\numberwithin{equation}{section}
\renewcommand{\bf}{\textbf}
\newcommand{\leqs}{\leqslant}
\newtheorem{theorem}{Theorem}
\newtheorem{propn}[theorem]{Proposition}
\newtheorem{thm}{Theorem}[section]
\newtheorem{prop}[thm]{Proposition}
\newtheorem{lem}[thm]{Lemma}
\newtheorem{corol}[thm]{Corollary}
\newtheorem{cor}[theorem]{Corollary}
\newtheorem{con}[theorem]{Conjecture}
\theoremstyle{definition}
\begin{document}

\title{Varieties of elements of given order in simple algebraic groups}

 \author{Claude Marion\\ \\ \small{\begin{tabular}{c}Dipartimento di Matematica, Universit\`{a} degli Studi di Padova,  Padova, Italy\\ marion@math.unipd.it\end{tabular}}}
\date{}
\maketitle

\noindent\textbf{Abstract.}  
Given a positive integer $u$ and a simple algebraic group $G$ defined over an algebraically closed field $K$ of characteristic $p$, we  derive properties about  the subvariety $G_{[u]}$ of $G$ consisting of elements of $G$ of order dividing $u$. In particular, we determine the dimension of $G_{[u]}$, completing results of Lawther \cite{Lawther} in the special case where $G$ is of adjoint type.   We also apply our results to the study of finite simple quotients of triangle groups, giving further insight on a conjecture we proposed in \cite{Marionconj} as well as proving that some finite quasisimple groups are not quotients of certain triangle groups. 

\tableofcontents

\section{Introduction}\label{s:intro}
Let $G$ be a  reductive algebraic group defined over an algebraically closed field $K$ of  characteristic $p$ (possibly equal to 0), $C$ be a conjugacy class of $G$ and $u$ be a positive integer.  In 2007 Guralnick \cite{Guralnick} proved the following result:

\begin{theorem}\cite[Theorem 1.1]{Guralnick}.
Given a reductive algebraic group $G$, a conjugacy class $C$ of $G$ and a positive integer $u$, the set $\{g \in G: g^u\in C\}$   is a finite union of conjugacy classes of $G$. 
\end{theorem}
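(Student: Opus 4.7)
The plan is to reduce the statement, via the Jordan decomposition, to two finiteness facts: that the $u$-th roots of a fixed semisimple element inside a maximal torus form a finite set, and that any reductive algebraic group has only finitely many unipotent conjugacy classes.

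First I would reformulate the problem. Since $C$ is a single conjugacy class, every $G$-class meeting $X := \{g \in G : g^u \in C\}$ has a representative $g$ with $g^u = c$ for a fixed element $c \in C$, so the $G$-classes meeting $X$ correspond bijectively to $C_G(c)$-orbits on the fibre $\{g \in G : g^u = c\}$ of the power map $g \mapsto g^u$. It therefore suffices to show that only finitely many $G$-conjugacy classes meet $X$. Writing $c = s_0 v_0$ for the Jordan decomposition of $c$, and $g = sv$ for the Jordan decomposition of any $g \in X$, the equality $g^u = s^u v^u$ is once again a Jordan decomposition; so after replacing $g$ by a suitable conjugate one may assume $s^u = s_0$ and $v^u = v_0$, with $v$ lying in the centralizer $C_G(s)$, a (possibly disconnected) reductive subgroup of $G$.

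I would then bound the possibilities for $s$ and for $v$ separately. For the semisimple part, up to $G$-conjugation any admissible $s$ lies in a fixed maximal torus $T$ containing $s_0$, and $s^u$ lies in the intersection of $T$ with the $G$-class of $s_0$, which by Steinberg's description of semisimple classes is a single Weyl-group orbit in $T$ of size at most $|W|$. For each prescribed value $t_0$ in this orbit, the equation $t^u = t_0$ in $T$ has either no solutions or a single coset of the finite subgroup $\{t \in T : t^u = 1\}$ as its solution set. This yields only finitely many $G$-conjugacy classes of admissible semisimple elements $s$. For the unipotent part, for each such representative $s$ the classical finiteness of unipotent conjugacy classes in a reductive algebraic group provides only finitely many $C_G(s)$-conjugacy classes of unipotent $v \in C_G(s)$. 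Pairing the two finite sets then produces only finitely many $G$-conjugacy classes of elements of the form $sv$, and these cover all of $X$.

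The main external input, and the most substantial step, is the finiteness of unipotent classes in arbitrary characteristic, which in bad characteristic is Lusztig's theorem; the torus computation for the semisimple part and the manipulations with Jordan decompositions are routine, and no restriction on $p$ or on $u$ is needed.
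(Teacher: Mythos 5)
Your proposal is correct, and its skeleton is the same as the one the paper uses for Proposition \ref{p:fcu} (its proof of the special case $G$ connected, $C=\{1\}$): split off the Jordan decomposition, control the semisimple part by working inside a maximal torus, and invoke Lusztig's finiteness of unipotent classes \cite{Lusztig} for the centralizer of the semisimple part, then recombine. What you add to handle a general class $C$ — that $s_0^G\cap T$ is a single Weyl-group orbit and that each fibre of the $u$-th power map on $T$ is a coset of the finite subgroup $\{t\in T: t^u=1\}$ (finite because the $u$-th roots of unity in $K^*$ form a group of order equal to the prime-to-$p$ part of $u$) — is exactly the right generalisation of the paper's observation that a torus contains only finitely many elements of order dividing $v$, and it is correct. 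Two small points deserve explicit mention. First, your argument uses connectedness of $G$ throughout (semisimple elements lying in maximal tori, the single-$W$-orbit description of $s_0^G\cap T$, and the fact that unipotent elements of $C_G(s)$ lie in $C_G(s)^0$, cf.\ \cite[Proposition 14.7]{MaTe}); this matches the setting the paper actually works in, but if the quoted statement is meant for possibly disconnected reductive $G$ the outer components would need separate treatment. Second, Lusztig's theorem should be applied to the connected reductive group $C_G(s)^0$ rather than to the possibly disconnected $C_G(s)$; this is harmless precisely because the unipotent elements of $C_G(s)$ all lie in $C_G(s)^0$, and a $C_G(s)^0$-class is contained in a $C_G(s)$-class, so the finiteness you need survives.
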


In this paper, we concentrate our attention to the case where $G$ is connected and $C=\{1\}$ is the trivial conjugacy class of $G$.   For  a positive integer $u$, we let $$G_{[u]}=\{g \in G: g^u=1\}$$ be the subvariety of $G$ consisting of elements of $G$ of order dividing  $u$ and set $j_u(G)= \dim G_{[u]}$.  We also let  $d_u(G)$ be the minimal dimension of a centralizer in $G$ of an element of $G$ of order dividing $u$.  We are merely interested in determining $j_u(G)$ for every positive positive integer $u$ (when $G$ is a simple algebraic group).  \\

For completeness, we begin by proving Guralnick's result in the case where $G$ is connected and $C=\{1\}$.  In the statement below, given $g \in G$, we let $g^G$ denote the conjugacy class of $g$ in $G$.

\begin{propn}\label{p:fcu}
Let $G$ be a connected reductive  algebraic group  defined  over an algebraically closed field $K$ of characteristic $p$. Let $u$ be a positive integer. Then the number of conjugacy classes of $G$ of elements  of order dividing $u$ is finite. In particular $G_{[u]}$ is a finite union of conjugacy classes of $G$. Moreover $\dim G_{[u]}=\max_{g \in G_{[u]}} \dim g^G$ and ${\rm codim}\ G_{[u]} = d_u(G)$. 
\end{propn}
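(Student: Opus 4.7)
The plan is to combine uniqueness of Jordan decomposition with two finiteness inputs: (i) there are only finitely many semisimple conjugacy classes of bounded order, and (ii) every reductive algebraic group has only finitely many unipotent conjugacy classes (Lusztig). The dimension statements will then follow formally from standard facts about orbit dimensions.

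First I would write $u = p^a m$ with $\gcd(m,p) = 1$ (with $p^a = 1$ if $p = 0$) and observe that, for the Jordan decomposition $g = s\nu$ of an element $g \in G$, one has $g^u = 1$ if and only if $s^m = 1$ and $\nu^{p^a} = 1$. Indeed, $g^u = s^u\nu^u$ with $s^u$ semisimple and $\nu^u$ unipotent, and their product being $1$ forces each to be trivial. Next, fix a maximal torus $T$ of $G$: every semisimple element of $G$ is $G$-conjugate to one in $T$, the elements of $T$ of order dividing $m$ form a finite subgroup, and two elements of $T$ are $G$-conjugate if and only if they are $W$-conjugate, where $W = N_G(T)/T$ is finite. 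Hence there are only finitely many $G$-classes of semisimple elements of order dividing $m$.

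Next, for each semisimple representative $s$ of order dividing $m$, I would invoke that $C_G(s)^{\circ}$ is a connected reductive group (Steinberg) and that every unipotent element centralising $s$ lies in $C_G(s)^{\circ}$. By Lusztig's theorem, $C_G(s)^{\circ}$ has finitely many unipotent conjugacy classes; since $C_G(s)/C_G(s)^{\circ}$ is finite, $C_G(s)$ itself carries only finitely many unipotent classes, in particular only finitely many consisting of elements of order dividing $p^a$. By uniqueness of Jordan decomposition, two elements $s_1\nu_1, s_2\nu_2 \in G_{[u]}$ are $G$-conjugate if and only if some $g \in G$ conjugates $s_1$ to $s_2$ and $\nu_1$ to $\nu_2$ simultaneously. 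This reduces the number of $G$-classes inside $G_{[u]}$ to a finite sum of finite numbers, establishing the first assertion and in particular that $G_{[u]}$ is a finite union of conjugacy classes. For the dimension part, I would note that $G_{[u]}$ is closed in $G$ (being the fibre over $1$ of the morphism $x\mapsto x^u$) and equals the set-theoretic union of the finitely many orbits $g^G$ of its elements. Each orbit is a locally closed irreducible subvariety of dimension $\dim G - \dim C_G(g)$, and its closure lies in $G_{[u]}$. Hence $G_{[u]}$ is a finite union of closed irreducible sets $\overline{g^G}$, so $\dim G_{[u]} = \max_{g \in G_{[u]}}\dim g^G$, and passing to codimensions yields $\operatorname{codim} G_{[u]} = \min_{g\in G_{[u]}}\dim C_G(g) = d_u(G)$.

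The principal subtlety lies in the finiteness argument: while each individual ingredient (parametrisation of semisimple classes via $T/W$, Steinberg's theorem that $C_G(s)^{\circ}$ is reductive and absorbs every unipotent element of $C_G(s)$, and finiteness of unipotent classes in a reductive group) is standard, their orchestration through Jordan decomposition, together with bookkeeping across the finite component group $C_G(s)/C_G(s)^{\circ}$, must be carried out carefully. Once finiteness is secured, the dimension and codimension statements are formal consequences of the orbit--stabiliser equality and the fact that a finite union of irreducible varieties has dimension equal to the maximum of the dimensions of its components.
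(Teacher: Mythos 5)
Your proposal is correct and follows essentially the same route as the paper's proof: reduce via Jordan decomposition to finitely many semisimple classes (parametrised through a maximal torus) and, for each semisimple representative $s$, to Lusztig's finiteness of unipotent classes in the connected reductive group $C_G(s)^{0}$, then deduce the dimension and codimension statements from the orbit--centraliser formula applied to a finite union of classes. The only cosmetic difference is that the paper exhibits an explicit conjugating element sending $g$ to a product $x_{i,j}y_i$ of chosen representatives, whereas you phrase the count through simultaneous conjugacy of the Jordan parts; the content is the same.
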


Given a simple algebraic group $G$  defined over an algebraically closed field $K$ of characteristic $p$, we denote  by $G_{s.c.}$ (respectively, $G_{a.}$)  the simple algebraic group over $K$ of simply connected (respectively, adjoint) type having the same Lie type and Lie rank as  $G$.  We prove the following result partially proved in \cite[Theorem 3.11]{Lawther}:

 \begin{propn}\label{p:ingsimple}
Let  $G$ be simple algebraic group defined over an algebraically closed field $K$ of characteristic $p$.   Let $u$ be a positive integer.  Then
  $$ d_u(G_{a.})\leq d_u(G) \leq d_u(G_{s.c.}).$$
    \end{propn}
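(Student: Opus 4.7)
The plan is to reduce both inequalities to a single general principle: for any central isogeny $\pi \colon G_1 \to G_2$ between connected algebraic groups, one has $d_u(G_2) \leq d_u(G_1)$. Since $G_{s.c.}$ and $G_{a.}$ have the same Lie type and rank as $G$, there exist isogenies $\pi_1 \colon G_{s.c.} \to G$ and $\pi_2 \colon G \to G_{a.}$ with kernels contained in the respective centers; applying the principle to $\pi_1$ and $\pi_2$ yields the two inequalities of the proposition.

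The key step is to show that such a central isogeny $\pi$, with finite kernel $Z$, preserves centralizer dimension, i.e.\ $\dim C_{G_2}(\pi(g)) = \dim C_{G_1}(g)$ for every $g \in G_1$. The starting observation is that
$$\pi^{-1}(C_{G_2}(\pi(g))) = \{h \in G_1 : [h, g] \in Z\}.$$
The commutator map $\varphi \colon G_1 \to G_1$, $\varphi(h) = [h,g]$, is a morphism of varieties with fiber $\varphi^{-1}(1) = C_{G_1}(g)$. A short calculation using the centrality of $Z$ shows that every non-empty fiber $\varphi^{-1}(z)$ with $z \in Z$ is a left coset of $C_{G_1}(g)$ and hence has the same dimension. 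Since $Z$ is finite, $\pi^{-1}(C_{G_2}(\pi(g)))$ is a finite disjoint union of translates of $C_{G_1}(g)$, giving $\dim \pi^{-1}(C_{G_2}(\pi(g))) = \dim C_{G_1}(g)$. Because $\pi$ is finite-to-one, the left-hand side equals $\dim C_{G_2}(\pi(g))$, proving the claim.

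With this tool the principle follows readily: one has $\pi((G_1)_{[u]}) \subseteq (G_2)_{[u]}$, so the set of centralizer dimensions realized on $(G_1)_{[u]}$ is, via $\pi$, contained in the set realized on $(G_2)_{[u]}$; taking minima yields $d_u(G_2) \leq d_u(G_1)$. The main obstacle is really just the centralizer-dimension claim above; no further input is required. It is worth noting that the inclusion $\pi((G_1)_{[u]}) \subseteq (G_2)_{[u]}$ is in general strict, since a preimage in $G_1$ of an element of order dividing $u$ in $G_2$ need only satisfy $\tilde g^u \in Z$ rather than $\tilde g^u = 1$; this asymmetry is precisely what forces the inequalities to run in the stated direction rather than becoming equalities.
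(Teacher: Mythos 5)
Your proof is correct and follows essentially the same route as the paper: the coset description of the fibres of the commutator map $h\mapsto [h,g]$ is exactly the content of the paper's Lemma \ref{l:dufactg} (the sets $G_{g,k}$ there are your fibres $\varphi^{-1}(k)$), and the right-hand inequality is then obtained by pushing a minimizing element of $G_{s.c.}$ through the isogeny, just as in the paper. The only difference is cosmetic: the paper simply cites Lawther for $d_u(G_{a.})\leq d_u(G)$, whereas you derive both inequalities uniformly from the single isogeny principle applied to $G_{s.c.}\to G$ and $G\to G_{a.}$, which is a slightly more self-contained presentation of the same argument.
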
 

In \cite{Lawther}, given a positive integer $u$ and a simple algebraic group $G$ defined over an algebraically closed field $K$ of characteristic $p$, Lawther gives a lower bound for  $d_u(G)$. Moreover, he proves that this lower bound is equal to $d_u(G)$ in the case where $G$ is of adjoint type. We establish the following result completing Lawther's result. Recall that the Coxeter number of a simple algebraic group is defined to be the quotient of the number of roots  in the root system associated to $G$ by the number of such roots which are simple.  Concretely,  $h=\ell+1$,  $2\ell$, $2\ell$, $2\ell-2$, $6$, $12$, $12$, $18$ or $30$ according respectively as $G$ is of type $A_\ell$, $B_\ell$, $C_\ell$, $D_\ell$, $G_2$, $F_4$, $E_6$. $E_7$ or $E_8$.

\begin{theorem}\label{t:blawther}
Let $G$ be a simple algebraic group  defined over an
algebraically closed field $K$ of characteristic $p$.  Let $h$ be the
Coxeter number of $G$ and $u$ be a positive integer. Write $h=zu+e$ and  $u=qv$, where $z$ and
$e$ are nonnegative integers with $0 \leq e \leq u-1$ and $q$, $v$ are
coprime positive integers such that $q$ is a power of $p$ and $p$ does not divide $v$.
  The following assertions hold.
\begin{enumerate}[(i)]
\item If $G$ is of type $A_\ell$ then $$d_u(G_{s.c.})=d_u(G_{a.})$$
except  if $p\neq 2$, $u$ is even, $h=zu$ and $z$ is odd in
which case $$d_u(G_{s.c.})-d_u(G_{a.})=2.$$
\item If $G$ is of type $B_\ell$ then  $$d_u(G_{s.c.})=d_u(G_{a.})$$
except  if
\begin{enumerate}
\item $p\neq 2$,  $u\equiv 2 \mod 4$, $h=zu$, and $z\equiv u/2 \mod
4$ or $z\equiv 2 \mod 4$ in which case
$$d_u(G_{s.c.})-d_u(G_{a.})=2.$$
\item $p\neq 2$, $u \equiv 4 \mod 8$, $h=zu$ and $z$ is odd in which
case  $$d_u(G_{s.c.})-d_u(G_{a.})= 2.$$
\end{enumerate}
\item If $G$ is of type $C_\ell$ then $$d_u(G_{s.c.})=d_u(G_{a.})$$
except  if $p\neq 2$ and  $u$ is even in which case
$$d_u(G_{s.c.})-d_u(G_{a.})=2\left\lceil \frac z2\right\rceil.$$
\item  If $G$ is of type $D_\ell$ then  $$d_u(G_{s.c.})=d_u(G_{a.})$$
except  if
\begin{enumerate}
\item  $p\neq 2$, $u=2$, $h=zu$ and $z \equiv u/2 \mod 4$ in which
case  $$d_u(G_{s.c.})-d_u(G_{a.})=4.$$
\item   $p\neq 2$, $u \equiv 2 \mod 4$, $u>2$, $h=zu$ and $z \equiv
u/2 \mod 4$ in which case  $$d_u(G_{s.c.})-d_u(G_{a.})=2.$$
\item $p\neq 2$, $u\equiv 2 \mod 4$, $e=u-2\neq 0$ and
$z \equiv 1 \mod 4$  in which case  $$d_u(G_{s.c.})-d_u(G_{a.})= 2.$$
\item $p\neq 2$, $u \equiv 4 \mod 8$, $z$ is odd and  $h=zu$ 
 in which case $$d_u(G_{s.c.})-d_u(G_{a.})=2.$$
\end{enumerate}
\item If $G$ is of exceptional type then $d_u(G_{s.c.})=d_u(G_{a.})$ except if $G$ is of type $E_7$, $p \neq 2$ and $u\in \{2,6,10,14,18\}$ in which case
$$d_2(G_{s.c.})-d_2(G_{a.})=6 \quad \textrm{and} \quad d_u(G_{s.c.})-d_u(G_{a.})=2 \ \textrm{for} \ u \in \{6,10,14,18\}.$$
\end{enumerate}
\end{theorem}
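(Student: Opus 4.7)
The plan is to combine Proposition \ref{p:ingsimple} with Lawther's explicit formula for $d_u(G_{a.})$ from \cite{Lawther} and reduce the theorem to a lifting question across the central isogeny $\pi\colon G_{s.c.}\to G_{a.}$ with kernel $Z=Z(G_{s.c.})$. Since $\pi$ is finite and central, centralizer dimensions are preserved, so
\[
d_u(G_{s.c.})=\min\bigl\{\dim C_{G_{a.}}(g):g\in G_{a.},\ g^u=1,\ g\ \text{lifts to}\ \tilde g\in G_{s.c.}\ \text{with}\ \tilde g^u=1\bigr\}.
\]
If $\tilde g$ is any preimage of $g$, then $\tilde g^u\in Z$, and a preimage of order dividing $u$ exists if and only if $\tilde g^u$ lies in $Z^u=\{z^u:z\in Z\}$. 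This defines a map $\mu_u\colon\{g\in G_{a.}:g^u=1\}\to Z/Z^u$ whose zero set is precisely the image of $\{\tilde g\in G_{s.c.}:\tilde g^u=1\}$, and we must minimize $\dim C_{G_{a.}}(g)$ subject to $\mu_u(g)=0$.

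I would then describe elements of order dividing $u$ in a maximal torus of $G_{s.c.}$ via Kac coordinates $(s_0,\ldots,s_\ell)$, i.e.\ non-negative integers on the affine Dynkin diagram satisfying $\sum a_is_i=u$ (where $(a_i)$ are the affine labels, $\sum a_i=h$); the centralizer rank equals $\ell+1-|\{i:s_i>0\}|$, and Lawther's adjoint-optimal element is obtained by spreading $u$ as evenly as possible over the nodes so as to maximize the number of zero entries, the outcome being controlled by the residue $e$ in $h=zu+e$. The map $\mu_u$ reads off on Kac coordinates via the natural pairing between $Z\cong P^\vee/Q^\vee$ and the affine nodes, so checking whether Lawther's optimum lies in $\ker\mu_u$ reduces to an arithmetic calculation in the small group $Z$. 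For positive characteristic I would use the factorization $u=qv$ and Jordan decomposition, noting that every exceptional case in the theorem satisfies $p\nmid|Z|$, so that the lifting question is effectively one about prime-to-$p$ semisimple elements.

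A type-by-type case analysis then yields the theorem. For types $A_\ell$, $B_\ell$, $D_\ell$ the symmetry group $\Omega\cong Z$ has order $2$ (or is $\Z/2\times\Z/2$, $\Z/4$ in type $D$), and the parity conditions appearing in (i), (ii), (iv) --- $u$ even, $z$ odd, $u\equiv 2\pmod 4$, $u\equiv 4\pmod 8$, etc.\ --- fall out from testing $\Omega$-fixity of the Kac-optimal configuration. The increments $+2$ or $+4$ correspond to perturbations that activate one or two extra Kac nodes, the exact value being read off from root multiplicities. In type $C_\ell$ the symmetry swaps the two ends of the affine diagram and the Kac-optimum is never $\Omega$-fixed; the closest $\Omega$-fixed configuration costs $2\lceil z/2\rceil$ extra, which I would establish by an explicit construction and a matching minimization. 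For exceptional types only $E_6$ and $E_7$ have $\Omega\neq 1$; the $\Z/3$ action in $E_6$ is shown never to obstruct the optimum, while in $E_7$ the $\Z/2$-obstruction arises precisely for $u\in\{2,6,10,14,18\}$, yielding (v).

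The main obstacle I anticipate is part (iii) for type $C_\ell$: one must show both that $2\lceil z/2\rceil$ is achieved by an explicit liftable Kac configuration and that no liftable configuration does better, requiring a genuine minimization over the $\Omega$-fixed cone of admissible tuples rather than the toggling of a single coordinate that suffices elsewhere. A closely related source of delicate bookkeeping is type $D_\ell$ in part (iv), where the disconnected centre forces one to track two independent central characters and produces the four subcases (a)--(d), each of which requires its own constructive perturbation and its own non-existence argument. These are combinatorial calculations with root data rather than deep conceptual difficulties, but they constitute the bulk of the work in the proof.
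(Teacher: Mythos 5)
Your overall strategy --- reduce via Proposition \ref{p:ingsimple} to deciding when an adjoint-optimal class lifts to an element of the same order in $G_{s.c.}$, and detect the obstruction by a character of the centre --- is sound, and your lifting criterion in types $B$ and $D$ is equivalent to the paper's Lemma \ref{l:soliftspin}. But the proposal has genuine gaps. The most serious is the treatment of positive characteristic: after Jordan decomposition the quantity to be minimized is not $\dim C_G(g)$ over $g\in G_{[u]}$ but $d_q(C_G(y)^0)$ over semisimple $y\in G_{[v]}$ (Proposition \ref{p:prelimdu}(i)), where $d_q$ of the reductive centralizer is computed from Lawther's formulas (Lemma \ref{l:lawther}). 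The optimal semisimple class is therefore governed by the two-step division $h=\alpha v+\beta$, $\alpha=\gamma q+\delta$, and is in general \emph{not} the configuration that ``spreads $u$ as evenly as possible''; a Kac-coordinate argument that only sees the residue $e$ of $h$ modulo $u$ cannot locate the correct optimum when $q>1$, and several exceptional cases of the theorem (for instance $u\equiv 4\bmod 8$ with $q$ odd) live precisely in that regime. Relatedly, your claim that the increments $+2$ or $+4$ ``correspond to perturbations that activate one or two extra Kac nodes'' is false: the centralizer dimension depends on the root subsystem supported on the zero nodes, not on their number. The case $G=E_7$, $u=2$, where the adjoint optimum $A_7$ ($\dim 63$) must be replaced by $A_1D_6$ ($\dim 69$), already gives an increment of $6$ that this heuristic cannot produce.

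Second, the two computations you yourself identify as the bulk of the work are deferred rather than carried out: the matching lower bound $2\lceil z/2\rceil$ in type $C_\ell$, which in the paper requires a minimization over the three possible shapes of centralizer (zero, one or two symplectic factors) together with the estimate $z\geq j-1$ ruling out better non-fixed configurations, and the four subcases of type $D_\ell$. Without these the exact values in (iii) and (iv) are not established. Finally, even where the difference is bounded by $2$, concluding that it equals $2$ once strict inequality is known requires the parity statement $\dim C_G(x)\equiv\ell\bmod 2$ (Lemma \ref{l:centmod2}), which your sketch never invokes; without it the upper-bound construction and the non-lifting argument do not by themselves pin down the value.
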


We note that for  $G$  of exceptional type the only cases where the simply connected and the adjoint groups are not abstractly isomorphic is when $G$  is of type $E_6$ or $E_7$ and $p\neq 3$ or $p\neq 2$ respectively.\\

Recall that if a simple algebraic group $G$ over an algebraically closed field $K$ of characteristic $p$  is not of simply connected type  nor of  adjoint type then either $G$ is abstractly isomorphic to ${\rm SL}_n(K)/C$ where $C\leqs Z({\rm SL}_n(K))$, or $G$ is of type $D_{\ell}$, $p \neq 2$ and $G$ is abstractly isomorphic to ${\rm SO}_{2\ell}(K)$ or a half-spin group ${\rm HSpin}_{2\ell}(K)$ where $\ell$ is even in the latter case. For simple algebraic groups that are neither of simply connected nor adjoint type we obtain the following result. 

 \begin{theorem}\label{t:duotsca}
   Let $G$ be a simple algebraic group, neither of simply connected nor adjoint type, defined over an algebraically closed field $K$ of characteristic $p$. Let $u$ be a positive integer. Write $u=qv$ where $q$ and $v$ are coprime positive integers such that $q$ is a power of $p$  and $p$ does not divide $v$.   Then the following assertions hold. 
   \begin{enumerate}[(i)]
   \item If $d_u(G_{s.c.})=d_u(G_{a.})$ then $d_u(G)=d_u(G_{a.})$
   \item  Suppose $d_u(G_{s.c.})\neq d_u(G_{a.})$. 
   \begin{enumerate}
   \item If $G={\rm SL}_{\ell+1}(K)/C$ where $C\leq Z({\rm SL}_{\ell+1}(K))$, then $d_u(G)=d_u(G_{a.})$ if $C$ has an element of order $2$, otherwise $d_u(G)=d_u(G_{{s.c.}})$. 
   \item If $G={\rm SO}_{2\ell}(K)$ where $p\neq 2$, then $d_u(G)=d_u(G_{a.})$.
   \item If $G={\rm HSpin}_{2\ell}(K)$ where $p\neq 2$ and $\ell$ is even, then $d_u(G)=d_u(G_{s.c.})$. 
   \end{enumerate}
   \end{enumerate}
   \end{theorem}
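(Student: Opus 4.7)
The plan is to observe that central isogenies preserve centralizer dimensions, so computing $d_u(G)$ for an intermediate quotient reduces to locating, inside $G_{s.c.}$, the minimizers of centralizer dimension among elements whose $u$-th power lies in the kernel of the isogeny $G_{s.c.}\to G$. Writing $G=G_{s.c.}/C$ with $C\leq Z:=Z(G_{s.c.})$, for any $\tilde g\in G_{s.c.}$ with image $g\in G$ we have $\dim C_G(g)=\dim C_{G_{s.c.}}(\tilde g)$, and $g^u=1$ in $G$ exactly when $\tilde g^u\in C$. Hence
\[
d_u(G)=\min\bigl\{\dim C_{G_{s.c.}}(\tilde g):\tilde g\in G_{s.c.},\ \tilde g^u\in C\bigr\},
\]
while $d_u(G_{s.c.})$ and $d_u(G_{a.})$ are the same minima with $C$ replaced by $\{1\}$ and by $Z$ respectively. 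Part (i) is immediate: if $d_u(G_{s.c.})=d_u(G_{a.})$, then by Proposition \ref{p:ingsimple} all three values coincide.

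For part (ii) I proceed case by case. In each instance of strict inequality $d_u(G_{s.c.})>d_u(G_{a.})$ recorded in Theorem \ref{t:blawther}, I would identify an explicit element $\tilde y\in G_{s.c.}$ realising $\dim C_{G_{s.c.}}(\tilde y)=d_u(G_{a.})$ (from Lawther's construction in \cite{Lawther}, which underlies that theorem) and compute the precise element $c=\tilde y^u\in Z$. Then $d_u(G)=d_u(G_{a.})$ precisely when such a $\tilde y$ can be chosen with $c\in C$; otherwise $d_u(G)=d_u(G_{s.c.})$ by the squeeze from Proposition \ref{p:ingsimple}.

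For (ii)(a), $G_{s.c.}={\rm SL}_{\ell+1}(K)$ and $Z$ is cyclic of order $\ell+1$; Lawther's minimizer for $d_u(G_{a.})$ in type $A_\ell$ under the hypotheses of Theorem \ref{t:blawther}(i) turns out to have $c=\tilde y^u$ equal to the unique involution of $Z$, which I would verify by writing $\tilde y$ in the standard diagonal form. Thus $\tilde y$ descends to $G={\rm SL}_{\ell+1}(K)/C$ if and only if $C$ contains an element of order $2$, which yields (ii)(a). For (ii)(b) and (ii)(c), $G_{s.c.}={\rm Spin}_{2\ell}(K)$ and $Z$ contains the kernel $C_{SO}$ of the projection onto ${\rm SO}_{2\ell}(K)$ together with, when $\ell$ is even, two further subgroups of order $2$ each giving a half-spin quotient. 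The aim is to verify in each of the four subcases (a)--(d) of Theorem \ref{t:blawther}(iv) that any minimizer $\tilde y$ satisfies $\tilde y^u\in C_{SO}\setminus\{1\}$ and does not lie in the other order-$2$ subgroups of $Z$; this would simultaneously prove (ii)(b) and (ii)(c).

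The main obstacle is pinpointing, for each subcase, the explicit element of $Z$ occurring as $\tilde y^u$. In type $A_\ell$ this amounts to a direct diagonal computation in ${\rm SL}_{\ell+1}(K)$ and is routine, but in type $D_\ell$ one must work inside the spin group, where the several order-$2$ subgroups of $Z({\rm Spin}_{2\ell}(K))$ require explicit coordinates (via a Clifford-algebra model, or via the action on the two half-spin representations) to distinguish. Reconciling Lawther's explicit minimizers in ${\rm SO}_{2\ell}(K)$ with their preimages in ${\rm Spin}_{2\ell}(K)$ across the four subcases of Theorem \ref{t:blawther}(iv) is the step that will require the most care.
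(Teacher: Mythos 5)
Your overall strategy --- transfer everything to $G_{s.c.}$ via the isogeny invariance of centralizer dimensions (Lemma \ref{l:dufactg}), squeeze $d_u(G)$ between $d_u(G_{a.})$ and $d_u(G_{s.c.})$ using Proposition \ref{p:ingsimple} together with the parity constraint of Lemma \ref{l:centmod2}, and then decide which central element occurs as $\tilde y^u$ for the minimizers --- is essentially the paper's. There is, however, one concrete point where the argument as you state it fails. In the subcase of Theorem \ref{t:blawther}(iv)(a), i.e.\ type $D_\ell$ with $u=2$ (and then $\ell\equiv 2\bmod 4$, so ${\rm HSpin}_{2\ell}(K)$ is in scope), the difference $d_2(G_{s.c.})-d_2(G_{a.})$ equals $4$, not $2$. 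Ruling out $d_2({\rm HSpin}_{2\ell}(K))=d_2(G_{a.})$ therefore still leaves the two admissible values $d_2(G_{a.})+2$ and $d_2(G_{s.c.})$, and ``the squeeze from Proposition \ref{p:ingsimple}'' does not choose between them. The paper closes this case by a separate argument: any order-$2$ element $hC_1$ of ${\rm HSpin}_{2\ell}(K)$ has $h^2\in\{1,c_1\}$, and $h^2=c_1$ forces the image of $h$ in ${\rm SO}_{2\ell}(K)$ to have order $4$ with eigenvalues $\pm i$ each of multiplicity $\ell$, whence $\dim C(h)=\ell^2$, which exceeds $d_2(G_{s.c.})$; so every order-$2$ element of ${\rm HSpin}$ lifts to an order-$2$ element of ${\rm Spin}$ and $d_2({\rm HSpin})\geq d_2(G_{s.c.})$. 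You need some such supplementary step.

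A second caution concerns the negative directions ((ii)(a) with $C$ of odd order, and (ii)(c) with $u>2$): computing $\tilde y^u$ for Lawther's one explicit minimizer is not enough; you must show that \emph{every} element achieving $d_u(G_{a.})$ whose $u$-th power is central has that power outside $C$. In type $A$ the paper does this by an eigenvalue analysis of an arbitrary minimizing element of ${\rm SL}_{\ell+1}(K)/C$, which forces $2$ to divide $|C|$. For ${\rm HSpin}$ with $u>2$ the paper does not classify minimizers of ${\rm Spin}$ by their central $u$-th powers at all: it assumes a minimizer $hC_1$ with $h^u=c_1$ exists and multiplies its semisimple part by an auxiliary element $d$ whose image in ${\rm SO}_{2\ell}(K)$ has $v$-th power $-I$ and which, by Lemma \ref{l:soliftspin} and the parity of $\ell$, lifts to ${\rm Spin}$ with order dividing $2v$; this manufactures an honest element of ${\rm Spin}$ of order dividing $v$ with the same centralizer dimension, a contradiction. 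Your Clifford-algebra bookkeeping would have to reproduce something equivalent; note also that distinguishing $c_1$ from $c_1c_2$ is not actually required for (ii)(b) or (ii)(c), since only membership of $\tilde y^u$ in $C_2$ (for ${\rm SO}$) or in $C_1$ (for ${\rm HSpin}$) matters.
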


In \cite{Lawther} Lawther showed that if $G$ is a simple algebraic group of rank $\ell$ with Coxeter number $h$ over an algebraically closed field and $u$ is a positive integer with $u\geq h$ then  $d_u(G)=\ell$. As a corollary of Theorems \ref{t:blawther} and  \ref{t:duotsca} we obtain the following result:

  \begin{cor}
Let $G$ be a simple algebraic group   of rank $\ell$ over an algebraically closed field $K$ of  characteristic $p$. Let $h$ be the Coxeter number of $G$ and let $u$ be a positive number.  Then
\begin{enumerate}[(i)] 
\item If $u>h$ then $d_u(G)=\ell$.
\item Suppose $u=h$ then $d_u(G)=\ell$ if  $G$ is (abstractly isomorphic to a group) of adjoint type, or $p=2$, or $h$ is odd (and then $G$ is of type $A$), or $G={\rm SL}_{\ell+1}(K)/C$ and $C$ has an element of order $2$ where $C\leq Z({\rm SL}_{\ell+1}(K))$,   or $G\cong {\rm SO}_{2\ell}(K)$ and $p\neq 2$, or $G$ is of type $B$ or $D$ and $h\equiv 0\mod 8$, or $G=E_6(K)$, otherwise  $d_u(G)=\ell+2$. 
\end{enumerate}
\end{cor}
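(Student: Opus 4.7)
The strategy is to reduce the corollary to Theorems~\ref{t:blawther} and~\ref{t:duotsca} together with Lawther's result that $d_u(G_{a.})=\ell$ whenever $u\geq h$. For part~(i), suppose $u>h$, so that the decomposition $h=zu+e$ forces $z=0$ and $e=h$. I would scan through each of the five cases of Theorem~\ref{t:blawther} and verify that every exceptional clause requires either $h=zu$ (hence $z\geq 1$), a congruence on $z$ such as $z\equiv 1\pmod 4$ (which fails for $z=0$), or $u$ to lie in a finite set bounded by $h$ (the $E_7$ clause). None of these is compatible with $u>h$, so $d_u(G_{s.c.})=d_u(G_{a.})=\ell$ by Lawther's bound. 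Theorem~\ref{t:duotsca}(i) then extends this equality to every intermediate simple type, giving $d_u(G)=\ell$.

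For part~(ii), I would substitute $u=h$, $z=1$, $e=0$ into Theorem~\ref{t:blawther} and analyse each type in turn. In type $A_\ell$ the exception of~(i) triggers precisely when $p\neq 2$ and $h=\ell+1$ is even. In type $C_\ell$, $u=2\ell$ is automatically even, so~(iii) triggers for every $p\neq 2$, with difference $2\lceil 1/2\rceil=2$. For types $B_\ell$ and $D_\ell$, sub-clause~(c) of~(iv) is killed by $e=0$, while the remaining sub-clauses of~(ii) and~(iv), after substituting $z=1$, collapse to the single condition that $p\neq 2$ and $h\equiv 2$ or $4\pmod 8$. Among exceptional groups only $E_7$ with $p\neq 2$ contributes, since $h=18\in\{6,10,14,18\}$; the larger gaps of $4$ and $6$ appearing in Theorem~\ref{t:blawther}(iv)(a) and~(v) correspond to $u=2$, which is strictly less than $h$ for every valid simple type arising here, so the relevant difference is always exactly $2$. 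Hence in every triggered case $d_u(G_{s.c.})=\ell+2$ and $d_u(G_{a.})=\ell$.

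The final step is to feed these data into Theorem~\ref{t:duotsca}(ii) to handle simple groups that are neither simply connected nor adjoint: this gives $d_u(G)=\ell$ when $G={\rm SL}_{\ell+1}(K)/C$ and $C$ contains an element of order~$2$, or when $G\cong{\rm SO}_{2\ell}(K)$ with $p\neq 2$, and $d_u(G)=\ell+2$ for $G={\rm HSpin}_{2\ell}(K)$. Collecting these with the adjoint and simply connected data above produces the full list of situations in which $d_u(G)=\ell$, with $d_u(G)=\ell+2$ in every remaining case. The main obstacle I anticipate is the bookkeeping for types $B$ and $D$, where four distinct sub-clauses of Theorem~\ref{t:blawther} have to be merged into a single mod-$8$ statement; but once $z=1$ and $e=0$ are plugged in, this merging reduces to a short arithmetic check, and the remaining work is essentially a dictionary lookup against Theorem~\ref{t:duotsca}.
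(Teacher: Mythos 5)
Your overall strategy --- substitute $u>h$ (so $z=0$) and $u=h$ (so $z=1$, $e=0$) into Theorem \ref{t:blawther}, combine this with Lawther's result that $d_u(G_{a.})=\ell$ for $u\geq h$, and then route the intermediate isogeny types through Theorem \ref{t:duotsca} --- is exactly the intended derivation; the paper gives no separate proof of the corollary. Two points in the execution need attention, however. First, in part (i) your blanket claim that every exceptional clause requires $h=zu$, a congruence on $z$, or a bounded $u$ fails for type $C_\ell$: clause (iii) of Theorem \ref{t:blawther} triggers whenever $p\neq 2$ and $u$ is even, with no further hypothesis. The conclusion survives only because the displacement there is $2\lceil z/2\rceil$, which vanishes when $z=0$; you should say this rather than assert that the clause never applies for $u>h$.

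Second, and more seriously, your own arithmetic for types $B$ and $D$ in part (ii) gives ``exception iff $p\neq2$ and $h\equiv 2$ or $4\pmod 8$,'' hence $d_h(G_{s.c.})=d_h(G_{a.})=\ell$ whenever $h\equiv 0$ or $6\pmod 8$. That computation is correct: for instance, for $G={\rm Spin}_7(K)$ with $p\nmid 6$ and $u=h=6$ neither sub-clause of Theorem \ref{t:blawther}(ii) applies (one would need $1\equiv 3\pmod 4$, the other $6\equiv 4\pmod 8$), and indeed the semisimple element of ${\rm SO}_7(K)$ with eigenvalues $1,\omega,\dots,\omega^5,-1$ lifts to an element of ${\rm Spin}_7(K)$ of order $6$ whose connected centralizer is a $3$-dimensional torus. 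But this does not match the statement you are asked to prove, which lists only ``$G$ is of type $B$ or $D$ and $h\equiv 0\bmod 8$'' among the conditions forcing $d_u(G)=\ell$ and consigns everything else to $d_u(G)=\ell+2$. Your closing assertion that collecting the data ``produces the full list of situations in which $d_u(G)=\ell$'' therefore cannot stand as a proof of the printed corollary: either your mod-$8$ merge is wrong (it is not), or the corollary's list is missing the case $h\equiv 6\pmod 8$ for types $B$ and $D$ (and correspondingly for ${\rm HSpin}_{2\ell}(K)$ with $\ell\equiv 0\pmod 4$). You need to surface this discrepancy explicitly rather than silently identifying the condition you derived with the one in the statement.
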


Given a simple algebraic group $G$ over an algebraically closed field, we also derive a further property of $$d_u(G): \begin{array}{l}
\mathbb{N}\rightarrow \mathbb{N}\\
u \mapsto d_u(G).
\end{array}
$$  

\begin{propn}\label{p:dudecreasing}
Let $G$ be a simple algebraic group  over an algebraically closed field $K$ of  characteristic $p$. 
Then
$$d_u(G): \begin{array}{l}
\mathbb{N}\rightarrow \mathbb{N}\\
u \mapsto d_u(G).
\end{array}
$$  
 is a decreasing function of $u$.
\end{propn}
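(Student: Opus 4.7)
The plan is to establish $d_{u+1}(G)\leq d_u(G)$ for every positive integer $u$, which yields the monotonicity by induction.

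I would first reduce to the cases $G=G_{s.c.}$ and $G=G_{a.}$. By Theorem \ref{t:duotsca}, for every simple $G$ the quantity $d_u(G)$ coincides with either $d_u(G_{s.c.})$ or $d_u(G_{a.})$, the choice depending on the isogeny type and on $u$. By Theorem \ref{t:blawther}, the values $d_u(G_{s.c.})$ and $d_u(G_{a.})$ disagree only for a restricted list of $u$, and in each instance by an explicitly computed amount. Granted decreasingness for $d_u(G_{s.c.})$ and $d_u(G_{a.})$ individually, the intermediate-$G$ case reduces to a direct consistency check at the finitely many transition points $u$ where the two quantities differ, using the formulas in these two theorems.

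The core step is decreasingness of the semisimple contribution, via Kac's classification. Semisimple conjugacy classes of order dividing $u$ in the adjoint group are parametrized by tuples $(s_0,s_1,\ldots,s_\ell)\in \mathbb{Z}_{\geq 0}^{\ell+1}$ with $\sum_{i=0}^\ell m_i s_i=u$, where $m_0,\ldots,m_\ell$ are the marks of the extended Dynkin diagram, $m_0=1$. The connected centralizer has dimension $\ell+|\Phi_J|$, where $J=\{i:s_i=0\}$ and $\Phi_J$ is the subroot system generated by $\{\alpha_i:i\in J\}$ in the extended root system. The key observation is that, since $m_0=1$, the shift $s_0\mapsto s_0+1$ produces a valid tuple at level $u+1$, whose zero-set $J'$ satisfies $J'\subseteq J$; hence $|\Phi_{J'}|\leq |\Phi_J|$, and the corresponding semisimple element at level $u+1$ has centralizer of no greater dimension. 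The same argument applied with the simply connected Kac parametrization delivers the analogous bound for $d_u(G_{s.c.})$.

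For positive characteristic, when the extremum at level $u$ is realized by a mixed element $g=sv$ (Jordan decomposition with $v$ a nontrivial unipotent of $p$-power order, only possible if $p\mid u$), I would exploit the fact that consecutive integers $u,u+1$ are not both divisible by $p$, so that $p\nmid u+1$. Only semisimple elements lie in $G_{[u+1]}$, and a direct Kac shift exhibits an element at level $u+1$ whose centralizer dimension is at most $\ell$ when regular semisimple elements of order dividing $u+1$ exist; since this certainly happens for $u+1\geq h$, and more generally whenever the Kac tuple $(s_0,\ldots,s_\ell)=(u+1-h,1,1,\ldots,1)$ is nonnegative, the inequality $d_{u+1}(G)\leq \dim C_G(g)$ holds under the requisite size condition. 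In the symmetric case $p\nmid u$, $p\mid u+1$, the mixed regime arises at level $u+1$ rather than $u$, and semisimple optimizers at level $u$ are dominated by a regular unipotent element in the centralizer at level $u+1$, again yielding $d_{u+1}(G)\leq d_u(G)$.

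The main obstacle is the residual range of small $u$ where neither regular semisimple nor regular unipotent elements of the required order exist simultaneously. Here the proof reduces to a finite numerical verification, combining Lawther's formula for $d_u(G_{a.})$, the explicit discrepancy $d_u(G_{s.c.})-d_u(G_{a.})$ from Theorem \ref{t:blawther}, and the isogeny assignment from Theorem \ref{t:duotsca}. The awkward sub-cases requiring case-by-case attention are types $C_\ell,D_\ell$ in odd characteristic with small even $u$, where the discrepancy depends non-trivially on $z=\lfloor h/u\rfloor$, and the $E_7$ instances of part (v) of Theorem \ref{t:blawther}; in each, one must verify that the aggregate function does not jump up at the transition from one regime to the next.
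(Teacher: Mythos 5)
Your Kac-coordinate shift is an attractive and genuinely different route for the purely semisimple situation: the map $s_0\mapsto s_0+1$, i.e. $x\mapsto \frac{u}{u+1}x$ on alcove points, preserves the relevant cocharacter lattice for \emph{every} isogeny type (if $(u)x$ lies in the lattice then so does $(u+1)\cdot\frac{u}{u+1}x$), can only shrink the zero-set $J$, and hence gives $d_{u+1}(G)\leq d_u(G)$ directly for each $G$ with no need to sandwich $G$ between $G_{s.c.}$ and $G_{a.}$. This is cleaner than the paper's argument, which instead establishes the stronger cross-isogeny inequality $d_u(G_{a.})\geq d_{u+1}(G_{s.c.})$ (inequality (\ref{e:dasc})) by comparing Lawther's closed formulas with the discrepancies of Theorem \ref{t:blawther}. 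But your argument only applies when $p\nmid u(u+1)$ (or $p=0$), since it manipulates semisimple classes only.

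The gap is the mixed case, and it is not a finite residue. When $p\mid u$ and $2\leq u\leq h-2$, the quantity $d_u(G)=\min_y d_q(C_G(y)^0)$ is governed by unipotent data inside semisimple centralizers, and your only mechanism for bounding the purely semisimple quantity $d_{u+1}(G)$ against it is the existence of regular semisimple elements of order dividing $u+1$, which requires $u+1\geq h$. The symmetric sketch for $p\mid u+1$ (domination ``by a regular unipotent element in the centralizer'') fails for the same reason: a regular unipotent element of $C_G(y)^0$ has order dividing the $p$-part of $u+1$ only when that $p$-part is at least the Coxeter number of $C_G(y)^0$. What remains is the whole range $u+1<h$ with $p\mid u(u+1)$, an infinite family indexed by the rank $\ell$ and by $u$, and settling it is precisely the content of the paper's proof: the arithmetic relating $z=\lfloor h/u\rfloor$ to $z'=\lfloor h/(u+1)\rfloor$ (writing $z'=z-j$ and showing $z>2j-1$ or $j=1$), together with the verification that the drop $d_u(G_{a.})-d_{u+1}(G_{a.})$ dominates the discrepancy $d_{u+1}(G_{s.c.})-d_{u+1}(G_{a.})$ — which in type $C_\ell$ equals $2\lceil z'/2\rceil$ and is unbounded in $\ell$, so a genuinely quantitative estimate is unavoidable there. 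As written, the proposal proves the proposition in characteristic zero and for $u$ prime to $p$ by a new method, but leaves open exactly the positive-characteristic cases the statement is chiefly needed for.
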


We now apply our results to the study of finite quasisimple images of triangle groups. Recall that a finite group is quasisimple if it is perfect and the quotient by its centre is simple. \\
Every finite quasisimple group being 2-generated, given a finite  quasisimple  group $G_0$ and a triple $(a,b,c)$ of positive integers, a natural question to consider is whether $G_0$ can be generated by two elements of  orders dividing respectively $a$ and $b$ and  having product of order dividing $c$.  A finite group generated by two such elements is called an $(a,b,c)$-group. Equivalently, an $(a,b,c)$-group is a finite quotient of the triangle group $T=T_{a,b,c}$ with presentation
$$ T=T_{a,b,c}=\langle x,y,z:x^a=y^b=z^c=xyz=1\rangle. $$
When investigating the finite (nonabelian) quasisimple quotients of $T$, we can  assume that $1/a+1/b+1/c<1$ as otherwise $T$ is either soluble or $T\cong T_{2,3,5}\cong{\rm Alt}_5$ (see \cite{Conder}). The group $T$ is then a hyperbolic triangle group.  Without loss of generality, we will further assume that $a \leq b \leq c$ and call $(a,b,c)$ a hyperbolic triple of integers. (Indeed, $T_{a,b,c}\cong T_{a',b',c'}$ for any permutation $(a',b',c')$ of $(a,b,c)$.)\\

Recall that a finite quasisimple group $G_0$ of Lie type  occurs as the derived subgroup of the  fixed point group  of a simple algebraic group $G$, when ${\rm char}(K)=p$ is  prime, under a Steinberg endomorphism  $F$, i.e. $G_0=(G^F)'$.    We use the standard notation $G_0=(G^F)'= G(q)$ where $q=p^r$ for some positive integer $r$. (We include the possibility that $G(q)$ is of twisted type.)\\
 Given a simple algebraic group $G$ and a triple $(a,b,c)$ of positive integers, we say that $(a,b,c)$ is rigid for $G$ if the sum $j_a(G)+j_b(G)+j_c(G)$ is equal to $2\dim G$. When the  latter sum is less (respectively, greater) than $2\dim G$, we say that $(a,b,c)$ is reducible (respectively, nonrigid)
 for $G$. \\
In \cite{Marionconj} we  conjectured the following finiteness result in the rigid case  (first formulated for hyperbolic triples $(a,b,c)$ of primes):
\begin{con}\label{c:marionconj}
Let $G$ be a simple algebraic group defined over an algebraically closed field $K$ of prime characteristic $p$ and let $(a,b,c)$ be a rigid triple of integers for $G$.  Then there are only finitely many  quasisimple groups $G(p^r)$ (of the form $(G^F)'$ where $F$ is a Steinberg endomorphism) that are $(a,b,c)$-generated.  
\end{con}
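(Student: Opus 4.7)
The strategy is to pass from the finite-group-theoretic question to an algebraic-geometric one on the \emph{triple variety}
$$ V = V_{a,b,c}(G) := \{(x,y,z) \in G_{[a]} \times G_{[b]} \times G_{[c]} : xyz = 1\}, $$
on which $G$ acts by simultaneous conjugation. Every quasisimple $(a,b,c)$-quotient $H = G(p^r)$ of the triangle group $T$ embeds in $G(\bar K)$, and a choice of generators of $H$ of orders dividing $a$ and $b$ with product of order dividing $c$ yields a point $(x,y,z) \in V(\bar K)$ with $\langle x,y,z\rangle = H$. Distinct values of $r$ give subgroups of distinct order (since $|G(p^r)|$ is a polynomial of degree $\dim G$ in $p^r$), hence non-conjugate images in $G(\bar K)$, hence distinct $G$-orbits on $V$. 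It therefore suffices to show that $V$ has only finitely many $G$-orbits of \emph{generating triples}, that is, triples whose entries generate a Zariski-dense subgroup of $G$, so that the simultaneous-conjugation stabiliser $Z_G(\langle x,y,z\rangle)$ coincides with $Z(G)$.

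I would establish this finiteness via two dimension estimates. First, stratify $V$ according to the $G$-conjugacy classes $(C_a, C_b, C_c)$ to which $(x,y,z)$ belongs, writing $V = \bigsqcup V_{C_a, C_b, C_c}$, and argue that on every nonempty stratum
$$ \dim V_{C_a, C_b, C_c} \le \dim C_a + \dim C_b + \dim C_c - \dim G. $$
Bounding $\dim C_\star \le j_\star(G)$ for $\star \in \{a,b,c\}$ and invoking the rigidity hypothesis $j_a(G) + j_b(G) + j_c(G) = 2\dim G$ then yields $\dim V \le \dim G$. Second, a generating triple has $G$-stabiliser $Z(G)$, which is finite for simple $G$, so its $G$-orbit has dimension exactly $\dim G$. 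Combining the two estimates forces the generating locus $V^{\mathrm{gen}}$ to be a finite union of $G$-orbits, and hence only finitely many values of $r$ can occur.

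The main obstacle is the stratum-wise dimension bound $\dim V_{C_a, C_b, C_c} \le \dim C_a + \dim C_b + \dim C_c - \dim G$. This is a transversality statement for the product map $\mu : C_a \times C_b \times C_c \to G$, $(x,y,z) \mapsto xyz$, asserting that $\mu^{-1}(1)$ has the expected codimension $\dim G$. Over $\C$ it is a manifestation of the symplectic/moment-map formalism for multiplicative quiver varieties (Crawley-Boevey, Katz) and lies at the heart of rigid Deligne--Simpson theory; in characteristic $p$ one must additionally control non-separable behaviour of multiplication near unipotent classes and excise excess-dimensional components contributed by triples contained in a proper reductive subgroup $M < G$. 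A plausible route is to apply Luna's \'etale slice theorem near a generating point to reduce locally to a transversal slice where the dimension count is automatic, and to treat triples landing in proper reductive subgroups by induction on $\dim G$ combined with parabolic induction from $M$.
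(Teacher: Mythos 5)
You are attempting to prove Conjecture \ref{c:marionconj}, which the paper does not prove: it is stated as an open conjecture, with the paper recording only that it holds when $p$ does not divide $abcd$ (by \cite[Theorem 1.7]{LLM1}) and deferring further progress to \cite{JLM}. The paper's own contributions here are the classification of rigid and reducible triples (Theorem \ref{t:classification}), which makes the conjecture concrete, and the nonexistence result in the reducible case (Proposition \ref{p:marionred}). So there is no proof in the paper to compare against, and your proposal must be judged as an attack on the open problem itself.

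As such it has a genuine gap, and you have located it yourself: the stratum-wise upper bound $\dim V_{C_a,C_b,C_c}\leq \dim C_a+\dim C_b+\dim C_c-\dim G$ is not a routine transversality statement but is essentially equivalent to the conjecture. The fibre of $\mu\colon C_a\times C_b\times C_c\to G$ over $1$ is a fibre over a point fixed by the conjugation action, hence highly non-generic, and fibre dimension can jump upward there; the standard semicontinuity bound for nonempty fibres goes the wrong way. In characteristic $0$ the bound follows from Weil's computation $\dim_{(x,y,z)}\Hom(T_{a,b,c},G)\leq\dim Z^1(T_{a,b,c},{\rm Ad})$ together with vanishing of $H^1$ for rigid triples; this is exactly the deformation-theoretic route of \cite{LLM1}, and when $p$ divides $abcd$ the relevant cocycle spaces can be too large, which is precisely the excess-dimension phenomenon you propose to ``excise'' and which nobody currently knows how to control. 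Your suggested remedies (Luna slices, induction through proper reductive overgroups) are not worked out and there is no reason to expect them to succeed where the cohomological argument fails. A secondary, fixable slip: a finite quasisimple subgroup $G(p^r)$ is never Zariski dense in $G$ (finite subsets are closed), so the locus $V^{\mathrm{gen}}$ as you define it excludes every triple you care about; the condition you actually need is that the generated subgroup has centraliser equal to $Z(G)$, which does hold for $(G^F)'$ and gives the orbit dimension $\dim G$ you intend.
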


In \cite[Theorem 1.7]{LLM1}, Larsen, Lubotzky and the author proved that Conjecture \ref{c:marionconj} holds except possibly if $p$ divides $abcd$ where $d$ is the determinant of the Cartan matrix of $G$. 
In a future paper \cite{JLM} we will make further progress on Conjecture \ref{c:marionconj} by, in particular, completely settling it in the case where the finite group $(G^F)'$ is simple.  A crucial ingredient therein is the classification of the hyperbolic triples $(a,b,c)$ of integers as reducible, rigid or nonrigid for a given simple algebraic group $G$. We determine this classification below. 

   \begin{theorem}\label{t:classification}
  The following assertions hold.
  \begin{enumerate}[(i)]
  \item The reducible hyperbolic triples $(a,b,c)$ of integers for simple algebraic groups $G$ of simply connected or adjoint type are exactly those given in Table \ref{ta:red}. In particular there are no reducible hyperbolic triples of integers for simple algebraic groups of adjoint type.
  \item The rigid hyperbolic triples of integers $(a,b,c)$ for simple algebraic  groups $G$ of simply connected type are exactly those given in Table \ref{ta:rigidsc}.
    \item The rigid hyperbolic triples of integers $(a,b,c)$ for simple algebraic groups $G$ of adjoint type are exactly those given in Table \ref{ta:rigida}.
  \item The classification of the reducible and the rigid hyperbolic triples of integers for ${\rm SO}_n(K)$ is the same as for ${\rm PSO}_n(K)$.
  \item The classification of the reducible and the rigid hyperbolic triples of integers for ${\rm HSpin}_{2\ell}(K)$ is the same as for ${\rm Spin}_{2\ell}(K)$. 
  \item If $C\leqs Z({\rm SL}_n(K))$ contains an involution then the classification of  the reducible and the rigid hyperbolic triples of integers for ${\rm SL}_n(K)/C$ is the same as for ${\rm PSL}_n(K)$. Otherwise, the classification of the reducible and the rigid hyperbolic triples of integers for ${\rm SL}_n(K)/C$ is the same as for   ${\rm SL}_n(K)$.
  \end{enumerate}
  \end{theorem}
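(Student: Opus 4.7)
The plan is to reduce the classification to a finite arithmetic check on the codimension function $d_u(G)$. By Proposition~\ref{p:fcu}, $j_u(G)=\dim G - d_u(G)$, so $(a,b,c)$ is reducible, rigid or nonrigid for $G$ according as the sum $d_a(G)+d_b(G)+d_c(G)$ is greater than, equal to, or less than $\dim G$. Thus once we have a closed formula for each $d_u(G)$, which is provided by Lawther's lower bound \cite{Lawther} together with the corrections of Theorem~\ref{t:blawther}, the entire problem reduces to solving an explicit inequality in the parameters of $G$.

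I would begin by bounding the range of hyperbolic triples that can possibly satisfy the reducibility or rigidity condition. Since $d_u(G)=\ell$ whenever $u\geq h$ (by \cite{Lawther}) and $d_u$ is decreasing (Proposition~\ref{p:dudecreasing}), the only substantial contributions to the sum come from those among $a,b,c$ that are strictly less than $h$. Combined with the constraint $1/a+1/b+1/c<1$, this forces $a$ and typically also $b$ to be small, and limits $(a,b,c)$ to a short list of candidate shapes depending on the Lie type of $G$. For each candidate I would insert the formula of Theorem~\ref{t:blawther} into $d_a(G)+d_b(G)+d_c(G)$ and solve the resulting inequality in the Lie rank $\ell$ (or, in exceptional type, check the finitely many possibilities directly), populating Tables~\ref{ta:red}, \ref{ta:rigidsc} and \ref{ta:rigida}. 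Assertion (i)'s statement that no reducible triples exist in the adjoint case will then drop out by inspection: in each candidate configuration the adjoint sum never exceeds $\dim G$, attaining equality in the rigid rows and falling strictly below otherwise.

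For assertions (iv)--(vi), Theorem~\ref{t:duotsca} shows that for any isogeny type $G$ distinct from $G_{s.c.}$ and $G_{a.}$, each $d_u(G)$ coincides with either $d_u(G_{s.c.})$ or $d_u(G_{a.})$, the choice being determined by $G$ alone and not by $u$. Summing over $u\in\{a,b,c\}$, the sum $d_a(G)+d_b(G)+d_c(G)$ equals one of the two sums already analysed, so the classification is inherited verbatim from Table~\ref{ta:rigidsc} or Table~\ref{ta:rigida}. Concretely, $\mathrm{SO}_{2\ell}(K)$ and $\mathrm{SL}_n(K)/C$ with $C$ containing an involution behave as the adjoint group, while $\mathrm{HSpin}_{2\ell}(K)$ and $\mathrm{SL}_n(K)/C$ with $C$ of odd order behave as the simply connected group; this is exactly what parts (iv)--(vi) record.

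The main obstacle is the bookkeeping in the case analysis for classical types: the formula for $d_u(G)$ in Theorem~\ref{t:blawther} depends on the residues of $u$ and $h$ modulo one another and on the characteristic $p$, so even though each individual inequality $d_a+d_b+d_c \gtreqqless \dim G$ is elementary, several dozen arithmetic subcases must be enumerated without overlooking sporadic solutions, in particular the borderline configurations in types $B$ and $D$ with $u\equiv 2\pmod 4$ and the low-rank exceptions in type $A$ where the discrepancy $d_u(G_{s.c.})-d_u(G_{a.})$ tips the comparison.
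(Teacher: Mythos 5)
Your proposal follows essentially the same route as the paper: translate reducibility/rigidity into the sign of $d_a(G)+d_b(G)+d_c(G)-\dim G$ via Proposition \ref{p:fcu}, use the monotonicity of $d_u$ (Proposition \ref{p:dudecreasing}) together with explicit upper bounds from Theorem \ref{t:blawther} and \cite{Lawther} to establish nonrigidity above an explicit rank threshold for the minimal hyperbolic triples, finish the bounded range by direct computation, and inherit parts (iv)--(vi) from Theorem \ref{t:duotsca}. This matches the paper's Propositions \ref{p:classificationsc} and \ref{p:classificationa}, so the approach is correct and not materially different.
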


  \begin{table}[h!]
\center{
\begin{tabular}{|l|l|l|}
\hline
$G$ & $p$ & $(a,b,c)$\\
\hline
${\rm SL}_2(K)$ & $p \neq 2$  & $(2,b,c)$\\
\hline
${\rm Sp}_4(K)$ & $p \neq 2$  & $(2,3,c)$, $(2,4,c)$, $(3,3,4)$, $(3,4,4)$, $(4,4,4)$\\
${\rm Sp}_6(K)$ & $p \neq 2$  & $(2,3,c)$, $(2,4,c)$, $(2,5,5)$, $(2,5,6)$, $(2,6,6)$\\
${\rm Sp}_8(K)$ & $p \neq 2$  & $(2,3,7)$, $(2,3,8)$, $(2,4,5)$, $(2,4,6)$\\
${\rm Sp}_{10}(K)$ & $p \neq 2$  & $(2,3,7)$, $(2,3,8)$, $(2,3,9)$, $(2,3,10)$, $(2,4,5)$, $(2,4,6)$\\
${\rm Sp}_{12}(K)$ & $p \neq 2$  & $(2,3,7)$, $(2,3,8)$, $(2,4,5)$\\
${\rm Sp}_{14}(K)$ & $p \neq 2$  & $(2,3,7)$, $(2,3,8)$, $(2,4,5)$\\
${\rm Sp}_{16}(K)$ & $p \neq 2$  & $(2,3,7)$\\
${\rm Sp}_{18}(K)$ & $p \neq 2$  & $(2,3,7)$\\
${\rm Sp}_{22}(K)$ & $p \neq 2$  & $(2,3,7)$\\
\hline
\end{tabular}
}
\caption{Reducible triples for simple algebraic groups of simply connected or adjoint type}\label{ta:red}
\end{table}

\begin{table}[h!]
   \center{
\begin{tabular}{|l|l|l|}
\hline
$G$ & $p$ & $(a,b,c)$\\
\hline
${\rm SL}_2(K)$ & $p =2$  & $(a,b,c)$\\
& $p \neq 2$ & $(a,b,c)$ $a\geq 3$\\
${\rm SL}_3(K)$ & any  & $(2,b,c)$\\
${\rm SL}_4(K)$ & $p =2$  & $(2,3,c)$\\
& $p\neq 2$ & $(2,3,c)$, $(2,4,c)$, $(3,3,4)$, $(3,4,4)$,$(4,4,4)$\\
${\rm SL}_5(K)$ & any  & $(2,3,c)$\\
${\rm SL}_6(K)$ & $p\neq 2$  & $(2,3,c)$, $(2,4,5)$, $(2,4,6)$\\
${\rm SL}_{10}(K)$ & $p\neq 2$  & $(2,3,7)$\\
\hline
${\rm Sp}_4(K)$ & $p = 2$  & $(2,3,c)$, $(3,3,c)$\\
${\rm Sp}_4(K)$ & $p \neq 2$  & $(2,b,c)$ $b \geq 5$, $(3,3,c)$ $c \geq 5$, $(3,4,c)$ $c\geq 5$, $(4,4,c)$ $c \geq 5$\\ 
${\rm Sp}_6(K)$ & $p \neq 2$  & $(2,5,c)$ $c \geq 7$,  $(2,6,c)$ $c\geq 7$, $(3,3,4)$, $(3,4,4)$, $(4,4,4)$\\
${\rm Sp}_8(K)$ & $p \neq 2$  & $(2,3,c)$ $c\geq 9$, $(2,4,7)$, $(2,4,8)$, $(2,5,5)$, $(2,5,6)$, $(2,6,6)$\\
${\rm Sp}_{10}(K)$ & $p \neq 2$  & $(2,3,c)$ $c\geq 11$,  $(2,4,7)$, $(2,4,8)$\\
${\rm Sp}_{12}(K)$ & $p \neq 2$  & $(2,3,9)$, $(2,3,10)$, $(2,4,6)$\\
${\rm Sp}_{14}(K)$ & $p \neq 2$  & $(2,3,9)$, $(2,3,10)$, $(2,4,6)$\\
${\rm Sp}_{16}(K)$ & $p \neq 2$  & $(2,3,8)$, $(2,4,5)$\\
${\rm Sp}_{18}(K)$ & $p \neq 2$  & $(2,3,8)$, $(2,4,5)$\\
${\rm Sp}_{20}(K)$ & $p \neq 2$  & $(2,3,7)$\\
${\rm Sp}_{24}(K)$ & $p \neq 2$  & $(2,3,7)$\\
${\rm Sp}_{26}(K)$ & $p \neq 2$  & $(2,3,7)$\\
\hline
${\rm Spin}_{11}(K)$ & $p \neq 2$  & $(2,3,7)$\\
${\rm Spin}_{12}(K)$ & $p \neq 2$  & $(2,3,7)$\\
\hline
\end{tabular}
}
\caption{Rigid triples for simple algebraic groups  of simply connected type}\label{ta:rigidsc}
\end{table}

  \begin{table}[h!]
\center{
\begin{tabular}{|l|l|l|}
\hline
$G$ & $p$ & $(a,b,c)$\\
\hline
${\rm PSL}_2(K)$ & any & $(a,b,c)$\\
${\rm PSL}_3(K)$ & any  & $(2,b,c)$\\
${\rm PSL}_4(K)$ & any  & $(2,3,c)$\\
${\rm PSL}_5(K)$ & any  & $(2,3,c)$\\
\hline
${\rm PSp}_4(K)$ & any & $(2,3,c)$, $(3,3,c)$\\
\hline
$G_{2}(K)$ & any  & $(2,4,5)$, (2,5,5)\\
\hline
\end{tabular}}
\caption{Rigid triples for simple algebraic groups  of adjoint type}\label{ta:rigida}
\end{table}

 As Theorem \ref{t:classification}   gives the list of the rigid triples of integers for a given simple algebraic group, it puts Conjecture  \ref{c:marionconj} into a very concrete context. Concerning the reducible case, we  prove the following nonexistence result.  

\begin{propn}\label{p:marionred}
 Let $G$ be a simple algebraic group defined over an algebraically closed field $K$ of prime characteristic $p$ and let $(a,b,c)$ be a reducible triple of integers for $G$.  Then a quasisimple group $G(p^r)$ (of the form $(G^F)'$ where $F$ is a Steinberg endomorphism) is never an $(a,b,c)$-group.
\end{propn}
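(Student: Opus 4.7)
The plan is to argue by contradiction. Suppose that $G_0 := (G^F)'$ is an $(a,b,c)$-group, realised by $x,y,z \in G_0$ with $x^a=y^b=z^c=1$, $xyz=1$ and $\langle x,y,z\rangle = G_0$. Let $C_1, C_2, C_3$ be the $G$-conjugacy classes of $x,y,z$. By Proposition~\ref{p:fcu}, each of $G_{[a]}, G_{[b]}, G_{[c]}$ is a union of $G$-conjugacy classes, so $C_1 \subseteq G_{[a]}$ (and similarly for $b, c$), giving $\dim C_1 \leq j_a(G)$, $\dim C_2 \leq j_b(G)$, $\dim C_3 \leq j_c(G)$. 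The argument will rest on a dimension count on the $G$-stable subvariety
$$V = \{(u,v,w) \in C_1 \times C_2 \times C_3 : uvw = 1\} \subseteq G^3,$$
on which $G$ acts diagonally by simultaneous conjugation. The aim is to derive from $(x,y,z) \in V$ the inequality $\dim C_1 + \dim C_2 + \dim C_3 \geq 2\dim G$, which then forces $j_a(G) + j_b(G) + j_c(G) \geq 2\dim G$, contradicting the hypothesis that $(a,b,c)$ is reducible for $G$.

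For the lower bound $\dim V \geq \dim G$, I would use Zariski density: $G^F$ is Zariski dense in $G$ by standard properties of Steinberg endomorphisms, and since $G_0 = (G^F)'$ has finite index in $G^F$ it too is Zariski dense in $G$. Hence $\langle x,y,z\rangle = G_0$ is Zariski dense in $G$ and $C_G(\langle x,y,z\rangle) = C_G(G) = Z(G)$, which is finite since $G$ is simple. The stabiliser at $(x,y,z)$ of the diagonal conjugation action is $C_G(x) \cap C_G(y) \cap C_G(z) = C_G(\langle x,y,z\rangle) = Z(G)$, so the $G$-orbit through $(x,y,z)$ in $V$ has dimension $\dim G - \dim Z(G) = \dim G$.

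For the upper bound I would analyse the product morphism $\bar\mu \colon C_1 \times C_2 \times C_3 \to G$, $(u,v,w) \mapsto uvw$, so that $V = \bar\mu^{-1}(1)$. A standard tangent-space computation in left-invariant coordinates (the algebraic analogue of Weil's rigidity formula) shows that the cokernel of $d\bar\mu$ at $(x,y,z)$ is controlled by the Lie-algebra invariants $\frakg^{\langle x,y,z\rangle}$; Zariski density forces this to coincide with $\frakg^G$, which vanishes for the simple algebraic group $G$. Thus $\bar\mu$ is smooth at $(x,y,z)$, the irreducible component of $V$ containing $(x,y,z)$ has dimension $\dim C_1 + \dim C_2 + \dim C_3 - \dim G$, and combining with the lower bound completes the argument.

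The main obstacle I foresee is the cokernel/smoothness calculation in positive characteristic: when $p$ divides one of $a, b, c$ the centraliser schemes of $x, y, z$ need not be smooth, so the identification of geometric centraliser dimensions with Lie-algebra invariants requires care, and the claim $\frakg^G = 0$ can fail in small characteristic when $p$ divides the order of the fundamental group. I would either invoke the analogous rigidity/submersion lemma proved in \cite{LLM1} or reprove it here by a direct cohomological argument treating $\frakg$ as an $\langle x,y,z\rangle$-module, handling any residual exceptional cases by a case analysis guided by Table~\ref{ta:red}.
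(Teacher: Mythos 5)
Your overall strategy is sound and reaches the right inequality, but it takes a genuinely different route from the paper. The paper's proof is essentially two citations: for $G$ classical in good characteristic it invokes Proposition \ref{p:reducibility} (= \cite[Proposition 2.1]{Marionconj}), which already asserts $\dim g_1^G+\dim g_2^G+\dim g_3^G\geq 2\dim G$ whenever $g_1g_2g_3=1$ and $\langle g_1,g_2\rangle$ acts irreducibly on the natural module (as the quasisimple group $(G^F)'$ does); and for the remaining cases (exceptional type, or bad characteristic) it observes via Theorem \ref{t:classification} that no reducible triples exist, so there is nothing to prove. You instead re-derive the key inequality from scratch by a fibre-dimension count on $\bar\mu^{-1}(1)$, using Zariski density of $(G^F)'$ in place of irreducibility; this is more self-contained but shifts all the work onto the submersion claim, which is precisely the content of the result the paper cites (and of the Scott's-lemma-type argument behind it). Two remarks on your deferred step. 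First, the cokernel of $d\bar\mu$ is dual to the \emph{invariants of $\frakg^*$} (equivalently the coinvariants of $\frakg$), not to $\frakg^{\langle x,y,z\rangle}$ itself; the distinction matters exactly in the characteristics you worry about. Second, your "main obstacle" evaporates once you use Table \ref{ta:red} at the outset rather than as a last resort: every reducible triple occurs only for ${\rm SL}_2(K)$ or ${\rm Sp}_{2\ell}(K)$ with $p\neq 2$, where the trace form identifies $\frakg$ with $\frakg^*$ and $\frakg^G=\mathfrak{z}(\frakg)=0$, so $d\bar\mu$ is surjective and there are no residual cases to handle. With that observation made explicit (and with the tangent-space computation written out, or delegated to \cite{LLM1} or \cite{Strambach}), your argument closes.
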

  
  Theorem \ref{t:classification} which gives inclusively the  list of the reducible triple of integers for a given simple algebraic group is one of the ingredients of the proof of Proposition \ref{p:marionred}. As an immediate corollary of Theorem \ref{t:classification} and Proposition \ref{p:marionred} we obtain the following result. 
  
  \begin{cor}\label{c:classification}
  If $(G,p,(a,b,c))$ is as in  Table \ref{ta:red}  then $(G^F)'=G(p^r)$ is never an $(a,b,c)$-group. 
   \end{cor}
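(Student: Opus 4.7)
The plan is to recognise this corollary as a direct combination of two results already established in the excerpt, namely Theorem~\ref{t:classification}(i), which enumerates the reducible hyperbolic triples for simple algebraic groups of simply connected or adjoint type, and Proposition~\ref{p:marionred}, which rules out quasisimple quotients of $T_{a,b,c}$ in the reducible case. No new geometric or group-theoretic input is required; the work is purely a matter of chaining the two statements.

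Concretely, I would fix an arbitrary row $(G,p,(a,b,c))$ of Table~\ref{ta:red} and argue in two steps. First, by Theorem~\ref{t:classification}(i), Table~\ref{ta:red} is precisely the list of reducible hyperbolic triples of integers for simple algebraic groups of simply connected or adjoint type, so in particular $(a,b,c)$ is reducible for $G$ in the sense that $j_a(G)+j_b(G)+j_c(G)<2\dim G$. Second, Proposition~\ref{p:marionred} then applies verbatim: for any Steinberg endomorphism $F$ of $G$, the finite quasisimple group $(G^F)'=G(p^r)$ is not an $(a,b,c)$-group. Since the row was arbitrary, the conclusion holds for every entry of Table~\ref{ta:red}.

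There is essentially no obstacle in this proof, as the substantive content has been absorbed into Theorem~\ref{t:classification} and Proposition~\ref{p:marionred}. The only mild sanity check worth flagging is that every entry of Table~\ref{ta:red} has $G$ of simply connected type, which is consistent with the second sentence of Theorem~\ref{t:classification}(i) stating that no reducible hyperbolic triples exist for simple algebraic groups of adjoint type; hence no adjoint cases have been inadvertently omitted from the statement of the corollary, and nothing further needs to be checked.
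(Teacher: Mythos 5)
Your proposal is correct and is exactly the paper's own argument: the paper presents this corollary as an immediate consequence of Theorem~\ref{t:classification}(i) (which identifies Table~\ref{ta:red} as the list of reducible triples) combined with Proposition~\ref{p:marionred} (which excludes $(a,b,c)$-generation in the reducible case). Nothing further is needed.
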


Unless otherwise stated, we let $G$ be a connected reductive algebraic group defined  over an algebraically closed field $K$ of  characteristic $p$ with maximal torus  $T$. If $T'$ is a torus of $G$ of dimension $r$ we sometimes write $T'=T_r$. \\
Given a positive integer $u$,  we let  $G_{[u]}$ be the subvariety of $G$ consisting of elements of order dividing $u$, we  set 
$$j_u(G)= \dim G_{[u]}$$ and we let $d_u(G)$ be the minimal dimension of a centralizer in $G$ of an element of $G$ of order dividing $u$. \\
\newpage
For $G$ simple, we fix some more notation as follows. We let $\Phi$ be the root system of $G$ with respect to $T$ and set $\Pi=\{\alpha_1,\dots, \alpha_\ell\} \subset \Phi$ to be a set of simple roots  of $G$, where $\ell$ is the rank of $G$. 
We let $h=|\Phi|/{\rm rank}(G)=|\Phi|/\ell$ be the Coxeter number of $G$. Recall that $h=\ell+1$, $2\ell$, $2\ell$, $2\ell-2$, $6$, $12$, $12$, $18$ or $30$ according respectively as $G$ is of type $A_\ell$, $B_\ell$, $C_\ell$, $D_\ell$, $G_2$, $F_4$, $E_6$. $E_7$ or $E_8$. Also we denote  by $G_{s.c.}$ (respectively, $G_{a.}$)  the simple algebraic group over $K$ of simply connected (respectively, adjoint) type having the same Lie type and Lie rank as  $G$. \\
 Unless otherwise stated given a positive integer $u$, we also write:\\
  $u=qv$ where $q$ and $v$ are coprime positive integers such that $q$ is a power of $p$  and $p$ does not divide $v$,\\
  $h=zu+e$ where $z,e$ are nonnegative integers such that $0\leq e \leq q-1$,\\
  $h=\alpha v+\beta$ where $\alpha,\beta$ are nonnegative integers such that $0\leq \alpha \leq v-1$,\\
  $\alpha=\gamma q+\delta$ where $\gamma,\delta$ are nonnegative integers such that $0\leq \delta \leq q-1$.\\
   An easy check yields $\gamma=z$.  Note that if $e=0$ then $\beta=\delta=0$. Furthermore, if $\beta =0$ then $\delta=0$ if and only if $e=0$.\\
Finally for an nonnegative integer $r$, we let $\epsilon_r \in \{0,1\}$ be $0$ if $r$ is even, otherwise $\epsilon_r=1$, and set $\sigma_r\in\{0,1\}$ to be 1 if $r=0$, otherwise $\sigma_r=0$.\\

The outline of the paper is as follows.  In \S\ref{s:precent} we give some preliminary results on centralizers. In \S\ref{s:tfr} we give detailed proofs for Propositions \ref{p:fcu} and \ref{p:ingsimple}.  In \S\ref{s:ex}, given a positive integer $u$, we determine $d_u(G)$ for $G$ a simple simply connected algebraic group of exceptional type over an algebraically closed field $K$. This establishes Theorem \ref{t:blawther} for $G$ of exceptional type. In \S\ref{s:spin} we recall some properties of the spin groups and determine when a preimage under the canonical map ${\rm Spin}_n(K) \rightarrow {\rm SO}_n(K)$ of a semisimple element  of ${\rm SO}_n(K)$ of a given order  is also of that order. In \S\ref{s:upc} given a positive integer $u$, we determine precise upper bounds for $d_u(G)$ where $G$ is a simple algebraic group of classical type over an algebraically closed field. In \S\ref{s:pc} given a positive integer $u$, we determine $d_u(G)$ for $G$ of classical type, completing the proofs of Theorems \ref{t:blawther} and  \ref{t:duotsca}.  In  \S\ref{s:decreasing} we prove that for a given simple algebraic group $G$ over an algebraically closed field,  $d_u(G)$, seen as a function of $u$, is decreasing. This establishes Proposition \ref{p:dudecreasing}. In  \S\ref{s:classification} we classify the reducible and the rigid hyperbolic triples of integers for simple algebraic groups, establishing Theorem \ref{t:classification}.  In \S\ref{s:reducibility} we prove Proposition \ref{p:marionred}. Finally in  \S\ref{s:tables} we collect Tables \ref{t:asc}-\ref{ta:casestoconsider} which appear later in the paper.\\

  \noindent \textbf{Acknowledgements.} The author thanks the MARIE CURIE and PISCOPIA research fellowship  scheme and the University of Padova  for their support.  The research leading to these results has received funding from the European Comission, Seventh Framework Programme (FP7/2007-2013) under Grant Agreement 600376.

\section{Preliminary results on centralizers}\label{s:precent}
In this section, unless otherwise stated,  $G$ denotes a connected reductive group defined over an algebraically closed field $K$ of characteristic $p$.  Given a positive integer $u$, recall that $d_u(G)$ is defined to be the minimal dimension of a centralizer in $G$ of an element of $G$ of order dividing $u$. We first recall some generalities on the centralizer in $G$ of an element of  $G$. 

\begin{lem}\label{l:prelimdu}
Let $G$ be a connected reductive group defined over an algebraically closed field $K$ of characteristic $p$. Let $g$ be an element of $G$ with Jordan decomposition $g=xy$ where $x$ is unipotent and $y$ is semisimple. Let $H=C_G(y)^0$. The following assertions hold. 
 \begin{enumerate}[(i)]
  \item We have  $x\in C_G(y)^0$, $C_G(g)=C_{C_{G(y)}}(x)$ and $C_G(g)^0=C_{C_G(y)^0}(x)^0$.
 \item  The group $H=C_G(y)^0$ is reductive and so $H=H_1\cdots H_r T'$ for some nonnegative integer $r$ and  simple groups $H_i$ and central torus $T'$. Morever  each $H_i$ is closed and normal in $H$, $[H_i,H_j]=1$ for $i\neq j$, and $H_i \cap H_1 \dots H_{i-1}H_{i+1}\cdots H_r$ is finite for each $i$. 
 \item  Write $x=x_1\cdots x_r t$ where $x_i \in H_i$ and $t\in T'$.  Then 
 $$C_{C_G(y)^0}(x)=C_{H_1}(x_1)\dots C_{H_r}(x_r)T'.$$
 \item $\dim C_G(g)=\dim C_{C_G(y)^0}(x)={\rm rank}(T')+ \sum_{i=1}^r \dim C_{H_i}(x_i).$ 
 \end{enumerate}
\end{lem}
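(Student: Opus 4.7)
The plan is to prove the four parts in order, with most of the work being structural rather than computational; I would lean on standard results about centralizers of semisimple elements in connected reductive groups rather than prove everything from scratch.

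For (i), I start from the fact that $x$ and $y$ commute (part of the definition of the Jordan decomposition), so $x \in C_G(y)$. The refinement that $x$ actually lies in the identity component $C_G(y)^0$ rests on the standard principle that every unipotent element of a linear algebraic group is contained in a connected unipotent subgroup—e.g., in the unipotent radical of a Borel subgroup of the reductive group $C_G(y)^0$—which I would cite from Steinberg's or Humphreys' treatment of centralizers of semisimple elements. The two centralizer identities are then formal: $h$ commutes with $g=xy$ if and only if it commutes with each Jordan component (by uniqueness of the decomposition of $hgh^{-1}$), giving $C_G(g)=C_G(x)\cap C_G(y)=C_{C_G(y)}(x)$. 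For the identity component, $C_G(g)^0$ is connected and contains $1$, hence is contained in $C_G(y)^0$; intersecting both sides of the previous identity with $C_G(y)^0$ converts the centralizer into $C_{C_G(y)^0}(x)$ at the level of identity components.

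For (ii), Steinberg's theorem on centralizers of semisimple elements in connected reductive groups yields that $H=C_G(y)^0$ is reductive. The standard structure theorem for connected reductive groups then expresses $H$ as an almost direct product of the simple factors $H_1,\dots,H_r$ of the derived subgroup $[H,H]$ and the central torus $T'=Z(H)^0$. The stated properties (closedness, normality, pairwise commutation, and finiteness of pairwise intersections) are immediate consequences of the almost direct product structure.

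For (iii), I use the surjection $H_1\times\cdots\times H_r\times T'\to H$ to write $x=x_1\cdots x_r t$ with $x_i\in H_i$, $t\in T'$. Because $T'$ is central in $H$ and $[H_i,H_j]=1$ for $i\neq j$, any $h=h_1\cdots h_r t'\in H$ satisfies $[h,x_i]=[h_i,x_i]$ and $[h,t]=1$, so $h$ centralizes $x$ if and only if each $h_i$ centralizes $x_i$ inside $H_i$. This gives the desired product formula for $C_{C_G(y)^0}(x)$.

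For (iv), I combine (i) and (iii): $\dim C_G(g)=\dim C_G(g)^0=\dim C_{C_G(y)^0}(x)^0=\dim C_{C_G(y)^0}(x)$, and since the product in (iii) is almost direct (its pairwise intersections are finite), its dimension equals the sum of the dimensions of the factors, yielding ${\rm rank}(T')+\sum_{i=1}^r\dim C_{H_i}(x_i)$. The only genuinely delicate step is the assertion in (i) that $x$ lies in $C_G(y)^0$ rather than merely in $C_G(y)$, and I expect to settle this by appeal to the standard literature rather than by a self-contained argument.
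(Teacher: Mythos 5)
Your proposal is correct and follows essentially the same route as the paper: (i) is reduced to the standard fact that unipotent elements of $C_G(y)$ lie in $C_G(y)^0$ when $G$ is connected reductive (the paper cites \cite[Proposition 14.7]{MaTe}), (ii) is Steinberg's theorem together with the structure theory of connected reductive groups, and (iii)--(iv) are the same formal almost-direct-product computations. One caution on (i): the general principle you invoke --- that every unipotent element of a linear algebraic group lies in a connected unipotent subgroup --- is false for disconnected groups in characteristic $p$ (a finite $p$-group embedded in ${\rm GL}_n$ via its regular representation consists of unipotent elements yet has trivial identity component), and phrasing it via a Borel subgroup of $C_G(y)^0$ presupposes the very membership you are trying to establish; the load-bearing statement is the specific theorem on centralizers of semisimple elements in connected reductive groups, which is exactly the reference you (and the paper) fall back on, so the argument stands.
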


\begin{proof}
We first consider part (i). Since $G$ is connected reductive we have  $x\in C_G(y)^0$ - see for example \cite[Proposition 14.7]{MaTe}. It is well-known  that $C_G(g)=C_{C_{G(y)}}(x)$. As $C_G(g)\leqs C_G(y)$, we have $C_G(g)^0\leqs C_G(y)^0$.  It now follows that $C_G(g)^0 \leqs C_{C_G(y)^0}(x)$ and so $C_G(g)^0=C_{C_G(y)^0}(x)^0$.\\
We consider part (ii). Since $G$ is connected reductive and $y$ is a semisimple element of $G$, $C_G(y)^0$ is connected reductive - see for example \cite[Theorem 14.2]{MaTe}.  Since $H=C_G(y)^0$ is connected reductive, we have $H=[H,H]Z(H)^0$ where $Z(H)^0$ is a central torus and $[H,H]$ is semisimple - see for example \cite[Corollary 8.22]{MaTe}. Part (ii) follows - see for example \cite[Theorem 8.21]{MaTe}. Part (iii) is now an easy consequence of part (ii).  \\
We now consider part (iv). We have 
$$\dim C_G(g)=\dim C_G(g)^0\quad \textrm{and}\quad  \dim C_{C_G(y)^0}(x)= \dim C_{C_G(y)^0}(x)^0. $$ Part (iv) now follows from parts (i)-(iii).\\
\end{proof}

 We now give some properties of $d_u(G)$. In the statement below, we adopt the notation of  Lemma \ref{l:prelimdu}(ii) for the connected component of the centralizer of a semisimple element in a connected reductive algebraic group.

\begin{prop}\label{p:prelimdu}
Let $G$ be a connected reductive algebraic group over an algebraically closed field $K$ of characteristic $p$. Let $u$ be a positive integer. Write $u=qv$ where $u$ and $v$ are coprime positive integers with $q$  a power of $p$  and $p$ does not divide $v$. Then the following assertions hold.
\begin{enumerate}[(i)]
\item $$\begin{array}{llll}d_u(G) &= & \min & d_q(C_G(y)^0).    \\
& &  y \in G_{[v]} & \\
\end{array}$$
\item  Let $g=xy$ be the Jordan decomposition of an element $g\in G$ where $x$ and $y$ are respectively the unipotent and semisimple parts of $g$.  If $g$ has order dividing $u$ and  $d_u(G)=\dim C_G(g)$, then $$d_u(G)=d_q(C_G(y)^0).$$ 
\item  If $y$ is a semisimple element  of $G$ with $C_G(y)^0=H_1\dots H_rT'$ (as in Lemma \ref{l:prelimdu}(ii)) then $$d_q(C_G(y)^0)= {\rm rank}(T')+\sum_{i=1}^r d_q(H_i).
$$
\item $$\begin{array}{llll}d_u(G) &= &   \min & {\rm rank}(T')+d_q(H_1)+\dots +d_q(H_r).  \\
&&  C_G(y)^0=\prod_{i=1}^r H_iT'& \\
& &  y \in G_{[v]} &\\
\end{array}$$
\end{enumerate}
\end{prop}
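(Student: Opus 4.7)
The plan is to exploit the Jordan decomposition $g=xy$ (with $x$ unipotent, $y$ semisimple, and $xy=yx$). The key observation is that the order of $x$ is a power of $p$ while the order of $y$ is coprime to $p$; hence $g\in G_{[u]}$ if and only if $y\in G_{[v]}$ and $x$ is a unipotent element of $H:=C_G(y)^0$ satisfying $x^q=1$. By Lemma \ref{l:prelimdu}(ii), $H$ is connected reductive, so $d_q(H)$ is well-defined, and the dimension of $C_G(g)$ is governed by the formulas of Lemma \ref{l:prelimdu}(iii)--(iv).

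For the inequality $d_u(G)\leq \min_{y\in G_{[v]}} d_q(C_G(y)^0)$ of part (i), I would pick $y\in G_{[v]}$ achieving the minimum on the right and a unipotent $x\in H$ with $x^q=1$ realizing $d_q(H)$; then $g=xy\in G_{[u]}$ and Lemma \ref{l:prelimdu}(iv) yields $\dim C_G(g)=\dim C_H(x)=d_q(H)$. Conversely, given $g\in G_{[u]}$ with $\dim C_G(g)=d_u(G)$ and Jordan decomposition $g=xy$, we have $y\in G_{[v]}$ and $x\in H_{[q]}$, and Lemma \ref{l:prelimdu}(iv) gives $d_u(G)=\dim C_G(g)=\dim C_H(x)\geq d_q(H)\geq \min_{y\in G_{[v]}}d_q(C_G(y)^0)$. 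Part (ii) then drops out by tracking the equality case through this latter chain.

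Part (iii) is the main technical step. Writing $H=H_1\cdots H_r T'$ as in Lemma \ref{l:prelimdu}(ii), the upper bound $d_q(H)\leq \mathrm{rank}(T')+\sum d_q(H_i)$ is easy: choose unipotents $x_i\in H_i$ of order dividing $q$ realizing $d_q(H_i)$ and set $x=x_1\cdots x_r$; since the $H_i$ pairwise commute, $x$ is unipotent with $x^q=1$, and Lemma \ref{l:prelimdu}(iii) gives $\dim C_H(x)=\mathrm{rank}(T')+\sum d_q(H_i)$. The matching lower bound is the hard part: given $x\in H_{[q]}$ (necessarily unipotent, since $q$ is a $p$-power), I aim to produce a decomposition $x=x_1\cdots x_r t$ with each $x_i\in (H_i)_{[q]}$, so that $\dim C_{H_i}(x_i)\geq d_q(H_i)$. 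Since $x$ is unipotent, its image in the torus $H/[H,H]$ is trivial, so $x\in H_1\cdots H_r$ and I may take $t=1$; replacing each $x_i$ by its unipotent part (permitted by uniqueness of Jordan decomposition and pairwise commutation) reduces to the case where every $x_i$ is unipotent. Then $x^q=x_1^q\cdots x_r^q=1$ forces $x_i^q\in H_i\cap \prod_{j\neq i}H_j$, which lies in $Z(H_i)$ (as elements of $\prod_{j\neq i}H_j$ centralize $H_i$); since $Z(H_i)(K)$ has order coprime to $p$ and $x_i^q$ is unipotent, this yields $x_i^q=1$. Lemma \ref{l:prelimdu}(iii) then gives the desired lower bound.

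Part (iv) follows immediately by substituting the formula from (iii) into the minimum in (i). The main obstacle, as indicated, is the lower-bound argument in (iii): one must carefully manipulate the almost-direct-product decomposition of $H$ so that each factor $x_i$ individually has order dividing $q$, relying crucially on the fact that the centre of a simple algebraic group in characteristic $p$ has order coprime to $p$ at the level of $K$-points.
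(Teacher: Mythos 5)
Your proposal is correct and follows essentially the same route as the paper: part (i) via the Jordan decomposition and the double minimum over $y\in G_{[v]}$ and $x\in (C_G(y)^0)_{[q]}$ using Lemma \ref{l:prelimdu}, part (ii) by tracking equality, part (iii) via the almost-direct-product decomposition $H=H_1\cdots H_r T'$, and part (iv) by substitution. The only difference is that your lower-bound argument in (iii) — passing to unipotent parts of the $x_i$ and using that $x_i^q$ lies in the finite (semisimple) centre of $H_i$ to force $x_i^q=1$ — spells out a step the paper compresses into a bare citation of Lemma \ref{l:prelimdu}(ii)--(iii), and that extra care is warranted.
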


\begin{proof}
We first consider part (i).  By definition $d_u(G) = \min_{g \in G_{[u]}} \ \dim C_G(g)$.  Let $g$ be any element of $G_{[u]}$. Write $g=xy$ where $x$ and $y$ are respectively the unipotent and semisimple parts of $g$. Clearly $x\in G_{[q]}$ and $y\in G_{[v]}$. Following Lemma \ref{l:prelimdu}(i), we in fact have $x \in (C_G(y)^0)_{[q]}$ and $C_G(g)^0=C_{C_G(y)^0}(x)^0$. It follows that
$$\begin{array}{lllll}d_u(G) &= & \min & \min & \dim C_{C_G(y)^0}(x).    \\
&&  y\in G_{[v]} &          x \in (C_G(y)^0)_{[q]} &                        
\end{array}$$
This establishes part (i). \\
We now consider part (ii). Note  that $x\in (C_G(y)^0)_{[q]}$ and $y\in G_{[v]}$.   Since by assumption $d_u(G)=\dim C_G(g)$ and by Lemma \ref{l:prelimdu}(iv) $\dim C_G(g)= \dim C_{C_G(y)^0}(x)$, we obtain
\begin{equation}\label{e:lalabobo} 
d_u(G)= \dim C_{C_G(y)^0}(x).
\end{equation}
By part (i) $d_u(G)=\min_{l \in G_{[v]}} d_q(C_{G}(l)^0)$ and so $d_q(C_G(y)^0)\geq d_u(G)$. 
However if $d_q(C_G(y)^0)> d_u(G)$ then $\dim C_{C_G(y)^0}(x) > d_u(G)$, contradicting (\ref{e:lalabobo}). Hence $d_u(G)=d_q(C_G(y)^0)$ as claimed. \\
We now consider part (iii). Recall that by Lemma \ref{l:prelimdu}(ii), $C_G(y)^0$ is connected reductive. 
For $x\in (C_G(y)^0)_{[q]}$, write $x=x_1\dots x_r t$ where $x_i\in H_i$ for $1\leq i \leq r$ and $t\in T'$.  
By Lemma \ref{l:prelimdu}(iv) we have 
$$\dim C_{C_G(y)^0}(x)={\rm rank}(T')+ \sum_{i=1}^r \dim C_{H_i}(x_i).$$
Hence
\begin{eqnarray*}
d_q(C_G(y)^0) & = & \min_{x \in (C_G(y)^0)_{[q]}} \dim C_{C_G(y)^0}(x)  \\
& = &  \min_{x \in (C_G(y)^0)_{[q]}} {\rm rank}(T')+ \sum_{i=1}^r \dim C_{H_i}(x_i)\\
& = &{\rm rank}(T')+ \sum_{i=1}^r  \min_{x \in (C_G(y)^0)_{[q]}} \dim C_{H_i}(x_i)\\
& = & {\rm rank}(T')+\sum_{i=1}^r d_q(H_i),
\end{eqnarray*}
where to obtain the final equality we used Lemma \ref{l:prelimdu}(ii)-(iii). 
Part (iii) follows. \\
Finally  part (iv) follows from parts (i) and (iii). 
\end{proof}

The result below is the main ingredient in the proof of Proposition \ref{p:ingsimple}. 

\begin{lem}\label{l:dufactg}
Let $G$ be any  group, $C$ a normal subgroup of $G$, and $\phi: G\rightarrow G/C$ be the canonical surjective map.  Let  $g,k$ be  any elements of $G$ and set 
$$G_{g,k}=\{x \in G:g^{-1}x^{-1}gx=k\}.$$ The following assertions hold.
\begin{enumerate}[(i)]
\item Either $G_{g,k}=\emptyset$ or $G_{g,k}$ is a coset of $C_G(g)$.
\item We have $C_{G/C}(gC)= \bigcup_{k \in C} \phi(G_{g,k})$.
\item Suppose  $G$ is a simple algebraic group defined over an algebraically closed field $K$ of characteristic $p$ and $\phi$ is an isogeny. Then 
$$ C_{G/C}(gC)^0= (C_G(g)/C)^0\cong C_G(g)^0/(C\cap C_G(g)^0).$$ In particular, 
 $\dim C_{G/C}(gC)=\dim C_G(g)$. Moreover if $q$ is a power of $p$ and $g$ is a semisimple element of $G$ then $$d_q(C_{G/C}(gC)^0)=d_q(C_G(g)^0).$$  
\end{enumerate} 
\end{lem}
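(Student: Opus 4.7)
The plan is to handle the three parts in order: parts (i) and (ii) by elementary manipulations of commutator relations, and part (iii) by combining (i)--(ii) with standard properties of central isogenies of connected reductive groups.

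For part (i), I would fix any $x_{0}\in G_{g,k}$ (assuming it is nonempty) and observe that for an arbitrary $x\in G$, the condition $x\in G_{g,k}$ reads $g^{-1}x^{-1}gx=g^{-1}x_{0}^{-1}gx_{0}$, which rearranges to $xx_{0}^{-1}\in C_{G}(g)$. This identifies $G_{g,k}$ with the coset $C_{G}(g)x_{0}$. For part (ii), the observation is that $xC$ centralises $gC$ in $G/C$ precisely when $g^{-1}x^{-1}gx\in C$, i.e.\ exactly when $x\in G_{g,k}$ for some (unique) $k\in C$; applying $\phi$ then yields the claimed decomposition.

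For part (iii), I would first combine (i) and (ii) to see that $\phi^{-1}(C_{G/C}(gC))$ is a finite union of (at most $|C|$) closed cosets of $C_{G}(g)$, each of dimension $\dim C_{G}(g)$. Since only one of these cosets, namely $C_{G}(g)$ itself, contains the identity, the identity component of $\phi^{-1}(C_{G/C}(gC))$ coincides with $C_{G}(g)^{0}$. Because $\phi$ is finite and surjective, its restriction maps $C_{G}(g)^{0}$ onto $C_{G/C}(gC)^{0}$, and the first isomorphism theorem identifies this image with $C_{G}(g)^{0}/(C\cap C_{G}(g)^{0})$. The dimension equality $\dim C_{G/C}(gC)=\dim C_{G}(g)$ then follows from the finiteness of $C$.

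For the final assertion, note that if $g$ is semisimple then $C_{G}(g)^{0}$ is connected reductive, and $\pi:=\phi|_{C_{G}(g)^{0}}$ is an isogeny of connected reductive groups with finite central kernel $C\cap C_{G}(g)^{0}$. My plan is to show that $\pi$ induces an order-preserving bijection on the subvarieties of elements of order dividing $q$: since $q$ is a power of $p$, any such element is unipotent, and an isogeny is bijective on unipotent elements; moreover, no nontrivial unipotent element can lie in the (semisimple) central subgroup $C\cap C_{G}(g)^{0}$, so orders on unipotent lifts are preserved. Applying the dimension identity proved above to each centraliser $C_{C_{G}(g)^{0}}(x)$, one deduces that $\pi$ also preserves centraliser dimensions, and taking minima gives $d_{q}(C_{G/C}(gC)^{0})=d_{q}(C_{G}(g)^{0})$. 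The main technical point will be the careful justification of this order-preservation, which amounts to exploiting that an isogeny between connected reductive groups induces a bijective morphism on unipotent varieties and that its kernel contains only semisimple elements.
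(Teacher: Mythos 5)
Your proofs of (i), (ii) and the first half of (iii) follow essentially the same route as the paper: the paper also identifies $G_{g,k}$ as a coset $C_G(g)l$ (choosing $l$ with $l^{-1}gl=gk$ rather than a point of $G_{g,k}$, which is the same thing), proves (ii) by the same commutator computation, and deduces $C_{G/C}(gC)^0=(C_G(g)/C)^0\cong C_G(g)^0/(C\cap C_G(g)^0)$ from the finite-index considerations you describe (working in the quotient where you work with the preimage $\phi^{-1}(C_{G/C}(gC))$; the two are equivalent). Where you genuinely diverge is the final assertion $d_q(C_{G/C}(gC)^0)=d_q(C_G(g)^0)$: the paper simply cites Lawther's result that $d_q$ of a \emph{simple} algebraic group is independent of isogeny type and applies it to the reductive group $C_G(g)^0$ (implicitly via the decomposition into simple factors and a central torus from Lemma \ref{l:prelimdu}), whereas you argue directly that the central isogeny $\pi\colon C_G(g)^0\to C_G(g)^0/(C\cap C_G(g)^0)$ restricts to an order-preserving bijection on elements of order dividing $q$ (these being unipotent since $q$ is a power of $p$, and the kernel being central in the connected reductive group $C_G(g)^0$, hence semisimple), and then transports centraliser dimensions through $\pi$ using the dimension identity already established. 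Your argument is correct, self-contained, and in fact slightly more careful than the paper's at this point, at the cost of having to justify the standard facts about isogenies and unipotent varieties that the citation to Lawther packages away; just make sure, when you apply the dimension identity to the centralisers $C_{C_G(g)^0}(x)$, that you note it only requires a finite normal subgroup of an algebraic group (not simplicity), since $C_G(g)^0$ is reductive rather than simple.
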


\begin{proof}
We first consider  part (i).  Note that if $g$ and $gk$ are not conjugate then $G_{g,k}=\emptyset$. Suppose that $g$ and $gk$ are conjugate, say $l^{-1}gl=gk$ for some $l\in G$. \\
Let $x$ be any element of $G_{g,k}$.  Then 
$$x^{-1}gx  =  gk  =   l^{-1}gl.$$
Hence $lx^{-1}g(lx^{-1})^{-1}=g$ and $xl^{-1}\in C_G(g)$. It follows that $x\in C_G(g)l$ and so $G_{g,k}\subseteq C_G(g)l$.  \\
Suppose now that $x \in C_G(g)l$. Then $x=yl$ for some $y\in C_G(g)$. Hence
$$ x^{-1}gx= (yl)^{-1}g(yl)= l^{-1}\cdot y^{-1}gy\cdot l= l^{-1}gl=gk$$ and so 
$x \in G_{g,k}$. It follows that $C_G(g)l\subseteq G_{g,k}$. We obtain $G_{g,k}= C_G(g)l$, proving part (i). \\
We now consider part (ii).  Suppose first that $xC$ is any element of $\phi(G_{g,k})$ where $k \in C$. Without loss of generality, $x\in G_{g,k}$. Hence 
$x^{-1}gx = gk$. Since $k \in C$, we obtain  $(xC)^{-1}\cdot gC\cdot xC = gC$ and so $xC \in C_{G/C}(gC)$. Hence $\bigcup_{k \in C} \phi(G_{g,k})\subseteq  C_{G/C}(gC)$.\\
Suppose now that $xC \in C_{G/C}(gC)$. Then 
$ (xC)^{-1}\cdot gC \cdot xC = gC$  and so $ (gC)^{-1}\cdot(xC)^{-1} \cdot gC \cdot xC  = C$. It follows that  there exists $k\in C$ such that $g^{-1}x^{-1}gx=k$  and so $x\in G_{g,k}$. Hence $xC\in \phi(G_{g,k})$ with $k\in C$. This shows that   $C_{G/C}(gC)\subseteq \bigcup_{k \in C} \phi(G_{g,k})$. We obtain  $C_{G/C}(gC)= \bigcup_{k \in C} \phi(G_{g,k})$, proving part (ii).\\
Finally we consider part (iii).   As $G$ is simple and $\phi$ is an isogeny, ${\rm ker} \ \phi=C$ is a finite central subgroup  of $G$.
Also following part (ii), $\phi (C_G(g))=C_G(g)/C$ is a subgroup of  $C_{G/C}(gC)$ of finite index and so $ C_{G/C}(gC)^0 \leqs (C_G(g)/C)^0$.   On the other hand, as $ C_G(g)/C\leqs   C_{G/C}(gC)$, we have $ (C_G(g)/C)^0\leqs   C_{G/C}(gC)^0$. Hence  \begin{equation*}\label{e:o} C_{G/C}(gC)^0 = (C_G(g)/C)^0.\end{equation*}
Note that as $C_G(g)^0$ is connected so is the factor group $C_G(g)^0/(C\cap C_G(g)^0)$ - see for example \cite[Proposition 1.10]{MaTe}. By the second isomorphism theorem, $C_G(g)^0/(C\cap C_G(g)^0)$ can be seen as a connected subgroup of $C_G(g)/C$. As this subgroup is of finite index, we obtain 
\begin{equation}\label{e:oo} C_G(g)^0/(C\cap C_G(g)^0) \cong (C_G(g)/C)^0=C_{G/C}(gC)^0.\end{equation}
As $C$ is a finite group, it  now follows from  (\ref{e:oo}) that  $\dim C_{G/C}(gC)=\dim C_G(g)$.  To conclude suppose that $g$ is a semisimple element of $G$. By Lemma \ref{l:prelimdu} $C_G(g)^0$ is a connected reductive group. By \cite{Lawther} given a simple algebraic group $H$ defined over $K$, $d_q(H)$ is independent of the isogeny type of $H$ (see Lemma \ref{l:lawther} below). 
It follows that  $d_q(C_G(g)^0/(C\cap C_G(g)^0))=d_q(C_G(g)^0)$ and so by (\ref{e:oo}) $d_q(C_G(g)^0)=d_q(C_{G/C}(gC)^0),$ as required.
\end{proof}

 For matter of clarity we record   \cite[Lemma 2.5 and Corollary 2.6]{Lawther}:

\begin{lem}\label{l:lawther}
Let $G$ be  a simple algebraic group defined over an algebraic closed field of  characteristic $p$. Let $q$ be a power of $p$. Then $d_q(G)$ does not depend on the isogeny type of $G$. 
Furthermore, let $\alpha$ be a nonnegative integer and write $\alpha=\gamma q +\delta$ where $0\leq \delta \leq q-1$ and let $\varsigma\in \{0,1\}$. Then the following assertions hold:
\begin{enumerate}[(i)]
\item $d_q(A_{\alpha-\varsigma})=\gamma^2q+(2\gamma+1)(\alpha-\varsigma-\gamma q+1)-1$.
\item $d_q(B_{\lceil \frac{\alpha}2\rceil}-\varsigma\epsilon_{\alpha})=\frac{\gamma^2q}2+(2\gamma+1)(\lceil \frac{\alpha}2\rceil-\varsigma\epsilon_{\alpha}-\frac{\gamma q}2)+\lceil  \frac{\gamma}2\rceil\epsilon_q$.
\item $d_q(C_{\lceil \frac{\alpha}2\rceil}-\varsigma\epsilon_{\alpha})=\frac{\gamma^2q}2+(2\gamma+1)(\lceil \frac{\alpha}2\rceil-\varsigma\epsilon_{\alpha}-\frac{\gamma q}2)+\lceil  \frac{\gamma}2\rceil\epsilon_q$.
\item $d_q(D_{\lceil \frac{\alpha+1}2\rceil-\varsigma\epsilon_{\alpha+1})}=\frac{\gamma^2q}2+(2\gamma+1)(\lceil \frac{\alpha+1}2\rceil-\varsigma\epsilon_{\alpha+1}-\frac{\gamma q}2)+\lceil  \frac{\gamma}2\rceil\epsilon_q-\gamma - \epsilon_\gamma+2\varsigma\epsilon_{\gamma(\alpha+1)}\sigma_{\alpha-\gamma q}$.
\end{enumerate}
\end{lem}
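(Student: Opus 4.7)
The plan is to split the assertion into two parts: first the isogeny invariance of $d_q$, then the explicit formulas in (i)--(iv). For the first part, the key input is that when $q$ is a power of $p$, the set $G_{[q]}$ consists precisely of the unipotent elements of $G$ whose Jordan block sizes are at most $q$. A central isogeny $\phi\colon G\to G'$ between simple algebraic groups of the same type has finite central kernel $C$ and restricts to a bijection on unipotent varieties. Applying Lemma~\ref{l:dufactg}(iii) to any unipotent $g\in G$ yields $\dim C_{G'}(\phi(g))=\dim C_G(g)$, so taking minima gives $d_q(G)=d_q(G')$. This reduces the rest of the proof to a calculation in the most convenient isogeny type for each Lie type.

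For the explicit formulas one works with ${\rm GL}_n$, ${\rm Sp}_{2n}$, ${\rm O}_{2n+1}$ and ${\rm O}_{2n}$. Unipotent classes are parameterised by partitions of $n$ (with the usual parity conditions: for types $B$ and $D$ even parts occur with even multiplicity; for type $C$ odd parts occur with even multiplicity). The centralizer dimensions are given by standard closed-form expressions in the parts of $\lambda$ (for type $A$, $\dim C_{{\rm GL}_n}(u)=n+2\sum_{i<j}\min(\lambda_i,\lambda_j)$, with analogous formulas for the other types). The task is to minimise this expression over admissible partitions $\lambda$ whose parts are all bounded by $q$. A rearrangement argument shows that the minimum is attained at the most balanced partition: $\gamma$ parts of size $q$ together with a small remainder of total size $\alpha-\gamma q=\delta$ (suitably adjusted by $\varsigma$ to pass from ${\rm GL}$ to ${\rm SL}$, or to account for the ambient dimension in the other types). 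Substituting this partition into the dimension formula and simplifying yields the expressions in (i)--(iv).

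The main obstacle is the case analysis for types $B$, $C$, $D$: the parity constraints on admissible partitions may forbid the naive balanced partition, forcing a perturbation such as splitting a size-$q$ block into two smaller blocks or pairing two equal blocks. These perturbations depend on the parities of $q$ and $\alpha$, and they are precisely what produce the correction terms $\lceil\gamma/2\rceil\epsilon_q$, $-\gamma-\epsilon_\gamma$ and $2\varsigma\epsilon_{\gamma(\alpha+1)}\sigma_{\alpha-\gamma q}$ in parts (ii)--(iv). A careful swap/convexity argument is required to confirm that the proposed partition is genuinely optimal amongst all admissible partitions satisfying the bound, and to handle the edge cases $\delta=0$ and small $\alpha$ uniformly. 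Once this is done, a direct expansion of the classical dimension formula at the optimum partition produces the stated closed forms, completing the proof.
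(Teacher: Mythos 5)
The paper does not actually prove this lemma: it is recorded verbatim from \cite[Lemma 2.5 and Corollary 2.6]{Lawther} (see the sentence immediately preceding the statement), so there is no internal proof to compare yours against. Your outline is, in substance, a reconstruction of Lawther's own argument: isogeny invariance via the observation that elements of order dividing $q$ are unipotent and that a central isogeny induces a dimension-preserving correspondence on unipotent classes (which is exactly what Lemma~\ref{l:dufactg}(iii) delivers), followed by minimisation of the classical centralizer-dimension formulas over partitions with parts bounded by $q$. That strategy is sound, and your type-$A$ computation (balanced partition, $\gamma^2q+(2\gamma+1)(\delta+1)-1$ after adjusting by $\varsigma$) does come out correctly.

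However, as written the proposal establishes only the strategy, not the formulas. Everything that makes (ii)--(iv) nontrivial --- proving that the balanced partition is optimal among \emph{admissible} partitions, identifying the parity-forced perturbations in types $B$, $C$, $D$, and extracting the exact correction terms $\lceil\gamma/2\rceil\epsilon_q$, $-\gamma-\epsilon_\gamma$ and $2\varsigma\epsilon_{\gamma(\alpha+1)}\sigma_{\alpha-\gamma q}$ --- is explicitly deferred (``a careful swap/convexity argument is required''). In particular the term $2\varsigma\epsilon_{\gamma(\alpha+1)}\sigma_{\alpha-\gamma q}$ in (iv), which depends simultaneously on whether $\delta=0$ and on the parities of $\gamma$ and $\alpha+1$, cannot be read off from the general description you give; one has to actually run the case analysis, and that is where the content of the lemma lives. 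So what you have is a correct proof plan with the decisive computation missing. If the intention is not to redo Lawther's optimisation in full, the honest course is to do what the paper does and simply cite \cite{Lawther}.
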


\begin{corol}\label{c:lawther}
Let $m$ be a natural number. The following assertions hold:
\begin{enumerate}[(i)]
\item If $m+1=\gamma q+\delta$ with $0 \leq \gamma \leq q-1$, then $d_q(A_{m+1})-d_q(A_m)=2\gamma+1$.  
\item If $2m+1=\gamma q+\delta$ with  $0 \leq \gamma \leq q-1$, then $d_q(B_{m+1})-d_q(B_m)=2\gamma+1$.
\item  If $2m+1=\gamma q+\delta$ with  $0 \leq \gamma \leq q-1$, then $d_q(C_{m+1})-d_q(C_m)=2\gamma+1$.
\item   If $2m=\gamma q+\delta$ with  $0 \leq \gamma \leq q-1$, then $d_q(D_{m+1})-d_q(D_m)=2\gamma+1-2\epsilon_\gamma\sigma_\delta$.
\end{enumerate}
\end{corol}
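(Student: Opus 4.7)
The plan is to derive each assertion directly from Lemma \ref{l:lawther} by choosing the parameters $\alpha$ and $\varsigma$ cleverly so that one substitution yields $d_q$ at rank $m+1$ while another (with the same $\gamma,\delta$ but a shifted $\varsigma$) yields $d_q$ at rank $m$. The differences then collapse nicely because only the $\varsigma$-dependent terms in Lemma \ref{l:lawther} survive.

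For part (i), I would take $\alpha=m+1$ (so that, by hypothesis, $m+1=\gamma q+\delta$) and set $\varsigma=0$ to get $d_q(A_{m+1})$; applying Lemma \ref{l:lawther}(i) again with the same $\alpha,\gamma,\delta$ but $\varsigma=1$ yields $d_q(A_{\alpha-1})=d_q(A_m)$. Subtracting, the $\gamma^2 q$ and the constant cancel, and the $(2\gamma+1)(\alpha-\varsigma-\gamma q+1)$ term contributes exactly $2\gamma+1$. For parts (ii) and (iii), I would take $\alpha=2m+1$, which is odd so $\epsilon_\alpha=1$ and $\lceil \alpha/2\rceil=m+1$; with $\varsigma=0$ Lemma \ref{l:lawther}(ii),(iii) produce $d_q(B_{m+1})$ and $d_q(C_{m+1})$ respectively, while $\varsigma=1$ produces the corresponding values at rank $m+1-\epsilon_\alpha=m$. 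Again all $\gamma,\delta,q$-only terms cancel, leaving a difference of $2\gamma+1$.

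For part (iv), I would set $\alpha=2m$, so that $\alpha+1=2m+1$ is odd, giving $\epsilon_{\alpha+1}=1$ and $\lceil(\alpha+1)/2\rceil=m+1$, and moreover $\alpha-\gamma q=\delta$ so $\sigma_{\alpha-\gamma q}=\sigma_\delta$ and $\epsilon_{\gamma(\alpha+1)}=\epsilon_\gamma$. With $\varsigma=0$, Lemma \ref{l:lawther}(iv) yields $d_q(D_{m+1})$ (and the correction term $2\varsigma\epsilon_{\gamma(\alpha+1)}\sigma_{\alpha-\gamma q}$ vanishes); with $\varsigma=1$ it yields $d_q(D_m)$, contributing both a $-(2\gamma+1)$ term from the $(2\gamma+1)(\dots-\varsigma\epsilon_{\alpha+1}\dots)$ factor and the correction $2\epsilon_\gamma\sigma_\delta$. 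Subtracting gives $2\gamma+1-2\epsilon_\gamma\sigma_\delta$ as claimed.

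The only real obstacle is bookkeeping: one must verify that the hypotheses on $\gamma,\delta$ in Corollary \ref{c:lawther} are compatible with the hypotheses $\alpha=\gamma q+\delta$, $0\le\delta\le q-1$ underlying Lemma \ref{l:lawther}, and that the shift from $\varsigma=0$ to $\varsigma=1$ genuinely corresponds to decreasing the rank by one in each type (which is why $\alpha$ must be chosen odd in types $B,C$ and $\alpha+1$ odd in type $D$, ensuring the relevant $\epsilon$-factor equals $1$). Once these choices are made, each assertion is a short subtraction and no further ingredients are needed.
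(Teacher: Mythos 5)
Your derivation is correct: in each case the two evaluations of Lemma \ref{l:lawther} with $\varsigma=0$ and $\varsigma=1$ differ only in the factor $(\cdots-\varsigma\epsilon_{\bullet}\cdots)$ multiplying $2\gamma+1$ (plus, in type $D$, the term $2\varsigma\epsilon_{\gamma(\alpha+1)}\sigma_{\alpha-\gamma q}$, which you correctly identify with $2\epsilon_\gamma\sigma_\delta$ since $\alpha+1=2m+1$ is odd and $\alpha-\gamma q=\delta$). The paper itself gives no proof of this corollary — it simply records it, together with the preceding lemma, from \cite[Lemma 2.5 and Corollary 2.6]{Lawther} — so there is no argument to compare against, but your parameter substitutions are exactly the intended ones and the bookkeeping checks out (noting only that the hypothesis ``$0\leq\gamma\leq q-1$'' in the statement should read $0\leq\delta\leq q-1$, the Euclidean-division condition you implicitly use).
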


 Finally we record  the following result:
 
 \begin{lem}\label{l:centmod2} \cite[6.17]{Humphreys}.
Let $G$ be a simple algebraic group of rank $\ell$ over an algebraically closed field $K$ and let $x\in G$. Then 
$$ \dim C_G(x) \equiv \ell \mod 2.$$
\end{lem}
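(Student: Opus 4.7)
The plan is to use the Jordan decomposition to reduce to the case of a unipotent element in a connected reductive group, and then to invoke the classical fact that unipotent conjugacy classes are even-dimensional.

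First, write $x=su$ for the Jordan decomposition with $s$ semisimple and $u$ unipotent. By Lemma~\ref{l:prelimdu}(i) we have $u\in H:=C_G(s)^0$ and $\dim C_G(x)=\dim C_H(u)$. The group $H$ is connected reductive, and any maximal torus of $G$ containing the semisimple element $s$ is contained in $H$ and remains maximal there, so $\mathrm{rank}(H)=\ell$. Hence it suffices to prove the statement in the form $\dim C_H(u)\equiv \mathrm{rank}(H)\bmod 2$ for a unipotent element $u$ in a connected reductive group $H$ of rank $\ell$.

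Next, from the decomposition $\dim H=\ell+|\Phi_H|$ and the fact that roots of $H$ occur in pairs $\pm\alpha$, we obtain $\dim H\equiv \ell\bmod 2$. Since $\dim C_H(u)=\dim H - \dim u^H$, the proof reduces to the purely geometric claim that every unipotent conjugacy class $u^H$ in a connected reductive group has even dimension.

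The main obstacle is this evenness claim. In good characteristic it follows from the existence of a Springer isomorphism between the unipotent variety of $H$ and the nilpotent variety of $\mathfrak{h}$, combined with the fact that every nilpotent coadjoint orbit is a symplectic leaf for the Kostant--Kirillov Poisson structure and therefore even-dimensional; chaining these gives $\dim u^H\equiv 0\bmod 2$. In bad characteristic the same parity statement remains true and can be extracted from the Bala--Carter/Lusztig--Spaltenstein classification of unipotent classes on a type-by-type basis. Alternatively, since the result is classical, one may simply cite Humphreys \cite{Humphreys} as the lemma itself does.
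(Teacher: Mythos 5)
The paper does not actually prove this lemma; it simply cites \cite[6.17]{Humphreys}, so your proposal should be judged as a free-standing reconstruction, and as such it is correct. Your reduction is sound: the Jordan decomposition together with Lemma \ref{l:prelimdu}(i),(iv) gives $\dim C_G(x)=\dim C_H(u)$ with $H=C_G(s)^0$ connected reductive; a maximal torus through $s$ lies in $H$ and stays maximal, so $\mathrm{rank}(H)=\ell$; and $\dim H\equiv\mathrm{rank}(H)\bmod 2$ because roots come in pairs $\pm\alpha$. This correctly isolates the only nontrivial content, namely that unipotent classes of a connected reductive group are even-dimensional, which is precisely what the cited result in \cite{Humphreys} supplies. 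You are also right to flag that the symplectic-leaf argument via a Springer isomorphism is a good-characteristic argument and that in bad characteristic one falls back on the classification of unipotent classes (or directly on the reference); that caveat matters here because the paper applies the lemma in arbitrary characteristic $p$, including $p=2$ for types $B$, $C$, $D$. In short: the paper's ``proof'' is a citation, yours is the standard argument behind that citation, and the two are compatible; nothing in your write-up would fail.
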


 \section{Proofs of Propositions \ref{p:fcu} and \ref{p:ingsimple}}\label{s:tfr}
 
 In this section we prove Propositions \ref{p:fcu} and \ref{p:ingsimple}. We first give a detailed proof of Proposition \ref{p:fcu}.

\noindent\textit{Proof of Proposition \ref{p:fcu}.}
Write $u=qv$ where $q$ and $v$ are coprime positive integers such that $q$ is a power of $p$  and $p$ does not divide $v$. Let $g$ be an element of $G$ of order dividing $u$. Let $g=xy$ be the Jordan decomposition of $g$ where $x$ and $y$ denote respectively the unipotent and semisimple parts of $g$. In particular the order of $x$ divides $q$ and that of $y$ divides $v$.  As $G$ is connected, every semisimple element of $G$ lies in a maximal torus of $G$, see for example \cite[Corollary 6.11]{MaTe}. Since all maximal tori of $G$ are conjugate, see for example  \cite[Corollary 6.5]{MaTe}, it follows that there are only finitely many conjugacy classes of $G$ of semisimple elements of order dividing $v$.  Let $y_1,\dots, y_r$ be  representatives of the conjugacy classes of $G$ of semisimple elements of order dividing $v$.   For $1 \leq i \leq r$, $C_G(y_i)^0$ is a connected reductive algebraic group
(see Lemma \ref{l:prelimdu}(ii)) and so  by \cite{Lusztig} $C_G(y_i)^0$ has only finitely many unipotent classes. Let $x_{i,1}, \dots , x_{i,t_i}$ be representatives of  the unipotent classes of $C_G(y_i)^0$. 
Now there exists $1\leq i\leq r$ such that $y$ is conjugate in $G$ to $y_i$. Let $k$ be an element of $G$ such that $kyk^{-1}=y_i$. Then 
\begin{eqnarray*}
kgk^{-1}& = & kxyk^{-1}\\
& = & kxk^{-1} \cdot k yk^{-1}\\
&  = & kxk^{-1}  \cdot y_i.
\end{eqnarray*}
Note that $kxk^{-1}\in C_G(y_i)$. Indeed 
\begin{eqnarray*}
kxk^{-1} \cdot y_i \cdot (kxk^{-1})^{-1} &=& kx \cdot  k^{-1}y_ik \cdot x^{-1}k^{-1}\\
& = & kx\cdot y x^{-1}k^{-1}\\
& = & kyxx^{-1}k^{-1}\\
& = & kyk^{-1}\\
& =& y_i.
\end{eqnarray*}
Since $G$ is a connected reductive group and $kxk^{-1}\cdot y_i$ is the Jordan decomposition  of $kxk^{-1}\cdot y_i$, it follows that $kxk^{-1}$  is a unipotent element of $C_G(y_i)^0$ - see Lemma \ref{l:prelimdu}(i). Hence there exist $l\in C_G(y_i)^0$ and $1\leq j \leq t_i$ such that $l\cdot kxk^{-1}\cdot l^{-1} = x_{i,j}.$ Also as $l\in C_G(y_i)^0$, we have $ly_il^{-1}=y_i$. Hence
\begin{eqnarray*}
lkgk^{-1}l^{-1}& = & l\cdot kgk^{-1} \cdot l^{-1}.\\
& = & l\cdot kxk^{-1}\cdot y_i \cdot l^{-1} \\
&  = & l\cdot kxk^{-1} l^{-1}\cdot l \cdot y_il^{-1}\\
& = & x_{i,j}y_i
\end{eqnarray*}
and so $g$ is conjugate to $x_{i,j}y_i$. It follows that there are at most $\sum_{i=1}^r t_i$ conjugacy classes of elements of $G$ of order dividing $u$. It follows that $G_{[u]}$ consists of the finite union of conjugacy classes of $G$ of elements of order dividing $u$. Hence 
$$ \dim G_{[u]} = \max_{g \in G_{[u]}} \dim g^G.$$ Since for any element  $g \in G$, ${\rm codim}\ g^G=\dim C_G(g)$ - see for example \cite[Proposition 1.5]{Humphreys}, we obtain $$ {\rm codim} \ G_{[u]}= \dim G -  \max_{g \in G_{[u]}} \dim g^G = \min_{g \in G_{[u]}} \dim C_G(g) = d_u(G). \quad \square$$

We can now prove Proposition \ref{p:ingsimple}.

\noindent \textit{Proof of Proposition \ref{p:ingsimple}.}
 First, by \cite{Lawther}, we have $d_u(G_{a.})\leq d_u(G)$. It remains to show that $d_u(G) \leq d_u(G_{s.c.})$. Note that the result is trivial unless $G$ is neither of adjoint nor simply connected type.    
 Let  $\phi: G_{s.c.} \rightarrow G$ be an isogeny.   Let $g$ be any element of $G_{s.c.}$ of order dividing $u$ such that $d_u(G_{s.c.})=\dim C_{G_{s.c.}}(g)$. Then $\phi(g)$ is an element of $G$ of order dividing $u$ and by Lemma \ref{l:dufactg} $\dim C_G(\phi(g))=\dim C_{G_{s.c.}}(g)$. Hence $\dim C_G(\phi(g))=d_u(G_{s.c.})$.  Since  $d_u(G)\leq \dim C_G(\phi(g))$ we obtain $d_u(G)\leq d_u(G_{s.c.})$. $\square$

 \section{Proof of Theorem \ref{t:blawther} for $G$ of exceptional type}\label{s:exceptionalgroups}\label{s:ex}

   In this section, we determine $d_u(G)$ for $G$ a simple algebraic group of exceptional type defined over an algebraically  closed field $K$ of characteristic $p$. In particular we prove  Theorem \ref{t:blawther} for $G$ of exceptional type (see Propositions \ref{p:e6sc} and \ref{p:e7sc} below).  Recall that a simple algebraic group of exceptional type is either simply connected or adjoint. Following the result of Lawther \cite{Lawther} giving $d_u(G)$ for $G$ of adjoint type, we suppose that $G$ is of simply connected type. Without loss of generality,
we  assume that $G$ is of type $E_6$ or $E_7$ and $p\neq 3$ or $2$ respectively, as
these are the only cases where the simply connected and the adjoint groups are
not abstractly isomorphic. \\
Let $T$ be a maximal torus of $G$ with corresponding root system $\Phi$ and set $\Pi=\{\alpha_1,\dots, \alpha_\ell\} \subset \Phi$ to be a set of simple roots  of $G$, where $\ell$ is the rank of $G$. Note  that $\ell=6$ or $\ell=7$ according respectively as $G=E_6$ or $G=E_7$. \\
Given a positive integer $u$, we write $u=qv$ where $q$ and $v$ are positive coprime integers with $q$ a power of $p$  and $p$ does not divide $v$.\\
Let $y$ be a semisimple element of $G$ of order dividing $v$, i.e. $y\in G_{[v]}$.  Since every semisimple element of $G$ belongs to a maximal torus of $G$ and all maximal tori are conjugate, we can assume without loss of generality that $y \in T$. 
Now 
\begin{equation}\label{e:centalgg}
C_G(y)^0= \langle T, U_{\alpha}: \alpha \in \Psi \rangle\end{equation}where 
$\Psi=\{\alpha \in \Phi: \alpha(y)=1\}$ and $U_{\alpha}$ is the root subgroup of $G$ corresponding to $\alpha$.
Also we can write \begin{equation*}\label{e:sseg}
y= \prod_{i=1}^\ell h_{\alpha_i}(k^{c_i})\end{equation*} 
where  for $l \in  K^*=K\setminus\{0\}$ and $1\leq i \leq \ell$, $h_{\alpha_i}(l)$ is the image of $\begin{pmatrix} l & 0 \\ 0 & l^{-1}\end{pmatrix}\in {\rm SL}_2(K)$ under the canonical map ${\rm SL}_2(K) \rightarrow \langle U_{\alpha_i}, U_{-\alpha_{i}}\rangle$, $k$ is an element of $K^*$ of order $v$, and $0\leq c_i\leq v-1$ is an integer  for all $1\leq i \leq \ell$. \\
Given $\alpha \in \Phi$ we then have 
\begin{equation}\label{e:centalgp}
 \alpha(y)= k^{\sum_{i=1}^\ell c_i\langle \alpha_i,\alpha  \rangle}
 \end{equation} where $$\langle  \alpha_i, \alpha\rangle= \frac{2(\alpha_i,\alpha)}{(\alpha_i,\alpha_i)}$$  and $(,)$ denotes the  inner product of the Euclidean space spanned by $\Phi$. 

\begin{lem}\label{l:duered}
Let $G$ be a simple simply connected algebraic group of type $X=E_6$ or $E_7$  defined over an algebraically closed field $K$ of characteristic $p$.   Let $u$ be a positive integer. Then $d_u(G)=d_u(G_{a.})$ except possibly if $G=E_6$, $p\neq 3$ and $u\equiv 0\mod 3$, or $G=E_7$, $p \neq 2$ and $u\equiv 0 \mod 2$. 
\end{lem}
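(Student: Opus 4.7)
The plan is to establish the reverse inequality $d_u(G_{s.c.})\leq d_u(G_{a.})$ outside the listed exceptional cases; combined with $d_u(G_{a.})\leq d_u(G_{s.c.})$ from Proposition~\ref{p:ingsimple}, this yields equality. When $p=3$ and $G=E_6$, or $p=2$ and $G=E_7$, the groups $G_{s.c.}$ and $G_{a.}$ are abstractly isomorphic and there is nothing to prove, so I may assume $p\neq 3$ (resp.\ $p\neq 2$). The hypothesis of the lemma then says $3\nmid u$ (resp.\ $2\nmid u$); since $q$ is a power of $p$ which is coprime to $|Z|$, where $Z$ denotes the (abstract) kernel of the isogeny $\phi\colon G_{s.c.}\to G_{a.}$ (cyclic of order $3$ for $E_6$, of order $2$ for $E_7$), this translates into the clean hypothesis $\gcd(v,|Z|)=1$.

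The strategy is then to pick an element $g\in G_{a.,[u]}$ attaining $\dim C_{G_{a.}}(g)=d_u(G_{a.})$ (such a minimizer exists by Proposition~\ref{p:fcu}), take its Jordan decomposition $g=xy$ with $y$ semisimple of order dividing $v$ and $x$ unipotent of order dividing $q$, and construct a lift $\tilde{g}\in G_{s.c.,[u]}$ satisfying $\phi(\tilde{g})=g$. Once this is done, Lemma~\ref{l:dufactg}(iii) gives
$$\dim C_{G_{s.c.}}(\tilde{g})=\dim C_{G_{a.}}(g)=d_u(G_{a.}),$$
and hence $d_u(G_{s.c.})\leq d_u(G_{a.})$.

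The heart of the argument is the construction of $\tilde{g}$. First I would produce a semisimple lift $\tilde{y}_0\in G_{s.c.}$ of $y$, using that Jordan decomposition is preserved under $\phi$. Then $\tilde{y}_0^{\,v}\in Z$, and because $\gcd(v,|Z|)=1$ the $v$-th power map on the cyclic group $Z$ is bijective, so I can adjust $\tilde{y}_0$ by a central element $z\in Z$ to obtain a semisimple lift $\tilde{y}:=\tilde{y}_0 z$ with $\tilde{y}^{\,v}=1$. The restriction of $\phi$ to $C_{G_{s.c.}}(\tilde{y})^0$ is then a central isogeny onto $C_{G_{a.}}(y)^0$ between connected reductive groups, whose kernel lies in $Z$ and, in the relevant characteristics, is étale and consists of semisimple elements. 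Consequently this restriction induces an order-preserving bijection on unipotent elements, and the unipotent $x\in C_{G_{a.}}(y)^0$ lifts uniquely to a unipotent $\tilde{x}\in C_{G_{s.c.}}(\tilde{y})^0$ with $\tilde{x}^{\,q}=1$. Setting $\tilde{g}:=\tilde{x}\tilde{y}$, the factors commute and one has $\tilde{g}^{\,u}=\tilde{x}^{\,u}\tilde{y}^{\,u}=1$, so $\tilde{g}\in G_{s.c.,[u]}$ and $\phi(\tilde{g})=g$.

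The main obstacle is the semisimple lifting step: when $|Z|$ divides $v$ the $v$-th power map on $Z$ becomes trivial and no adjustment of $\tilde{y}_0$ can force $\tilde{y}^{\,v}=1$, so a genuinely different element of $G_{s.c.,[u]}$ may be needed and the argument breaks down. This is precisely why the conditions $3\mid u$ (for $E_6$) and $2\mid u$ (for $E_7$) are singled out in the statement and must be analysed separately in the subsequent work establishing Theorem~\ref{t:blawther} for exceptional type. No rank-by-rank or case-by-case computation enters at this stage; everything follows from the elementary structure of the power map on the small cyclic centre together with standard preservation properties of Jordan decomposition and centralizer dimensions under central isogenies.
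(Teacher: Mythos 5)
Your proposal is correct and takes essentially the same route as the paper: both reduce to lifting a centralizer-minimizing element of order dividing $u$ from $G_{a.}$ to an element of $G_{s.c.}$ of order dividing $u$, using that the relevant power map on the cyclic centre $Z$ is bijective when $\gcd(u,|Z|)=1$, and then conclude via Lemma \ref{l:dufactg}(iii) and Proposition \ref{p:ingsimple}. The only difference is one of execution: the paper adjusts a coset representative $g$ directly by a central element (since $g^u\in Z$ and $z\mapsto z^u$ is bijective on $Z$), whereas you split $g$ into its Jordan parts and lift each separately — more elaborate, but the same underlying idea.
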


\begin{proof}
Set $C=Z(G)$ and note that  $$|C|=\left\{\begin{array}{ll} 
3 & \textrm{if} \ G=E_6 \ \textrm{and} \ p\neq 3\\
2 & \textrm{if} \ G=E_7 \ \textrm{and} \ p\neq 2\\
1 & \textrm{otherwise.}\\
 \end{array} \right.$$
Also note that if $(X,p) \in \{(E_6,3), (E_7,2)\}$ then $G$ is abstractly isomorphic to $G_{a.}$ and so  $d_u(G)=d_u(G_{a.})$ for every positive integer $u$. We therefore assume that $(X,p)\not \in   \{(E_6,3), (E_7,2)\}$. Then $C=Z(G)=\langle c \rangle$ is cyclic group of order $3$ or $2$ according respectively as $X=E_6$ or $E_7$.   Let $\phi: G\rightarrow G_{a.}$ be the surjective canonical map. Let $gC$ be an element of $G_{a.}$ of order dividing $u$ such that $\dim C_{G_{a.}}(gC)=d_u(G_{a.})$. \\
Suppose $u\neq 0 \mod |C|$.  Then $g \in G$ has order dividing $u$, and by Lemma \ref{l:dufactg}  $\dim C_G(g)=\dim C_{G_{a.}}(gC)$. Hence $\dim C_G(g)=d_u(G_{a.})$ and $d_u(G) \leq d_u(G_{a.})$. It now follows from Proposition \ref{p:ingsimple} that $d_u(G)=d_u(G_{a.})$.
\end{proof}

\begin{prop}\label{p:e6sc}
Suppose $G$ is a simple simply connected algebraic group of type $E_6$  defined over an algebraic closed field $K$ of characteristic $p$.  Let $u$ be a positive integer. Then $d_u(G)=d_u(G_{a.})$. 
\end{prop}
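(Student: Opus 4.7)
Plan: By Lemma \ref{l:duered}, we may assume $p \neq 3$ and $3 \mid u$, which forces $3 \mid v$ since $q$ is a power of $p \neq 3$. Proposition \ref{p:ingsimple} gives $d_u(G_{a.}) \leq d_u(G)$ automatically, so the task is to produce an element of $G = G_{s.c.}$ of order dividing $u$ whose centralizer has dimension at most $d_u(G_{a.})$.

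By Proposition \ref{p:prelimdu}(i), it suffices to find a semisimple $y \in G_{[v]}$ with $d_q(C_G(y)^0) \leq d_u(G_{a.})$, and since by Lemma \ref{l:lawther} the quantity $d_q$ depends only on the Lie type of the reductive centralizer, the problem reduces to reproducing, inside $G$, the Lie-type decomposition $H_1 \cdots H_r T'$ of Lawther's optimal centralizer $C_{G_{a.}}(\bar y)^0$. I parametrize $y = \prod_{i=1}^{6} h_{\alpha_i}(k^{c_i})$ with $k \in K^{\ast}$ of order $v$ and $c_i \in \{0,1,\dots,v-1\}$; any such $y$ has order dividing $v$ in $G$, and by (\ref{e:centalgg})--(\ref{e:centalgp}) its centralizer is controlled by the subsystem $\Psi = \{\alpha \in \Phi : \sum_i c_i \langle \alpha_i,\alpha\rangle \equiv 0 \pmod v\}$. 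The goal becomes to choose $(c_1,\dots,c_6)$ so that $\Psi$ has the same root-system type as Lawther's chosen subsystem for $G_{a.}$.

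The main obstacle is that, when $3 \mid v$, the map $T_{s.c.}[v] \to T_{a.}[v]$ has image of index $3$ (since $|Z(G)| = 3$ and $Z(G) \subset T_{s.c.}[v]$), so Lawther's optimal $\bar y$---typically presented in fundamental coweight coordinates on $T_{a.}$---need not lift to an element of order dividing $v$ in $T_{s.c.}$. The plan to overcome this is to work through Lawther's explicit list of optimal elements for $E_6$ case by case, organized by the relationship between $u$ and $h = 12$. For $u \geq h$ the value $d_u(G_{a.}) = \ell$ is achieved by a regular semisimple element which lifts trivially; the substantive work lies in the finitely many small cases $u \in \{3,6,9,12\}$ (and their $p$-power refinements through the factor $q$). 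In each such case I would either exhibit explicit coroot-lattice coordinates $(c_i) \in (\mathbb{Z}/v\mathbb{Z})^6$ whose associated subsystem $\Psi$ is Weyl-equivalent to Lawther's chosen one, or else exhibit a different element of $T_{s.c.}[v]$ whose centralizer has the same Lie type decomposition. The technical core is this verification, which is significantly eased by the Dynkin diagram symmetry of $E_6$ and by the freedom to conjugate by $W$ and to translate by $Z(G) \subset T_{s.c.}[v]$ before reading off the Lie type of the centralizer.
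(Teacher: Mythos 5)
Your plan follows essentially the same route as the paper's proof: reduce via Lemma \ref{l:duered} to $p\neq 3$ and $3\mid v$, then for each of the finitely many relevant values of $v$ exhibit an explicit $y=\prod_{i}h_{\alpha_i}(k^{c_i})$ in the simply connected maximal torus whose centralizer type realises Lawther's optimum, and conclude by Proposition \ref{p:prelimdu}(i). The paper carries out exactly the case analysis you defer, producing elements $y_3,y_6,y_9,y_{12}$ with connected centralizers $A_2A_2A_2$, $A_1A_1A_1T_3$, $A_1T_5$, $T_6$ and checking the corresponding $d_q$ values for all relevant powers $q$ of $p$, including the regime $u>h$ with $v<h$ where one reuses $y_3,y_6,y_9$ with larger $q$ rather than a regular semisimple element.
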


\begin{proof}
Note that by \cite{Lawther} $d_u(G)\geq d_u(G_{a.})$ and $d_u(G_{a})=6$ for $u\geq h=12$.\\
By Lemma \ref{l:duered} we can assume without loss of generality that $p \neq 3$ and $u \equiv 0 \mod 3$.  Write $u=qv$ where $q$ and $v$ are coprime positive integers such that $q$ is a power of $p$  and $p$ does not divide $v$. \\
 We first suppose that $u \leq h=12$. \\
Assume $u=3$.  Then $q=1$ and $u=v=3$. Let $$y_3=h_{\alpha_1}(k)h_{\alpha_2}(k^2)h_{\alpha_3}(k)h_{\alpha_4}(k^2)h_{\alpha_5}(k^2)h_{\alpha_6}(1)$$ where $k \in K^*$ is an element of order 3.  Then $y_3$ is a semisimple element of $G$ of order 3. Moreover by  (\ref{e:centalgg}) and (\ref{e:centalgp}) we get
$$C_G(y_3)^0=A_2A_2A_2.$$ Hence $d_q(C_G(y_3)^0)=d_1(A_2A_2A_2)=24$. Now by \cite{Lawther} $d_u(G_{a.})=24$. We now deduce from  Proposition \ref{p:prelimdu}(i)  that $d_u(G)=d_u(G_{a.})=24.$\\
Assume $u=6$.  Then $q=1$ and $u=v=6$, or $q=p=2$ and $v=3$. Suppose first that $q=1$. 
Let $$y_6=h_{\alpha_1}(k)h_{\alpha_2}(k^2)h_{\alpha_3}(k)h_{\alpha_4}(k^5)h_{\alpha_5}(k^2)h_{\alpha_6}(1)$$ where $k \in K^*$ is an element of order 6.  Then $y_6$ is a semisimple element of $G$ of order 6. Moreover by  (\ref{e:centalgg}) and (\ref{e:centalgp}) we get
$$C_G(y_6)^0=A_1A_1A_1T_3.$$ Hence $d_q(C_G(y_6)^0)=d_1(A_1A_1A_1T_3)=12$. \\
 Suppose now that $q=2$ and $v=3$. 
Consider the semisimple element $y_3$ of $G$ of order 3 defined above. We have $C_G(y_3)^0=A_2A_2A_2$ and $d_q(C_G(y_3)^0)=d_2(A_2A_2A_2)=12$. \\
Now by \cite{Lawther} $d_u(G_{a.})=12$. We now deduce from  Proposition \ref{p:prelimdu}(i)  that $d_u(G)=d_u(G_{a.})=12.$\\
Assume $u=9$.  Then $q=1$ and $u=v=9$. Let $$y_9=h_{\alpha_1}(k)h_{\alpha_2}(k^2)h_{\alpha_3}(k)h_{\alpha_4}(k^5)h_{\alpha_5}(k^8)h_{\alpha_6}(1)$$ where $k \in K^*$ is an element of order 9.  Then $y_9$ is a semisimple element of $G$ of order 9. Moreover by  (\ref{e:centalgg}) and (\ref{e:centalgp}) we get
$$C_G(y_9)^0=A_1T_5.$$ Hence $d_q(C_G(y_9)^0)=d_1(A_1T_5)=8$. Now by \cite{Lawther} $d_u(G_{a.})=8$. We now deduce from  Proposition \ref{p:prelimdu}(i)  that $d_u(G)=d_u(G_{a.})=8.$\\
Assume $u=12$.  Then $q=1$ and $u=v=12$, or $q=4$, $p=2$ and $v=3$. Suppose first that $q=1$. 
Let $$y_{12}=h_{\alpha_1}(k)h_{\alpha_2}(k^2)h_{\alpha_3}(k)h_{\alpha_4}(k^5)h_{\alpha_5}(k^8)h_{\alpha_6}(1)$$ where $k \in K^*$ is an element of order 12.  Then $y_{12}$ is a semisimple element of $G$ of order 12. Moreover by  (\ref{e:centalgg}) and (\ref{e:centalgp}) we get
$$C_G(y_{12})^0=T_6.$$ Hence $d_q(C_G(y_{12})^0)=d_1(T_6)=6$. \\
 Suppose now that $q=4$, $p=2$ and $v=3$. 
Consider the semisimple element $y_3$ of $G$ of order 3 defined above. We have $C_G(y_3)^0=A_2A_2A_2$ and $d_q(C_G(y_3)^0)=d_4(A_2A_2A_2)=6$.
By  Proposition \ref{p:prelimdu}(i) we obtain  $d_{12}(G)=d_{12}(G_{a.})=6$.
 \\
We now suppose that $u>h=12$.  If $v\geq h$ then let $$y=h_{\alpha_1}(k)h_{\alpha_2}(k^2)h_{\alpha_3}(k)h_{\alpha_4}(k^5)h_{\alpha_5}(k^8)h_{\alpha_6}(1)$$ where $k \in K^*$ is an element of order $v$.  Then $y$ is a semisimple element of $G$ of order $v$. Moreover by  (\ref{e:centalgg}) and (\ref{e:centalgp}) we get
$$C_G({y})^0=T_6.$$ Hence $d_q(C_G(y)^0)=d_q(T_6)=6$ and so by  Proposition \ref{p:prelimdu}(i) $d_u(G)=6$. \\
We finally suppose that $u>h=12$ and $v<h$. As we are assuming that $u \equiv 0 \mod 3$ and $p\neq 3$ we have $v\equiv 0 \mod 3$. Hence $v\in \{3,6,9\}$. \\
Assume $v=3$. As $u>12$ we have $q\geq 5$.  Now $y_3$ is a semisimple element of $G$ of order $3$ with $C_G(y_3)^0=A_2A_2A_2$. As $q\geq 3$, we have 
$d_q(C_G(y_3)^0)=d_q(A_2A_2A_2)=6$. \\
Assume $v=6$. As $u>12$ we have $q\geq 3$.  Now $y_6$ is a semisimple element of $G$ of order $6$ with $C_G(y_3)^0=A_1A_1A_1T_3$. As $q\geq 2$, we have 
$d_q(C_G(y_6)^0)=d_q(A_1A_1A_1T_3)=6$. \\
Assume $v=9$. As $u>12$ we have $q\geq 2$.  Now $y_9$ is a semisimple element of $G$ of order $9$ with $C_G(y_9)^0=A_1T_5$. As $q\geq 2$, we have 
$d_q(C_G(y_9)^0)=d_q(A_1T_5)=6$. \\
Hence in the case where $u>h$ and $v<h$, by  Proposition \ref{p:prelimdu}(i), we get $d_u(G)=6.$
\end{proof}

\begin{prop}\label{p:e7sc} 
Suppose $G$ is a simple simply connected algebraic group of type $E_7$  defined over an algebraic closed field $K$ of characteristic $p$.  Let $u$ be a positive integer. The following assertions hold.
\begin{enumerate}[(i)]
\item We have $d_u(G)=d_u(G_{a.})$ unless $p\neq 2$ and $u\in\{2,6,10,14,18\}$.
\item Suppose $p\neq 2$. Then  $d_2(G)=d_u(G_{a.})+6$ and $d_u(G)=d_u(G_{a.})+2$ for $u\in\{6,10,14,18\}$. More precisely,
$$d_2(G)=69,\ d_6(G)=23,\ d_{10}(G)=15, \ d_{14}(G)=11 \quad \textrm{and} \quad d_{18}(G)=9.$$ 
\end{enumerate}
\end{prop}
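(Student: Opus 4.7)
The plan is to apply Lemma \ref{l:duered} to reduce to the case $p \neq 2$ and $u$ even, and then for each such $u$ to enumerate the conjugacy classes of semisimple elements $y \in G_{[v]}$ (where $u=qv$ with $q$ a power of $p$ and $v$ coprime to $p$), to compute $C_G(y)^0$ via the recipe (\ref{e:centalgg})-(\ref{e:centalgp}), to evaluate $d_q(C_G(y)^0)$ via Proposition \ref{p:prelimdu}(iii) and Lemma \ref{l:lawther}, and finally to take the minimum using Proposition \ref{p:prelimdu}(i). Since $G$ is simply connected, all semisimple centralizers are connected, which keeps the bookkeeping manageable. The discrepancy with $d_u(G_{a.})$ will be traced to a single geometric phenomenon: the involution class in $G_{a.}$ with connected centralizer of type $A_7$, which realizes $d_2(G_{a.})=63$, does not lift to an involution class in $G_{s.c.}$ since its preimages have order $4$.

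For $u=2$ (with $p\neq 2$), we have $q=1$ and $v=2$, so the task reduces to listing semisimple involutions of $G$. It is standard that there are exactly two classes, with connected centralizers of types $A_1 D_6$ and $E_6 T_1$, of dimensions $3+66=69$ and $78+1=79$ respectively. Hence $d_2(G)=69$, which combined with the known value $d_2(G_{a.})=63$ yields the claimed difference of $6$.

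For $u\in\{6,10,14,18\}$, writing $u=2m$ with $m$ odd, the analysis splits according to whether $p\nmid m$ (so $q=1$, $v=u$) or $p=m$ or $p\mid m$ (with $u=18$, $p=3$ requiring a further subcase where $q=9$, $v=2$). In each subcase we exhibit specific semisimple elements $y=\prod_i h_{\alpha_i}(k^{c_i})$ with $k$ of order $v$ and suitable exponents $c_i$ whose centralizer realizes the claimed value $d_u(G)\in\{23,15,11,9\}$, and then verify that no semisimple element of order dividing $v$ achieves the smaller adjoint bound $d_u(G_{a.})$: any such element would lie in a class whose image in $G_{a.}$ meets the forbidden $A_7$-derived class or its higher-order analogues, contradicting the order restriction in $G_{s.c.}$. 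For even $u\notin\{2,6,10,14,18\}$ the reverse construction works: we produce an explicit $y$ whose centralizer has dimension equal to the lower bound $d_u(G_{a.})$ from Proposition \ref{p:ingsimple}, establishing equality in the spirit of the proof of Proposition \ref{p:e6sc}. The principal obstacle is the careful enumeration of semisimple classes for the five exceptional values of $u$ and the verification that every adjoint class achieving $d_u(G_{a.})$ fails to lift as a class of the required order, forcing the minimum in $G_{s.c.}$ to jump by exactly $2$; the centralizer dimension computations themselves, via Lemma \ref{l:lawther} and Proposition \ref{p:prelimdu}(iii), are routine once the classes are isolated.
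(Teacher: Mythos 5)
Your overall strategy is the same as the paper's: reduce to $p\neq 2$ and $u$ even via Lemma \ref{l:duered}, split according to $u=qv$, exhibit explicit torus elements $\prod_i h_{\alpha_i}(k^{c_i})$ whose centralizers realize the upper bound via (\ref{e:centalgg})--(\ref{e:centalgp}) and Proposition \ref{p:prelimdu}, and show the adjoint minimum is not attained in $G_{s.c.}$. However, two points need attention. First, your enumeration of involutions is incorrect: in the simply connected $E_7$ with $p\neq 2$ the non-central elements of order $2$ all have connected centralizer $A_1D_6$ (there are two such classes, interchanged by multiplication by the central involution); the adjoint class with centralizer $E_6T_1$, like the $A_7$ class, lifts only to elements of order $4$. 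This is exactly what the paper's citation of \cite[Table 6]{Cohen} records ($C_G(y)^0=A_1D_6$ or $E_7$ for $y$ of order $2$). Your error is harmless here because $\min\{69,79,133\}=\min\{69,133\}=69$, but the list you call ``standard'' is not the right one.

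Second, and more substantively, the lower bounds for $u\in\{6,10,14,18\}$ are the real content of the proposition and your proposal only gestures at them. For the subcases with $q=1$ (so $v=u$) the adjoint minimum $d_u(G_{a.})$ is attained by semisimple elements of order $u$, not by anything ``derived from'' the $A_7$ involution; the paper's mechanism is a direct count showing that any order-$u$ semisimple element of the simply connected torus must have at least a prescribed number of positive roots vanishing on it (e.g.\ at least four for $u=10$, at least two for $u=14$, at least one for $u=18$), which rules out the small adjoint centralizer types such as $A_1T_6$ or $T_7$. Your phrase ``meets the forbidden $A_7$-derived class or its higher-order analogues'' does not identify this constraint. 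Moreover, excluding the adjoint minimum only yields $d_u(G)>d_u(G_{a.})$; to conclude the jump is exactly $2$ you must invoke the parity statement $\dim C_G(x)\equiv \ell \pmod 2$ (Lemma \ref{l:centmod2}) together with your exhibited upper-bound elements, and this step is absent from your sketch. Finally, note that for $u\in\{6,10,14,18\}$ the subcases with $q=p\in\{3,5,7,9\}$ and $v=2$ must be handled separately, where the complete list of order-$2$ centralizers (point one above) is what makes the computation $d_q(A_1D_6)$ decisive.
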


\begin{proof}
Recall that by \cite{Lawther} $d_u(G)\geq d_u(G_{a.})$, $d_u(G_{a})=7$ for $u\geq h=18$, and $$d_2(G_{a.})=63,\ d_6(G_{a.})=21, \ d_{10}(G_{a.})=13 \quad \textrm{and}\quad d_{14}(G_{a.})=9.$$
By Lemma \ref{l:duered} we can assume without loss of generality that $p \neq 2$ and $u \equiv 0 \mod 2$, as otherwise $d_u(G)=d_u(G_{a.})$.  Write $u=qv$ where $q$ and $v$ are coprime positive integers such that $q$ is a power of $p$  and $p$ does not divide $v$. Note that $q$ is odd and $v\equiv 0 \mod 2$. \\
 We first suppose that $u \leq h=18$. \\
Assume $u=2$.  Then $q=1$ and $u=v=2$. By \cite[Table 6]{Cohen}, $d_2(G)=69.$ Let $$y_2=h_{\alpha_1}(k)h_{\alpha_2}(1)h_{\alpha_3}(1)h_{\alpha_4}(1)h_{\alpha_5}(k)h_{\alpha_6}(k)h_{\alpha_7}(1)$$ where $k \in K^*$ is an element of order 2.  Then $y_2$ is a semisimple element of $G$ of order 2. Moreover by  (\ref{e:centalgg}) and (\ref{e:centalgp}) we get
$$C_G(y_2)^0=A_1D_6.$$ Hence $d_q(C_G(y_2)^0)=d_1(A_1D_6)=69=d_2(G)$. \\
Assume $u=4$.  Then $q=1$ and $u=v=4$.  By \cite{Lawther} and  \cite[Table 6]{Cohen}, $d_4(G)=d_4(G_a)=33$.  Let $$y_4=h_{\alpha_1}(k)h_{\alpha_2}(1)h_{\alpha_3}(1)h_{\alpha_4}(k^2)h_{\alpha_5}(k)h_{\alpha_6}(k)h_{\alpha_7}(k^2)$$ where $k \in K^*$ is an element of order 4.  Then $y_4$ is a semisimple element of $G$ of order 4. Moreover by  (\ref{e:centalgg}) and (\ref{e:centalgp}) we get
$$C_G(y_4)^0=A_3A_3A_1.$$ Hence $d_q(C_G(y_4)^0)=d_1(A_3A_3A_1)=33=d_4(G)$. \\
Assume $u=6$.  Then $q=1$ and $u=v=6$, or $q=p=3$ and $v=2$.  Suppose first that $q=p=3$ and $v=2$. By \cite[Table 6]{Cohen} a semisimple element  $y$ of $G$ of order $2$ is   
such that $C_G(y)^0= A_1D_6$ or $E_7$. It follows from  Proposition \ref{p:prelimdu}(i) that $d_6(G)=d_3(A_1D_6)=23.$
 Suppose now that $q=1$. By \cite[Table 6]{Cohen}, $d_6(G)=23.$ Let $$y_6=h_{\alpha_1}(k)h_{\alpha_2}(1)h_{\alpha_3}(1)h_{\alpha_4}(k^2)h_{\alpha_5}(k^5)h_{\alpha_6}(k^3)h_{\alpha_7}(k^2)$$ where $k \in K^*$ is an element of order 6.  Then $y_6$ is a semisimple element of $G$ of order 6. Moreover by  (\ref{e:centalgg}) and (\ref{e:centalgp}) we get
$$C_G(y_6)^0=A_3A_1A_1T_2.$$ Hence $d_q(C_G(y_6)^0)=d_1(A_3A_1A_1T_2)=23=d_6(G)$. \\
Assume $u=8$.  Then $q=1$ and $u=v=8$.   Let $$y_8=h_{\alpha_1}(1)h_{\alpha_2}(1)h_{\alpha_3}(k)h_{\alpha_4}(k)h_{\alpha_5}(k^2)h_{\alpha_6}(k^7)h_{\alpha_7}(k^3)$$ where $k \in K^*$ is an element of order 8.  Then $y_8$ is a semisimple element of $G$ of order 8. Moreover by  (\ref{e:centalgg}) and (\ref{e:centalgp}) we get
$$C_G(y_8)^0=A_2A_1A_1T_3.$$ Hence $d_q(C_G(y_8)^0)=17$. Since, by \cite{Lawther}, $d_8(G_a)=17$, we deduce from  Proposition \ref{p:prelimdu}(i) that $d_8(G)=17$.\\
Assume $u=10$.  Then $q=1$ and $u=v=10$, or $q=p=5$ and $v=2$.  Suppose first that  $q=p=5$ and $v=2$. By \cite[Table 6]{Cohen} a semisimple element  $y$ of $G$ of order $2$ is   
such that $C_G(y)^0= A_1D_6$ or $E_7$. It follows from  Proposition \ref{p:prelimdu}(i) that $d_{10}(G)=d_5(A_1D_6)=15.$
 Suppose now that $q=1$. By  (\ref{e:centalgg}) and (\ref{e:centalgp}), a semisimple $y$ element of $G$ of order 10 is such that $C_G(y)^0$ is generated by at least four root subgroups $U_\alpha$ where $\alpha$  is a  positive root of $G$. 
     It follows from \cite{Lawther} that $d_{10}(G)>d_{10}(G_{a.})=13$. Hence by Lemma \ref{l:centmod2} $d_{10}(G)\geq 15$.   Let $$y_{10}=h_{\alpha_1}(k)h_{\alpha_2}(1)h_{\alpha_3}(1)h_{\alpha_4}(1)h_{\alpha_5}(k^6)h_{\alpha_6}(k)h_{\alpha_7}(k^8)$$  where $k \in K^*$ is an element of order 10.  Then $y_{10}$ is a  semisimple element of $G$ of order 10. Moreover by  (\ref{e:centalgg}) and (\ref{e:centalgp}) we get
$$C_G(y_{10})^0=A_1A_2T_4.$$ 
Hence $$d_q(C_G(y_{10})^0)=d_1(A_1A_2T_4)=15.$$ It now follows from  Proposition \ref{p:prelimdu}(i) that $d_{10}(G)=15.$\\
Assume $u=12$.  Then $q=1$ and $u=v=12$, or $q=p=3$ and $v=4$.   Suppose first that $q=p=3$ and $v=4$.  Consider the semisimple element $y_4$ of $G$ defined above. Then $C_G(y_4)^0=A_3A_3A_1$ and  $d_q(C_G(y_4)^0)=d_3(A_3A_3A_1)=11$. Hence by  Proposition \ref{p:prelimdu}(i), we obtain $d_{12}(G)\leq 11$. 
Suppose now that $q=1$ and $u=v=12$. Let $$y_{12}=h_{\alpha_1}(k)h_{\alpha_2}(1)h_{\alpha_3}(1)h_{\alpha_4}(k)h_{\alpha_5}(k^3)h_{\alpha_6}(k^5)h_{\alpha_7}(k^8)$$ where $k \in K^*$ is an element of order 12.  Then $y_{12}$ is a semisimple element of $G$ of order 12. Moreover by  (\ref{e:centalgg}) and (\ref{e:centalgp}) we get
$$C_G(y_{12})^0=A_1A_1T_5.$$ Hence $d_q(C_G(y_{12})^0)=11$.\\
 Since, by \cite{Lawther}, $d_{12}(G_a)=11$, we deduce  that $d_{12}(G)=11$ for all $p$.\\
Assume $u=14$.  Then $q=1$ and $u=v=14$, or $q=p=7$ and $v=2$.  Suppose first that $q=p=7$ and $v=2$. By \cite[Table 6]{Cohen} a semisimple element  $y$ of $G$ of order $2$ is   
such that $C_G(y)^0= A_1D_6$ or $E_7$. It follows from  Proposition \ref{p:prelimdu}(i) that $d_{14}(G)=d_7(A_1D_6)=11.$
 Suppose now that $q=1$. By  (\ref{e:centalgg}) and (\ref{e:centalgp}), a semisimple $y$ element of $G$ of order 14 is such that $C_G(y)^0$ is generated by at least two root subgroups $U_\alpha$ where $\alpha$  is a  positive root of $G$.   We deduce that $C_G(s)^0\neq A_1T_6$. It follows from \cite{Lawther} that $d_{14}(G)>d_{14}(G_{a.})=9$. Hence by Lemma \ref{l:centmod2} $d_{14}(G)\geq 11$.   Let $$y_{14}=h_{\alpha_1}(k)h_{\alpha_2}(1)h_{\alpha_3}(1)h_{\alpha_4}(1)h_{\alpha_5}(k^2)h_{\alpha_6}(k^5)h_{\alpha_7}(k^9)$$  where $k \in K^*$ is an element of order 14.  Then $y_{14}$ is a  semisimple elements of $G$ of order 14. Moreover by  (\ref{e:centalgg}) and (\ref{e:centalgp}) we get
$$C_G(y_{14})^0=A_1A_1T_5.$$ 
Hence $$d_q(C_G(y_{14})^0)=d_1(A_1A_1T_5)=11.$$ It now follows from  Proposition \ref{p:prelimdu}(i) that $d_{14}(G)=11.$\\
Assume $u=16$.  Then $q=1$ and $u=v=16$.   Let $$y_{16}=h_{\alpha_1}(k)h_{\alpha_2}(1)h_{\alpha_3}(1)h_{\alpha_4}(k)h_{\alpha_5}(k^3)h_{\alpha_6}(k^6)h_{\alpha_7}(k^{10})$$ where $k \in K^*$ is an element of order 16.  Then $y_{16}$ is a semisimple element of $G$ of order 16. Moreover by  (\ref{e:centalgg}) and (\ref{e:centalgp}) we get
$$C_G(y_{16})^0=A_1T_6.$$ Hence $d_q(C_G(y_{16})^0)=9$. Since, by \cite{Lawther}, $d_{16}(G_a)=9$, we deduce from  Proposition \ref{p:prelimdu}(i) that $d_{16}(G)=9$.\\
Assume $u=18$.  Then $q=1$ and $u=v=18$, or $q=9$, $p=3$ and $v=2$.  Suppose first that $q=9$, $p=3$ and $v=2$. By \cite[Table 6]{Cohen} a semisimple element  $y$ of $G$ of order $2$ is   
such that $C_G(y)^0= A_1D_6$ or $E_7$. It follows from  Proposition \ref{p:prelimdu}(i) that $d_{18}(G)=d_9(A_1D_6)=9$.
 Suppose now that $q=1$. By  (\ref{e:centalgg}) and (\ref{e:centalgp}), a semisimple $y$ element of $G$ of order 18 is such that $C_G(y)^0$ is generated by at least one root subgroup $U_\alpha$ where $\alpha$  is a  positive root of $G$.   We deduce that $C_G(y)^0\neq T_7$. It follows from \cite{Lawther} that $d_{18}(G)>d_{18}(G_{a.})=7$. Hence by Lemma \ref{l:centmod2} $d_{18}(G)\geq 9$.   Let $$y_{18}=h_{\alpha_1}(k)h_{\alpha_2}(1)h_{\alpha_3}(1)h_{\alpha_4}(k)h_{\alpha_5}(k^3)h_{\alpha_6}(k^6)h_{\alpha_7}(k^{10})$$  where $k \in K^*$ is an element of order 18.  Then $y_{18}$ is a  semisimple element of $G$ of order 18. Moreover by  (\ref{e:centalgg}) and (\ref{e:centalgp}) we get
$$C_G(y_{18})^0=A_1T_6.$$ 
Hence $$d_q(C_G(y_{18})^0)=d_1(A_1T_6)=9.$$ It now follows from  Proposition \ref{p:prelimdu}(i) that $d_{18}(G)=9.$\\

We now suppose that $u>h=18$. Recall that $p\neq 2$ and $v\equiv 0 \mod 2$.  Assume $v> h$. Note that  $v\geq 20$. Let $$y=h_{\alpha_1}(k)h_{\alpha_2}(1)h_{\alpha_3}(1)h_{\alpha_4}(k^2)h_{\alpha_5}(k^5)h_{\alpha_6}(k^9)h_{\alpha_7}(k^{14})$$ where $k \in K^*$ is an element of order $v$.  Then $y$ is a semisimple element of $G$ of order $v\geq 20$. Moreover by  (\ref{e:centalgg}) and (\ref{e:centalgp}) we get
$$C_G({y})^0=T_7.$$ Hence $d_q(C_G(y)^0)=d_q(T_7)=7$ and so by  Proposition \ref{p:prelimdu}(i) $d_u(G)=7$. \\
We finally suppose that $u>h=18$ and $v\leq h$. Since  $v\equiv 0 \mod 2$ we have $v\in \{2,4,6,8,10,12,14,16,18\}$. \\
Assume $v=2$. As $u\geq 20$ we have $q\geq 10$.  Now $y_2$ is a semisimple element of $G$ of order $2$ with $C_G(y_2)^0=A_1D_6$. As $q\geq 10$, we have 
$d_q(C_G(y_2)^0)=d_q(A_1D_6)=7$. \\
Assume $v=4$. As $u\geq 20$ we have $q\geq 5$.  Now $y_4$ is a semisimple element of $G$ of order $4$ with $C_G(y_4)^0=A_3A_3A_1$. As $q\geq 4$, we have 
$d_q(C_G(y_4)^0)=d_q(A_3A_3A_1)=7$. \\
Assume $v=6$. As $u\geq 20$ and $p\neq 2$ we have $q\geq 5$.  Now $y_6$ is a semisimple element of $G$ of order $6$ with $C_G(y_6)^0=A_3A_1A_1T_2$. As $q\geq 4$, we have 
$d_q(C_G(y_6)^0)=d_q(A_3A_1A_1T_2)=7$. \\
Assume $v=8$. As $u\geq 20$ we have $q\geq 3$.  Now $y_8$ is a semisimple element of $G$ of order $8$ with $C_G(y_8)^0=A_2A_1A_1T_3$. As $q\geq 3$, we have 
$d_q(C_G(y_8)^0)=d_q(A_2A_1A_1T_3)=7$. \\
Assume $v=10$. As $u\geq 20$ and $p\neq 2$ we have $q\geq 3$.  Now $y_{10}$ is a semisimple element of $G$ of order $10$ with $C_G(y_{10})^0=A_2A_1T_4$. As $q\geq 3$, we have 
$d_q(C_G(y_{10})^0)=d_q(A_2A_1T_4)=7$. \\
Assume $v\in\{12,14\}$. As $u\geq 20$ and $p\neq 2$ we have $q\geq 3$.  Now $y_{12}$  and $y_{14}$ are semisimple elements of $G$ of respective orders $12$ and $14$ with $C_G(y_{12})^0=A_1A_1T_5$ and $C_G(y_{14})^0=A_1A_1T_5$  . As $q\geq 2$, we have 
$d_q(C_G(y_{12})^0)=d_q(C_G(y_{14})^0)=d_q(A_1A_1T_5)=7$. \\
Assume $v\in\{16,18\}$. As $u\geq 20$ and $p\neq 2$ we have $q\geq 3$.  Now $y_{16}$  and $y_{18}$ are semisimple elements of $G$ of respective orders $16$ and $18$ with $C_G(y_{16})^0=A_1T_6$ and $C_G(y_{18})^0=A_1T_6$. As $q\geq 2$, we have 
$d_q(C_G(y_{16})^0)=d_q(C_G(y_{18})^0)=d_q(A_1T_6)=7$. \\
Hence in the case where $u>h$ and $v\leq h$, by  Proposition \ref{p:prelimdu}(i), we get $d_u(G)=7.$
\end{proof}

 \section{Some properties of ${\rm Spin}_n(K)$}\label{s:spin}

Let $K$ now denote an algebraically closed field $K$ of 
characteristic $p\neq2$. We give a characterization of semisimple elements of ${\rm
Spin}_n(K)$ of  a given order  (see Lemma
 \ref{l:soliftspin} below). Before doing so we recall some properties of
the group ${\rm Spin}_n(K)$ and the canonical surjective map ${\rm
Spin}_n(K)\rightarrow {\rm SO}_n(K)$, where $n\geq 7$ is a positive
integer.  Write $n=2\ell$ or $n=2\ell+1$ for some integer $\ell\geq
1$, according respectively as $n$ is even or odd.\\
Let $V$ be the natural module for ${\rm SO}_n(K)$ and let
$$\mathcal{B}=\{e_1,f_1,\dots,e_\ell,f_\ell\} \quad  \textrm{or} \quad
\mathcal{B}=\{e_1,f_1,\dots, e_\ell,f_\ell,d\}$$ (according
respectively as $n$ is even or odd) be a standard basis of $V$. That
is, $(e_i,f_i)=1$, $(e_i,e_i)=(f_i,f_i)=0$ for $1\leq i\leq \ell$,
$(e_i,f_j)=(e_i,e_j)=(f_i,f_j)=0$ for  $i\neq j$ such that $1\leq i,
j\leq \ell$ and if $n$ is odd then $(d,d)=1$ and $(d,e_i)=(d,f_i)$ for $1\leq i \leq \ell$. Here $(,): V\times V
\rightarrow K$ denotes the non-degenerate symmetric bilinear form
associated to $V$.  \\
Let $T_0(V,K)$ be the $K$-algebra of polynomials in
$e_1,f_1,\dots,e_\ell,f_\ell$  if $n$ is even (respectively, in
$e_1,f_1,\dots,e_\ell,f_\ell,d$ if $n$ is odd). Also let $I(V,K)$ be
the ideal of $T_0(V,K)$ generated by $\{vv-(v,v):v \in V\}$ and let
$C_0(V,K)=T_0(V,K)/I(V,K)$.\\
Let ${}^*: C_0(V,K)\rightarrow C_0(V,K)$ be the $K$-linear map such
that for any even  positive integer $r$
and elements $x_1,\dots, x_r \in \mathcal{B}$,  we have the equality
$(x_1\dots x_r)^*=x_r\dots x_1$ (in $C_0(V,K))$. Note that for $x$ in
$V$, we have $x^*x=xx^*=xx=(x,x)$. \\

We can now define the spin group ${\rm Spin}_n(K)$ as an abstract
group, namely ${\rm Spin}_n(K)$ consists of the elements $t$ of
$C_0(V,K)$ such that:
$$ t^*t=1,\quad tVt^{-1}=V$$ and the map
$$ \begin{array}{l} V \rightarrow V\\
x\mapsto txt^{-1}
 \end{array}$$ has determinant 1.

 Given an element $t$ of ${\rm Spin}_n(K)$, consider the map $\phi_t:
V\rightarrow V$
defined by $\phi_t(x)=txt^{-1}$ for $x$ in $V$. Then for any $x$ in $V$,
$$(\phi_t(x),\phi_t(x))=\phi_t(x)\phi_t(x)=txt^{-1}txt^{-1}=tx^2t^{-1}=t(x,x)t^{-1}=(x,x)$$
and so $\phi_t$ is an element of ${\rm SO}_n(K)$.  In fact the map
$\phi$:
$$\begin{array}{lll}{\rm Spin}_n(K) & \rightarrow & {\rm SO}_n(K)\\ t
& \mapsto & \phi_t \end{array}$$
 is a surjective homomorphism with kernel equal to $\{1,-1\}$. \\

We can now characterize a preimage of a semisimple element of ${\rm SO}_n(K)$ of a given order under the canonical surjective map ${\rm Spin}_n(K)\rightarrow {\rm SO}_n(K)$, where $n\geq 7$ is a positive integer and $K$ is an
algebraically closed field of characteristic $p \neq 2$.

\begin{lem}\label{l:soliftspin}
Let $g$ be an element of ${\rm SO}_{n}(K)$  defined over an
algebraically closed field $K$ of  characteristic $p\neq2$. 
Suppose $g$ is a semisimple element of ${\rm SO}_n(K)$ of order $u$,  and let $\omega$ be a $u$-th root of unity of $K$. Let $\pm w$ be
any of the two preimages of $g$ under the canonical surjective map
$\phi: {\rm Spin}_n(K)\rightarrow {\rm SO}_{n}(K)$. The following
assertions hold.
\begin{enumerate}[(i)]
\item The order of $w$ in ${\rm Spin}_n(K)$ is divisible by $u$.
\item There exits $t \in \{\pm w\}$ of order $u$ if and only if  $u$
is odd, or $u$ is even and  the number of eigenvalues of $\phi(t)$ of
the form $\omega^i$ with $i$ odd is divisible by $4$.
\item If neither $w$ nor $-w$ has  order $u$, then $u$ is even and
$\pm w$ has order $2u$.
\end{enumerate}
\end{lem}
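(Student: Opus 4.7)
The plan is to produce explicit preimages of $g$ inside the Clifford algebra $C_0(V,K)$ and compute their $u$-th powers. First I reduce to $n=2\ell$: for $n=2\ell+1$, since $\det g=1$ and the eigenvalues of $g$ on the hyperbolic part pair as $\lambda,\lambda^{-1}$, the eigenvalue on $\langle d\rangle$ must be $+1$, so $g$ fixes $d$ and any preimage of $g$ lies in the natural ${\rm Spin}_{2\ell}(K)$ subgroup. Using that $g$ is semisimple and that the eigenspace for the eigenvalue $-1$ (if any) has even dimension, after conjugating I may assume $g$ is diagonal with respect to the hyperbolic basis $\{e_i,f_i\}$, i.e.\ $g(e_i)=\lambda_i e_i$ and $g(f_i)=\lambda_i^{-1} f_i$ with each $\lambda_i$ a $u$-th root of unity.

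For each $i$ I look for a preimage of the $i$-th two-dimensional factor in the form $w_i=\mu_i+\nu_i e_if_i$. The Clifford relations $e_if_i+f_ie_i=2$, $e_i^2=f_i^2=0$, and the fact that $e_if_i$ commutes with $e_j,f_j$ for $j\neq i$, yield by routine calculation
\[w_ie_iw_i^{-1}=\tfrac{\mu_i+2\nu_i}{\mu_i}\,e_i,\quad w_if_iw_i^{-1}=\tfrac{\mu_i}{\mu_i+2\nu_i}\,f_i,\quad w_i^{\ast}w_i=\mu_i(\mu_i+2\nu_i),\]
so the requirements force $\mu_i^2=\lambda_i^{-1}$ and $\nu_i=(\lambda_i-1)\mu_i/2$, the two signs of $\mu_i$ corresponding to the two Spin-preimages of the $i$-th factor. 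The product rule $(a+be_if_i)(c+de_if_i)=ac+(ad+bc+2bd)e_if_i$ gives by induction that the scalar part of $w_i^r$ is $\mu_i^r$; since $\phi(w_i^u)=1$ forces $w_i^u\in\{\pm 1\}$, we obtain the identity $w_i^u=\mu_i^u$. The $w_i$'s pairwise commute (disjoint supports), so $w:=w_1\cdots w_\ell$ is a preimage of $g$ with $w^u=\prod_i\mu_i^u$.

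To translate $w^u$ into the eigenvalue-parity condition, write $\lambda_i=\omega^{k_i}$ with $0\leq k_i\leq u-1$ and fix a square root $\zeta\in K$ of $\omega$, so that $\mu_i\in\{\pm\zeta^{-k_i}\}$. If $u$ is odd, the choice $\zeta=\omega^{(u+1)/2}$ satisfies $\zeta^u=1$, hence $\mu_i^u\in\{\pm 1\}$; toggling any single sign of $\mu_i$ sends $w\mapsto -w$, so one can always arrange $w^u=1$, and then $w$ has order exactly $u$. If $u$ is even, then necessarily $\zeta^u=\omega^{u/2}=-1$, so $\mu_i^u=(-1)^{k_i}$ independently of the signs, and $w^u=(-w)^u=(-1)^N$ with $N=\#\{i:k_i\text{ odd}\}$. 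The eigenvalues of $g$ are $\{\omega^{\pm k_i}\}$, and for $u$ even $\omega^{-k_i}$ has the same parity of exponent as $k_i$, so the total number of eigenvalues of the form $\omega^j$ with $j$ odd is $2N$. Hence $w^u=1$ iff $N$ is even iff $2N\equiv 0\pmod 4$, which is condition (ii).

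Parts (i) and (iii) then follow formally. Since $\phi(w^u)=g^u=1$ forces $w^u\in\{\pm 1\}$, the order of $w$ is $u$ or $2u$, a multiple of $u$; this is (i). For (iii), if neither $\pm w$ has order $u$ then both have order $2u$; but for $u$ odd the previous paragraph exhibits a preimage of order $u$, so $u$ must be even. The main obstacle is the explicit Clifford-algebra computation of $w_i^u$ and the sign bookkeeping that turns $w^u=\pm 1$ into the eigenvalue-parity statement in (ii); everything else is formal manipulation with the kernel of $\phi$.
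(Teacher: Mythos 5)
Your proof is correct and follows essentially the same route as the paper's: both diagonalize $g$, lift each $2$-dimensional hyperbolic block to an element of the form $\mu+\nu\,e_if_i$ of the even Clifford algebra whose $u$-th power is the scalar $\mu^u$ with $\mu^2=\lambda^{-1}$, and reduce (ii) to the parity count of odd exponents among the eigenvalues. The only difference is cosmetic: the paper produces the same lift by writing each block as a product of two reflections and normalizing the corresponding Clifford element, whereas you posit the ansatz directly and solve for the coefficients.
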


\begin{proof}
This is a classical result which follows from the properties of the
canonical surjective map $\phi: {\rm Spin}_n(K)\rightarrow {\rm
SO}_n(K)$ described above.  We give a sketch of the argument. \\
We first consider part (i). Let ${g}$ be any semisimple element of ${\rm
SO}_n(K)$ of order $u$.
Let $w$ be a preimage of $g$ under $\phi$ (so that $-w$ is the other
preimage of $g$ under $\phi$) and let $m$ be the order of $w$. We have
$g^m=\phi(w)^m=\phi(w^m)=\phi(1)=1$. Hence $u$, which is the order of
$g$, divides the order of a preimage of $g$ under $\phi$. This
establishes part (i).\\

 We now consider parts (ii) and (iii). Let $V$ be the natural module
for ${\rm SO}_n(K)$. There is a standard basis $\mathcal{B}$ of $V$,
where $\mathcal{B}=(e_1,f_1,\dots,e_\ell,f_\ell)$ or $(e_1,f_1,\dots,
e_\ell,f_\ell,d)$ according respectively as $n$ is even or odd, such
that the matrix of ${g}$ with respect to $\mathcal{B}$ is diagonal
with entries $d_i$ satisfying $d_i^u=1$ for $1 \leq i \leq n$. Note
that  if $1\leq j \leq \ell$ then $d_{2j-1}=d_{2j}^{-1}$ is a power of
$\omega$, and if $n$ is odd then $d_n=1$. Moreover without loss of
generality $d_1=\omega$ and $d_2=\omega^{-1}$.  For $1\leq j \leq
\ell$, let ${g_j}$ be the element of ${\rm SO}_n(K)$ such that,  with
respect to $\mathcal{B}$, ${g_j}$ is diagonal and
$({g_j})_{2j-1,2j-1}=d_j$, $({g_j})_{2j,2j}=d_{j+1}$,
$({g_j})_{k,k}=1$ for $k\not\in\{2j-1,2j\}$. Note that
${g}={g_1}\dots{g_\ell}$. In the decomposition
${g}={g_1}\dots{g_\ell}$ delete the elements ${g_j}$ with $1\leq i
\leq \ell$ such that ${g_j}=1$, and renumbering the elements ${g_j}$
where $1\leq j\leq \ell$, if necessary, write
${g}={g_1}\dots{g_\iota}$ where $1\leq \iota \leq \ell$ and
${g_1},\dots, {g_\iota}$ are not the identity element. \\
Let $1\leq j \leq \iota$ and let $1\leq k\leq \iota$ be such that
$({g_j})_{2k-1,2k-1}$ and $({g_j})_{2k,2k}$ are both not 1.  Write
$\theta_j=({g_j})_{2k-1,2k-1}$. Note that $\theta_j\neq 1$ is a power
of $\omega$ and $\theta_j^{-1}=({g_j})_{2k,2k}$. Moreover
${g_j}(e_k)=\theta_j e_k$,  ${g_j}(f_k)=\theta_j^{-1} f_k$ and ${g_j}$
fixes every other element of $\mathcal{B}$. Write
$\theta_j=\omega^{l_j}$ for some positive integer $l_j$. \\
For a nonsingular element $x$ in $V$, let $R_x$ be the reflection:
$$\begin{array}{lll}V & \rightarrow&  V \\ y &\mapsto
&y-\frac{2(y,x)}{(x,x)}x.\end{array}$$  An easy check yields
$${g_j}=R_{(1-\theta_j)e_k+(\theta_j^{-1}-1)f_k}R_{2e_k+2f_k}.$$
Let $$x_j=((1-\theta_j)e_k+(\theta_j^{-1}-1)f_k)(2e_k+2f_k).$$ Then
$x_j\in C_0(V,K)$ and $x_jx_j^*=16(1-\theta_j)^2\theta_j^{-1}$. Let
$\tau_j \in K$ be such that $\tau_j^2=\theta_j^{-1}=\omega^{-l_j}$,
furthermore if $u$ and $l_j$ are both odd take
$\tau_j=\omega^{(u-l_j)/2}$, and if $l_j$ is even take
$\tau_j=\omega^{-l_j/2}$. Let $$t_j=\frac{x_j}{4(1-\theta_j)\tau_j}.$$
Then $t_j$ is an element of ${\rm Spin}_n(K)$ and the image of $t_j$
under the  canonical surjective map $\phi: {\rm Spin}_n(K)\rightarrow
{\rm SO}_n(K)$ is ${g_j}$. Furthermore, for any positive integer $m$,
$$t_j^m=\frac{1}{2\tau_j^m}(e_kf_k+\theta_j^{-m}(2-e_kf_k))$$ and so
\begin{equation}\label{e:orderpreso}
t_j^u=\frac1{\tau_j^u}=\left\{\begin{array}{ll}
1 & \textrm{if} \ l_j \ \textrm{is even, or} \ u \ \textrm{and} \
l_j\ \textrm{are both odd} \\
-1 & \textrm{if} \ u \ \textrm{is even and} \ l_j \ \textrm{is
odd}.\end{array} \right.\end{equation}
Finally, let $t=t_1\dots t_\iota$. Then $t \in {\rm Spin}_n(K)$ and
$\phi(t)={g}={g_1}\dots{g_\iota}$. Now for $1\leq i, j \leq \iota$,
$t_i$ and $t_j$ commute and so
$$t^u=(t_1\dots t_\iota)^u=t_1^u\dots t_\iota^u= \prod_{j=1}^\iota
\frac{1}{\tau_j^u}.$$
It follows from (\ref{e:orderpreso}) that $t^u=1$ if and only if  $u$
is odd or $u$ is even and  the number of eigenvalues of $\phi(t)=g$ of
the form $\omega^i$ with $i$ odd is divisible by $4$. Otherwise
$t^u=-1$ and $t^{2u}=1$.
Note that the other preimage of $g$ under $\phi$ is $-t$ and
$(-t)^u=t^u$ if $u$ is even, otherwise $(-t)^u=-t^u$.  Parts (ii) and
(iii) now follow from part (i).
 \end{proof}

  \section{Upper bounds for $d_u(G)$ for $G$ of classical type}\label{s:upc}
Let $G$ be a simple algebraic group of classical type defined over an algebraically closed field $K$ of characteristic $p$. Let $u$ be a positive integer. In this section, we give  an upper bound for $d_u(G)$. 
Since by Proposition \ref{p:ingsimple}  $d_u(G)\leq d_u(G_{s.c.})$ we assume without loss of generality that $G$ is of simply connected type. \\

Unless otherwise stated, we use the notation introduced at the end of \S\ref{s:intro}. Recall, we let $h$ be the Coxeter number of $G$ and write:\\
  $u=qv$ where $q$ and $v$ are coprime positive integers such that $q$ is a power of $p$  and $p$ does not divide $v$,\\
  $h=zu+e$ where $z,e$ are nonnegative integers such that $0\leq e \leq q-1$,\\
  $h=\alpha v+\beta$ where $\alpha,\beta$ are nonnegative integers such that $0\leq \alpha \leq v-1$,\\
  $\alpha=\gamma q+\delta$ where $\gamma,\delta$ are nonnegative integers such that $0\leq \delta \leq q-1$.\\
   An easy check yields $\gamma=z$.  Also if $e=0$ then $\beta=\delta=0$. Furthermore, if $\beta =0$ then $\delta=0$ if and only if $e=0$.\\
Finally for an nonnegative integer $r$, we let $\epsilon_r \in \{0,1\}$ be $0$ if $r$ is even, otherwise $\epsilon_r=1$, and set $\sigma_r\in\{0,1\}$ to be 1 if $r=0$, otherwise $\sigma_r=0$.\\

 Moreover, we also let $\omega$ be a $v$-th root of 1 in $K$ and consider the following diagonal blocks $M_{1}$, $M_{2}$, $M_3$, $M_4$, $M_5$, ${M_6}$, $M_7$, $M_8$ and $M_9$ where:
$$M_{1}={\rm diag}(1, \omega, \omega^2,\dots, \omega^{v-1}) \quad \textrm{is of size} \ v,$$
$$M_{2}={\rm diag}(\omega, \omega^{-1}, \omega^2, \omega^{-2}, \dots, \omega^{\lfloor\beta/2\rfloor}, \omega^{-\lfloor  \beta/2\rfloor}) \quad \textrm{is of size} \  \beta-\epsilon_{\beta},$$
 $$M_{3}={\rm diag}(\omega, \omega^{-1}, \omega^2, \omega^{-2}, \dots, \omega^{\lceil\beta/2\rceil}, \omega^{-\lceil  \beta/2\rceil}) \quad \textrm{is of size} \  \beta+\epsilon_{\beta},$$
$$M_{4}={\rm diag}(\omega, \omega^2,\dots, \omega^{v-1}) \quad \textrm{is of size} \ v-1.$$
and if $\beta \geq 2$ is even then $$ M_{5}={\rm diag}(\omega, \omega^{-1}, \dots, \omega^{\beta/2-1},\omega^{-(\beta/2-1)}) \quad \textrm{is of size} \ \beta-2$$
$$ M_{6}={\rm diag}(\omega^2,\omega^{-2}, \dots, \omega^{\frac{\beta}2},\omega^{-\frac{\beta}2}) \quad \textrm{is of size} \ \beta -2,$$
and if $v \geq 4$ is even then
$$ M_{7}={\rm diag}(\omega^2,\omega^{-2}, \dots, \omega^{\frac{v}2-1},\omega^{-(\frac{v}2-1)}) \quad \textrm{is of size} \ v -4,$$
$$ M_{8}={\rm diag}(\omega,\omega^{-1},\omega^2,\omega^{-2}, \dots,  \omega^{\frac{v}2-3},\omega^{-(\frac{v}2-3)},\omega^{\frac{v}2-1},\omega^{-(\frac{v}2-1)}) \quad \textrm{is of size} \ v -4,$$
$$ M_9 ={\rm diag}(\omega,\omega^{-1},\omega^2, \omega^{-2}, \dots, \omega^{\frac v2-2}, \omega^{-(\frac v2-2)}) \quad \textrm{is of size} \ v-4.$$
We also write $(1)$ for $I_1={\rm diag}(1)$ and $(-1)$ for $-I_1={\rm diag}(-1)$. More generally given a positive integer $i$, we write $(\omega^i)$ for ${\rm diag}(\omega^i)$  Finally, given some nonnegative integers $r_1,\dots, r_{11}$ and some nonnegative integers $s_1,\dots, s_{v-1}$we denote by $$ M_{1}^{r_1}\oplus M_2^{r_2}\oplus M_{3}^{r_3}\oplus M_{4}^{r_4}\oplus M_{5}^{r_5}\oplus M_{6}^{r_6}\oplus M_{7}^{r_7}\oplus M_{8}^{r_8}\oplus M_{9}^{r_9}\oplus (1)^{r_{10}}\oplus (-1)^{r_{11}}\oplus_{j=1}^{v-1} (\omega^j)^{s_j}$$ the diagonal matrix consisting of $r_i$ blocks $M_{i}$ for $1\leq i\leq 9$,  $I_{r_{10}}$, $-I_{r_{11}}$ and $\omega^jI_{s_j}$ for $1\leq j \leq v-1$.

\subsection{$G$ is  of type $A_\ell$}

Let $G=(A_\ell)_{s.c.}$  with $\ell \geq 1$ defined over an algebraically closed field $K$ of characteristic $p$. Here $|\Phi|=\ell(\ell+1)$ and $h=\ell+1$.

\begin{lem}\label{l:asc}
Let  $G=(A_{\ell})_{s.c.}$ be defined over an algebraically closed field of  characteristic $p$. Let $u=qv$ be a positive integer where $q$ and $v$ are coprime positive integers such that $q$ is a power of $p$  and $p$ does not divide $v$. Write $h=zu+e=\alpha v +\beta$ and $\alpha=zq+\delta$ where $z$, $e$, $\alpha$, $\beta$, $\delta$ are nonnegative integers such that $e<u$, $\beta<v$, and $\delta<q$. 
Let $y$ be the element of  $G_{s.c.}$ of order $v$ defined in Table \ref{t:asc} (see \S\ref{s:tables}). Then 
$C_{G}(y)^0$ and $d_q(C_{G}(y)^0)$ are given in Table \ref{t:asc} and $d_q(C_{G}(y)^0)$ is an upper bound for $d_u(G)$.
\end{lem}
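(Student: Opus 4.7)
The plan is to follow the strategy laid out in Proposition \ref{p:prelimdu}(i), namely: exhibit a semisimple element $y \in G_{[v]}$ and compute $d_q(C_G(y)^0)$ explicitly, which then automatically gives an upper bound for $d_u(G)$.

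First I would verify that the element $y$ in Table \ref{t:asc} is a well-defined element of $(A_\ell)_{s.c.}={\rm SL}_{\ell+1}(K)$ of order exactly $v$. Since $y$ is given as a diagonal matrix built from the blocks $M_1,\dots,M_9$, $(1)$, $(-1)$ and $(\omega^j)$ where $\omega$ is a primitive $v$-th root of $1$, two checks are required: (a) the total size of the diagonal blocks equals $\ell+1=h$, which is forced by the decomposition $h=\alpha v+\beta$ with $\alpha=zq+\delta$; and (b) the product of the diagonal entries equals $1$ (so $y\in{\rm SL}$). For each $M_i$ the product of diagonal entries is either $1$ or a small explicit power of $\omega$, so this reduces to elementary bookkeeping. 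Its order equals $v$ because $\omega$ is primitive of order $v$ and the blocks actually involve a $v$-th root of $1$ that is nontrivial.

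The second step is to compute $C_G(y)^0$. Since $y$ is semisimple in ${\rm SL}_{\ell+1}(K)$, its centralizer in ${\rm GL}_{\ell+1}(K)$ is the product of ${\rm GL}$'s indexed by the distinct eigenvalues of $y$, with block sizes equal to the multiplicities; intersecting with ${\rm SL}$ kills one central $\mathbb{G}_m$, so $C_G(y)^0$ has the shape $A_{m_1-1}\cdots A_{m_r-1}T_{r-1}$ where $m_1,\dots,m_r$ are the multiplicities. The multiplicities are read off directly from the block structure of $y$: each $M_1$ block contributes multiplicity $\alpha$ (or $\alpha\pm 1$) to various eigenvalues, the $M_2,M_3$ blocks contribute pairs $\omega^i,\omega^{-i}$, and the trailing $(1)$, $(-1)$, $(\omega^j)$ blocks account for the remaining $\beta$ or $\delta$ entries. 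This is the recorded value of $C_G(y)^0$ in Table \ref{t:asc}.

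Next I would compute $d_q(C_G(y)^0)$ by combining Proposition \ref{p:prelimdu}(iii) with Lemma \ref{l:lawther}(i). By (iii), $d_q$ of the reductive centralizer is the rank of its central torus plus the sum of $d_q(A_{m_i-1})$ over the simple factors; by Lemma \ref{l:lawther}(i), each $d_q(A_{m_i-1})$ is an explicit quadratic expression in $q$ and the quotient/remainder of $m_i$ by $q$. Summing gives the entry recorded in the table. Finally, Proposition \ref{p:prelimdu}(i) says $d_u(G)=\min_{y\in G_{[v]}} d_q(C_G(y)^0)$, so the value we computed is automatically an upper bound for $d_u(G)$.

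The main obstacle is purely bookkeeping: Table \ref{t:asc} splits into several cases according to the parities of $v$ and $\beta$ and the sizes of $\delta$ and $\beta$ (in particular whether $\beta=0$, whether the element $(-1)$ block is present, etc.), and in each case one must match multiplicities with the claimed product of $A$-factors and the claimed central torus rank. Once the element $y$ is constructed correctly, however, the computation of both $C_G(y)^0$ and $d_q(C_G(y)^0)$ is mechanical, and no further structure theory of $A_\ell$ beyond the well-known description of centralizers of semisimple elements in ${\rm SL}$ is needed.
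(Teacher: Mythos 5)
Your proposal is correct and follows essentially the same route as the paper's proof: verify that $y$ lies in ${\rm SL}_{\ell+1}(K)$ and has order $v$ (the paper phrases the determinant check as pairing each nontrivial eigenvalue with its inverse), read off $C_G(y)^0$ from the eigenvalue multiplicities, evaluate $d_q$ of the centralizer via Proposition \ref{p:prelimdu}(iii) together with Lemma \ref{l:lawther}(i), and conclude the upper bound from Proposition \ref{p:prelimdu}(i). The only content you leave implicit is the case-by-case arithmetic (in particular the $(\epsilon_v,\epsilon_\alpha)=(0,1)$, $\beta=0$ case where $A_{\alpha-2}$ appears and the answer depends on whether $e=0$), which the paper carries out explicitly.
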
 

\begin{proof}
We first determine $C_G(y)^0$. 
Suppose that $\epsilon_v=1$ or $(\epsilon_v,\epsilon_\alpha)=(0,0)$. As any nontrivial eigenvalue of $y$ can be paired with its inverse, $y$ is an element of $G$ of order $v$. Also $y$ has $\alpha+\epsilon_\beta$ eigenvalues equal to $1$,  $\alpha+1$ eigenvalues equal to $\omega^i$ where $i$ is any integer with $1 \leq |i| \leq \lfloor \beta/2\rfloor$, and every other eigenvalue of $y$ occurs with multiplicity $\alpha$. Therefore $C_G(y)^0=A_{\alpha}^\beta A_{\alpha-1}^{v-\beta}T_{v-1}$. \\
Suppose that $(\epsilon_{v},\epsilon_{\alpha}, \epsilon_{\beta})=(0,1,1)$. As any  nontrivial eigenvalue of $y$ can be paired with its inverse,  $y$ is an element of $G$ of order $v$. Also $y$ has $\alpha+1$ eigenvalues equal to $-1$, $\alpha+1$ eigenvalues equal to $\omega^i$ where $i$ is any integer with $1 \leq |i| \leq  (\beta-1)/2$, and every other eigenvalue of $y$ occurs with multiplicity $\alpha$. Therefore $C_G(y)^0=A_{\alpha}^\beta A_{\alpha-1}^{v-\beta}T_{v-1}$. \\
Suppose that $(\epsilon_{v},\epsilon_{\alpha}, \epsilon_{\beta})=(0,1,0)$ and $\beta \geq 2$. As any eigenvalue of $y$ can be paired with its inverse, $y$ is an element of $G$ of order $v$. Also $y$ has $\alpha+1$ eigenvalues equal to $1$, $\alpha+1$ eigenvalues equal to $-1$, $\alpha+1$ eigenvalues equal to $\omega^i$ where $i$ is any  integer with $1\leq |i| \leq  (\beta-2)/2$, and every other eigenvalue of $y$ occurs with multiplicity $\alpha$. Therefore $C_G(y)^0=A_{\alpha}^\beta A_{\alpha-1}^{v-\beta}T_{v-1}$. \\
Suppose finally that $(\epsilon_v,\epsilon_\alpha)=(0,1)$ and $\beta=0$. As any eigenvalue of $y$ can be paired with its inverse,  $y$ is an element of $G$ of order $v$. Also $y$ has $\alpha-1$ eigenvalues equal to $1$, $\alpha+1$ eigenvalues equal to $-1$,  and every other eigenvalue of $y$ occurs with multiplicity $\alpha$. Therefore $C_G(y)^0=A_{\alpha} A_{\alpha-1}^{v-2}A_{\alpha-2}T_{v-1}$. \\

Note that by Proposition \ref{p:prelimdu}(i), $d_u(G)\leq d_q(C_G(y)^0)$ and so $d_q(C_G(y)^0)$ is an upper bound for $d_u(G)$. It remains to calculate $d_q(C_G(y)^0)$.\\

By Lemma \ref{l:lawther}, \begin{equation}\label{e:dqa}d_q(A_{\alpha}) = z^2q +(2z+1)(\alpha-zq+1)-1,\end{equation}
 \begin{equation}\label{e:dqam1}d_q(A_{\alpha-1})=z^2q+(2z+1)(\alpha-zq)-1,\end{equation}
 and \begin{equation}\label{e:dqam2}d_q(A_{\alpha-2})=\left \{\begin{array}{ll}  z^2q+(2z+1)(\alpha-1-zq)-1 & \textrm{if}\ \delta>0\\
 (z-1)^2q+(2z-1)(\alpha-1 -(z-1)q)-1 & \textrm{if} \ \delta=0,
  \end{array}\right.\end{equation}
where for determining $d_q(A_{\alpha-2})$ we do the Euclidean division of $\alpha-1$ by $q$:
$$\alpha-1=\left\{\begin{array}{ll} zq+(\delta-1) & \textrm{if} \ \delta>0\\ (z-1)q+q-1 & \textrm{if} \ \delta=0.\end{array}\right.$$

Assume first that  $(\epsilon_v,\epsilon_\alpha)\neq (0,1)$ or $\beta>0$. We then have $$C_G(y)^0=A_{\alpha}^\beta A_{\alpha-1}^{v-\beta}T_{v-1}.$$
By Proposition \ref{p:prelimdu}(iii), (\ref{e:dqa}) and (\ref{e:dqam1})
\begin{eqnarray*}
d_q(C_G(y)^0) & = & (v-1)+d_q(A_{\alpha}^\beta A_{\alpha-1}^{v-\beta})\\
& = & (v-1)+\beta d_q(A_\alpha) +(v-\beta)d_q(A_{\alpha-1})\\
& = &  2z(\alpha v+\beta)-qvz(z+1)+\alpha v+\beta-1\\
& = & 2z(zu+e)-zu(z+1)+zu+e-1\\
& = &  z^2u+e(2z+1)-1.
\end{eqnarray*}

Assume now that $(\epsilon_v, \epsilon_\alpha)=(0,1)$ and $\beta=0$. Since $\beta=0$, recall that $e=0$ if and only if $\delta=0$.  We then have $$C_G(y)^0= A_\alpha A_{\alpha-1}^{v-2} A_{\alpha-2}T_{v-1}.$$ 
By Proposition \ref{p:prelimdu}(iii),  (\ref{e:dqa}), (\ref{e:dqam1}) and (\ref{e:dqam2})
\begin{eqnarray*}
d_q(C_G(y)^0) & = & (v-1)+d_q(A_\alpha A_{\alpha-1}^{v-2} A_{\alpha-2})\\
& = & (v-1)+d_q(A_\alpha)+(v-2) d_q(A_{\alpha-1}) +d_q(A_{\alpha-2})\\
& =  & \left\{ \begin{array}{ll} 2z\alpha v-qvz(z+1)+\alpha v-1 & \textrm{if} \ \delta>0\\
2z\alpha v-qvz(z+1)+\alpha v+1& \textrm{if} \ \delta=0
 \end{array}\right.\\
& = &\left\{ \begin{array}{ll} 2z(zu+e)-zu(z+1)+zu+e-1 &  \textrm{if} \ \delta>0\\ 
2z(zu+e)-zu(z+1)+zu+e+1 &  \textrm{if} \ \delta=0 \end{array}\right.\\
& = & \left\{ \begin{array}{ll} z^2u+e(2z+1)-1 &  \textrm{if} \  e>0\\ 
z^2u+e(2z+1)+1&  \textrm{if} \ e=0. \end{array}\right.\\
\end{eqnarray*}
\end{proof}

\subsection{$G$ is of type $C_\ell$}

Let $G=(C_\ell)_{{\rm s.c}}$ be a simply connected group of type ${C}_{\ell}$ with $\ell \geq 2$ defined over an algebraically closed field $K$ of  characteristic $p$. Here $|\Phi|=2\ell^2$ and $h=2\ell$. 

\begin{lem}\label{l:csc}
Let  $G=(C_{\ell})_{{\rm s.c}}$ be defined over an algebraically closed field of characteristic $p$. 
Let $u=qv$ be a positive integer where $q$ and $v$ are positive integers such that $q$ is a power of $p$. Write $h=zu+e=\alpha v +\beta$ and $\alpha=zq+\delta$ where $z$, $e$, $\alpha$, $\beta$, $\delta$ are nonnegative integers such that $e<u$, $\beta<v$, and $\delta<q$. 
Let $y$ be the element of  $G$ of order $v$ defined in Table \ref{t:csc} (see \S\ref{s:tables}). Then 
$C_G(y)^0$ and $d_q(C_G(y)^0)$ are given in Table \ref{t:csc} and $d_q(C_G(y)^0)$ is an upper bound for $d_u(G)$.
\end{lem}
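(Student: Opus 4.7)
The plan is to mirror exactly the strategy used in the proof of Lemma \ref{l:asc}, now adapted to type $C_\ell$ acting on its natural $2\ell$-dimensional symplectic module. Since $y$ is a semisimple element of ${\rm Sp}_{2\ell}(K)$, its eigenvalues on the natural module come in inverse pairs. The prescribed form of $y$ in Table \ref{t:csc} is built from the diagonal blocks $M_1,\dots,M_9,(1),(-1)$ defined in the preamble of \S\ref{s:upc}; these blocks are already symmetric under $\lambda \leftrightarrow \lambda^{-1}$, so $y \in G_{s.c.}$ and $y$ has order $v$. The first step is therefore to read off the multiplicity of each eigenvalue of $y$ in each case of the parities of $v$, $\alpha$ and $\beta$, exactly as in Lemma \ref{l:asc}.

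The second step is the identification of $C_G(y)^0$. For $y \in {\rm Sp}_{2\ell}(K)$ semisimple with multiplicity $2a$ at $1$, multiplicity $2b$ at $-1$ (when $p \neq 2$), and multiplicity $m_i$ at each pair $\{\omega^i,\omega^{-i}\}$ of non-real eigenvalues, one has
\[
C_G(y)^0 \;\cong\; C_a \times C_b \times \prod_{i} A_{m_i-1} \times T_r
\]
for a suitable central torus dimension $r$ (determined by requiring the whole product to have the correct rank $\ell$). So in each parity case I would translate the eigenvalue counts provided by the block description of $y$ into an explicit decomposition, giving $C_G(y)^0$ of the form $C_{\alpha}\times A_{\alpha-1}^{\lfloor(v-\beta-\epsilon_\beta)/2 \rfloor}\times \cdots\times T_{?}$, with the precise shape depending on the parities of $v$, $\alpha$ and $\beta$ (and on whether $\beta=0$, where the $-1$-eigenspace is absent or has the wrong parity, requiring a different block). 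As in the type-$A$ lemma, the auxiliary cases are $\epsilon_v=1$, $(\epsilon_v,\epsilon_\alpha,\epsilon_\beta)=(0,0,\ast)$, $(0,1,1)$, $(0,1,0)$ with $\beta\geq 2$, and $(0,1,0)$ with $\beta=0$.

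The third step is the arithmetic. By Proposition \ref{p:prelimdu}(i) we have $d_u(G)\leq d_q(C_G(y)^0)$, so it suffices to evaluate $d_q$ on each factor using Proposition \ref{p:prelimdu}(iii) together with the Lawther formulas in Lemma \ref{l:lawther}. The type-$A$ factors contribute via parts (i) of Lemma \ref{l:lawther} as in the proof of Lemma \ref{l:asc}; the new ingredient is the type-$C$ factor handled by part (iii), which introduces the extra term $\lceil z/2\rceil\epsilon_q$. Substituting $\alpha=zq+\delta$ and $\alpha v+\beta=zu+e$ and collecting, the sum should collapse to a clean closed form in $z$, $u$, $e$, $q$ with a correction term of the form $\lceil z/2\rceil\epsilon_q$ and possibly $+1$ adjustments in the exceptional case $\beta=e=0$, matching the bound recorded in Table \ref{t:csc}.

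The main obstacle is, as usual in these Lawther-style calculations, to keep the case analysis disciplined. Type $C$ behaves more delicately than type $A$ because one must separately track the $+1$ and $-1$ eigenspaces of $y$ (both of which give $C$-type components), and because the Lawther formula for $d_q(C_m)$ carries the extra $\lceil z/2\rceil\epsilon_q$ correction which does not cancel across cases. In particular, the subcase $\beta=0$ together with $(\epsilon_v,\epsilon_\alpha)=(0,1)$ (analogous to the last case in Lemma \ref{l:asc}) will need its own adjusted choice of $y$, since the natural candidate would have no room for the $-1$-eigenspace with the correct multiplicity; this is precisely the role of the auxiliary blocks $M_5,\dots,M_9$ introduced in the preamble. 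Once those degenerate configurations are correctly chosen, the remainder is routine bookkeeping.
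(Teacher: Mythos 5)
Your proposal follows essentially the same route as the paper's proof: pick the explicit block-diagonal semisimple $y$ from Table \ref{t:csc}, read off eigenvalue multiplicities case by case on the parities of $v$, $\alpha$, $\beta$ to identify $C_G(y)^0$ (with ${\rm Sp}$-factors from the $\pm 1$-eigenspaces and ${\rm GL}$-factors from the inverse pairs), then combine Proposition \ref{p:prelimdu}(i),(iii) with Lemma \ref{l:lawther} to evaluate $d_q(C_G(y)^0)$. The only cosmetic difference is that your case list imports the type-$A$ subcase $(\epsilon_v,\epsilon_\alpha,\epsilon_\beta)=(0,1,1)$, which is vacuous here since $h=2\ell$ forces $\beta$ even whenever $v$ is even (and likewise the anticipated $+1$ adjustment at $\beta=e=0$ turns out not to occur in type $C$); the computation as you set it up would reveal both points.
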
 

\begin{proof}
Since $h=2\ell$, note that if $\epsilon_v=1$ then $\epsilon_\alpha=\epsilon_\beta$ and $\epsilon_u=\epsilon_q$, whereas if $\epsilon_v=0$ then $\epsilon_u=0$, $\epsilon_q=1$ and $\epsilon_\beta=0$. We first determine $C_G(y)^0$. 
Suppose that $\epsilon_v=1$.  As any eigenvalue of $y$ can be paired with its inverse,  $y$ is an element of $G$ of order $v$. Also $y$ has $\alpha+\epsilon_\beta$ eigenvalues equal to $1$, $0$ eigenvalues equal to $-1$, $\alpha+1$ eigenvalues equal to $\omega^i$ where $i$ is any integer with $1 \leq |i| \leq \lfloor \beta/2\rfloor$, and every other eigenvalue of $y$ occurs with multiplicity $\alpha$. Therefore $C_G(y)^0=A_\alpha^{\lfloor\frac\beta2\rfloor} A_{\alpha-1}^{\frac{v-1}2-\lfloor\frac\beta2\rfloor}C_{\lceil \frac \alpha2\rceil}T_{\frac{v-1}2}$. \\
Suppose that $(\epsilon_{v},\epsilon_{\alpha})=(0,0)$. As any eigenvalue of $y$ can be paired with its inverse,  $y$ is an element of $G$ of order $v$. Also $y$ has $\alpha$ eigenvalues equal to $1$, $\alpha$ eigenvalues equal to $-1$, $\alpha+1$ eigenvalues equal to $\omega^i$ where $i$ is any integer with $1  \leq |i| \leq  \beta/2$, and every other eigenvalue of $y$ occurs with multiplicity $\alpha$. Therefore $C_G(y)^0=A_\alpha^{\frac\beta2} A_{\alpha-1}^{\frac{v}2-1-\frac\beta2}C_{\frac \alpha2}^2T_{\frac{v}2-1}$. \\
Suppose that $(\epsilon_{v},\epsilon_{\alpha})=(0,1)$ and $\beta \geq 2$. As any eigenvalue of $y$ can be paired with its inverse,  $y$ is an element of $G$ of order $v$. Also $y$ has $\alpha+1$ eigenvalues equal to $1$, $\alpha+1$ eigenvalues equal to $-1$, $\alpha+1$ eigenvalues equal to $\omega^i$ where $i$ is any integer with $1 \leq |i| \leq  (\beta-2)/2$, and every other eigenvalue of $y$ occurs with multiplicity $\alpha$. Therefore $C_G(y)^0=A_\alpha^{\frac\beta2-1} A_{\alpha-1}^{\frac{v}2-\frac\beta2}C_{\frac{\alpha+1}2}^2T_{\frac{v}2-1}$. \\
Suppose finally that $(\epsilon_v,\epsilon_\alpha)=(0,1)$ and $\beta=0$. As any eigenvalue of $y$ can be paired with its inverse,  $y$ is an element of $G$ of order $v$. Also $y$ has $\alpha-1$ eigenvalues equal to $1$, $\alpha+1$ eigenvalues equal to $-1$,  and every other eigenvalue of $y$ occurs with multiplicity $\alpha$. Therefore $C_G(y)^0=A_{\alpha-1}^{\frac{v}2-1}C_{\frac{\alpha+1}2}C_{\frac{\alpha-1}2}T_{\frac{v}2-1}$. \\

By Proposition \ref{p:prelimdu}(i), $d_u(G)\leq d_q(C_G(y)^0)$ and so $d_q(C_G(y)^0)$ is an upper bound for $d_u(G)$. It remains to calculate $d_q(C_G(y)^0)$.\\

Note that by Lemma \ref{l:lawther}, $$d_q(A_{\alpha}) = z^2q +(2z+1)(\alpha-zq+1)-1,$$
$$d_q(A_{\alpha-1})=z^2q+(2z+1)(\alpha-zq)-1,$$
$$d_q\left(C_{\lceil\frac{\alpha}2\rceil}\right)=\frac{z^2q}{2}+(2z+1)\left(\left\lceil\frac{\alpha}{2}\right\rceil-\frac{zq}2\right)+\left\lceil \frac z2\right\rceil\epsilon_q$$
and if $\alpha$ is odd then 
$$d_q\left(C_{\frac{\alpha-1}2}\right)=\frac{z^2q}{2}+(2z+1)\left(\frac{\alpha-1}{2}-\frac{zq}{2}\right)+\left\lceil \frac{z}{2}\right\rceil\epsilon_q.$$

Assume first that  $\epsilon_v=1$. Recall that  $\epsilon_u=\epsilon_q$ and $\epsilon_\alpha=\epsilon_\beta$. We  have $$C_G(y)^0=A_\alpha^{\lfloor\frac\beta2\rfloor} A_{\alpha-1}^{\frac{v-1}2-\lfloor\frac\beta2\rfloor}C_{\lceil \frac \alpha2\rceil}T_{\frac{v-1}2}.$$
By Proposition \ref{p:prelimdu}(iii),
\begin{eqnarray*}
d_q(C_G(y)^0) & = & \frac{v-1}2+d_q\left(A_\alpha^{\lfloor\frac\beta2\rfloor} A_{\alpha-1}^{\frac{v-1}2-\lfloor\frac\beta2\rfloor}C_{\lceil \frac \alpha2\rceil}\right)\\
& = & \frac{v-1}2+\left\lfloor \frac \beta2 \right \rfloor d_q(A_\alpha)+\left(\frac{v-1}2-\left \lfloor \frac \beta2 \right \rfloor\right)d_q(A_{\alpha-1})+d_q\left(C_{\lceil \frac \alpha2\rceil}\right)\\
& =  &z(\alpha v+ \beta)-\frac{qvz(z+1)}2+\frac{\alpha v +\beta}2+\left\lceil\frac{z}2\right\rceil\epsilon_q\\
& = & z(zu+e)-\frac{zu(z+1)}2+\frac{zu+e}2+\left\lceil\frac{z}2\right\rceil\epsilon_q\\
& = &  \frac12(z^2u+e(2z+1))+\left\lceil\frac{z}2\right\rceil\epsilon_u.
\end{eqnarray*}

Assume that  $(\epsilon_v,\epsilon_\alpha)=(0,0)$. Recall that $\epsilon_q=1$ and $\epsilon_\beta=0$. We  have $$C_G(y)^0=A_\alpha^{\frac\beta2} A_{\alpha-1}^{\frac{v}2-1-\frac\beta2}C_{\frac \alpha2}^2T_{\frac{v}2-1}.$$
By Proposition \ref{p:prelimdu}(iii),
\begin{eqnarray*}
d_q(C_G(y)^0) & = & \frac{v}2-1+d_q\left(A_\alpha^{\frac\beta2} A_{\alpha-1}^{\frac{v}2-1-\frac\beta2}C_{\frac \alpha2}^2\right) \\
& = & \frac{v}2-1+ \frac \beta2 d_q(A_\alpha)+\left(\frac{v}2-1- \frac \beta2 \right)d_q(A_{\alpha-1})+2d_q\left(C_{ \frac \alpha2}\right)\\
& =  &z(\alpha v+ \beta)-\frac{qvz(z+1)}2+\frac{\alpha v +\beta}2+2\left\lceil\frac{z}2\right\rceil\epsilon_q\\
& = & z(zu+e)-\frac{zu(z+1)}2+\frac{zu+e}2+2\left\lceil\frac{z}2\right\rceil\epsilon_q\\
& = &  \frac12(z^2u+e(2z+1))+2\left\lceil\frac{z}2\right\rceil.
\end{eqnarray*}

Assume that  $(\epsilon_v,\epsilon_\alpha)=(0,1)$ and $\beta\geq 2$. Recall that $\epsilon_q=1$ and $\epsilon_\beta=0$. We  have $$C_G(y)^0=A_\alpha^{\frac\beta2-1} A_{\alpha-1}^{\frac{v}2-\frac\beta2}C_{\frac{\alpha+1}2}^2T_{\frac{v}2-1}.$$
By Proposition \ref{p:prelimdu}(iii),
\begin{eqnarray*}
d_q(C_G(y)^0) & = & \frac{v}2-1+d_q\left(A_\alpha^{\frac\beta2-1} A_{\alpha-1}^{\frac{v}2-\frac\beta2}C_{\frac{\alpha+1}2}^2\right) \\
& = & \frac{v}2-1+\left( \frac \beta2-1\right) d_q(A_\alpha)+\left(\frac{v}2- \frac \beta2 \right)d_q(A_{\alpha-1})+2d_q\left(C_{ \frac {\alpha+1}2}\right)\\
& =  &z(\alpha v+ \beta)-\frac{qvz(z+1)}2+\frac{\alpha v +\beta}2+2\left\lceil\frac{z}2\right\rceil\epsilon_q\\
& = & z(zu+e)-\frac{zu(z+1)}2+\frac{zu+e}2+2\left\lceil\frac{z}2\right\rceil\epsilon_q\\
& = &  \frac12(z^2u+e(2z+1))+2\left\lceil\frac{z}2\right\rceil.
\end{eqnarray*}

Assume finally that  $(\epsilon_v,\epsilon_\alpha)=(0,1)$ and $\beta=0$. Recall that $\epsilon_q=1$. We  have $$C_G(y)^0=A_{\alpha-1}^{\frac{v}2-1}C_{\frac{\alpha+1}2}C_{\frac{\alpha-1}2}T_{\frac{v}2-1}.$$
By Proposition \ref{p:prelimdu}(iii),
\begin{eqnarray*}
d_q(C_G(y)^0) & = & \frac{v}2-1+d_q\left(A_{\alpha-1}^{\frac v2-1} C_{\frac{\alpha+1}2}C_{\frac{\alpha-1}2}\right) \\
& = & \frac{v}2-1+ \left(\frac v2-1\right) d_q(A_{\alpha-1})+d_q\left(C_{ \frac {\alpha+1}2}\right)+d_q\left(C_{ \frac {\alpha-1}2}\right)\\
& =  &z(\alpha v)-\frac{qvz(z+1)}2+\frac{\alpha v}2+2\left\lceil\frac{z}2\right\rceil\epsilon_q\\
& = & z(zu+e)-\frac{zu(z+1)}2+\frac{zu+e}2+2\left\lceil\frac{z}2\right\rceil\epsilon_q\\
& = &  \frac12(z^2u+e(2z+1))+2\left\lceil\frac{z}2\right\rceil.
\end{eqnarray*}
\end{proof}

\subsection{$G$ is of type $B_\ell$}

Let $G=(B_\ell)_{{\rm s.c}}$ be a simply connected group of type ${B}_{\ell}$ with $\ell \geq 3$ defined over an algebraically closed field $K$ of  characteristic $p$. Here $|\Phi|=2\ell^2$, $h=2\ell$ and $G={\rm Spin}_{h+1}(K)$ if $p\neq 2$, otherwise $G={\rm SO}_{h+1}(K)$. Given an element $g$ in $G$, we let $\overline{g}$ be the image of $g$ under the canonical surjective map $G\rightarrow {\rm SO}_{h+1}(K)$.  \\

\begin{lem}\label{l:bsc}
Let  $G=(B_{\ell})_{{\rm s.c}}$ where $\ell \geq 3$ be defined over an algebraically closed field $K$ of  characteristic $p$. Let $u=qv$ be a positive integer where $q$ and $v$ are positive integers such that $q$ is a power of $p$ and $p$ does not divide $v$. Write $h=zu+e=\alpha v +\beta$ and $\alpha=zq+\delta$ where $z$, $e$, $\alpha$, $\beta$, $\delta$ are nonnegative integers such that $e<u$, $\beta<v$, and $\delta<q$. Consider the following conditions:\\
\begin{enumerate}[(a)]
\item $\epsilon_v=1$. 
\item $v=2$. 
\item $(\epsilon_v,\epsilon_\alpha)=(0,1)$, $v>2$ and $(v \mod 4,\beta \mod 4,\ell \mod 4) \in \{(0,0,0), (0,2,3), (2,0,3), (2,2,2)\}.$
\item $(\epsilon_v,\epsilon_{\alpha})=(0,1)$ and $(v \mod 4,\beta \mod 4,\ell \mod 4) \in \{(0,2,1), (2,2,0)\}.$
\item $(\epsilon_v,\epsilon_{\alpha})=(0,1)$, $\beta >0$ and $(v \mod 4,\beta \mod 4,\ell \mod 4) \in \{(0,0,2), (2,0,1)\}.$
\item $(\epsilon_v,\epsilon_{\alpha})=(0,1)$, $v>2$, $\beta=0$ and $(v \mod 4,\ell \mod 4) \in \{(0,2), (2,1)\}.$
\item $(\epsilon_v,\epsilon_\alpha)=(0,0)$, $v>2$ and $(v \mod 4,\beta \mod 4,\ell \mod 4) \in \{(0,0,0), (0,2,3), (2,0,0), (2,2,3)\}.$
\item  $(\epsilon_v,\epsilon_{\alpha})=(0,0)$ and  $(v \mod 4,\beta \mod 4,\ell \mod 4) =(2,2,1)$.
\item $(\epsilon_v,\epsilon_{\alpha})=(0,0)$, $\beta>0$ and $(v \mod 4,\beta \mod 4,\ell \mod 4) \in \{(0,2,1),(2,0,2)\}.$
\item $(\epsilon_v,\epsilon_{\alpha})=(0,0)$, $\beta>0$ and  $(v \mod 4,\beta \mod 4,\ell \mod 4) \in (0,0,2)$.
\item $(\epsilon_v,\epsilon_{\alpha})=(0,0)$,  $v>2$, $\beta=0$ and $(v \mod 4,\ell \mod 4)= (2,2)$. 
\end{enumerate}
Let $\overline{y}$ be the element of  ${\rm SO}_{h+1}(K)$ of order $v$ defined in Table \ref{t:bsc} (see \S\ref{s:tables}). Then $\overline{y}$ has a preimage $y$ in $G$ of order $v$, and 
$C_{{\rm SO}_{h+1}}(\overline{y})^0$ and $d_q(C_G(y)^0)=d_q(C_{{\rm SO}_{h+1}(K)}(\overline{y})^0)$ are given in Table \ref{t:bsc}. Furthermore $d_q(C_G(y)^0)$ is an upper bound for $d_u(G)$.
\end{lem}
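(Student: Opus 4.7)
The plan is to follow the pattern established in Lemmas \ref{l:asc} and \ref{l:csc}, but with the additional subtlety that for $p\neq 2$ we must lift from ${\rm SO}_{h+1}(K)$ to ${\rm Spin}_{h+1}(K)$ without increasing the order, which is exactly what the elaborate list of congruence conditions (a)--(k) is keeping track of. First I would read off the eigenvalue multiplicities of $\overline{y}$ directly from its block description in Table \ref{t:bsc}: the total multiplicities must sum to $h+1=2\ell+1$, and since $\overline{y}\in{\rm SO}_{h+1}(K)$ its eigenvalues come in pairs $\omega^i,\omega^{-i}$ together with an odd number of $\pm 1$ eigenvalues. From this it is immediate that $\overline{y}$ is semisimple of order $v$.

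Next I would compute $C_{{\rm SO}_{h+1}(K)}(\overline{y})^0$ by the standard recipe for the connected centralizer of a semisimple element in an orthogonal group: for each pair $\{\omega^i,\omega^{-i}\}$ with $i\neq 0$ of common multiplicity $m$, contribute an $A_{m-1}$ factor (acting on a totally singular $m$-subspace paired with its dual); the eigenvalue $1$ with multiplicity $2m_1{+}1$ contributes a $B_{m_1}$ factor; the eigenvalue $-1$ with multiplicity $2m_{-1}$ contributes a $D_{m_{-1}}$ factor; and the centre of the centralizer is a torus of rank equal to the number of distinct eigenvalue pairs minus one. Running this case by case against the definition of $\overline{y}$ in Table \ref{t:bsc} produces the listed centralizer types.

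The delicate point is the lifting step, which I would handle via Lemma \ref{l:soliftspin}: if $v$ is odd (case (a)), a preimage of order $v$ automatically exists, covering also the whole $p=2$ situation (where $v$ is forced to be odd). If $v$ is even, Lemma \ref{l:soliftspin}(ii) requires the number $N$ of eigenvalues of $\overline{y}$ of the form $\omega^i$ with $i$ odd to be divisible by $4$. Here $N$ is determined by $v\bmod 4$, $\beta\bmod 4$, and the number of diagonal blocks used, which in turn is a function of $\ell\bmod 4$. The case analysis (b)--(k) is precisely the enumeration of all residue triples $(v\bmod 4,\beta\bmod 4,\ell\bmod 4)$ together with the parities of $\alpha$ and the value of $\beta$ (special behaviour when $\beta=0$ or $v=2$) for which the blocks of $\overline{y}$ in Table \ref{t:bsc} produce $4\mid N$. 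I would verify this block-by-block, and this is the main obstacle of the proof: it is largely bookkeeping, but one must track parities precisely in each of the eleven sub-cases.

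Finally, to compute $d_q(C_G(y)^0)$ I would use Lemma \ref{l:dufactg}(iii), which tells us that $d_q$ of the connected centralizer is independent of isogeny type, so $d_q(C_G(y)^0)=d_q(C_{{\rm SO}_{h+1}(K)}(\overline{y})^0)$. Then Proposition \ref{p:prelimdu}(iii) reduces the computation to summing the rank of the central torus and the values $d_q(A_\bullet)$, $d_q(B_\bullet)$, $d_q(D_\bullet)$, all supplied by Lemma \ref{l:lawther}. Using $h=\alpha v+\beta=zu+e$ and $\alpha=zq+\delta$ to collapse the resulting expression gives the entry in Table \ref{t:bsc}, exactly as in the $C_\ell$ calculation of Lemma \ref{l:csc} but with the appropriate $B$/$D$ substitutions reflecting that an $(2\ell{+}1)$-dimensional orthogonal space was used. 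The upper bound $d_u(G)\leq d_q(C_G(y)^0)$ is then immediate from Proposition \ref{p:prelimdu}(i), since $y\in G_{[v]}$ has order dividing $u=qv$.
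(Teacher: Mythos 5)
Your proposal follows essentially the same route as the paper: read off the eigenvalue multiplicities of $\overline{y}$ block by block, apply Lemma \ref{l:soliftspin} (automatic lift for odd $v$, divisibility of $N$ by $4$ for even $v$, with conditions (b)--(k) enumerating exactly when this holds), determine $C_{{\rm SO}_{h+1}(K)}(\overline{y})^0$ from the multiplicities, transfer $d_q$ across the isogeny by Lemma \ref{l:dufactg}(iii), and evaluate via Proposition \ref{p:prelimdu}(iii) and Lemma \ref{l:lawther}, with the upper bound from Proposition \ref{p:prelimdu}(i). One small caution: the central torus of the centralizer has rank equal to the number of distinct pairs $\{\omega^i,\omega^{-i}\}$ with $\omega^i\neq\pm 1$ (not the number of all eigenvalue classes minus one, which overcounts when both $1$ and $-1$ occur), but since you verify each case against Table \ref{t:bsc} this would be caught in the bookkeeping.
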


\begin{proof}
Note that by Lemma \ref{l:dufactg}(iii), $d_q(C_G(y)^0)=d_q(C_{{\rm SO}_{h+1}(K)}(\overline{y})^0)$. 
Since $h=2\ell$, note that if $\epsilon_v=1$ then $\epsilon_\alpha=\epsilon_\beta$ and $\epsilon_u=\epsilon_q$, whereas if $\epsilon_v=0$ then $\epsilon_u=0$, $\epsilon_q=1$ and $\epsilon_\beta=0$. Note also that if $\ell \equiv 2 \mod 4$, $v \equiv 0\mod 4$ and $\beta=0$ then  $\alpha$ is odd, as otherwise on one hand we would have $2\ell \equiv 4 \mod 8$ and on the other $2\ell = \alpha v \equiv 0 \mod 8$, a contradiction.  We first determine $C_{{\rm SO}_{h+1}(K)}(\overline{y})^0$. 
Suppose that case (a) holds so that $v$ is odd.  As any eigenvalue of $\overline{y}$ different from 1 can be paired with its inverse,  $\overline{y}$ is an element of ${\rm SO}_{h+1}(K)$ of order $v$.  Furthermore, since $v$ is odd, by Lemma \ref{l:soliftspin} it follows that $\overline{y}$ has a preimage $y$ in $G$ of order $v$. Also $\overline{y}$ has $\alpha+1+\epsilon_\beta$ eigenvalues equal to $1$, $0$ eigenvalues equal to $-1$, $\alpha+1$ eigenvalues equal to $\omega^i$ where $i$ is any integer with $1 \leq |i| \leq \lfloor \beta/2\rfloor$, and every other eigenvalue of $\overline{y}$ occurs with multiplicity $\alpha$. Therefore $C_{{\rm SO}_{h+1}(K)}(\overline{y})^0=A_\alpha^{\lfloor\frac\beta2\rfloor} A_{\alpha-1}^{\frac{v-1}2-\lfloor\frac\beta2\rfloor}B_{\lceil \frac {\alpha}2\rceil}T_{\frac{v-1}2}$. \\
Suppose that case (b) holds so that $v=2$.  As any eigenvalue of $\overline{y}$ different from 1 can be paired with its inverse,  $\overline{y}$ is an element of ${\rm SO}_{h+1}(K)$ of order $v$. Now the number $N$ of eigenvalues $\omega^i=(-1)^i$ of $\overline{y}$ with $i$ odd  is equal  to $\ell$, $\ell-1$, $\ell+2$ or $\ell +1$ according respectively as $\ell \mod 4$ is  0, 1, 2 or  3. Since $N$ is divisible by 4, it follows from Lemma $\ref{l:soliftspin}$ that $\overline{y}$ has a preimage $y$ in $G$ of order $v$.  Also 
$$ C_{{\rm SO}_{h+1}(K)}(\overline{y})^0=\left\{\begin{array}{ll}B_{\frac{\ell}2}D_{\frac{\ell}2} & \textrm{if} \ \ell \equiv 0 \ (4)\\ 
 B_{\frac{\ell+1}2}D_{\frac{\ell-1}2} & \textrm{if} \ \ell \equiv 1 \ (4)\\ 
B_{\frac{\ell-2}2}D_{\frac{\ell+2}2} & \textrm{if} \ \ell \equiv 2 \ (4)\\ 
B_{\frac{\ell-1}2}D_{\frac{\ell+1}2} & \textrm{if} \ \ell \equiv 3 \ (4).
 \end{array}\right.$$
 Suppose that case (c) holds.  As any eigenvalue of $\overline{y}$ different from 1 can be paired with its inverse,  $\overline{y}$ is an element of ${\rm SO}_{h+1}(K)$ of order $v$. Let $N$ be the  number of eigenvalues $\omega^i$ of $\overline{y}$ with $i$ odd. Then 
 \begin{eqnarray*}
 N& = &\left\{ \begin{array}{ll} \alpha  \cdot \frac{v}{2}+\frac{\beta}2& \textrm{if} \ (v \mod 4, \beta \mod 4)=(0,0)\\
 \alpha  \cdot \frac{v}{2}+\left(\frac{\beta}2+1\right)  & \textrm{if} \ (v \mod 4, \beta \mod 4)=(0,2)\\
  \alpha  \cdot\left( \frac{v}{2}-1\right)+(\alpha+1)+\frac{\beta}2 & \textrm{if} \ (v \mod 4, \beta \mod 4)=(2,0)\\
  \alpha  \cdot \left( \frac{v}{2}-1\right)+(\alpha+1)+\left(\frac{\beta}2+1\right)  & \textrm{if} \ (v \mod 4, \beta \mod 4)=(2,2)\\
  \end{array}\right. \\ 
&=&  \left\{ \begin{array}{ll} \ell & \textrm{if} \ (v \mod 4, \beta \mod 4) =(0,0)\\
  \ell+1 & \textrm{if} \ (v \mod 4, \beta \mod 4)\in\{(0,2),(2,0)\}\\
    \ell+2 & \textrm{if} \ (v \mod 4, \beta \mod 4) = (2,2).\\
     \end{array}\right. 
 \end{eqnarray*}
 From the assumptions of case (c), it follows that $N$ is divisible by 4. Hence 
  by Lemma $\ref{l:soliftspin}$, $\overline{y}$ has a preimage $y$ in $G$ of order $v$.
 Also $\overline{y}$ has $\alpha$ eigenvalues equal to $1$, $\alpha+1$ eigenvalues equal to $-1$, $\alpha+1$ eigenvalues equal to $\omega^i$ where $i$ is any integer with $1  \leq |i| \leq  \beta/2$, and every other eigenvalue of $\overline{y}$ occurs with multiplicity $\alpha$. Therefore $C_{{\rm SO}_{h+1}(K)}(\overline{y})^0=A_\alpha^{\frac\beta2} A_{\alpha-1}^{\frac{v}2-1-\frac\beta2}B_{\frac {\alpha-1}2}D_{\frac{\alpha+1}2}T_{\frac{v}2-1}$. \\
  Suppose that case (d) or (e) holds.  As any eigenvalue of $\overline{y}$ different from 1 can be paired with its inverse,  $\overline{y}$ is an element of ${\rm SO}_{h+1}(K)$ of order $v$. Let $N$ be  the number of eigenvalues $\omega^i$ of $\overline{y}$ with $i$ odd. Then 
 \begin{eqnarray*}
 N& = &\left\{ \begin{array}{ll} \alpha  \cdot \frac{v}{2}+\left(\frac{\beta}2-2\right)& \textrm{if} \ (v \mod 4, \beta \mod 4)=(0,0)\\
 \alpha  \cdot \frac{v}{2}+\left(\frac{\beta}2-1\right)  & \textrm{if} \ (v \mod 4, \beta \mod 4)=(0,2)\\
  \alpha  \cdot\left( \frac{v}{2}-1\right)+(\alpha+1)+\left(\frac{\beta}2-2\right) & \textrm{if} \ (v \mod 4, \beta \mod 4)=(2,0)\\
  \alpha  \cdot \left( \frac{v}{2}-1\right)+(\alpha+1)+\left(\frac{\beta}2-1\right)  & \textrm{if} \ (v \mod 4, \beta \mod 4)=(2,2)\\
  \end{array}\right. \\ 
&=&  \left\{ \begin{array}{ll} \ell-2 & \textrm{if} \ (v \mod 4, \beta \mod 4)=(0,0)\\
  \ell-1 & \textrm{if} \ (v \mod 4, \beta \mod 4)\in \{(0,2),(2,0)\}\\
  \ell  & \textrm{if} \ (v \mod 4, \beta \mod 4)=(2,2).\\
  \end{array}\right. \\ 
 \end{eqnarray*}
 From the assumptions of case (d)  or (e), it follows that $N$ is divisible by 4. Hence 
  by Lemma $\ref{l:soliftspin}$, $\overline{y}$ has a preimage $y$ in $G$ of order $v$.
 Also $\overline{y}$ has $\alpha+2$ eigenvalues equal to $1$, $\alpha+1$ eigenvalues equal to $-1$, $\alpha+1$ eigenvalues equal to $\omega^i$ where if case (d) holds then $i$ is any integer with $1 \leq |i| \leq  (\beta/2-1)$ and if case $(e)$ holds then $i$ any  integer with $2  \leq |i| \leq  \beta/2$, and every other eigenvalue of $\overline{y}$ occurs with multiplicity $\alpha$. Therefore $C_{{\rm SO}_{h+1}(K)}(\overline{y})^0=A_\alpha^{\frac\beta2-1} A_{\alpha-1}^{\frac{v}2-\frac\beta2}B_{\frac {\alpha+1}2}D_{\frac{\alpha+1}2}T_{\frac{v}2-1}$. \\
 Suppose that case (f) holds.  As any eigenvalue of $\overline{y}$ different from 1 can be paired with its inverse,  $\overline{y}$ is an element of ${\rm SO}_{h+1}(K)$ of order $v$. Let $N$ be  the number of eigenvalues $\omega^i$ of $\overline{y}$ with $i$ odd. Then 
 \begin{eqnarray*}
 N& = &\left\{ \begin{array}{ll} (\alpha-1)  \cdot \frac{v}{2}+\left(\frac{v}2-2\right)& \textrm{if} \ v \equiv 0 \mod 4\\
 (\alpha-1)  \cdot \left(\frac{v}{2}-1\right)+((\alpha-1)+2)+\left(\left(\frac{v}2-1\right)-2\right)  & \textrm{if} \ v  \equiv 2\mod 4\\
  \end{array}\right. \\ 
&=&  \left\{ \begin{array}{ll} \ell-2 & \textrm{if} \ v \equiv 0 \mod 4\\
  \ell-1& \textrm{if} \ v \equiv 2\mod 4. \end{array}\right.
 \end{eqnarray*}
 From the assumptions of case (f), it follows that $N$ is divisible by 4. Hence 
  by Lemma $\ref{l:soliftspin}$, $\overline{y}$ has a preimage $y$ in $G$ of order $v$.
 Also $\overline{y}$ has $\alpha+2$ eigenvalues equal to $1$, $\alpha+1$ eigenvalues equal to $-1$, $\alpha$ eigenvalues equal to $\omega^i$ where $i$ is any integer with $2  \leq |i| \leq  v/2-1$, and every other eigenvalue of $\overline{y}$ occurs with multiplicity $\alpha-1$. Therefore $C_{{\rm SO}_{h+1}(K)}(\overline{y})^0=A_{\alpha-1}^{\frac v2-2} A_{\alpha-2}B_{\frac {\alpha+1}2}D_{\frac{\alpha+1}2}T_{\frac{v}2-1}$. \\ 
Suppose that case (g) holds.  As any eigenvalue of $\overline{y}$ different from 1 can be paired with its inverse,  $\overline{y}$ is an element of ${\rm SO}_{h+1}(K)$ of order $v$. Let $N$ be the  number of eigenvalues $\omega^i$ of $\overline{y}$ with $i$ odd. Then 
 \begin{eqnarray*}
 N& = &\left\{ \begin{array}{ll} \alpha  \cdot \frac{v}{2}+\frac{\beta}2& \textrm{if} \ (v \mod 4, \beta \mod 4)=(0,0)\\
 \alpha  \cdot \frac{v}{2}+\left(\frac{\beta}2+1\right)  & \textrm{if} \ (v \mod 4, \beta \mod 4)=(0,2)\\
  \alpha  \cdot\left( \frac{v}{2}-1\right)+\alpha+\frac{\beta}2 & \textrm{if} \ (v \mod 4, \beta \mod 4)=(2,0)\\
  \alpha  \cdot \left( \frac{v}{2}-1\right)+\alpha+\left(\frac{\beta}2+1\right)  & \textrm{if} \ (v \mod 4, \beta \mod 4)=(2,2)\\
  \end{array}\right. \\ 
&=&  \left\{ \begin{array}{ll} \ell & \textrm{if} \ (v \mod 4, \beta \mod 4)\in\{(0,0),(2,0)\}\\
  \ell+1 & \textrm{if} \ (v \mod 4, \beta \mod 4)\in\{(0,2),(2,2)\}.\\
  \end{array}\right. \\ 
 \end{eqnarray*}
 From the assumptions of case (g), it follows that $N$ is divisible by 4. Hence 
  by Lemma $\ref{l:soliftspin}$, $\overline{y}$ has a preimage $y$ in $G$ of order $v$.
 Also $\overline{y}$ has $\alpha+1$ eigenvalues equal to $1$, $\alpha$ eigenvalues equal to $-1$, $\alpha+1$ eigenvalues equal to $\omega^i$ where $i$ is any integer with $1  \leq |i| \leq  \beta/2$, and every other eigenvalue of $\overline{y}$ occurs with multiplicity $\alpha$. Therefore $C_{{\rm SO}_{h+1}(K)}(\overline{y})^0=A_\alpha^{\frac\beta2} A_{\alpha-1}^{\frac{v}2-1-\frac\beta2}B_{\frac {\alpha}2}D_{\frac{\alpha}2}T_{\frac{v}2-1}$. \\
 Suppose that case (h) holds. As any eigenvalue of $\overline{y}$ different from 1 can be paired with its inverse,  $\overline{y}$ is an element of ${\rm SO}_{h+1}(K)$ of order $v$. Let $N$ be  the number of eigenvalues $\omega^i$ of $\overline{y}$ with $i$ odd. Then 
 $$ N= \alpha\cdot\left( \frac v2-1\right) + \left( \frac \beta2 +1-2\right)+\alpha= \frac{2(\ell-1)}2.$$
 From the assumptions of case (h), it follows that $N$ is divisible by 4. Hence 
  by Lemma $\ref{l:soliftspin}$, $\overline{y}$ has a preimage $y$ in $G$ of order $v$.
 Also $\overline{y}$ has $\alpha+1$ eigenvalues equal to $1$, $\alpha$ eigenvalues equal to $-1$, $\alpha+1$ eigenvalues equal to $\omega^i$ where $i$ is any integer with $2  \leq |i| \leq  \beta/2$,  $(\alpha+1)$ eigenvalues equal to $\omega^{\frac{v}2-1}$,  $(\alpha+1)$ eigenvalues equal to $\omega^{-\left(\frac{v}2-1\right)}$ and every other eigenvalue of $\overline{y}$ occurs with multiplicity $\alpha$. Therefore $C_{{\rm SO}_{h+1}(K)}(\overline{y})^0=A_{\alpha}^{\frac{\beta}2}A_{\alpha-1}^{\frac v2-\frac\beta2-1}B_{\frac {\alpha}2}D_{\frac{\alpha}2}T_{\frac{v}2-1}$. \\  
  Suppose that case (i) or (j) holds.  As any eigenvalue of $\overline{y}$ different from 1 can be paired with its inverse,  $\overline{y}$ is an element of ${\rm SO}_{h+1}(K)$ of order $v$. Let $N$ be the  number of eigenvalues $\omega^i$ of $\overline{y}$ with $i$ odd. Then 
 \begin{eqnarray*}
 N& = &\left\{ \begin{array}{ll} \alpha  \cdot \frac{v}{2}+\left(\frac{\beta}2-2\right)& \textrm{if} \ (v \mod 4, \beta \mod 4)=(0,0)\\
 \alpha  \cdot \frac{v}{2}+\left(\frac{\beta}2-1\right)  & \textrm{if} \ (v \mod 4, \beta \mod 4)=(0,2)\\
  \alpha  \cdot\left( \frac{v}{2}-1\right)+(\alpha+2)+\frac{\beta}2 & \textrm{if} \ (v \mod 4, \beta \mod 4)=(2,0)\\
  \end{array}\right. \\ 
&=&  \left\{ \begin{array}{ll} \ell-2 & \textrm{if} \ (v \mod 4, \beta \mod 4)=(0,0)\\
  \ell-1 & \textrm{if} \ (v \mod 4, \beta \mod 4)=(0,2)\\
  \ell+2& \textrm{if} \ (v \mod 4, \beta \mod 4)=(2,0).\\
  \end{array}\right. \\ 
 \end{eqnarray*}
 From the assumptions of case (i)  or (j), it follows that $N$ is divisible by 4. Hence 
  by Lemma $\ref{l:soliftspin}$, $\overline{y}$ has a preimage $y$ in $G$ of order $v$.
 Also $\overline{y}$ has $\alpha+1$ eigenvalues equal to $1$, $\alpha+2$ eigenvalues equal to $-1$, $\alpha+1$ eigenvalues equal to $\omega^i$ where if case (i) holds then $i$ is any integer with $1 \leq |i| \leq  (\beta/2-1)$ and if case $(j)$ holds then $i$ any  integer with $2  \leq |i| \leq  \beta/2$, and every other eigenvalue of $\overline{y}$ occurs with multiplicity $\alpha$. Therefore $C_{{\rm SO}_{h+1}(K)}(\overline{y})^0=A_\alpha^{\frac\beta2-1} A_{\alpha-1}^{\frac{v}2-\frac\beta2}B_{\frac {\alpha}2}D_{\frac{\alpha+2}2}T_{\frac{v}2-1}$. \\
 Suppose that case (k) holds.  As any eigenvalue of $\overline{y}$ different from 1 can be paired with its inverse,  $\overline{y}$ is an element of ${\rm SO}_{h+1}(K)$ of order $v$. Let $N$ be the  number of eigenvalues $\omega^i$ of $\overline{y}$ with $i$ odd. Then 
 $$ N= (\alpha-1)\cdot \left(\frac v2-1\right) + \left( \frac v2 -1\right)+(\alpha+2)= \frac{2(\ell +2)}2.$$
 From the assumption of case (k), it follows that $N$ is divisible by 4. Hence 
  by Lemma $\ref{l:soliftspin}$, $\overline{y}$ has a preimage $y$ in $G$ of order $v$.
 Also $\overline{y}$ has $\alpha+1$ eigenvalues equal to $1$, $\alpha+2$ eigenvalues equal to $-1$, $\alpha$ eigenvalues equal to $\omega^i$ where $i$ is any integer with $1  \leq |i| \leq  v/2-2$, and every other eigenvalue of $\overline{y}$ occurs with multiplicity $\alpha-1$. Therefore $C_{{\rm SO}_{h+1}(K)}(\overline{y})^0=A_{\alpha-1}^{\frac v2-2} A_{\alpha-2}B_{\frac {\alpha}2}D_{\frac{\alpha+2}2}T_{\frac{v}2-1}$. \\  
  
 By Proposition \ref{p:prelimdu}(i), $d_u(G)\leq d_q(C_G(y)^0)$ and so $d_q(C_G(y)^0)$ is an upper bound for $d_u(G)$. It remains to calculate $d_q(C_{{\rm SO}_{h+1}}(\overline{y})^0)=d_q(C_G(y)^0)$.\\

It follows from Lemma \ref{l:lawther} that 
$$d_q(A_{\alpha+1})=\left\{\begin{array}{ll}
z^2q+(2z+1)(\alpha+2-zq)-1 & \textrm{if} \ \delta < q-1\\
(z+1)^2q+(2z+3)(\alpha+2-(z+1)q)-1 & \textrm{if} \ \delta=q-1,
 \end{array}\right.$$
 $$d_q(A_{\alpha}) = z^2q +(2z+1)(\alpha-zq+1)-1,$$
$$d_q(A_{\alpha-1})=z^2q+(2z+1)(\alpha-zq)-1,$$
$$d_q(A_{\alpha-2})=\left\{\begin{array}{ll}
z^2q+(2z+1)(\alpha-1-zq)-1 & \textrm{if} \ \delta >0\\
(z-1)^2q+(2z-1)(\alpha-1-(z-1)q)-1 & \textrm{if} \ \delta=0,
 \end{array}\right.$$
$$d_q\left(B_{\lceil\frac{\alpha}2\rceil}\right)=\frac{z^2q}{2}+(2z+1)\left(\left\lceil\frac{\alpha}{2}\right\rceil-\frac{zq}2\right)+\left\lceil \frac z2\right\rceil\epsilon_q,$$
$$d_q\left(D_{\lceil\frac{\alpha+1}2\rceil}\right)=
\frac{z^2q}{2}+(2z+1)\left(\left\lceil\frac{\alpha+1}{2}\right\rceil-\frac{zq}2\right)+\left\lceil \frac z2\right\rceil\epsilon_q-z-\epsilon_z,$$
if $\alpha$ is odd then 
$$d_q\left(B_{\frac{\alpha-1}2}\right)=\frac{z^2q}{2}+(2z+1)\left(\frac{\alpha-1}{2}-\frac{zq}{2}\right)+\left\lceil \frac{z}{2}\right\rceil\epsilon_q,$$
$$d_q\left(D_{\frac{\alpha-1}2}\right)=
\left\{\begin{array}{ll}
\frac{z^2q}{2}+(2z+1)\left(\frac{\alpha-1}{2}-\frac{zq}2\right)+\left\lceil \frac z2\right\rceil\epsilon_q - z - \epsilon_z& \textrm{if} \  \delta>0\\
\frac{(z-1)^2q}{2}+(2z-1)\left(\frac{\alpha-1}{2}-\frac{(z-1)q}2\right)+\frac {z-1}2\epsilon_q-(z-1)& \textrm{if} \ \delta=0,
\end{array}\right.$$
and if $\alpha$ is even then
$$d_q\left(B_{\frac{\alpha-2}2}\right)=
\left\{\begin{array}{ll}
\frac{z^2q}{2}+(2z+1)\left(\frac{\alpha-2}{2}-\frac{zq}2\right)+\left\lceil \frac z2\right\rceil\epsilon_q & \textrm{if} \  \delta>0\\
\frac{(z-1)^2q}{2}+(2z-1)\left(\frac{\alpha-2}{2}-\frac{(z-1)q}2\right)+\left\lceil\frac{z-1}2\right\rceil \epsilon_q& \textrm{if} \ \delta=0,
\end{array}\right.$$
and
$$d_q\left(D_{\frac{\alpha}2}\right)=\left\{\begin{array}{ll}
\frac{z^2q}{2}+(2z+1)\left(\frac{\alpha}{2}-\frac{zq}2\right)+\left\lceil \frac z2\right\rceil\epsilon_q-z-\epsilon_z & \textrm{if} \ \delta>0 \ \textrm{or} \ z \  \textrm{is even}\\
\frac{z^2q}{2}+(2z+1)\left(\frac{\alpha}{2}-\frac{zq}2\right)+\left\lceil \frac z2\right\rceil\epsilon_q-z-\epsilon_z+2 & \textrm{otherwise}.  
\end{array}\right.
$$

Assume first that case (a) holds so that  $\epsilon_v=1$. Recall that  $\epsilon_u=\epsilon_q$ and $\epsilon_\alpha=\epsilon_\beta$. We  have $$C_{{\rm SO}_{h+1}(K)}(\overline{y})^0=A_\alpha^{\lfloor\frac\beta2\rfloor} A_{\alpha-1}^{\frac{v-1}2-\lfloor\frac\beta2\rfloor}B_{\lceil \frac \alpha2\rceil}T_{\frac{v-1}2}.$$
By Proposition \ref{p:prelimdu}(iii),
\begin{eqnarray*}
d_q(C_{{\rm SO}_{h+1}(K)}(\overline{y})^0) & = & \frac{v-1}2+d_q\left(A_\alpha^{\lfloor\frac\beta2\rfloor} A_{\alpha-1}^{\frac{v-1}2-\lfloor\frac\beta2\rfloor}B_{\lceil \frac \alpha2\rceil}\right)\\
& = & \frac{v-1}2+\left\lfloor \frac \beta2 \right \rfloor d_q(A_\alpha)+\left(\frac{v-1}2-\left \lfloor \frac \beta2 \right \rfloor\right)d_q(A_{\alpha-1})+d_q\left(B_{\lceil \frac \alpha2\rceil}\right)\\
& =  &z(\alpha v+ \beta)-\frac{qvz(z+1)}2+\frac{\alpha v +\beta}2+\left\lceil\frac{z}2\right\rceil\epsilon_q\\
& = & z(zu+e)-\frac{zu(z+1)}2+\frac{zu+e}2+\left\lceil\frac{z}2\right\rceil\epsilon_q\\
& = &  \frac12(z^2u+e(2z+1))+\left\lceil\frac{z}2\right\rceil\epsilon_u.
\end{eqnarray*}

Assume that   case (b) holds so that $v=2$, $\epsilon_u=\epsilon_v=0$ and $\epsilon_q=1$. Recall that $\alpha=\ell$ and $\beta=0$.  Note also that $e=0$ if and only if $\delta=0$. We have 
 $$C_{{\rm SO}_{h+1}(K)}(\overline{y})^0= \left\{ \begin{array}{ll}
 B_{\frac{\alpha}2}D_{\frac{\alpha}2} & \textrm{if} \ \alpha \equiv 0 \ (4)\\ 
 B_{\frac{\alpha+1}2}D_{\frac{\alpha-1}2} & \textrm{if} \ \alpha \equiv 1 \ (4)\\ 
B_{\frac{\alpha-2}2}D_{\frac{\alpha+2}2} & \textrm{if} \ \alpha \equiv 2 \ (4)\\ 
B_{\frac{\alpha-1}2}D_{\frac{\alpha+1}2} & \textrm{if} \ \alpha \equiv 3 \ (4).
\end{array}\right.
$$
If $\alpha \equiv 0 \mod 4$ then $z$ is even if $\delta=0$, and by Proposition \ref{p:prelimdu}(i)
\begin{eqnarray*}
d_q(C_{{\rm SO}_{h+1}(K)}(\overline{y})^0) & = &  d_q(B_{\frac{\alpha}2}D_{\frac{\alpha}2})\\
& = & d_q(B_{\frac{\alpha}2})+d_q(D_{\frac{\alpha}2})\\
& = &   -qz(z+1)+2\alpha z+ \alpha\\
& = & \frac{-qvz(z+1)}2+(zu+e)z+\frac{(zu+e)}2\\ 
& = &  \frac{-zu(z+1)}2+(zu+e)z+\frac{(zu+e)}2\\  
& = & \frac{z^2u+e(2z+1)}2. 
\end{eqnarray*}
If $\alpha \equiv 1 \mod 4$ then  by Proposition \ref{p:prelimdu}(iii)
\begin{eqnarray*}
d_q(C_{{\rm SO}_{h+1}(K)}(\overline{y})^0) & = &  d_q(B_{\frac{\alpha+1}2}D_{\frac{\alpha-1}2})\\
& = & d_q(B_{\frac{\alpha+1}2})+d_q(D_{\frac{\alpha-1}2})\\
& = &  \left\{\begin{array}{ll} -qz(z+1)+2\alpha z+ \alpha & \textrm{if} \ \delta>0 \\
-qz^2+2\alpha z +2& \textrm{if} \ \delta=0 \end{array}\right. \\
& = &  \left\{\begin{array}{ll}  \frac{z^2u+e(2z+1)}2 & \textrm{if} \ \delta>0 \\
z^2q +2& \textrm{if} \ \delta=0 
\end{array}\right.\\
& = &  \left\{\begin{array}{ll}  \frac{z^2u+e(2z+1)}2 & \textrm{if} \ \delta>0 \\
\frac{z^2u}2 +2& \textrm{if} \ \delta=0 
\end{array}\right.\\
& = &  \left\{\begin{array}{ll}  \frac{z^2u+e(2z+1)}2 & \textrm{if} \ e>0 \\
 \frac{z^2u+e(2z+1)}2 +2 & \textrm{if} \ e=0. 
\end{array}\right.
\end{eqnarray*}
If $\alpha \equiv 2 \mod 4$ then $z$ is even if $\delta=0$, and   by Proposition \ref{p:prelimdu}(iii)
\begin{eqnarray*}
d_q(C_{{\rm SO}_{h+1}(K)}(\overline{y})^0) & = &  d_q(B_{\frac{\alpha-2}2}D_{\frac{\alpha+2}2})\\
& = & d_q(B_{\frac{\alpha-2}2})+d_q(D_{\frac{\alpha+2}2})\\
& = &  \left\{\begin{array}{ll} -qz(z+1)+2\alpha z+ \alpha & \textrm{if} \ \delta>0 \\
-qz^2+2\alpha z +2& \textrm{if} \ \delta=0 \end{array}\right. \\
& = &  \left\{\begin{array}{ll}  \frac{z^2u+e(2z+1)}2 & \textrm{if} \ e>0 \\
 \frac{z^2u+e(2z+1)}2 +2 & \textrm{if} \ e=0. 
\end{array}\right.
\end{eqnarray*}
If $\alpha\equiv 3 \mod 4$ then  by Proposition \ref{p:prelimdu}(iii)
\begin{eqnarray*}
d_q(C_{{\rm SO}_{h+1}(K)}(\overline{y})^0) & = &  d_q(B_{\frac{\alpha-1}2}D_{\frac{\alpha+1}2})\\
& = & d_q(B_{\frac{\alpha-1}2})+d_q(D_{\frac{\alpha+1}2})\\
&=&  \frac{-qvz(z+1)}2+(zu+e)z+\frac{(zu+e)}2\\
& = & \frac{z^2u+e(2z+1)}2. 
\end{eqnarray*}

Assume that case (c) holds.  Recall that  $\epsilon_q=\epsilon_\alpha=1$ and $\epsilon_u=\epsilon_v=\epsilon_{\beta}=0$. We  have $$C_{{\rm SO}_{h+1}(K)}(\overline{y})^0=A_\alpha^{\frac\beta2} A_{\alpha-1}^{\frac{v}2-1-\frac\beta2}B_{\frac{\alpha-1}2}D_{\frac{\alpha+1}2}T_{\frac{v}2-1}.$$
By Proposition \ref{p:prelimdu}(iii),
\begin{eqnarray*}
d_q(C_{{\rm SO}_{h+1}(K)}(\overline{y})^0) & = & \frac{v}2-1+d_q\left(A_\alpha^{\frac\beta2} A_{\alpha-1}^{\frac{v}2-1-\frac\beta2}B_{\frac{\alpha-1}2}D_{\frac{\alpha+1}2}\right)\\
& = & \frac{v}2-1+\frac\beta2  d_q(A_\alpha)+\left(\frac{v}2-1- \frac \beta2 \right)d_q(A_{\alpha-1})+d_q\left(B_{\frac{\alpha-1}2}\right)+d_q\left(D_{\frac{\alpha+1}2}\right)\\
& =  &z(\alpha v+ \beta)-\frac{qvz(z+1)}2+\frac{\alpha v +\beta}2\\
& = & z(zu+e)-\frac{zu(z+1)}2+\frac{zu+e}2\\
& = &  \frac12(z^2u+e(2z+1)).
\end{eqnarray*}

Assume that case (d) or case (e) holds.  Recall that  $\epsilon_q=\epsilon_\alpha=1$ and $\epsilon_u=\epsilon_v=\epsilon_{\beta}=0$. We  have $$C_{{\rm SO}_{h+1}(K)}(\overline{y})^0=A_\alpha^{\frac\beta2-1} A_{\alpha-1}^{\frac{v}2-\frac\beta2}B_{\frac{\alpha+1}2}D_{\frac{\alpha+1}2}T_{\frac{v}2-1}.$$
By Proposition \ref{p:prelimdu}(iii),
\begin{eqnarray*}
d_q(C_{{\rm SO}_{h+1}(K)}(\overline{y})^0) & = & \frac{v}2-1+d_q\left(A_\alpha^{\frac\beta2-1} A_{\alpha-1}^{\frac{v}2-\frac\beta2}B_{\frac{\alpha+1}2}D_{\frac{\alpha+1}2}\right)\\
& = & \frac{v}2-1+\left(\frac\beta2-1\right)  d_q(A_\alpha)+\left(\frac{v}2- \frac \beta2 \right)d_q(A_{\alpha-1})+d_q\left(B_{\frac{\alpha+1}2}\right)+d_q\left(D_{\frac{\alpha+1}2}\right)\\
& =  &z(\alpha v+ \beta)-\frac{qvz(z+1)}2+\frac{\alpha v +\beta}2\\
& = & z(zu+e)-\frac{zu(z+1)}2+\frac{zu+e}2\\
& = &  \frac12(z^2u+e(2z+1)).
\end{eqnarray*}

Assume that case (f) holds.  Recall that  $\epsilon_q=\epsilon_\alpha=1$, $\epsilon_u=\epsilon_v=\epsilon_{\beta}=0$ and $\beta=0$. Also $\delta=0$ if and only if $e=0$.  We  have $$C_{{\rm SO}_{h+1}(K)}(\overline{y})^0=A_{\alpha-1}^{\frac v2-2} A_{\alpha-2}B_{\frac{\alpha+1}2}D_{\frac{\alpha+1}2}T_{\frac{v}2-1}.$$
By Proposition \ref{p:prelimdu}(iii),
\begin{eqnarray*}
d_q(C_{{\rm SO}_{h+1}(K)}(\overline{y})^0) & = & \frac{v}2-1+d_q\left(A_{\alpha-1}^{\frac v2-2} A_{\alpha-2}B_{\frac{\alpha+1}2}D_{\frac{\alpha+1}2}\right)\\
& = & \frac{v}2-1+\left(\frac v2-2\right)  d_q(A_{\alpha-1})+d_q(A_{\alpha-2})+d_q\left(B_{\frac{\alpha+1}2}\right)+d_q\left(D_{\frac{\alpha+1}2}\right)\\
& =  & \left\{ \begin{array}{ll} 
z\alpha v-\frac{qvz(z+1)}2+\frac{\alpha v }2 & \textrm{if} \ \delta>0\\
z\alpha v-\frac{qvz(z+1)}2+\frac{\alpha v }2+2& \textrm{if} \ \delta=0\\
\end{array}\right.\\
& = & \left\{ \begin{array}{ll} 
z(zu+e)-\frac{zu(z+1)}2+\frac{zu+e}2 & \textrm{if} \ \delta>0\\
z(zu+e)-\frac{zu(z+1)}2+\frac{zu+e}2 +2& \textrm{if} \ \delta=0\\
 \end{array}\right.\\
 & = & \left\{ \begin{array}{ll} 
  \frac12(z^2u+e(2z+1)) & \textrm{if} \  e>0\\
  \frac12(z^2u+e(2z+1))+2 & \textrm{if} \  e=0.
  \end{array}\right.
\end{eqnarray*}

Assume that case (g) or case (h) holds.  Recall that  $\epsilon_q=1$ and $\epsilon_u=\epsilon_v=\epsilon_\alpha=\epsilon_{\beta}=0$. Also if $\delta=0$ then $z$ must be even.
We  have $$C_{{\rm SO}_{h+1}(K)}(\overline{y})^0=A_\alpha^{\frac\beta2} A_{\alpha-1}^{\frac{v}2-1-\frac\beta2}B_{\frac{\alpha}2}D_{\frac{\alpha}2}T_{\frac{v}2-1}.$$
By Proposition \ref{p:prelimdu}(iii),
\begin{eqnarray*}
d_q(C_{{\rm SO}_{h+1}(K)}(\overline{y})^0) & = & \frac{v}2-1+d_q\left(A_\alpha^{\frac\beta2} A_{\alpha-1}^{\frac{v}2-1-\frac\beta2}B_{\frac{\alpha}2}D_{\frac{\alpha}2}\right)\\
& = & \frac{v}2-1+\frac\beta2  d_q(A_\alpha)+\left(\frac{v}2-1- \frac \beta2 \right)d_q(A_{\alpha-1})+d_q\left(B_{\frac{\alpha}2}\right)+d_q\left(D_{\frac{\alpha}2}\right)\\
& =  &z(\alpha v+ \beta)-\frac{qvz(z+1)}2+\frac{\alpha v +\beta}2\\
& = & z(zu+e)-\frac{zu(z+1)}2+\frac{zu+e}2\\
& = &  \frac12(z^2u+e(2z+1)).
\end{eqnarray*}

Assume that case (i) or case (j) holds.  Recall that  $\epsilon_q=1$ and  $\epsilon_u=\epsilon_v=\epsilon_\alpha=\epsilon_{\beta}=0$. We  have $$C_{{\rm SO}_{h+1}(K)}(\overline{y})^0=A_\alpha^{\frac\beta2-1} A_{\alpha-1}^{\frac{v}2-\frac\beta2}B_{\frac{\alpha}2}D_{\frac{\alpha+2}2}T_{\frac{v}2-1}.$$
By Proposition \ref{p:prelimdu}(iii),
\begin{eqnarray*}
d_q(C_{{\rm SO}_{h+1}(K)}(\overline{y})^0) & = & \frac{v}2-1+d_q\left(A_\alpha^{\frac\beta2-1} A_{\alpha-1}^{\frac{v}2-\frac\beta2}B_{\frac{\alpha}2}D_{\frac{\alpha+2}2}\right)\\
& = & \frac{v}2-1+\left(\frac\beta2-1\right)  d_q(A_\alpha)+\left(\frac{v}2- \frac \beta2 \right)d_q(A_{\alpha-1})+d_q\left(B_{\frac{\alpha}2}\right)+d_q\left(D_{\frac{\alpha+2}2}\right)\\
& =  &z(\alpha v+ \beta)-\frac{qvz(z+1)}2+\frac{\alpha v +\beta}2\\
& = & z(zu+e)-\frac{zu(z+1)}2+\frac{zu+e}2\\
& = &  \frac12(z^2u+e(2z+1)).
\end{eqnarray*}

Assume that case (k) holds.  Recall that  $\epsilon_q=1$,  $\epsilon_u=\epsilon_v=\epsilon_\alpha=\epsilon_{\beta}=0$ and $\beta=0$. Also $\delta=0$ if and only if $e=0$.  We  have $$C_{{\rm SO}_{h+1}(K)}(\overline{y})^0=A_{\alpha-1}^{\frac v2-2} A_{\alpha-2}B_{\frac{\alpha}2}D_{\frac{\alpha+2}2}T_{\frac{v}2-1}.$$
By Proposition \ref{p:prelimdu}(iii),
\begin{eqnarray*}
d_q(C_{{\rm SO}_{h+1}(K)}(\overline{y})^0) & = & \frac{v}2-1+d_q\left(A_{\alpha-1}^{\frac v2-2} A_{\alpha-2}B_{\frac{\alpha}2}D_{\frac{\alpha+2}2}\right)\\
& = & \frac{v}2-1+\left(\frac v2-2\right)  d_q(A_{\alpha-1})+d_q(A_{\alpha-2})+d_q\left(B_{\frac{\alpha}2}\right)+d_q\left(D_{\frac{\alpha+2}2}\right)\\
& =  & \left\{ 
\begin{array}{ll}
z\alpha v-\frac{qvz(z+1)}2+\frac{\alpha v}2 & \textrm{if} \ \delta>0\\
z\alpha v-\frac{qvz(z+1)}2+\frac{\alpha v}2 +2& \textrm{if} \ \delta=0\\
\end{array}
\right.\\
& = & \left\{
\begin{array}{ll}
z(zu+e)-\frac{zu(z+1)}2+\frac{zu+e}2  & \textrm{if} \ \delta>0\\
z(zu+e)-\frac{zu(z+1)}2+\frac{zu+e}2+2 & \textrm{if} \ \delta=0\\
\end{array}
\right.\\
& = & \left\{
\begin{array}{ll}
  \frac12(z^2u+e(2z+1)) & \textrm{if} \ e>0\\
  \frac12(z^2u+e(2z+1))+2 & \textrm{if} \ e=0.
  \end{array}
  \right.  
\end{eqnarray*}
\end{proof}

\subsection{$G$ is  of type $D_\ell$}

Let $G=(D_\ell)_{{\rm s.c}}$ be a simply connected group of type ${D}_{\ell}$ with $\ell \geq 4$ defined over an algebraically closed field $K$ of  characteristic $p$. Here $|\Phi|=2\ell(\ell-1)$, $h=2\ell-2$ and $G={\rm Spin}_{h+2}(K)$  if $p\neq 2$, otherwise $G={\rm SO}_{h+2}(K)$. Given an element $g$ in $G$, we let $\overline{g}$ be the image of $g$ under the canonical surjective map $G\rightarrow {\rm SO}_{h+2}(K)$.  \\

\begin{lem}\label{l:dsc}
Let  $G=(D_{\ell})_{{\rm s.c}}$ where $\ell \geq 4$ be defined over an algebraically closed field $K$ of  characteristic $p$. Let $u=qv$ be a positive integer where $q$ and $v$ are positive integers such that $q$ is a power of $p$ and $p$ does not divide $v$. Write $h=zu+e=\alpha v +\beta$ and $\alpha=zq+\delta$ where $z$, $e$, $\alpha$, $\beta$, $\delta$ are nonnegative integers such that $e<u$, $\beta<v$, and $\delta<q$. Consider the following conditions:\\
\begin{enumerate}[(a)]
\item $\epsilon_v=1$. 
\item $v=2$. 
\item $(\epsilon_v,\epsilon_\alpha)=(0,1)$, $v>2$ and $(v \mod 4,\beta \mod 4,\ell \mod 4) \in \{(0,0,1), (0,2,0), (2,0,0), (2,2,3)\}.$
\item $(\epsilon_v,\epsilon_{\alpha})=(0,1)$ and $(v \mod 4,\beta \mod 4,\ell \mod 4) = (0,2,2).$
\item $(\epsilon_v,\epsilon_{\alpha})=(0,1)$, $v>2$ and $(v \mod 4,\beta \mod 4,\ell \mod 4) = (2,2,1).$
\item $(\epsilon_v,\epsilon_{\alpha})=(0,1)$, $\beta>0$ and $(v \mod 4,\beta \mod 4,\ell \mod 4) = (0,0,3).$
\item $(\epsilon_v,\epsilon_{\alpha})=(0,1)$, $v>2$, $\beta>0$, $\beta \neq v-2$ and $(v \mod 4,\beta \mod 4,\ell \mod 4) = (2,0,2).$
\item $(\epsilon_v,\epsilon_{\alpha})=(0,1)$, $v>2$, $\beta=v-2$ and $(v \mod 4,\beta \mod 4,\ell \mod 4) = (2,0,2).$
\item $(\epsilon_v,\epsilon_{\alpha})=(0,1)$, $\beta=0$ and $(v \mod 4,\beta \mod 4,\ell \mod 4) = (0,0,3).$
\item $(\epsilon_v,\epsilon_{\alpha})=(0,1)$, $v>2$, $\beta=0$ and $(v \mod 4,\beta \mod 4,\ell \mod 4) = (2,0,2).$
\item $(\epsilon_v,\epsilon_\alpha)=(0,0)$, $v>2$ and $(v \mod 4,\beta \mod 4,\ell \mod 4) \in \{(0,0,1), (0,2,0), (2,0,1), (2,2,0)\}.$
\item  $(\epsilon_v,\epsilon_{\alpha})=(0,0)$ and  $(v \mod 4,\beta \mod 4,\ell \mod 4) =(2,2,2)$.
\item  $(\epsilon_v,\epsilon_{\alpha})=(0,0)$ and  $(v \mod 4,\beta \mod 4,\ell \mod 4) =(0,0,3)$.
\item  $(\epsilon_v,\epsilon_{\alpha})=(0,0)$ and  $(v \mod 4,\beta \mod 4,\ell \mod 4) =(0,2,2)$.
\item  $(\epsilon_v,\epsilon_{\alpha})=(0,0)$, $v>2$, $\beta>0$ and  $(v \mod 4,\beta \mod 4,\ell \mod 4) =(2,0,3)$.
\item  $(\epsilon_v,\epsilon_{\alpha})=(0,0)$, $v>2$, $\beta=0$ and  $(v \mod 4,\ell \mod 4) =(2,3)$.
\end{enumerate}
Let $\overline{y}$ be the element of  ${\rm SO}_{h+2}(K)$ of order $v$ defined in Table \ref{t:dsc} (see \S\ref{s:tables}). Then $\overline{y}$ has a preimage $y$ in $G$ of order $v$, and 
$C_{{\rm SO}_{h+2}}(\overline{y})^0$ and $d_q(C_G(y)^0)=d_q(C_{{\rm SO}_{h+2}(K)}(\overline{y})^0)$ are given in Table \ref{t:dsc}. Furthermore $d_q(C_G(y)^0)$ is an upper bound for $d_u(G)$.
\end{lem}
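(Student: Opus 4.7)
The plan is to follow the template of Lemma \ref{l:bsc}, suitably adjusted for the type $D_\ell$ setting where the natural module for ${\rm SO}_{h+2}(K)$ is even-dimensional (with no odd "extra" coordinate). For each of the sixteen cases (a)-(p) the argument proceeds through three parallel steps.

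First, I would check that the diagonal element $\overline{y}$ described in Table \ref{t:dsc} genuinely lies in ${\rm SO}_{h+2}(K)$ and has order exactly $v$. This amounts to verifying that its eigenvalues come in inverse pairs (so that $\overline{y}$ preserves the symmetric form) and that $\overline{y}^v = 1$; both are immediate from the form of the blocks $M_1, \ldots, M_9, (1), (-1)$. Second, I would lift $\overline{y}$ to an element $y \in G={\rm Spin}_{h+2}(K)$ of order $v$ via Lemma \ref{l:soliftspin}. In case (a) where $v$ is odd, the lifting is automatic. In the remaining cases $v$ is even, and one must show $4 \mid N$, where $N$ is the number of eigenvalues $\omega^i$ of $\overline{y}$ with $i$ odd. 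In each case $N$ reduces to an expression of the form $\alpha(v/2)$ or $\alpha(v/2-1) + \text{corrections}$, depending on whether the block $(-1)^{?}$ or the blocks $\omega^{\pm(v/2-1)}$ appear, and the congruence hypotheses on $(v \bmod 4, \beta \bmod 4, \ell \bmod 4)$ stated in the lemma are precisely tailored so that this count is a multiple of~$4$.

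Third, I would read off $C_{{\rm SO}_{h+2}(K)}(\overline{y})^0$ from the eigenvalue multiplicities: each pair of eigenvalue blocks $(\omega^i, \omega^{-i})$ with $i \neq 0$ and common multiplicity $m$ contributes an $A_{m-1}$ factor, the eigenvalues $\pm 1$ with multiplicities $2a$ and $2b$ contribute factors $D_a$ and $D_b$, and the remaining centralizer data is captured by a central torus of rank $v/2-1$ (or $(v-1)/2$ in the odd-$v$ case). This yields the entries of Table \ref{t:dsc}. Using Lemma \ref{l:dufactg}(iii) we identify $d_q(C_G(y)^0) = d_q(C_{{\rm SO}_{h+2}(K)}(\overline{y})^0)$, and the explicit numerical formulas for $d_q(C_G(y)^0)$ then follow by substituting the values of $d_q(A_\alpha)$, $d_q(A_{\alpha-1})$, $d_q(A_{\alpha-2})$, and $d_q(D_k)$ from Lemma \ref{l:lawther} into Proposition \ref{p:prelimdu}(iii), and simplifying via the identities $h = zu+e = \alpha v + \beta$ and $\alpha = zq + \delta$. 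The upper bound $d_u(G) \leq d_q(C_G(y)^0)$ is then immediate from Proposition \ref{p:prelimdu}(i).

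The principal obstacle is bookkeeping across the sixteen cases: each congruence pattern produces a slightly different centralizer, and one must carefully track when the "middle" eigenvalue blocks $\omega^{v/2}=-1$ or $\omega^{\pm(v/2-1)}$ occur. The genuinely delicate point, specific to type $D_\ell$, is the extra correction term $2\varsigma \epsilon_{\gamma(\alpha+1)} \sigma_{\alpha-\gamma q}$ in the formula of Lemma \ref{l:lawther}(iv) for $d_q(D_k)$; this term is what forces the case split between "$\delta > 0$" and "$\delta = 0$" subcases (paralleling cases (f) and (k) in the $B_\ell$ proof) and also explains why cases (g) vs (h), (i) vs (j), and (m) vs (n), (o) vs (p) in the present lemma must be separated rather than collapsed. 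Once these correction terms are handled consistently, the simplification in each case collapses to one of the displayed values $\tfrac{1}{2}(z^2 u + e(2z+1)) + c$ for a small explicit constant $c$ depending on the case.
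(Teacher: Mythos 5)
Your proposal follows the paper's own proof essentially step for step: verify that $\overline{y}$ has order $v$ and lies in ${\rm SO}_{h+2}(K)$, use the eigenvalue count $N$ and Lemma \ref{l:soliftspin} to lift to ${\rm Spin}_{h+2}(K)$ (with the congruence hypotheses guaranteeing $4\mid N$), read off the centralizer from eigenvalue multiplicities, and then compute $d_q(C_G(y)^0)$ via Lemma \ref{l:dufactg}(iii), Lemma \ref{l:lawther} and Proposition \ref{p:prelimdu}(i),(iii). The approach and all key ingredients match the paper's argument; only the routine case-by-case arithmetic remains to be written out.
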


\begin{proof} Note that by Lemma \ref{l:dufactg}(iii), $d_q(C_G(y)^0)=d_q(C_{{\rm SO}_{h+2}(K)}(\overline{y})^0)$. 
Since $h=2\ell-2$, note that if $\epsilon_v=1$ then $\epsilon_\alpha=\epsilon_\beta$ and $\epsilon_u=\epsilon_q$, whereas if $\epsilon_v=0$ then $\epsilon_u=0$, $\epsilon_q=1$ and $\epsilon_\beta=0$. We first determine $C_{{\rm SO}_{h+2}(K)}(\overline{y})^0$. 
Suppose that case (a) holds so that $v$ is odd.  As any eigenvalue of $\overline{y}$ can be paired with its inverse,  $\overline{y}$ is an element of ${\rm SO}_{h+2}(K)$ of order $v$.  Furthermore, since $v$ is odd, by Lemma \ref{l:soliftspin} it follows that $\overline{y}$ has a preimage $y$ in $G$ of order $v$. Also $\overline{y}$ has $\alpha+2-\epsilon_\beta$ eigenvalues equal to $1$, $0$ eigenvalues equal to $-1$, $\alpha+1$ eigenvalues equal to $\omega^i$ where $i$ is any integer with $1 \leq |i| \leq \lceil \beta/2\rceil$, and every other eigenvalue of $\overline{y}$ occurs with multiplicity $\alpha$. Therefore $C_{{\rm SO}_{h+1}(K)}(\overline{y})^0=A_\alpha^{\lceil\frac\beta2\rceil} A_{\alpha-1}^{\frac{v-1}2-\lceil\frac\beta2\rceil}D_{\lceil \frac {\alpha+1}2\rceil}T_{\frac{v-1}2}$. \\
Suppose that case (b) holds so that $v=2$.  As any eigenvalue of $\overline{y}$ can be paired with its inverse,  $\overline{y}$ is an element of ${\rm SO}_{h+2}(K)$ of order $v$. Now the number $N$ of eigenvalues $\omega^i=(-1)^i$ of $\overline{y}$ with $i$ odd  is equal  to $\ell$, $\ell-1$, $\ell+2$ or $\ell +1$ according respectively as $\ell \mod 4$ is  0, 1, 2 or  3. Since $N$ is divisible by 4, it follows from Lemma $\ref{l:soliftspin}$ that $\overline{y}$ has a preimage $y$ in $G$ of order $v$.  Also 
$$ C_{{\rm SO}_{h+2}(K)}(\overline{y})^0=\left\{\begin{array}{ll}D_{\frac{\ell}2}D_{\frac{\ell}2} & \textrm{if} \ \ell \equiv 0 \ (4)\\ 
 D_{\frac{\ell+1}2}D_{\frac{\ell-1}2} & \textrm{if} \ \ell \equiv 1 \ (4)\\ 
D_{\frac{\ell-2}2}D_{\frac{\ell+2}2} & \textrm{if} \ \ell \equiv 2 \ (4)\\ 
D_{\frac{\ell-1}2}D_{\frac{\ell+1}2} & \textrm{if} \ \ell \equiv 3 \ (4).
 \end{array}\right.$$
 Suppose that case (c) holds.  As any eigenvalue of $\overline{y}$  can be paired with its inverse,  $\overline{y}$ is an element of ${\rm SO}_{h+2}(K)$ of order $v$. Let $N$ be  the number of eigenvalues $\omega^i$ of $\overline{y}$ with $i$ odd. Then 
 \begin{eqnarray*}
 N& = &\left\{ \begin{array}{ll} \alpha  \cdot \frac{v}{2}+\frac{\beta}2& \textrm{if} \ (v \mod 4, \beta \mod 4)=(0,0)\\
 \alpha  \cdot \frac{v}{2}+\left(\frac{\beta}2+1\right)  & \textrm{if} \ (v \mod 4, \beta \mod 4)=(0,2)\\
  \alpha  \cdot\left( \frac{v}{2}-1\right)+(\alpha+1)+\frac{\beta}2 & \textrm{if} \ (v \mod 4, \beta \mod 4)=(2,0)\\
  \alpha  \cdot \left( \frac{v}{2}-1\right)+(\alpha+1)+\left(\frac{\beta}2+1\right)  & \textrm{if} \ (v \mod 4, \beta \mod 4)=(2,2)\\
  \end{array}\right. \\ 
&=&  \left\{ \begin{array}{ll} \ell-1 & \textrm{if} \ (v \mod 4, \beta \mod 4) =(0,0)\\
  \ell & \textrm{if} \ (v \mod 4, \beta \mod 4)\in\{(0,2),(2,0)\}\\
    \ell+1 & \textrm{if} \ (v \mod 4, \beta \mod 4) = (2,2).\\
     \end{array}\right. 
 \end{eqnarray*}
 From the assumptions of case (c), it follows that $N$ is divisible by 4. Hence 
  by Lemma $\ref{l:soliftspin}$, $\overline{y}$ has a preimage $y$ in $G$ of order $v$.
 Also $\overline{y}$ has $\alpha+1$ eigenvalues equal to $1$, $\alpha+1$ eigenvalues equal to $-1$, $\alpha+1$ eigenvalues equal to $\omega^i$ where $i$ is any integer with $1  \leq |i| \leq  \beta/2$, and every other eigenvalue of $\overline{y}$ occurs with multiplicity $\alpha$. Therefore $C_{{\rm SO}_{h+2}(K)}(\overline{y})^0=A_\alpha^{\frac\beta2} A_{\alpha-1}^{\frac{v}2-1-\frac\beta2}D_{\frac {\alpha+1}2}D_{\frac{\alpha+1}2}T_{\frac{v}2-1}$. \\
  Suppose that case (d) or (e) holds.   Note that if case (d) holds then $h\equiv 2 \mod 8$ and so $\beta<v-2$ as otherwise $h=(\alpha +1)v-2$ which, under the assumptions of case (d), is equal to 6 modulo 8, a contradiction.  As any eigenvalue of $\overline{y}$  can be paired with its inverse,  $\overline{y}$ is an element of ${\rm SO}_{h+2}(K)$ of order $v$. Let $N$ be the  number of eigenvalues $\omega^i$ of $\overline{y}$ with $i$ odd. Then 
 \begin{eqnarray*}
 N& = &\left\{ \begin{array}{ll} \alpha  \cdot \frac{v}{2}+\left(\frac{\beta}2-1\right)  & \textrm{if} \ (v \mod 4, \beta \mod 4)=(0,2)\\
  \alpha  \cdot \left( \frac{v}{2}-1\right)+(\alpha+1)+\left(\frac{\beta}2-1\right)  & \textrm{if} \ (v \mod 4, \beta \mod 4)=(2,2)\\
  \end{array}\right. \\ 
&=&  \left\{ \begin{array}{ll} 
  \ell-2 & \textrm{if} \ (v \mod 4, \beta \mod 4)= (0,2)\\
  \ell-1  & \textrm{if} \ (v \mod 4, \beta \mod 4)=(2,2).\\
  \end{array}\right. \\ 
 \end{eqnarray*}
 From the assumptions of case (d)  or (e), it follows that $N$ is divisible by 4. Hence 
  by Lemma $\ref{l:soliftspin}$, $\overline{y}$ has a preimage $y$ in $G$ of order $v$.
 Also $\overline{y}$ has $\alpha+1$ eigenvalues equal to $1$, $\alpha+1$ eigenvalues equal to $-1$, $\alpha+1$ eigenvalues equal to $\omega^i$ where if case (d) holds then $i$ is any integer with $1 \leq |i| \leq  \beta/2-1$ or $|i|=v/2-2$ and if case $(e)$ holds then $i$ any  integer with $1  \leq |i| \leq  \beta/2-1$ or $|i|=v/2-1$, and every other eigenvalue of $\overline{y}$ occurs with multiplicity $\alpha$. Therefore $C_{{\rm SO}_{h+2}(K)}(\overline{y})^0=A_\alpha^{\frac\beta2} A_{\alpha-1}^{\frac{v}2-1-\frac\beta2}D_{\frac {\alpha+1}2}D_{\frac{\alpha+1}2}T_{\frac{v}2-1}$. \\
 Suppose that case (f) holds.  As any eigenvalue of $\overline{y}$  can be paired with its inverse,  $\overline{y}$ is an element of ${\rm SO}_{h+2}(K)$ of order $v$. Let $N$ be  the number of eigenvalues $\omega^i$ of $\overline{y}$ with $i$ odd. Then 
 \begin{eqnarray*}
 N& = &\left\{ \begin{array}{ll} \alpha  \cdot \frac{v}{2}+\left(\frac{\beta}2-2\right)& \textrm{if} \ \beta \neq v-4 \\
 \alpha  \cdot \frac{v}{2}+\frac{v}2  & \textrm{if} \ \beta=v-4\\
  \end{array}\right. \\ 
&=&  \left\{ \begin{array}{ll} \ell-3 & \textrm{if} \ \beta\neq v-4\\
  \ell+1& \textrm{if} \ \beta=v-4. \end{array}\right.
 \end{eqnarray*}
 From the assumptions of case (f), it follows that $N$ is divisible by 4. Hence 
  by Lemma $\ref{l:soliftspin}$, $\overline{y}$ has a preimage $y$ in $G$ of order $v$.
 Also $\overline{y}$ has $\alpha+1$ eigenvalues equal to $1$, $\alpha+1$ eigenvalues equal to $-1$, $\alpha+1$ eigenvalues equal to $\omega^i$ where if $\beta \neq v-4$ then  $i$ is any integer with $2  \leq |i| \leq  \beta/2$ or $|i|=v/2-2$ and if $\beta = v-4$ then  $i$ is any integer with $1  \leq |i| \leq  v/2-3$ or $|i|=v/2-1$, and every other eigenvalue of $\overline{y}$ occurs with multiplicity $\alpha$. Therefore $C_{{\rm SO}_{h+2}(K)}(\overline{y})^0=A_{\alpha}^{\frac \beta2} A_{\alpha-1}^{\frac v2-1-\frac \beta2}D_{\frac {\alpha+1}2}D_{\frac{\alpha+1}2}T_{\frac{v}2-1}$. \\ 
Suppose that case (g) holds.  As any eigenvalue of $\overline{y}$  can be paired with its inverse,  $\overline{y}$ is an element of ${\rm SO}_{h+2}(K)$ of order $v$. Let $N$ be  the number of eigenvalues $\omega^i$ of $\overline{y}$ with $i$ odd. Then 
 \begin{eqnarray*}
 N& = &\alpha \cdot \left(\frac v2-1\right)+\left(\frac{\beta}2-2\right)+(\alpha+1)\\
&=& \ell -2. \\ 
 \end{eqnarray*}
 From the assumptions of case (g), it follows that $N$ is divisible by 4. Hence 
  by Lemma $\ref{l:soliftspin}$, $\overline{y}$ has a preimage $y$ in $G$ of order $v$.
 Also $\overline{y}$ has $\alpha+1$ eigenvalues equal to $1$, $\alpha+1$ eigenvalues equal to $-1$, $\alpha+1$ eigenvalues equal to $\omega^i$ where $i$ is any integer with $2  \leq |i| \leq  \beta/2$ or $|i|=v/2-1$, and every other eigenvalue of $\overline{y}$ occurs with multiplicity $\alpha$. Therefore $C_{{\rm SO}_{h+2}(K)}(\overline{y})^0=A_{\alpha}^{\frac \beta2} A_{\alpha-1}^{\frac v2-1-\frac \beta2}D_{\frac {\alpha+1}2}D_{\frac{\alpha+1}2}T_{\frac{v}2-1}$. \\
 Suppose that case (h) holds. As any eigenvalue of $\overline{y}$  can be paired with its inverse,  $\overline{y}$ is an element of ${\rm SO}_{h+2}(K)$ of order $v$. Let $N$ be  the number of eigenvalues $\omega^i$ of $\overline{y}$ with $i$ odd. Then 
 $$ N= \alpha\cdot\left( \frac v2-1\right) + \frac \beta2+(\alpha+3)= \ell+2.$$
 From the assumptions of case (h), it follows that $N$ is divisible by 4. Hence 
  by Lemma $\ref{l:soliftspin}$, $\overline{y}$ has a preimage $y$ in $G$ of order $v$.
 Also $\overline{y}$ has $\alpha+1$ eigenvalues equal to $1$, $\alpha+3$ eigenvalues equal to $-1$, $\alpha+1$ eigenvalues equal to $\omega^i$ where $i$ is any integer with $1  \leq |i| \leq  v/2-2$,  and every other eigenvalue of $\overline{y}$ occurs with multiplicity $\alpha$. Therefore $C_{{\rm SO}_{h+2}(K)}(\overline{y})^0=A_{\alpha}^{\frac{v}2-2}A_{\alpha-1}D_{\frac {\alpha+1}2}D_{\frac{\alpha+3}2}T_{\frac{v}2-1}$. \\  
 Suppose that case (i) holds. As any eigenvalue of $\overline{y}$  can be paired with its inverse,  $\overline{y}$ is an element of ${\rm SO}_{h+2}(K)$ of order $v$. Let $N$ be  the number of eigenvalues $\omega^i$ of $\overline{y}$ with $i$ odd. Then 
 $$ N= (\alpha-1)\cdot\frac v2 + \left(\frac v2-2\right)+4= \ell+1.$$
 From the assumptions of case (i), it follows that $N$ is divisible by 4. Hence 
  by Lemma $\ref{l:soliftspin}$, $\overline{y}$ has a preimage $y$ in $G$ of order $v$.
 Also $\overline{y}$ has $\alpha+1$ eigenvalues equal to $1$, $\alpha-1$ eigenvalues equal to $-1$, $\alpha$ eigenvalues equal to $\omega^i$ where $i$ is any integer with $2  \leq |i| \leq  v/2-1$,  and every other eigenvalue of $\overline{y}$ occurs with multiplicity $\alpha+1$. Therefore $C_{{\rm SO}_{h+2}(K)}(\overline{y})^0=A_{\alpha}A_{\alpha-1}^{\frac{v}2-2}D_{\frac {\alpha+1}2}D_{\frac{\alpha-1}2}T_{\frac{v}2-1}$. \\  
  Suppose that case (j) holds.  As any eigenvalue of $\overline{y}$ can be paired with its inverse,  $\overline{y}$ is an element of ${\rm SO}_{h+2}(K)$ of order $v$. Let $N$ be  the number of eigenvalues $\omega^i$ of $\overline{y}$ with $i$ odd. Then 
 $$N= (\alpha-1)\cdot\left(\frac v2-1 \right)+ \left(\frac v2-1\right)+2+(\alpha+1)= \ell+2.$$
 From the assumptions of case (j), it follows that $N$ is divisible by 4. Hence 
  by Lemma $\ref{l:soliftspin}$, $\overline{y}$ has a preimage $y$ in $G$ of order $v$.
 Also $\overline{y}$ has $\alpha+1$ eigenvalues equal to $1$, $\alpha+1$ eigenvalues equal to $-1$, $\alpha+1$ eigenvalues equal to $\omega$, $\alpha+1$ eigenvalues equal to $\omega^{-1}$, $\alpha$ eigenvalues equal to $\omega^{i}$  where  $i$ is any integer with $2 \leq |i| \leq  v/2-2$, $\alpha-1$ eigenvalues equal to $\omega^{\frac v2-1}$ and $\alpha-1$ eigenvalues equal to $\omega^{-(\frac{v}2-1)}$. Therefore $C_{{\rm SO}_{h+2}(K)}(\overline{y})^0=A_\alpha A_{\alpha-1}^{\frac v2-3} A_{\alpha-2}D_{\frac {\alpha+1}2}D_{\frac{\alpha+1}2}T_{\frac{v}2-1}$. \\
 Suppose that case (k) holds.  As any eigenvalue of $\overline{y}$  can be paired with its inverse,  $\overline{y}$ is an element of ${\rm SO}_{h+2}(K)$ of order $v$. Let $N$ be  the number of eigenvalues $\omega^i$ of $\overline{y}$ with $i$ odd. Then 
 \begin{eqnarray*}
 N& = &\left\{ \begin{array}{ll} \alpha  \cdot \frac{v}{2}+\frac{\beta}2& \textrm{if} \ (v \mod 4, \beta \mod 4)=(0,0)\\
 \alpha  \cdot \frac{v}{2}+\left(\frac{\beta}2+1\right)  & \textrm{if} \ (v \mod 4, \beta \mod 4)=(0,2)\\
  \alpha  \cdot\left( \frac{v}{2}-1\right)+\alpha+\frac{\beta}2 & \textrm{if} \ (v \mod 4, \beta \mod 4)=(2,0)\\
  \alpha  \cdot \left( \frac{v}{2}-1\right)+\alpha+\left(\frac{\beta}2+1\right)  & \textrm{if} \ (v \mod 4, \beta \mod 4)=(2,2)\\
  \end{array}\right. \\ 
&=&  \left\{ \begin{array}{ll} \ell-1 & \textrm{if} \ (v \mod 4, \beta \mod 4) \in \{(0,0), (2,0)\}\\
  \ell & \textrm{if} \ (v \mod 4, \beta \mod 4)\in\{(0,2),(2,2)\}.\\
     \end{array}\right. 
 \end{eqnarray*}

 From the assumptions of case (k), it follows that $N$ is divisible by 4. Hence 
  by Lemma $\ref{l:soliftspin}$, $\overline{y}$ has a preimage $y$ in $G$ of order $v$.
 Also $\overline{y}$ has $\alpha+2$ eigenvalues equal to $1$, $\alpha$ eigenvalues equal to $-1$, $\alpha+1$ eigenvalues equal to $\omega^i$ where $i$ is any integer with $1  \leq |i| \leq  \beta/2$, and every other eigenvalue of $\overline{y}$ occurs with multiplicity $\alpha$. Therefore $C_{{\rm SO}_{h+2}(K)}(\overline{y})^0=A_{\alpha}^{\frac \beta2}A_{\alpha-1}^{\frac v2-1-\frac \beta2} D_{\frac {\alpha}2}D_{\frac{\alpha+2}2}T_{\frac{v}2-1}$. \\  
  Suppose that case (l) holds.  As any eigenvalue of $\overline{y}$ can be paired with its inverse,  $\overline{y}$ is an element of ${\rm SO}_{h+2}(K)$ of order $v$. Let $N$ be  the number of eigenvalues $\omega^i$ of $\overline{y}$ with $i$ odd. Then 
 $$N= \alpha\cdot\left(\frac v2-1 \right)+\alpha+ \left(\frac \beta2-1\right)= \ell-2.$$
 From the assumptions of case (l), it follows that $N$ is divisible by 4. Hence 
  by Lemma $\ref{l:soliftspin}$, $\overline{y}$ has a preimage $y$ in $G$ of order $v$.
 Also $\overline{y}$ has $\alpha+2$ eigenvalues equal to $1$, $\alpha$ eigenvalues equal to $-1$, $\alpha+1$ eigenvalues equal to $\omega^{i}$  where  $i$ is any integer with $1 \leq |i| \leq  \beta/2-1$ or $|i|=v/2-1$, and  every other eigenvalue of $\overline{y}$ occurs with multiplicity $\alpha$. Therefore $C_{{\rm SO}_{h+2}(K)}(\overline{y})^0=A_{\alpha}^{\frac \beta2}A_{\alpha-1}^{\frac v2-1-\frac \beta2} D_{\frac {\alpha}2}D_{\frac{\alpha+2}2}T_{\frac{v}2-1}$. \\
 Suppose that case (m) holds. As any eigenvalue of $\overline{y}$ can be paired with its inverse,  $\overline{y}$ is an element of ${\rm SO}_{h+2}(K)$ of order $v$. Let $N$ be the number of eigenvalues $\omega^i$ of $\overline{y}$ with $i$ odd. Then 
  \begin{eqnarray*}
 N& = &\left\{ \begin{array}{ll} \alpha  \cdot \frac{v}{2}+\left(\frac{\beta}2-2\right)& \textrm{if} \ \beta \neq v-4\\
 \alpha  \cdot \frac{v}{2}+\left(\frac{\beta}2+2\right)  & \textrm{if} \ \beta=v-4\\
  \end{array}\right. \\ 
&=&  \left\{ \begin{array}{ll} \ell-3 & \textrm{if} \ \beta \neq v-4\\
  \ell+1 & \textrm{if} \ \beta=v-4.\\
     \end{array}\right. 
 \end{eqnarray*}
 From the assumptions of case (m), it follows that $N$ is divisible by 4. Hence 
  by Lemma $\ref{l:soliftspin}$, $\overline{y}$ has a preimage $y$ in $G$ of order $v$.
 Also if $\beta \neq v-4$ then $\overline{y}$ has $\alpha+2$ eigenvalues equal to $1$, $\alpha$ eigenvalues equal to $-1$, $\alpha+1$ eigenvalues equal to $\omega^i$ where $i$ is any integer with $2  \leq |i| \leq  \beta/2$ or $|i|=v/2-2$, and every other eigenvalue of $\overline{y}$ occurs with multiplicity $\alpha$. On the other hand if $\beta=v-4$ then $\overline{y}$ has $\alpha+2$ eigenvalues equal to $1$, $\alpha$ eigenvalues equal to $-1$, $\alpha+1$ eigenvalues equal to $\omega^i$ where $i$ is any integer with $1  \leq |i| \leq  \beta/2-1$ or $|i|=\beta/2+1$, and every other eigenvalue of $\overline{y}$ occurs with multiplicity $\alpha$.  Therefore in both cases $C_{{\rm SO}_{h+2}(K)}(\overline{y})^0=A_\alpha^{\frac\beta2} A_{\alpha-1}^{\frac{v}2-1-\frac\beta2}D_{\frac {\alpha}2}D_{\frac{\alpha+2}2}T_{\frac{v}2-1}$. \\
Suppose that case (n) holds. As any eigenvalue of $\overline{y}$ can be paired with its inverse,  $\overline{y}$ is an element of ${\rm SO}_{h+2}(K)$ of order $v$. Let $N$ be the number of eigenvalues $\omega^i$ of $\overline{y}$ with $i$ odd. Then 
 $$N =  \alpha  \cdot \frac{v}{2}+\left(\frac{\beta}2-1\right)=\ell-2.$$ 
 From the assumptions of case (n), it follows that $N$ is divisible by 4. Hence 
  by Lemma $\ref{l:soliftspin}$, $\overline{y}$ has a preimage $y$ in $G$ of order $v$.
 Also if $\beta \neq v-2$ then $\overline{y}$ has $\alpha+2$ eigenvalues equal to $1$, $\alpha$ eigenvalues equal to $-1$, $\alpha+1$ eigenvalues equal to $\omega^i$ where $i$ is any integer with $1  \leq |i| \leq  \beta/2-1$ or $|i|=v/2-2$, and every other eigenvalue of $\overline{y}$ occurs with multiplicity $\alpha$. On the other hand if $\beta=v-2$ then $\overline{y}$ has $\alpha+2$ eigenvalues equal to $1$, $\alpha+2$ eigenvalues equal to $-1$, $\alpha+1$ eigenvalues equal to $\omega^i$ where $i$ is any integer with $1  \leq |i| \leq  v/2-2$, and every other eigenvalue of $\overline{y}$ occurs with multiplicity $\alpha$.  Therefore 
 $$C_{{\rm SO}_{h+2}(K)}(\overline{y})^0=
 \left\{ \begin{array}{ll}
 A_\alpha^{\frac\beta2} A_{\alpha-1}^{\frac{v}2-1-\frac\beta2}D_{\frac {\alpha}2}D_{\frac{\alpha+2}2}T_{\frac{v}2-1} & \textrm{if} \ \beta \neq v-2\\
 A_\alpha^{\frac v2-2} A_{\alpha-1}D_{\frac {\alpha+2}2}D_{\frac{\alpha+2}2}T_{\frac{v}2-1} & \textrm{if} \ \beta=v-2.
 \end{array} \right. 
 $$
 Suppose that case (o) holds. As any eigenvalue of $\overline{y}$ can be paired with its inverse,  $\overline{y}$ is an element of ${\rm SO}_{h+2}(K)$ of order $v$. Let $N$ be  the number of eigenvalues $\omega^i$ of $\overline{y}$ with $i$ odd. Then 
 \begin{eqnarray*}
 N & = &\left\{\begin{array}{ll}   \alpha  \cdot \left(\frac{v}{2}-1\right)+\left(\frac{\beta}2-2\right)+\alpha & \textrm{if} \ \beta\neq v-2\\
   \alpha  \cdot \left(\frac{v}{2}-1\right)+\frac{\beta}2+(\alpha+2) & \textrm{if} \ \beta = v-2\\
   \end{array}\right. \\
   & = & \left\{\begin{array}{ll}
  \ell-3 & \textrm{if} \ \beta\neq v-2\\
  \ell+1 & \textrm{if} \ \beta = v-2.\\
   \end{array}\right. \\
 \end{eqnarray*} 
 From the assumptions of case (o), it follows that $N$ is divisible by 4. Hence 
  by Lemma $\ref{l:soliftspin}$, $\overline{y}$ has a preimage $y$ in $G$ of order $v$.
 Also if $\beta \neq v-2$ then $\overline{y}$ has $\alpha+2$ eigenvalues equal to $1$, $\alpha$ eigenvalues equal to $-1$, $\alpha+1$ eigenvalues equal to $\omega^i$ where $i$ is any integer with $2  \leq |i| \leq  \beta/2$ or $|i|=v/2-1$, and every other eigenvalue of $\overline{y}$ occurs with multiplicity $\alpha$. On the other hand if $\beta=v-2$ then $\overline{y}$ has $\alpha+2$ eigenvalues equal to $1$, $\alpha+2$ eigenvalues equal to $-1$, $\alpha+1$ eigenvalues equal to $\omega^i$ where $i$ is any integer with $1  \leq |i| \leq  v/2-2$, and every other eigenvalue of $\overline{y}$ occurs with multiplicity $\alpha$.  Therefore 
 $$C_{{\rm SO}_{h+2}(K)}(\overline{y})^0=
 \left\{ \begin{array}{ll}
 A_\alpha^{\frac\beta2} A_{\alpha-1}^{\frac{v}2-1-\frac\beta2}D_{\frac {\alpha}2}D_{\frac{\alpha+2}2}T_{\frac{v}2-1} & \textrm{if} \ \beta \neq v-2\\
 A_\alpha^{\frac v2-2} A_{\alpha-1}D_{\frac {\alpha+2}2}D_{\frac{\alpha+2}2}T_{\frac{v}2-1} & \textrm{if} \ \beta=v-2.
 \end{array} \right. 
 $$
Suppose that case (p) holds. As any eigenvalue of $\overline{y}$ can be paired with its inverse,  $\overline{y}$ is an element of ${\rm SO}_{h+2}(K)$ of order $v$. Let $N$ be  the number of eigenvalues $\omega^i$ of $\overline{y}$ with $i$ odd. Then 
$$ N= \alpha\cdot\left(\frac v2-1\right)+(\alpha+2)=\ell+1.$$
From the assumptions of case (p), it follows that $N$ is divisible by 4. Hence 
  by Lemma $\ref{l:soliftspin}$, $\overline{y}$ has a preimage $y$ in $G$ of order $v$.
 Also  $\overline{y}$ has $\alpha$ eigenvalues equal to $1$, $\alpha+2$ eigenvalues equal to $-1$, and every other eigenvalue of $\overline{y}$ occurs with multiplicity $\alpha$. Therefore
   $$C_{{\rm SO}_{h+2}(K)}(\overline{y})^0=A_{\alpha-1}^{\frac v2-1} D_{\frac {\alpha}2}D_{\frac{\alpha+2}2}T_{\frac{v}2-1} .$$
 By Proposition \ref{p:prelimdu}(i), $d_u(G)\leq d_q(C_G(y)^0)$ and so $d_q(C_G(y)^0)$ is an upper bound for $d_u(G)$. It remains to calculate $d_q(C_{{\rm SO}_{h+2}}(\overline{y})^0)=d_q(C_G(y)^0)$.\\

It follows from Lemma \ref{l:lawther} that 
 $$d_q(A_{\alpha}) = z^2q +(2z+1)(\alpha-zq+1)-1,$$
$$d_q(A_{\alpha-1})=z^2q+(2z+1)(\alpha-zq)-1,$$
$$d_q(A_{\alpha-2})=\left\{\begin{array}{ll}
z^2q+(2z+1)(\alpha-1-zq)-1 & \textrm{if} \ \delta >0\\
(z-1)^2q+(2z-1)(\alpha-1-(z-1)q)-1 & \textrm{if} \ \delta=0,
 \end{array}\right.$$
$$d_q\left(D_{\lceil\frac{\alpha+1}2\rceil}\right)=
\frac{z^2q}{2}+(2z+1)\left(\left\lceil\frac{\alpha+1}{2}\right\rceil-\frac{zq}2\right)+\left\lceil \frac z2\right\rceil\epsilon_q-z-\epsilon_z,$$
if $\alpha$ is odd then 
$$d_q\left(D_{\frac{\alpha-1}2}\right)=
\left\{\begin{array}{ll}
\frac{z^2q}{2}+(2z+1)\left(\frac{\alpha-1}{2}-\frac{zq}2\right)+\left\lceil \frac z2\right\rceil\epsilon_q - z - \epsilon_z& \textrm{if} \  \delta>0\\
\frac{(z-1)^2q}{2}+(2z-1)\left(\frac{\alpha-1}{2}-\frac{(z-1)q}2\right)+\frac {z-1}2\epsilon_q-(z-1)& \textrm{if} \ \delta=0,
\end{array}\right.$$
$$d_q\left(D_{\frac{\alpha+3}2}\right)=
\left\{\begin{array}{ll}
\frac{z^2q}{2}+(2z+1)\left(\frac{\alpha+3}{2}-\frac{zq}2\right)+\left\lceil \frac z2\right\rceil\epsilon_q - z - \epsilon_z& \textrm{if} \  \delta<q-1\\
\frac{(z+1)^2q}{2}+(2z+3)\left(\frac{\alpha+3}{2}-\frac{(z+1)q}2\right)+\frac {z+1}2\epsilon_q-(z+1)-\epsilon_{z+1}& \textrm{if} \ \delta=q-1,
\end{array}\right.$$
and if $\alpha$ is even then
$$d_q\left(D_{\frac{\alpha}2}\right)=\left\{\begin{array}{ll}
\frac{z^2q}{2}+(2z+1)\left(\frac{\alpha}{2}-\frac{zq}2\right)+\left\lceil \frac z2\right\rceil\epsilon_q-z-\epsilon_z & \textrm{if} \ \delta>0 \ \textrm{or} \ z \  \textrm{is even}\\
\frac{z^2q}{2}+(2z+1)\left(\frac{\alpha}{2}-\frac{zq}2\right)+\left\lceil \frac z2\right\rceil\epsilon_q-z-\epsilon_z+2 & \textrm{otherwise}.  
\end{array}\right.
$$

Assume first that case (a) holds so that  $\epsilon_v=1$. Recall that  $\epsilon_u=\epsilon_q$ and $\epsilon_\alpha=\epsilon_\beta$. We  have $$C_{{\rm SO}_{h+2}(K)}(\overline{y})^0=A_\alpha^{\lceil\frac\beta2\rceil} A_{\alpha-1}^{\frac{v-1}2-\lceil\frac\beta2\rceil}D_{\lceil \frac {\alpha+1}2\rceil}T_{\frac{v-1}2}.$$
By Proposition \ref{p:prelimdu}(iii),
\begin{eqnarray*}
d_q(C_{{\rm SO}_{h+2}(K)}(\overline{y})^0) & = & \frac{v-1}2+d_q\left(A_\alpha^{\lceil\frac\beta2\rceil} A_{\alpha-1}^{\frac{v-1}2-\lceil\frac\beta2\rceil}D_{\lceil \frac {\alpha+1}2\rceil}\right)\\
& = & \frac{v-1}2+\left\lceil \frac \beta2 \right \rceil d_q(A_\alpha)+\left(\frac{v-1}2-\left \lceil \frac \beta2 \right \rceil\right)d_q(A_{\alpha-1})+d_q\left(D_{\lceil \frac{\alpha+1}2\rceil}\right)\\
& =  &z(\alpha v+ \beta)-\frac{qvz(z+1)}2+\frac{\alpha v +\beta}2+\left\lceil\frac{z}2\right\rceil\epsilon_q+z+1-\epsilon_z\\
& = & z(zu+e)-\frac{zu(z+1)}2+\frac{zu+e}2+\left\lceil\frac{z}2\right\rceil\epsilon_q+z+1-\epsilon_z\\
& = &  \frac12(z^2u+e(2z+1))+\left\lceil\frac{z}2\right\rceil\epsilon_u+z+1-\epsilon_z.
\end{eqnarray*}

Assume that   case (b) holds so that $v=2$, $\epsilon_u=\epsilon_v=0$ and $\epsilon_q=1$. Recall that $\alpha=\ell-1$ and $\beta=0$.  Note also that $e=0$ if and only if $\delta=0$. We have 
 $$C_{{\rm SO}_{h+2}(K)}(\overline{y})^0= \left\{ \begin{array}{ll}
 D_{\frac{\alpha}2}D_{\frac{\alpha+2}2} & \textrm{if} \ \alpha \equiv 0 \ (2)\\ 
 D_{\frac{\alpha-1}2}D_{\frac{\alpha+3}2} & \textrm{if} \ \alpha \equiv 1 \ (4)\\ 
D_{\frac{\alpha+1}2}D_{\frac{\alpha+1}2} & \textrm{if} \ \alpha \equiv 3 \ (4).\\ 
\end{array}\right.
$$
If $\alpha \equiv 0 \mod 2$ then $z$ is even if $\delta=0$, and by Proposition \ref{p:prelimdu}(iii)
\begin{eqnarray*}
d_q(C_{{\rm SO}_{h+2}(K)}(\overline{y})^0) & = &  d_q(D_{\frac{\alpha}2}D_{\frac{\alpha+2}2})\\
& = & d_q(D_{\frac{\alpha}2})+d_q(D_{\frac{\alpha+2}2})\\
& = &   -qz(z+1)+2\alpha z+ \alpha+z+1-\epsilon_z\\
& = & \frac{-qvz(z+1)}2+(zu+e)z+\frac{(zu+e)}2+z+1-\epsilon_z\\ 
& = &  \frac{-zu(z+1)}2+(zu+e)z+\frac{(zu+e)}2+z+1-\epsilon_z\\  
& = & \frac{z^2u+e(2z+1)}2+z+1-\epsilon_z. 
\end{eqnarray*}
If $\alpha \equiv 1 \mod 4$ then  by Proposition \ref{p:prelimdu}(iii)
\begin{eqnarray*}
d_q(C_{{\rm SO}_{h+2}(K)}(\overline{y})^0) & = &  d_q(D_{\frac{\alpha-1}2}D_{\frac{\alpha+3}2})\\
& = & d_q(D_{\frac{\alpha-1}2})+d_q(D_{\frac{\alpha+3}2})\\
& = &  \left\{\begin{array}{ll} -z^2+2\alpha z+\alpha+4& \textrm{if} \ \delta=q-1=0 \\
-qz^2+(2\alpha+1) z +2& \textrm{if} \ \delta=0\neq q-1\\
-qz^2+(2\alpha-2q+1)z+2\alpha-q+3& \textrm{if} \ \delta=q-1\neq 0 \\
-qz^2+(2\alpha-q+1) z +\alpha+1-\epsilon_z& \textrm{if} \ \delta\not \in \{0, q-1\} 
 \end{array}\right. \\
 & = &  \left\{\begin{array}{ll} \frac{-zu(z+1)}2+(zu+e) z+\frac{zu+e}2+z+1-\epsilon_z+4& \textrm{if} \ u=2 \\
\frac{-qvz(z+1)}2+\alpha v z+ \alpha+z+1-\epsilon_z+2& \textrm{if} \ e=0\neq u-v\\
\frac{-qvz(z+1)}2+\alpha v z+\alpha+z+1-\epsilon_z+2& \textrm{if} \ e=u-v\neq 0 \\
\frac{-qvz(z+1)}2+\alpha vz +\alpha+z+1-\epsilon_z&   \textrm{otherwise}
 \end{array}\right. \\
 & = &  \left\{\begin{array}{ll} \frac{-zu(z+1)}2+(zu+e) z+\frac{zu+e}2+z+1-\epsilon_z+4& \textrm{if} \ u=2 \\
\frac{-zu(z+1)}2+(zu+e)z+\frac{zu+e}2+z+1-\epsilon_z+2& \textrm{if} \ e=0\neq u-v\\
\frac{-zu(z+1)}2+(zu+e)z+\frac{zu+e}2+z+1-\epsilon_z+2& \textrm{if} \ e=u-v\neq 0 \\
\frac{-zu(z+1)}2+(zu+e)z+\frac{zu+e}2+z+1-\epsilon_z&   \textrm{otherwise}
 \end{array}\right. \\
 & = &  \left\{\begin{array}{ll} \frac{z^2u+e(2z+1)}2+z+1-\epsilon_z+4& \textrm{if} \ u=2 \\
 \frac{z^2u+e(2z+1)}2+z+1-\epsilon_z+2& \textrm{if} \ e=0\neq u-v\\
 \frac{z^2u+e(2z+1)}2+z+1-\epsilon_z+2& \textrm{if} \ e=u-v\neq 0 \\
 \frac{z^2u+e(2z+1)}2+z+1-\epsilon_z&   \textrm{otherwise}.
 \end{array}\right. 
\end{eqnarray*}
If $\alpha \equiv 3 \mod 4$ then by Proposition \ref{p:prelimdu}(iii)
\begin{eqnarray*}
d_q(C_{{\rm SO}_{h+2}(K)}(\overline{y})^0) & = &  d_q(D_{\frac{\alpha+1}2}D_{\frac{\alpha+1}2})\\
& = & 2d_q(D_{\frac{\alpha+1}2})\\
& = & -qz(z+1)+2\alpha z+\alpha+z+1-\epsilon_z\\
& = & \frac{-zu(z+1)}2+(zu+e)z+\frac{zu+e}2+z+1-\epsilon_z \\
& = &  \frac{z^2u+e(2z+1)}2+z+1-\epsilon_z. 
\end{eqnarray*}

Assume that case (c), (d), (e), (f) or (g) holds.  Recall that  $\epsilon_q=\epsilon_\alpha=1$ and $\epsilon_u=\epsilon_v=\epsilon_{\beta}=0$. We  have $$C_{{\rm SO}_{h+2}(K)}(\overline{y})^0=A_\alpha^{\frac\beta2} A_{\alpha-1}^{\frac{v}2-1-\frac\beta2}D_{\frac{\alpha+1}2}D_{\frac{\alpha+1}2}T_{\frac{v}2-1}.$$
By Proposition \ref{p:prelimdu}(iii),
\begin{eqnarray*}
d_q(C_{{\rm SO}_{h+2}(K)}(\overline{y})^0) & = & \frac{v}2-1+d_q\left(A_\alpha^{\frac\beta2} A_{\alpha-1}^{\frac{v}2-1-\frac\beta2}D_{\frac{\alpha+1}2}D_{\frac{\alpha+1}2}\right)\\
& = & \frac{v}2-1+\frac\beta2  d_q(A_\alpha)+\left(\frac{v}2-1- \frac \beta2 \right)d_q(A_{\alpha-1})+2d_q\left(D_{\frac{\alpha+1}2}\right)\\
& =  &z(\alpha v+ \beta)-\frac{qvz(z+1)}2+\frac{\alpha v +\beta}2+z+1-\epsilon_z\\
& = & z(zu+e)-\frac{zu(z+1)}2+\frac{zu+e}2+z+1-\epsilon_z\\
& = &   \frac{z^2u+e(2z+1)}2+z+1-\epsilon_z. 
\end{eqnarray*}

Assume that case (h)  holds.  Recall that  $\epsilon_q=\epsilon_\alpha=1$,  $\epsilon_u=\epsilon_v=\epsilon_{\beta}=0$ and $\beta=v-2$. We  have $$C_{{\rm SO}_{h+2}(K)}(\overline{y})^0=A_\alpha^{\frac v2-2} A_{\alpha-1}D_{\frac{\alpha+1}2}D_{\frac{\alpha+3}2}T_{\frac{v}2-1}.$$
By Proposition \ref{p:prelimdu}(iii),
\begin{eqnarray*}
d_q(C_{{\rm SO}_{h+2}(K)}(\overline{y})^0) & = & \frac{v}2-1+d_q\left(A_\alpha^{\frac v2-2} A_{\alpha-1}D_{\frac{\alpha+1}2}D_{\frac{\alpha+3}2}\right)\\
& = & \frac{v}2-1+\left(\frac v2-2\right)  d_q(A_\alpha)+d_q(A_{\alpha-1})+d_q\left(D_{\frac{\alpha+1}2}\right)+d_q\left(D_{\frac{\alpha+3}2}\right)\\
& =  &\left\{ \begin{array}{ll} z(\alpha v+ \beta)-\frac{qvz(z+1)}2+\frac{\alpha v +\beta}2+z+1-\epsilon_z& \textrm{if} \ 
\delta<q-1\\
z(\alpha v+ \beta)-\frac{qvz(z+1)}2+\frac{\alpha v +\beta}2+z+1-\epsilon_z+2&  \textrm{if} \ \delta=q-1\end{array}\right.
\\
& = & \left\{ \begin{array}{ll}  \frac{z^2u+e(2z+1)}2+z+1-\epsilon_z& \textrm{if} \ 
e\neq u-v+\beta\\
 \frac{z^2u+e(2z+1)}2+z+1-\epsilon_z+2& \textrm{if} \ e=u-v+\beta
\end{array}\right.
\end{eqnarray*}

Assume that case (i)  holds.  Recall that  $\epsilon_q=\epsilon_\alpha=1$,  $\epsilon_u=\epsilon_v=\epsilon_{\beta}=0$ and $\beta=0$. We  have $$C_{{\rm SO}_{h+2}(K)}(\overline{y})^0=A_{\alpha}A_{\alpha-1}^{\frac v2-2} D_{\frac{\alpha-1}2}D_{\frac{\alpha+1}2}T_{\frac{v}2-1}.$$
By Proposition \ref{p:prelimdu}(iii),
\begin{eqnarray*}
d_q(C_{{\rm SO}_{h+2}(K)}(\overline{y})^0) & = & \frac{v}2-1+d_q\left(A_{\alpha}A_{\alpha-1}^{\frac v2-2} D_{\frac{\alpha-1}2}D_{\frac{\alpha+1}2}\right)\\
& = & \frac{v}2-1+d_q(A_\alpha)+\left(\frac v2-2\right)  d_q(A_{\alpha-1})+d_q\left(D_{\frac{\alpha-1}2}\right)+d_q\left(D_{\frac{\alpha+1}2}\right)\\
& =  &\left\{ \begin{array}{ll} z(\alpha v+ \beta)-\frac{qvz(z+1)}2+\frac{\alpha v +\beta}2+z+1-\epsilon_z+2& \textrm{if} \ 
\delta=0\\
z(\alpha v+ \beta)-\frac{qvz(z+1)}2+\frac{\alpha v +\beta}2+z+1-\epsilon_z&  \textrm{if} \ \delta\neq0
\end{array}\right.
\\
& = & \left\{ \begin{array}{ll}  \frac{z^2u+e(2z+1)}2+z+1-\epsilon_z +2& \textrm{if} \ 
e=0\\
\frac{z^2u+e(2z+1)}2+z+1-\epsilon_z &  \textrm{otherwise}.
\end{array}\right.
\end{eqnarray*}

Assume that case (j)  holds.  Recall that  $\epsilon_q=\epsilon_\alpha=1$,  $\epsilon_u=\epsilon_v=\epsilon_{\beta}=0$ and $\beta=0$. We  have $$C_{{\rm SO}_{h+2}(K)}(\overline{y})^0=A_{\alpha}A_{\alpha-1}^{\frac v2-3}A_{\alpha-2}D_{\frac{\alpha+1}2}D_{\frac{\alpha+1}2}T_{\frac{v}2-1}.$$
By Proposition \ref{p:prelimdu}(iii),
\begin{eqnarray*}
d_q(C_{{\rm SO}_{h+2}(K)}(\overline{y})^0) & = & \frac{v}2-1+d_q\left(A_{\alpha}A_{\alpha-1}^{\frac v2-3}A_{\alpha-2}D_{\frac{\alpha+1}2}D_{\frac{\alpha+1}2}\right)\\
& = & \frac{v}2-1+d_q(A_{\alpha})+\left(\frac v2-3\right)  d_q(A_{\alpha-1})+d_q(A_{\alpha-2})+2d_q\left(D_{\frac{\alpha+1}2}\right)\\
& =  &\left\{ \begin{array}{ll} z(\alpha v+ \beta)-\frac{qvz(z+1)}2+\frac{\alpha v +\beta}2+z+1-\epsilon_z& \textrm{if} \ 
\delta>0\\
z(\alpha v+ \beta)-\frac{qvz(z+1)}2+\frac{\alpha v +\beta}2+z+1-\epsilon_z+2&  \textrm{if} \ \delta=0\end{array}\right.
\\
& = & \left\{ \begin{array}{ll}  \frac{z^2u+e(2z+1)}2+z+1-\epsilon_z& \textrm{if} \ 
e>0\\
 \frac{z^2u+e(2z+1)}2+z+1-\epsilon_z+2& \textrm{if} \ e=0.
\end{array}\right.
\end{eqnarray*}

Assume that case (k), (l), (m), (n) or (o)  holds, and that $\beta\neq v-2$ if in cases (n) or (o).   Recall that  $\epsilon_q=1$ and $\epsilon_u=\epsilon_v=\epsilon_\alpha=\epsilon_{\beta}=0$. We  have $$C_{{\rm SO}_{h+2}(K)}(\overline{y})^0=A_{\alpha}^\frac\beta2A_{\alpha-1}^{\frac v2-1 -\frac \beta2}D_{\frac{\alpha}2}D_{\frac{\alpha+2}2}T_{\frac{v}2-1}.$$
By Proposition \ref{p:prelimdu}(iii),
\begin{eqnarray*}
d_q(C_{{\rm SO}_{h+2}(K)}(\overline{y})^0) & = & \frac{v}2-1+d_q\left(A_{\alpha}^\frac\beta2A_{\alpha-1}^{\frac v2-1 -\frac \beta2}D_{\frac{\alpha}2}D_{\frac{\alpha+2}2}\right)\\
& = & \frac{v}2-1+\frac\beta2d_q(A_{\alpha})+\left(\frac v2-1-\frac{\beta}2\right)  d_q(A_{\alpha-1})+d_q(D_{\frac{\alpha}2})+d_q\left(D_{\frac{\alpha+2}2}\right)\\
& =  & z(\alpha v+ \beta)-\frac{qvz(z+1)}2+\frac{\alpha v +\beta}2+z+1-\epsilon_z\\
& = & \frac{z^2u+e(2z+1)}2+z+1-\epsilon_z.
\end{eqnarray*}

Assume that case (n) or (o) holds, and $\beta= v-2$. Recall that  $\epsilon_q=1$ and $\epsilon_u=\epsilon_v=\epsilon_\alpha=\epsilon_{\beta}=0$. We  have $$C_{{\rm SO}_{h+2}(K)}(\overline{y})^0=A_{\alpha}^{\frac v2-2}A_{\alpha-1}D_{\frac{\alpha+2}2}D_{\frac{\alpha+2}2}T_{\frac{v}2-1}.$$
By Proposition \ref{p:prelimdu}(iii),
\begin{eqnarray*}
d_q(C_{{\rm SO}_{h+2}(K)}(\overline{y})^0) & = & \frac{v}2-1+d_q\left(A_{\alpha}^{\frac v2-2}A_{\alpha-1}D_{\frac{\alpha+2}2}D_{\frac{\alpha+2}2}\right)\\
& = & \frac{v}2-1+\left(\frac v2-2\right)d_q(A_{\alpha})+  d_q(A_{\alpha-1})+2d_q\left(D_{\frac{\alpha+2}2}\right)\\
& =  & z(\alpha v+ \beta)-\frac{qvz(z+1)}2+\frac{\alpha v +\beta}2+z+1-\epsilon_z\\
& = & \frac{z^2u+e(2z+1)}2+z+1-\epsilon_z.
\end{eqnarray*}

Assume finally that case (p) holds. Recall that  $\epsilon_q=1$, $\epsilon_u=\epsilon_v=\epsilon_\alpha=\epsilon_{\beta}=0$ and $\beta=0$.
We  have $$C_{{\rm SO}_{h+2}(K)}(\overline{y})^0=A_{\alpha-1}^{\frac v2-1}D_{\frac{\alpha}2}D_{\frac{\alpha+2}2}T_{\frac{v}2-1}.$$
By Proposition \ref{p:prelimdu}(iii),
\begin{eqnarray*}
d_q(C_{{\rm SO}_{h+2}(K)}(\overline{y})^0) & = & \frac{v}2-1+d_q\left(A_{\alpha-1}^{\frac v2-1}D_{\frac{\alpha}2}D_{\frac{\alpha+2}2}\right)\\
& = & \frac{v}2-1+\left(\frac v2-1\right)  d_q(A_{\alpha-1})+d_q\left(D_{\frac{\alpha}2}\right)+d_q\left(D_{\frac{\alpha+2}2}\right)\\
& =  & z(\alpha v+ \beta)-\frac{qvz(z+1)}2+\frac{\alpha v +\beta}2+z+1-\epsilon_z\\
& = &  \frac{z^2u+e(2z+1)}2+z+1-\epsilon_z.
\end{eqnarray*}

\end{proof}

\section{Proofs of Theorems \ref{t:blawther}  and   \ref{t:duotsca}}\label{s:pc}

Recall that in \S\ref{s:exceptionalgroups} we proved Theorem \ref{t:blawther} for $G$ of exceptional type, see Propositions \ref{p:e6sc} and \ref{p:e7sc}.   
In this section  given a  positive integer $u$ and  a simple algebraic group $G$ of classical type over an algebraically closed field $K$ of characteristic $p$, we determine $d_u(G)$. 
We begin with the case where $G$ is of simply connected type and complete the proof of Theorem \ref{t:blawther}. We will then consider $G$ of neither  simply connected type nor adjoint type and prove Theorem \ref{t:duotsca}.

\noindent \textit{Proof of Theorem \ref{t:blawther}.}
As noticed at the beginning of this section, following Propositions \ref{p:e6sc} and \ref{p:e7sc} we can assume that $G$ is simply connected of classical type.  By \cite[Theorem 1]{Lawther}, where Lawther determines $d_u(G_{a.})$  and shows that $d_u(G)\geq d_u(G_{a.})$, and  Lemmas \ref{l:asc}, \ref{l:csc}, \ref{l:bsc} and \ref{l:dsc}, where the upper bounds for $d_u(G)$ are determined accordingly respectively as $G$ is of type $A_\ell$, $C_\ell$, $B_\ell$, or $D_\ell$, we get that $d_u(G)=d_u(G_{a.})$ except possibly if one of the  cases in Table \ref{ta:casestoconsider} (see \S\ref{s:tables}) holds.

It remains to show that in the cases appearing in Table \ref{ta:casestoconsider} the upper bounds given for $d_u(G)-d_u(G_{a.})$ are in fact the precise values for  $d_u(G)-d_u(G_{a.})$. As usual, we let $h$ be the Coxeter number of $G$ and  write:  $h=zu+e=\alpha v +\beta$ and $\alpha=zq+\delta$ where $z$, $e$, $\alpha$, $\beta$, $\delta$ are nonnegative integers such that $e<u$, $\beta<v$, and $\delta<q$. Noting that in the cases appearing in Table \ref{ta:casestoconsider}  $u$ is even, we set $s=v/2$. \\
Suppose first that $G$ is of type $A_\ell$.  By Table \ref{ta:casestoconsider}, we have to consider the case where $p\neq 2$, $u$ is even, $h=zu$ and $z$ is odd. Since $e=0$, we have $\beta=\delta=0$.  Also since $p\neq 2$ and $z$ is odd, $q$ and $\alpha$ are also odd. \\
Since  $0\leq d_u(G)-d_u(G_{a.})\leq 2$, by Lemma \ref{l:centmod2} it suffices to show that $d_u(G)-d_u(G_{a.})>0$.
Under the assumptions on $h$, $u$ and $z$, adapting the proof of of \cite[Proposition 3.3]{Lawther}, we obtain that $d_u(G_{a.})$ is attained for an element of $G_{a.}$ whose semisimple part $y$ of order   $v$ has centralizer satisfying
 $C_{G_{a.}}(y)^0=A_{\alpha-1}^v T_{v-1}$. Moreover $d_u(G_{a.})=d_q(A_{\alpha-1}^vT_{v-1})$.  \\
By the assumptions on $h$, $u$ and $z$ it follows  that $y$ is not the image of an element of ${\rm SL}_{h}(K)$ of order dividing $v$  under the canonical map ${\rm SL}_{h}(K)\rightarrow {\rm PSL}_{h}(K)$.  Hence  $d_u(G)-d_u(G_{a.})>0$   and $d_u(G)=d_u(G_{a.})+2$ as required. \\

Suppose now that $G$ is of type $B_\ell$.  By Table \ref{ta:casestoconsider}, we have to consider the case where $p\neq 2$, $u$ is even, $h=zu$ and $z$ is odd unless $u\equiv 2 \mod 4$ and $z\equiv 2 \mod 4$; moreover if $z$ is odd then either $u\equiv 2 \mod 4$ and $z\equiv u/2 \mod 4$, or $u \equiv 4 \mod 8$.  Since $e=0$, we have $\beta=\delta=0$ and $\alpha>0$.  Also since $p\neq 2$, $q$ is also odd. \\
Write $s=v/2$ and let $a$ and $b$ be nonnegative integers such that $\ell-1=as+b$ with $0\leq b\leq s-1$. Under the assumptions on $h$, $u$ and $z$ we have $a=\alpha-1$ and $b=s-1$. \\
Since  $0\leq d_u(G)-d_u(G_{a.})\leq 2$, by Lemma \ref{l:centmod2} it suffices to show that $d_u(G)-d_u(G_{a.})>0$. Under the assumptions on $h$, $u$ and $z$, adapting the proof  of \cite[Proposition 3.6]{Lawther}, we obtain that $d_u(G_{a.})$ is attained for an element of $G_{a.}$ whose semisimple part $y$ of order   $v$ has centralizer satisfying $$C_{G_{a.}}(y)^0=A_{\alpha-1}^{\frac{v}2-1}B_{\lceil \frac {\alpha-1}2 \rceil} D_{\lceil \frac{\alpha}2\rceil} T_{\frac{v}2-1}={\rm GL}_{\alpha}(K)^{\frac{v}2-1}{\rm SO}_{\alpha}(K){\rm SO}_{\alpha+1}(K).$$ Moreover $$d_u(G_{a.})=d_q(A_{\alpha-1}^{\frac{v}2-1}B_{\lceil \frac {\alpha-1}2 \rceil} D_{\lceil \frac{\alpha}2\rceil} T_{\frac{v}2-1}).$$  \\
By the assumptions on $h$, $u$ and $z$ it follows from Lemma \ref{l:soliftspin} that $y$ is not the image of an element of ${\rm Spin}_{h+1}(K)$ of order dividing $v$  under the canonical map ${\rm Spin}_{h+1}(K)\rightarrow {\rm SO}_{h+1}(K)$.  Hence  $d_u(G)-d_u(G_{a.})>0$   and $d_u(G)=d_u(G_{a.})+2$ as required. \\

Suppose now that $G$ is of type $C_\ell$. By Table \ref{ta:casestoconsider}, we have to consider the case where $p\neq 2$ and $u$ is even. 
Let  $y \in G_{[v]}$ be of order $v$ and such that $Z=C_{G}(y)^0$ is of minimal dimension.  We first show that $d_q(Z)-d_u(G_{a.})= 2\lceil z/2\rceil$. \\
 There are essentially three possibilities for the structure of $Z=C_{G}(y)^0$.  Indeed either $y$ has no eigenvalues equal to $1$ or $-1$ in which case $Z$ has no $C$ factors, or $y$ has some eigenvalues equal to $1$ or $-1$ but not both in which case $Z$ has one $C$ factor, or $y$ has some eigenvalues equal to $1$ and $-1$ in which case $Z$ has two $C$ factors. 
Using  Corollary \ref{c:lawther} and arguing in a similar way as in the proof of \cite[Proposition 3.4]{Lawther}  we can assume that either $Z= A_a^bA_{a-1}^{s-1-b}T_{s-1}$ where $a,b$ are nonnegative integers such that $\ell=a(s-1)+b$ and $0\leq b <s-1$, or $Z=A_a^{\lfloor \frac b2\rfloor}A_{a-1}^{s-1-\lfloor \frac b2\rfloor}C_{\lceil \frac a2\rceil}T_{s-1}$ where $a,b$ are nonnegative integers such that $2\ell=a(2s-1)+b$ and $0\leq b < 2s-1$, or $Z=A_a^{b-\epsilon_a(1-\sigma_b)}A_{a-1}^{s-1-b+\epsilon_a(1-\sigma_b)}C_{\lceil \frac a2\rceil}C_{\lceil \frac{a}2\rceil-\epsilon_a\sigma_b}T_{s-1}$ where  $a,b$ are nonnegative integers such that $\ell=as+b$ and $0\leq b <s$. \\
Suppose first that  $Z= A_a^bA_{a-1}^{s-1-b}T_{s-1}$ where $a,b$ are nonnegative integers such that $\ell=a(s-1)+b$ and $0\leq b <s-1$. Note that $v>2$ and so $s>1$.  Write $a=cq+d$ where $c,d$ are nonnegative integers such that $0\leq d<q$. It follows from Lemma \ref{l:lawther} that $$d_q(Z)=(2c+1)\ell-\frac{c(c+1)u}2+c(c+1)q.$$ An easy check yields  $h=zu+e= c(u-2q)+f$ where $f=d(v-2)+2b$ satisfies $0\leq f \leq u-2q-2$. In particular $c\geq z$.  Write $c=z+j$ where $j\geq 0$. 
 \cite[Theorem 1]{Lawther} which gives the value of $d_u(G_{a.})$ now yields
$$ d_q(Z)-d_u(G_{a.})=\frac{uj}2(j-1)+(z+j)q(z-j+1)+fj.$$  We claim that $z\geq j-1$. Suppose not. Then $c\geq 2z+2$ and so we get
\begin{eqnarray*}
0& = & 2\ell -2\ell \\
& = & (c(u-2q)+f)-(zu+e)\\
 &  \geq & (2z+2)(u-2q)+f-(zu+e)\\
 &  =& z(u-4q)+2(u-2q)+f-e\\
 & \geq & z(u-4q)+2(u-2q)-u+1\\
 & = & z(u-4q)+u-4q+1
\end{eqnarray*}
Since $v>2$, we have $u\geq 4q$ and so  we get $0\geq 1$, a contradiction. Therefore $z \geq j-1$ as claimed. It follows that $$ d_q(Z)-d_u(G_{a.})\geq 2\lceil z/2\rceil.$$  \\
Suppose now that   $Z=A_a^{\lfloor \frac b2\rfloor}A_{a-1}^{s-1-\lfloor \frac b2\rfloor}C_{\lceil \frac a2\rceil}T_{s-1}$ where $a,b$ are nonnegative integers such that $2\ell=a(2s-1)+b$ and $0\leq b < 2s-1$.  Write $a=cq+d$ where $c,d$ are nonnegative integers such that $0\leq d<q$. It follows from Lemma \ref{l:lawther} that $$d_q(Z)=(2c+1)\ell-\frac{c(c+1)u}2+\frac{c(c+1)q}2+\lceil c/2\rceil.$$ An easy check yields  $h=zu+e= c(u-q)+f$ where $f=d(v-1)+b$ satisfies $0\leq f \leq u-q-1$. In particular $c\geq z$.  Write $c=z+j$ where $j\geq 0$. 
 \cite[Theorem 1]{Lawther} which gives the value of $d_u(G_{a.})$ now yields
$$ d_q(Z)-d_u(G_{a.})=\frac{uj}2(j-1)+\frac{(z+j)q(z-j+1)}2+\frac{z+j+\epsilon_{z+j}}2+fj.$$  We claim that $z\geq j-1$. Suppose not. Then $c\geq 2z+2$ and so we get
\begin{eqnarray*}
0& = & 2\ell -2\ell \\
& = & (c(u-q)+f)-(zu+e)\\
 &  \geq & (2z+2)(u-q)+f-(zu+e)\\
 &  =& z(u-2q)+2(u-q)+f-e\\
 & \geq & z(u-2q)+2(u-q)-u+1\\
 & = & z(u-2q)+u-2q+1
\end{eqnarray*}
We have $u\geq 2q$ and so  we get $0\geq 1$, a contradiction. Therefore $z \geq j-1$ as claimed. It follows that $$ d_q(Z)-d_u(G_{a.})\geq 2\lceil z/2\rceil.$$  \\
Suppose finally that $Z=A_a^{b-\epsilon_a(1-\sigma_b)}A_{a-1}^{s-1-b+\epsilon_a(1-\sigma_b)}C_{\lceil \frac a2\rceil}C_{\lceil \frac{a}2\rceil-\epsilon_a\sigma_b}T_{s-1}$ where  $a,b$ are nonnegative integers such that $\ell=as+b$ and $0\leq b <s$. Writing $a=cq+d$ where $c,d$ are nonnegative integers such that $0\leq d<q$ one can show that $c=z$ and Lemma \ref{l:lawther} yields $d_q(Z)-d_u(G_{a.})=2\lceil z/2\rceil$ (see the proof of \cite[Proposition 3.4]{Lawther}) for more details. \\
We have in fact showed that if $G$ is  simply connected of type $C_\ell$, $p\neq 2$, $v$ is even and $y$ is a semisimple element of $G$ of order $v$ having  centralizer $Z$  of minimal dimension then $d_q(Z)=d_{qv}(G_{a.})+2\lceil z/2 \rceil$ where $z=\lfloor h/(qv)\rfloor$.  Since $$d_u(G)=\min_{y\in {G}_{[v]}} \ d_q(C_{G}(y)^0)$$ and   for any divisor $r$ of $v$ we have $\lfloor rh/(qv) \rfloor \geq \lfloor  h/qv\rfloor$  and, by \cite[Lemma 1.3]{Lawther}, $d_{qv/r}(G_{a.}) \geq d_{qv}(G_{a.})$, we deduce that $d_u(G)=d_q(Z)=d_u(G_{a.})+2\lceil z/2\rceil$. \\

Suppose finally  that $G$ is of type $D_\ell$.  By Table \ref{ta:casestoconsider}, we have to consider the case where $p\neq 2$, $z$ is odd, and   $u\equiv 2 \mod 4$  or $u \equiv 4 \mod 8$. Moreover if $u\equiv 4 \mod 8$ then $h=zu$. Finally if $u \equiv 2 \mod 4$ then $h=zu$ and $z\equiv u/2 \mod 4$, or $e=u-2\neq 0$ and $z\equiv 1 \mod 4$.  Since $v$ is even, $\beta$ is also even. Also since $p\neq 2$, $q$ is odd.  Finally under the assumptions on $h$, $u$ and $z$, $\alpha$ is odd.\\
Write $s=v/2$ and let $a$ and $b$ be nonnegative integers such that $\ell-1=as+b$ with $0\leq b\leq s-1$. We have $a=\alpha$ is odd and $b=\beta/2$. \\
Suppose first that $u=2$ so that $q=1$ and $v=2$.  Corollary \ref{c:lawther} and Lemma \ref{l:soliftspin}  yield that an element $y$ of $G$ of order $u$ with centralizer of minimal dimension  has  centralizer with connected component $D_{(z+3)/2}D_{(z-1)/2}$ and $d_u(G)=d_u(G_{a.})+4$.  \\
Suppose now that $u>2$. Then $0\leq d_u(G)-d_u(G_{a.})\leq 2$, and so by Lemma \ref{l:centmod2} it suffices to show that $d_u(G_{s.c.})-d_u(G_{a.})>0$. 
Under the assumptions on $h$, $u$ and $z$, adapting the proof of  \cite[Proposition 3.5]{Lawther}, we obtain that $d_u(G_{a.})$ is attained for an element of $G_{a.}$ whose semisimple part $y$ of order   $v$ has centralizer satisfying
 $$C_{G_{a.}}(s)^0=A_{\alpha}^\frac \beta 2A_{\alpha-1}^{\frac{v}2-1-\frac{\beta}2}D_{ \frac {\alpha+1}2 } D_{ \frac{\alpha+1}2} T_{\frac{v}2-1}={\rm GL}_{\alpha+1}^{\frac \beta 2}{\rm GL}_{\alpha}(K)^{\frac{v}2-1-\frac \beta 2}{\rm SO}_{\alpha+1}(K){\rm SO}_{\alpha+1}(K).$$ Moreover $$d_u(G_{a.})=d_q(A_{\alpha}^\frac \beta 2A_{\alpha-1}^{\frac{v}2-1-\frac{\beta}2}D_{ \frac {\alpha+1}2 } D_{ \frac{\alpha+1}2} T_{\frac{v}2-1}).$$  \\
By the assumptions on $h$, $u$ and $z$, it follows from Lemma \ref{l:soliftspin} that $y$ is not the image of an element of ${\rm Spin}_{h+2}(K)$ of order dividing $v$  under the canonical map ${\rm Spin}_{h+2}(K)\rightarrow {\rm SO}_{h+2}(K)$.  Hence  $d_u(G)-d_u(G_{a.})>0$   and $d_u(G_{s.c.})=d_u(G_{a.})+2$ as required. 
$\square$

\bigskip

Given a positive integer $u$, we can now  determine $d_u(G)$ for $G$ a simple algebraic group of neither simply connected nor adjoint type and  prove  Theorem \ref{t:duotsca}.\\

\noindent \textit{Proof of Theorem \ref{t:duotsca}.} Note that since $G$ is neither  simply connected nor adjoint, $G$ is of classical type. We first consider part (i).  If $d_u(G_{s.c.})=d_u(G_{a.})$ then Proposition \ref{p:ingsimple} yields $d_u(G)=d_u(G_{a.})$.  \\
We now consider part (ii)  and assume that $d_u(G_{s.c.})\neq d_u(G_{a.})$. In particular, by Theorem \ref{t:blawther}, $p\neq 2$ and $u$ is even (and so  $v$ is also even). By Proposition \ref{p:ingsimple} we have $d_u(G_{a.})\leq d_u(G)\leq d_u(G_{s.c.})$.  \\
(a) Suppose first that $G={\rm SL}_{\ell+1}(K)/C$ where  $C\leq Z({\rm SL}_{\ell+1}(K))$ is a central subgroup of $G_{s.c.}$. Note that $C$ is  finite and cyclic and write $C=\langle c \rangle$.   Since  $d_u(G_{s.c.})\neq d_u(G_{a.})$ Theorem \ref{t:blawther} yields $h=zu=zqv$ where $z$ is odd. \\
 Assume $(u,|C|)=1$.   Let $gC\in G$ be an element of order dividing $u$. Then $g^u \in C$ and say the eigenvalue of $g^u$ is $c^l$ for some $0\leq l \leq |C|-1$.  
 Since $(u,|C|)=1$, there is a positive integer $j$ such that $ju\equiv 1 \mod |C|$. Let $k$ be a positive integer such that $k\equiv -jl \mod |C|$ and set $g'=g\cdot {\rm diag}(c^{k}, \dots , c^{k})$. Then  $g'$ is an element of $G_{s.c.}$ of order dividing $u$, $g'C=gC$  and by Lemma \ref{l:dufactg} $\dim C_{G_{s.c.}}(g')=\dim C_G(g'C)$.  It follows that $d_u(G)=d_u(G_{s.c.})$.     \\
  Assume now that $(u,|C|)>1$. We claim that $d_u(G)=d_u(G_{s.c.})$ if $|C|$ is odd, otherwise $d_u(G)=d_u(G_{a.})$.  By Lemma \ref{l:centmod2}, Proposition \ref{p:ingsimple} and Theorem \ref{t:blawther}, $d_u(G) \equiv \ell \mod 2$,  $d_u(G_{a.})\leq d_u(G) \leq d_u(G_{s.c.})$ and $d_u(G_{s.c})-d_u(G_{a.})=2$. Hence either $d_u(G)=d_u(G_{a.})$ or $d_u(G)=d_u(G_{s.c.})$.     \\
  Suppose $d_u(G)=d_u(G_{a.})$. Let $gC$ be an element of $G$ of order dividing $u$ such that $\dim C_G(gC)=d_u(G_{a.})$.  Write $g=xy$ where $x$ and $y$ are respectively the unipotent and semisimple parts of $g$. As $g^u\in C$ and $(p,|C|)=1$,  the order of $x$ divides $q$ and the order of $y$ divides $v|C|$.  By Lemma \ref{l:dufactg}  $d_u(G_{a.})=\dim C_{G_{s.c.}}(g)$ and so  $y^v \neq 1$, as otherwise $g$ has order dividing $u$ and $d_u(G_{a})=d_u(G_{s.c.})$, a contradiction.\\
  Furthermore, $y$ must have $v$ distinct eigenvalues, each repeated $zq$ times. These $v$ distinct eigenvalues must be $\omega^{1+jk}, \omega^{-(1+jk)}$ where $0\leq j\leq (v-2)/2$, $k>1$ is a divisor   of $|C|$  and $\omega \in K$ is a primitive $kv$-th root of unity. Considering $1\neq y^v\in C$ we deduce that $\omega^v=\omega^{-v}$ and so $\omega^v$ has order $2$. Hence $2$ divides $|C|$. It follows that if $|C|$ is odd then $d_u(G)=d_u(G_{s.c.})$.  \\
  Suppose that $|C|$ is even. Consider $g$ a semisimple element of $G_{s.c.}$ having $v$ distinct  eigenvalues, each repeated $zq$ times, equal to   $\omega^{1+2j}, \omega^{-(1+2j)}$ where $0\leq j\leq (v-2)/2$, $\omega\in K$ is a primitive $2v$-th root of unity. Then $gC$ is an element of $G$ of order $v$ and $C_G(gC)^0= A_{\alpha-1}^vT_{v-1}$. It follows that $d_q(C_G(gC)^0)=d_u(G_{a.})$ and so $d_u(G)=d_u(G_{a.})$.\\
  (b) Suppose $G={\rm SO}_{2\ell}(K)$.   Using the notation of Lemma  \ref{l:dsc} and Theorem \ref{t:blawther}, we let $\omega \in K$ be  a primitive $v$-th root of $1$ and $y \in G$ be a semisimple element of order $v$ such that
  $$ y=\left\{\begin{array}{ll} (-1)^\ell \oplus (1)^\ell & \textrm{if} \ v=2 \\
  M_1^{\alpha}\oplus M_3\oplus(1)\oplus(-1) & \textrm{if} \ v>2 \ \textrm{and} \ \alpha \ \textrm{is odd}\\
  M_1^{\alpha}\oplus M_3\oplus(1)^2 & \textrm{if} \ v>2 \ \textrm{and} \ \alpha \ \textrm{is even}.\\
   \end{array} \right.$$
   Then  $$ C_G(y)^0=\left\{\begin{array}{ll} D_{\frac \ell2}D_{\frac \ell2} & \textrm{if} \ v=2\\
A_{\alpha}^{\frac \beta2}A_{\alpha-1}^{\frac v2 -1-\frac \beta2}D_{\frac{\alpha+1}2} D_{\frac{\alpha+1}2} & \textrm{if} \ v>2 \ \textrm{and} \ \alpha \ \textrm{is odd}\\
A_{\alpha}^{\frac \beta2}A_{\alpha-1}^{\frac v2 -1-\frac \beta2}D_{\frac{\alpha}2} D_{\frac{\alpha+2}2} & \textrm{if} \ v>2 \ \textrm{and} \ \alpha \ \textrm{is even}.
   \end{array} \right.$$
   An easy check yields that $d_q(C_G(y))=d_u(G_{a.})$ and so $d_u(G)=d_u(G_{a.})$. \\
 (c)  Finally suppose that $G={\rm HSpin}_{2\ell}(K)$ where $p\neq 2$ and $\ell$ is even.  
 Write $H=G_{s.c.}$ and let  $C=Z(H)=\langle c_1,c_2\rangle$, $C_1=\langle c_1\rangle$, $C_2=\langle c_2\rangle $  where $c_1,c_2$ are of order 2 and $H/C=G_{a.}$, $H/C_1=G$ and $H/C_2={\rm SO}_{2\ell}(K)$.\\
 Suppose first that $u=2$.  Let $hC_1$ be an element of $H/C_1$ of order 2 satisfying $$d_2(H/C_1)=\dim C_{H/C_1}(hC_1).$$  
 By Lemma \ref{l:dufactg} we have 
  \begin{equation}\label{e:spinsopso}
  \dim C_H(h)=\dim C_{H/C_1}(hC_1)=\dim C_{H/C_2}(hC_2)=\dim C_{H/C}(hC).
  \end{equation}
 Since $hC_1$ has order 2, either $h^2=1$ or $h^2=c_1$. Also note that $hC$ has order dividing 2. If $hC$ is trivial then $h\in C$ and  so by (\ref{e:spinsopso}) $\dim C_{H/C_1}(hC_1)= \dim H/C_1$, a contradiction. Hence $hC$ has order 2.  
 We claim that $h^2 \neq c_1$. Suppose not. Then $hC$ is an element of $H/C$ of order 2 and $hC_2$ is an element of $H/C_2$ of order 4. It follows that $hC_2$ has $\ell$ eigenvalues equal to $\omega$ and $\ell$ eigenvalues equal to $\omega^{-1}$ where $\omega \in K$ is a primitive fourth root of unity.  Hence $C_{H/C_2}(hC_2)=A_{\ell}$.  Using (\ref{e:spinsopso}), it follows that $d_2(G) > d_2(G_{s.c.})$, contradicting $d_2(G)\leq d_2(G_{s.c.})$. 
 Hence $h^2=1$ and so by (\ref{e:spinsopso}) $d_2(G)\geq d_2(G_{s.c.})$. Therefore  $d_2(G)= d_2(G_{s.c.})$.\\
 Suppose now that $u>2.$ Since   by Theorem \ref{t:blawther} $d_u(G_{s.c.})-d_u(G_{a.})=2$, in order to show that $d_u(G)=d_u(G_{s.c.})$ it is enough by Lemma \ref{l:centmod2} to prove that $d_u(G)\neq d_u(G_{a.})$.  Suppose for a contradiction that $d_u(G)=d_u(G_{a.})$.  Let  $hC_1$  be an element of $G=H/C_1$ of order $u$  such that  $$\dim C_{H/C_1}(hC_1)=d_u(G_{a.}).$$  
 By Lemma \ref{l:dufactg} we have 
  \begin{equation}\label{e:spinsopsobis}
  \dim C_H(h)=\dim C_{H/C_1}(hC_1)=\dim C_{H/C_2}(hC_2)=\dim C_{H/C}(hC).
  \end{equation}
Since $hC_1$ has order $u$, $h^u\in C_1$ and so $h^u=1$ or $h^u=c_1$.  Also $hC$ has order dividing $u$ whereas $hC_2$ has order dividing $u$ or  $2u$. 
Suppose $hC_2$ has order dividing $u$. Then $h^u\in C_1\cap C_2=1$ and so $h$ is an element of $G_{s.c.}$ of oder dividing $u$. Hence by  (\ref{e:spinsopsobis}), $d_u(G_{s.c.})=d_u(G_{a.})$, a contradiction.\\ 
 Suppose $hC_2$ has order dividing $2u$ but $(hC_2)^u\neq C_2$. Then $h^u=c_1$ and $(hC_2)^u=c_1C_2$. Let $h=xy$ be the Jordan decomposition of $h$ where $x$ is unipotent and $y$ is semisimple. Note that $(yC_2)^v=c_1C_2$. Without loss of generality, $yC_2$ is diagonal and $(yC_2)_{i,i}(yC_2)_{i+1,i+1}=1$ for every odd $i$ with $1\leq i \leq 2\ell-1$. Note that no  eigenvalue of $yC_2$ is equal to $1$ or $-1$. Let $d$ be a semisimple element of $H$ such  that $dC_2$ is diagonal, $(dC_2)_{i,i}(dC_2)_{i+1,i+1}=1$ for every odd $i$ with $1\leq i \leq 2\ell-1$ and $dC_2$ has  $\ell$ eigenvalues equal to $\omega$ and $\ell$ eigenvalues equal to $\omega^{-1}$ where $\omega\in K$ is a primitive $2v$-th root of unity. Then  $(dC_2)^v=c_1C_2$ and $(dyC_2)^v=C_2$. 
 Hence $dyC_2$ is an element of $H/C_2$ of order dividing $v$ and $C_{H/C_2}(dyC_2)\cong C_{H/C_2}(yC_2)$.  
  Since $\ell$ is even, it follows from Lemma \ref{l:soliftspin} that $d$ is a semisimple element of $H$  of order dividing $2v$ and so  $dy$ is a semisimple element of $H$ of order dividing $v$.   
  Now by Lemma \ref{l:dufactg} $$d_q(C_H(dy)^0)=d_q(C_{H/C_2}(dyC_2)^0)$$ and so we obtain 
  \begin{eqnarray*}
  d_q(C_H(dy)^0)& = & d_q(C_{H/C_2}(yC_2)^0)\\
  & = & d_u(G_{a.}).
  \end{eqnarray*}
  Hence $d_u({G_{s.c.}})=d_u(G_{a.})$, a contradiction.
$\square$

\section{Proof of Proposition \ref{p:dudecreasing}}\label{s:decreasing}

Given a simple algebraic group $G$ defined over an algebraically closed field $K$ of characteristic $p$, we now show that $d_u(G): \mathbb{N}\rightarrow \mathbb{N}$ is a decreasing function of $u$.
We need to show that for every positive integer $u$, we have $d_u(G)-d_{u+1}(G)\geq 0$. Now by Proposition \ref{p:ingsimple}, we have $d_u(G_{a.})\leq  d_u(G) \leq d_u(G_{s.c.})$ for every positive integer $u$. Hence the conclusion of the proposition will follow at once after we show that for every positive integer $u$, we have 
\begin{equation}\label{e:dasc}
d_u(G_{a.})-d_{u+1}(G_{s.c.})\geq 0.  \end{equation}
If $G$ is of exceptional type then the result follows at once from \cite[Theorem 1]{Lawther} and Theorem \ref{t:blawther}. We therefore assume that $G$ is of classical type. \\
Note that if $u=1$ then $d_u(G_{a.})=d_1(G_{a.})=\dim G \geq d_{u+1}(G_{s.c.})$ and so (\ref{e:dasc}) holds. \\
Suppose now that $u\geq h$. Since $u\geq h$ we have $d_u(G_{a.})=\ell$. Also as $u+1> h$, Theorem \ref{t:blawther} yields $d_{u+1}(G_{s.c.})=d_{u+1}(G_{a})=\ell$. Therefore   $d_u(G_{a.})=d_{u+1}(G_{s.c.})=\ell$ and so (\ref{e:dasc}) holds. \\
We can therefore assume that $2\leq u \leq h-1$. In particular $z\geq 1$. Write $h=z'(u+1)+e'$ where $z'$, $e'$ are nonnegative integers such that $0\leq e'\leq u.$	  An easy check gives $1\leq z'\leq z.$ Write $z'=z-j$ where $0\leq j \leq z-1$. Then $e'=e-z+j(u+1)$. Note that if $j=1$ then $z\geq 2$. Also $z-e\leq j(u+1)\leq u+z-e$.  We claim that either $z>2j-1$ or $j=1$. Suppose otherwise. Then as $z\geq 1$ we get $j\geq 2$ and $z\leq 2j-1$. Using the latter inequality and the fact that $j(u+1)\leq u+z-e$, we get $j\leq 1-e/(u-1)$ and so $j\leq 1$, a contradiction establishing the claim.\\
 
Suppose that $G$ is of type $A$. By Theorem \ref{t:blawther} we have $d_{u+1}(G_{s.c.})\leq d_{u+1}(G_{a.})+2$ and so  
\begin{equation}\label{e:daa} d_u(G_{a.})-d_{u+1}(G_{s.c.}) \geq d_u(G_{a.})-d_{u+1}(G_{a})-2.\end{equation}
Let $\mathbf{D}=  d_u(G_{a.})-d_{u+1}(G_{a})$. Note that Lemma \ref{l:centmod2} yields $d_{u}(G_{a.})\equiv d_{u+1}(G_{a.}) \mod 2$ and so $\mathbf{D}\equiv 0 \mod 2$. Using \cite[Theorem 1]{Lawther}, we have
\begin{eqnarray*}
\mathbf{D} & = & z^2u+e(2z+1)-1-((z-j)^2(u+1)+(e-z+j(u+1))(2(z-j)+1) -1)\\
& = & (u+1)j(j-1)+z(z-2j+1)+2je.
\end{eqnarray*}
Recalling that $z\geq 2$ if $j=1$, we note  that if $j=1$ then $\mathbf{D}>0$. Also if $z>2j-1$ then $\mathbf{D}>0$.    \\
Hence $\mathbf{D}>0$ in all cases.
 As $\mathbf{D}\equiv 0 \mod 2$ we deduce that $\mathbf{D}\geq 2$ and so (\ref{e:daa}) yields $d_u(G_{a.})-d_{u+1}(G_{s.c.})\geq 0$. Hence (\ref{e:dasc}) holds.\\

Suppose that $G$ is of type $C$. By Theorem \ref{t:blawther} we have $d_{u+1}(G_{s.c.})\leq d_{u+1}(G_{a.})+2\epsilon_u\lceil (z-j)/2 \rceil$ and so  
\begin{equation}\label{e:daac} 
d_u(G_{a.})-d_{u+1}(G_{s.c.}) \geq d_u(G_{a.})-d_{u+1}(G_{a})-2\epsilon_u\left \lceil \frac{z-j}2  \right\rceil.\end{equation}
Let $\mathbf{D}=  d_u(G_{a.})-d_{u+1}(G_{a})$. Note that Lemma \ref{l:centmod2} yields $d_{u}(G_{a.})\equiv d_{u+1}(G_{a.}) \mod 2$ and so $\mathbf{D}\equiv 0 \mod 2$. Using \cite[Theorem 1]{Lawther}, we have
\begin{eqnarray*}
\mathbf{D} & = & \frac{z^2u+e(2z+1)}2+\epsilon_u\left\lceil \frac z2\right\rceil-\left(\frac{ (z-j)^2(u+1)+(e-z+j(u+1))(2(z-j)+1)}2\right.\\ 
& & \left. +\epsilon_{u+1}\left\lceil \frac{z-j}2\right \rceil \right)\\
& = & \frac{ (u+1)j(j-1)+z(z-2j+1)+2je}2+\epsilon_u \left(\left\lceil \frac{z-j}2 \right\rceil +\left\lceil \frac z2 \right\rceil\right)-\left \lceil \frac{z-j}2 \right\rceil.
\end{eqnarray*}
 Recalling that $z\geq 2$ if $j=1$, we note  that if $j=1$ then $\mathbf{D}\geq 0$ and moreover $\mathbf{D}\geq (z^2+\epsilon_z)/2\geq z+\epsilon_z=2\lceil z/2\rceil $ for $u$ odd. Also if $z>2j-1$ then $\mathbf{D}\geq 0$ and moreover $\mathbf{D}\geq 2\lceil z/2\rceil$ for $u$ odd.  \\
In all cases we therefore have $\mathbf{D}\geq 0$ and moreover $\mathbf{D}\geq 2\lceil  z/2\rceil$ for $u$  odd.
 Now (\ref{e:daac}) yields $d_u(G_{a.})-d_{u+1}(G_{s.c.})\geq 0$. Hence (\ref{e:dasc}) holds.\\

Suppose that $G$ is of type $B$.  By Theorem \ref{t:blawther} we have $d_{u+1}(G_{s.c.})\leq d_{u+1}(G_{a.})+2\epsilon_u$ and so  
\begin{equation}\label{e:daab} 
d_u(G_{a.})-d_{u+1}(G_{s.c.}) \geq d_u(G_{a.})-d_{u+1}(G_{a})-2\epsilon_u.\end{equation}
Let $\mathbf{D}=  d_u(G_{a.})-d_{u+1}(G_{a})$. Note that Lemma \ref{l:centmod2} yields $d_{u}(G_{a.})\equiv d_{u+1}(G_{a.}) \mod 2$ and so $\mathbf{D}\equiv 0 \mod 2$. Using \cite[Theorem 1]{Lawther}, we have
\begin{eqnarray*}
\mathbf{D} & = & \frac{z^2u+e(2z+1)}2+\epsilon_u\left\lceil \frac z2\right\rceil-\left(\frac{ (z-j)^2(u+1)+(e-z+j(u+1))(2(z-j)+1)}2\right.\\ 
& & \left. +\epsilon_{u+1}\left\lceil \frac{z-j}2\right \rceil \right)\\
& = & \frac{ (u+1)j(j-1)+z(z-2j+1)+2je}2+\epsilon_u \left(\left\lceil \frac{z-j}2 \right\rceil +\left\lceil \frac z2 \right\rceil\right)-\left \lceil \frac{z-j}2 \right\rceil.
\end{eqnarray*}
Arguing as in the case where $G$ is of type $C$, we deduce that $\mathbf{D}\geq 0$ and moreover $\mathbf{D}\geq 2$ for $u$  odd. Now (\ref{e:daab}) yields $d_u(G_{a.})-d_{u+1}(G_{s.c.})\geq 0$.  Hence (\ref{e:dasc}) holds.\\

Suppose that $G$ is of type $D$.  Since $u\geq 2$, by Theorem \ref{t:blawther} we have $d_{u+1}(G_{s.c.})\leq d_{u+1}(G_{a.})+2\epsilon_u$ and so  
\begin{equation*} 
d_u(G_{a.})-d_{u+1}(G_{s.c.}) \geq d_u(G_{a.})-d_{u+1}(G_{a})-2\epsilon_u.\end{equation*}
Let $\mathbf{D}=  d_u(G_{a.})-d_{u+1}(G_{a})$. Note that Lemma \ref{l:centmod2} yields $d_{u}(G_{a.})\equiv d_{u+1}(G_{a.}) \mod 2$ and so $\mathbf{D}\equiv 0 \mod 2$. Using \cite[Theorem 1]{Lawther}, we have
\begin{eqnarray*}
\mathbf{D} 
& = & \frac{ (u+1)j(j-1)+z(z-2j+1)+2je}2+\epsilon_u \left(\left\lceil \frac{z-j}2 \right\rceil +\left\lceil \frac z2 \right\rceil\right)-\left \lceil \frac{z-j}2 \right\rceil+j+\epsilon_{z-j}-\epsilon_z.
\end{eqnarray*}
Arguing as in the case where $G$ is of type $B$, we deduce that $\mathbf{D}\geq 0$ and moreover  
$\mathbf{D}\geq 2$ for $u$ odd. We deduce that $d_u(G_{a.})-d_{u+1}(G_{s.c.})\geq 0$. Hence (\ref{e:dasc}) holds.
$\square$

\section{Proof of  Theorem \ref{t:classification}}\label{s:classification}

In this section we prove Theorem \ref{t:classification}. We proceed in two steps, first in the special case where $G$ is of simply connected type (see Proposition \ref{p:classificationsc}), and then we consider other types (see Proposition \ref{p:classificationa}).

\begin{prop}\label{p:classificationsc}
Let $G$ be a simple simply connected algebraic group  over an algebraic closed field  $K$ of prime characteristic $p$. The classification of the hyperbolic triples $(a,b,c)$ of integers for $G$ is as given in Theorem \ref{t:classification}. 
\end{prop}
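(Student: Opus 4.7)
\medskip

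\noindent\textbf{Proof plan for Proposition \ref{p:classificationsc}.}
The plan is to translate the reducibility/rigidity condition into one purely about $d_u(G)$, then exploit the explicit formulas from Theorem \ref{t:blawther} together with the monotonicity of Proposition \ref{p:dudecreasing} to reduce everything to a finite (and bounded) check in each Lie type.

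First I would use Proposition \ref{p:fcu} to rewrite $j_u(G)=\dim G - d_u(G)$, so that the triple $(a,b,c)$ with $a\leqs b\leqs c$ is reducible, rigid, or nonrigid for $G$ according as $d_a(G)+d_b(G)+d_c(G)$ is greater than, equal to, or less than $\dim G$. Proposition \ref{p:dudecreasing} then gives $d_a(G)\geqs d_b(G)\geqs d_c(G)$, and Lawther's result recalled in the introduction gives $d_c(G)=\ell$ whenever $c\geqs h$. Combined with hyperbolicity $1/a+1/b+1/c<1$, this already forces $a\in\{2,3,4\}$ and, for fixed $a$, only finitely many $b$ can produce a nontrivial contribution beyond $\ell$.

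Next, working type by type, I would apply Theorem \ref{t:blawther}(i)--(iv) to get closed-form expressions for $d_a(G)$, $d_b(G)$, $d_c(G)$ in terms of $\ell$, the Coxeter number $h$, and the characteristic $p$. For classical $G$ I would first establish a \emph{rank cutoff} $\ell_0(a,b,c)$ beyond which $d_a(G)+d_b(G)+d_c(G)<\dim G$, so that no hyperbolic triple with parameters at most the chosen $(a,b,c)$ contributes a reducible or rigid case. Concretely, for $G$ of type $A_\ell$, $B_\ell$, $C_\ell$, $D_\ell$ one has $\dim G$ quadratic in $\ell$ while $d_u(G)$ is essentially $\ell^2/u + O(\ell)$ by Lemma \ref{l:lawther}; thus
\begin{equation*}
d_a(G)+d_b(G)+d_c(G)-\dim G \;\sim\; \ell^2\!\left(\tfrac{1}{a}+\tfrac{1}{b}+\tfrac{1}{c}-1\right)+O(\ell),
\end{equation*}
and hyperbolicity makes the leading coefficient strictly negative. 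This shrinks the analysis, for each candidate $(a,b,c)$, to a bounded range of $\ell$, which I would dispatch by direct substitution into the formulas of Theorem \ref{t:blawther}, paying attention to the parity/congruence correction terms in types $B$, $C$, $D$ that distinguish simply connected from adjoint. For exceptional types the rank is already bounded and each $(G,a,b,c)$ can be checked directly.

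The main obstacle is bookkeeping rather than conceptual: carrying out the bounded case analysis for each classical type so that every entry of Tables \ref{ta:red} and \ref{ta:rigidsc} is recovered, with nothing missed and nothing extra. In particular the $C_\ell$ rows of Table \ref{ta:red} and the many $C_\ell$ rows of Table \ref{ta:rigidsc} involve the $2\lceil z/2\rceil$ correction of Theorem \ref{t:blawther}(iii), which for $p\neq2$ and $a=2$ pushes $d_2(G_{s.c.})$ above $d_2(G_{a.})$ and is precisely what creates the family of reducible triples for $\mathrm{Sp}_{2\ell}(K)$; I would isolate this correction, treat the cases $u=2$ and $u\geqs 3$ separately, and then verify that the borderline values of $\ell$ match exactly the entries of the tables. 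The corresponding matching for $B_\ell$ and $D_\ell$ uses the more delicate conditions of parts (ii) and (iv) of Theorem \ref{t:blawther} but proceeds along the same lines.
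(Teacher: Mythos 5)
Your plan is essentially the paper's own proof. The paper makes the same translation (writing $\mathbf{D}_{(a,b,c)}=d_a(G)+d_b(G)+d_c(G)-\dim G$ and classifying by its sign), uses Proposition \ref{p:dudecreasing} to propagate nonrigidity upward through the partial order on hyperbolic triples, bounds $d_u(G)$ above by an explicit function $F(u)\sim h^2/u$ obtained by maximizing over the remainder $e$ in $h=zu+e$, and deduces a rank cutoff for each of the three minimal hyperbolic triples $(2,3,7)$, $(2,4,5)$, $(3,3,4)$ from the strict negativity of $1/a+1/b+1/c-1$; the remaining bounded range of $\ell$ is then dispatched by direct substitution into Theorem \ref{t:blawther} and Lawther's formulas, exactly as you propose.

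Two points in your reduction need correcting before the case analysis is actually complete. First, the claim that hyperbolicity ``forces $a\in\{2,3,4\}$'' is false: for $G=\mathrm{SL}_2(K)$ one has $\dim G=3=3\ell$ and $d_u(G)=\ell$ for all $u\geq h=2$, so \emph{every} hyperbolic triple is rigid when $p=2$ (and every triple with $a\geq 3$ when $p\neq 2$) --- these are the first rows of Table \ref{ta:rigidsc} and would be missed if you discard $a\geq 5$ at the outset. The correct statement is only that $\mathbf{D}_{(a,b,c)}\leq 3\ell-\dim G$ once $a\geq h$, which is negative except in rank $1$. Second, your cutoff is phrased as excluding triples ``with parameters at most the chosen $(a,b,c)$'', but the monotonicity of $d_u$ propagates nonrigidity \emph{upward}: if $(a,b,c)$ is nonrigid then so is every $(a',b',c')\geq(a,b,c)$. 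You therefore must run the cutoff argument for precisely the three minimal hyperbolic triples (every hyperbolic triple dominates one of them), not for an arbitrary candidate; with that fixed, the finite verification against Tables \ref{ta:red} and \ref{ta:rigidsc} goes through as you describe.
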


\begin{proof}
As usual we let $h$ be the Coxeter number of $G$ and for a positive integer $u$, we let $z$ and $e$  be the nonnegative integers such that $h=zu+e$ and $0\leq e \leq u-1$. 
Also $d_u(G)$ denotes the minimal dimension of the centralizer of an element of $G$ of order dividing $u$. By Proposition \ref{p:fcu} $d_u(G)$ is the codimension of the subvariety $G_{[u]}$ of $G$ consisting of elements of order dividing $u$. Writing $j_u(G)=\dim G_{[u]}$, we have $d_u(G)=\dim G-j_u(G)$. \\
Let $(a,b,c)$ be a hyperbolic triple of integers. Let $\mathbf{S}_{(a,b,c)}=d_a(G)+d_b(G)+d_c(G)$ and $\mathbf{D}_{(a,b,c)}=\mathbf{S}_{(a,b,c)}-\dim G$. Saying that $(a,b,c)$ is reducible (respectively, rigid, nonrigid) for $G$ amounts to saying that  $\mathbf{D}_{(a,b,c)}$ is greater than (respectively equal to, less than) 0. \\
Recall the partial order we put on the set of hyperbolic triples of integers. Given two hyperbolic triples $(a,b,c)$ and $(a',b',c')$ of integers, we say  that $(a,b,c)\leq (a',b',c')$ if and only if $a\leq a'$, $b\leq b'$ and $c\leq c'$. Among all hyperbolic triples, exactly three are minimal: $(2,3,7)$, $(3,3,4)$ and $(2,4,5)$. 
 By Proposition \ref{p:dudecreasing}, if $(a,b,c)$ is nonrigid for $G$ then $(a',b',c')$ is nonrigid for $G$ for every $(a',b',c')\geq (a,b,c)$.\\
 For a nonnegative integer $p$, we let $\theta_p \in \{0,1\}$ be such that $\theta_p=1$ if $p$ is odd or $p=0$, otherwise $\theta_p=0$.\\
 Suppose first that $G$ is of exceptional type. By  Theorem \ref{t:blawther} and \cite[Theorem 1]{Lawther}, every hyperbolic triple $(a,b,c)$ of integers is nonrigid for $G$ unless $G$ is of type $G_2$ and $(a,b,c)\in\{(2,4,5),(2,5,5)\}$ in which case $(a,b,c)$ is rigid for $G$. \\
  Suppose now that $G$ is of classical type. In a first step we  show that if $\ell \geq 11$, $\ell \geq 10$, $\ell \geq 15$, or $\ell \geq 9$ accordingly respectively as $G$ is of type $A$, $B$, $C$ or $D$, then every hyperbolic triples of integers is nonrigid for $G$.\\
Suppose first  that $G$ is of type $A$. Note that $\dim G=h^2-1$.
By Theorem \ref{t:blawther} and \cite[Theorem 1]{Lawther}, we have 
\begin{eqnarray*}
d_u(G)&\leq& z^2u+e(2z+1)-1+2\theta_p(1-\epsilon_u)\\
& = & \frac{(h-e)(h+e)}u+e-1+2\theta_p(1-\epsilon_u). 
\end{eqnarray*} 
Let $g(e)= \frac{(h-e)(h+e)}u+e-1+2\theta_p(1-\epsilon_u).$ Then $g'(e)=\frac{u-2e}u$, and $g'(e)>0$ if and only if $e<u/2$. Hence $d_u(G)\leq F(u)$ where 
\begin{eqnarray*}
F(u) &  =& g(u/2)\\
& = & \frac{u^2-4u+4h^2}{4u}+2\theta_p(1-\epsilon_u).  
\end{eqnarray*}
In particular, for any hyperbolic triple $(a,b,c)$ of integers, we have $\mathbf{S}_{(a,b,c)}\leq F(a)+F(b)+F(c)$.\\
Suppose $(a,b,c)=(2,3,7)$. We have $\mathbf{S}_{(2,3,7)}\leq F(2)+F(3)+F(7)$. Now
$$F(2)=\frac{h^2-1}2+2\theta_p, \quad F(3)=\frac{4h^2-3}{12}, \quad F(7)=\frac{4h^2+21}{28}.$$
Hence $$\mathbf{S}_{(2,3,7)}\leq  \frac{41h^2}{42}+2\theta_p.$$
and $$\mathbf{D}_{(2,3,7)}\leq -\frac{h^2}{42}+2\theta_p+1.$$
Since   $-\frac{h^2}{42}+2\theta_p+1$ is negative for $h \geq 12$, it follows that $(2,3,7)$ is nonrigid for $G$ provided $h\geq 12$, that is  $\ell\geq 11$. Also every hyperbolic triple $(a',b',c')$ of integers with $(a',b',c')\geq (2,3,7)$ is nonrigid for $G$ with $\ell\geq 11$.\\ 
Suppose $(a,b,c)=(2,4,5)$. We have $\mathbf{S}_{(2,4,5)}\leq F(2)+F(4)+F(5)$. Now
$$F(2)=\frac{h^2-1}2+2\theta_p, \quad F(4)=\frac{h^2}{4}+2\theta_p, \quad F(5)=\frac{4h^2+5}{20}.$$
Hence $$\mathbf{S}_{(2,4,5)}\leq  \frac{19h^2}{20}+4\theta_p-\frac14.$$
and $$\mathbf{D}_{(2,4,5)}\leq -\frac{h^2}{20}+\frac{3}4+4\theta_p.$$
Since   $-\frac{h^2}{20}+4\theta_p+\frac 34$ is negative for $h \geq 10$, it follows that $(2,4,5)$ is nonrigid for $G$ provided $h\geq 10$, that is  $\ell\geq 9$. Also every hyperbolic triple $(a',b',c')$ of integers with $(a',b',c')\geq (2,4,5)$ is nonrigid for $G$ with $\ell\geq 9$.\\
Suppose $(a,b,c)=(3,3,4)$. We have $\mathbf{S}_{(3,3,4)}\leq F(3)+F(3)+F(4)$. Now
$$ F(3)=\frac{4h^2-3}{12}, \quad F(4)=\frac{h^2}{4}+2\theta_p.$$
Hence $$\mathbf{S}_{(3,3,4)}\leq  \frac{11h^2}{12}+2\theta_p-\frac12.$$
and $$\mathbf{D}_{(3,3,4)}\leq -\frac{h^2}{12}+\frac{1}2+2\theta_p.$$
Since   $-\frac{h^2}{12}+2\theta_p+\frac 12$ is negative for $h \geq 6$, it follows that $(3,3,4)$ is nonrigid for $G$ provided $h\geq 6$, that is  $\ell\geq 5$. Also every hyperbolic triple $(a',b',c')$ of integers with $(a',b',c')\geq (3,3,4)$ is nonrigid for $G$ with $\ell\geq 5$.\\

Suppose now  that $G$ is of type $B$. Note that $\dim G=h(h+1)/2$.
By Theorem \ref{t:blawther} and \cite[Theorem 1]{Lawther}, we have 
\begin{eqnarray*}
d_u(G)&\leq& \frac{z^2u+e(2z+1)}2+\frac{z+1}2+2\theta_p(1-\epsilon_u)\\
& = & \frac{(h-e)(h+e+1)}{2u}+\frac{e+1}2+2\theta_p(1-\epsilon_u). 
\end{eqnarray*} 
Let $g(e)= \frac{(h-e)(h+e+1)}{2u}+\frac{e+1}2+2\theta_p(1-\epsilon_u).$ Then $g'(e)=\frac{u-2e-1}{2u}$, and $g'(e)>0$ if and only if $e<(u-1)/2$. Hence $d_u(G)\leq F(u)$ where 
\begin{eqnarray*}
F(u) &  =& g((u-1)/2)\\
& = & \frac{u^2+2u+(2h+1)^2}{8u}+2\theta_p(1-\epsilon_u).  
\end{eqnarray*}
In particular, for any hyperbolic triple $(a,b,c)$ of integers, we have $\mathbf{S}_{(a,b,c)}\leq F(a)+F(b)+F(c)$.\\
Suppose $(a,b,c)=(2,3,7)$. We have $\mathbf{S}_{(2,3,7)}\leq F(2)+F(3)+F(7)$. Now
$$F(2)=\frac{4h^2+4h+9}{16}+2\theta_p, \quad F(3)=\frac{h^2+h+4}{6}, \quad F(7)=\frac{h^2+h+16}{14}.$$
Hence $$\mathbf{S}_{(2,3,7)}\leq  \frac{164h^2+164h+797}{336}+2\theta_p.$$
and $$\mathbf{D}_{(2,3,7)}\leq -\frac{4h^2+4h-797}{336}+2\theta_p.$$
Since   $-\frac{4h^2+4h-797}{336}+2\theta_p$ is negative for $h \geq 20$, it follows that $(2,3,7)$ is nonrigid for $G$ provided $h\geq 20$, that is  $\ell\geq 10$. Also every hyperbolic triple $(a',b',c')$ of integers with $(a',b',c')\geq (2,3,7)$ is nonrigid for $G$ with $\ell\geq 10$.\\ 
Suppose $(a,b,c)=(2,4,5)$. We have $\mathbf{S}_{(2,4,5)}\leq F(2)+F(4)+F(5)$. Now
$$F(2)=\frac{4h^2+4h+9}{16}+2\theta_p, \quad F(4)=\frac{4h^2+4h+25}{32}+2\theta_p, \quad F(5)=\frac{h^2+h+9}{10}.$$
Hence $$\mathbf{S}_{(2,4,5)}\leq  \frac{76h^2+76h+359}{160}+4\theta_p.$$
and $$\mathbf{D}_{(2,4,5)}\leq -\frac{4h^2+4h-359}{160}+4\theta_p.$$
Since   $ -\frac{4h^2+4h-359}{160}+4\theta_p$ is negative for $h \geq 16$, it follows that $(2,4,5)$ is nonrigid for $G$ provided $h\geq 16$, that is  $\ell\geq 8$. Also every hyperbolic triple $(a',b',c')$ of integers with $(a',b',c')\geq (2,4,5)$ is nonrigid for $G$ with $\ell\geq 8$.\\
Suppose $(a,b,c)=(3,3,4)$. We have $\mathbf{S}_{(3,3,4)}\leq F(3)+F(3)+F(4)$. Now
$$ F(3)=\frac{h^2+h+4}{6}, \quad F(4)=\frac{4h^2+4h+25}{32}+2\theta_p.$$
Hence $$\mathbf{S}_{(3,3,4)}\leq  \frac{44h^2+44h+203}{96}+2\theta_p.$$
and $$\mathbf{D}_{(3,3,4)}\leq -\frac{4h^2+4h-203}{96}+2\theta_p.$$
Since   $-\frac{4h^2+4h-203}{96}+2\theta_p$ is negative for $h \geq 10$, it follows that $(3,3,4)$ is nonrigid for $G$ provided $h\geq 10$, that is  $\ell\geq 5$. Also every hyperbolic triple $(a',b',c')$ of integers with $(a',b',c')\geq (3,3,4)$ is nonrigid for $G$ with $\ell\geq 5$.\\

Suppose now  that $G$ is of type $C$. Note that $\dim G=h(h+1)/2$.
By Theorem \ref{t:blawther} and \cite[Theorem 1]{Lawther}, we have 
\begin{eqnarray*}
d_u(G)&\leq& \left\{\begin{array}{ll} \frac{z^2u+e(2z+1)}2+z+1 & \textrm{if} \ u \ \textrm{is even}\\ 
\frac{z^2u+e(2z+1)}2+\frac{z+1}2 & \textrm{if} \ u \ \textrm{is odd}\\
 \end{array} \right.  \\
& = &  \left\{\begin{array}{ll}  \frac{(h-e)(h+e+2)}{2u}+\frac{e}2+1 & \textrm{if} \ u \ \textrm{is even}\\
 \frac{(h-e)(h+e+1)}{2u}+\frac{e+1}2& \textrm{if} \ u \ \textrm{is even}
 \end{array} \right.
\end{eqnarray*} 
Let $$g(e)= \left\{\begin{array}{ll}  \frac{(h-e)(h+e+2)}{2u}+\frac{e}2+1 & \textrm{if} \ u \ \textrm{is even}\\
 \frac{(h-e)(h+e+1)}{2u}+\frac{e+1}2& \textrm{if} \ u \ \textrm{is odd}
 \end{array} \right.
$$ Then $$g'(e)=\left\{\begin{array}{ll} \frac{u-2e-2}{2u}   & \textrm{if} \ u \ \textrm{is even}\\ 
\frac{u-2e-1}{2u}   & \textrm{if} \ u \ \textrm{is odd}
\end{array}\right.$$ and if $u$ is even then $g'(e)>0$ if and only if $e<(u-2)/2$, else $g'(e)>0$ if and only if $e<(u-1)/2$.  Hence $d_u(G)\leq F(u)$ where 
\begin{eqnarray*}
F(u) &  =&\left\{\begin{array}{ll} g((u-2)/2) & \textrm{if} \ u \ \textrm{is even} \\
g((u-1)/2) & \textrm{if} \ u \ \textrm{is odd} \end{array} \right. \\
& = & \left \{\begin{array}{ll}  \frac{u^2+4u+4(h+1)^2}{8u} & \textrm{if} \ u \ \textrm{is even}\\
\frac{u^2+2u+(2h+1)^2}{8u} & \textrm{if} \ u \ \textrm{is odd} \\
  \end{array}\right .
\end{eqnarray*}
In particular, for any hyperbolic triple $(a,b,c)$ of integers, we have $\mathbf{S}_{(a,b,c)}\leq F(a)+F(b)+F(c)$.\\
Suppose $(a,b,c)=(2,3,7)$. We have $\mathbf{S}_{(2,3,7)}\leq F(2)+F(3)+F(7)$. Now
$$F(2)=\frac{h^2+2h+4}{4}, \quad F(3)=\frac{h^2+h+4}{6}, \quad F(7)=\frac{h^2+h+16}{14}.$$
Hence $$\mathbf{S}_{(2,3,7)}\leq  \frac{41h^2+62h+236}{84}.$$
and $$\mathbf{D}_{(2,3,7)}\leq -\frac{h^2-20h-236}{84}.$$
Since   $-\frac{h^2-20h-236}{84}$ is negative for $h \geq 30$, it follows that $(2,3,7)$ is nonrigid for $G$ provided $h\geq 30$, that is  $\ell\geq 15$. Also every hyperbolic triple $(a',b',c')$ of integers with $(a',b',c')\geq (2,3,7)$ is nonrigid for $G$ with $\ell\geq 15$.\\ 
Suppose $(a,b,c)=(2,4,5)$. We have $\mathbf{S}_{(2,4,5)}\leq F(2)+F(4)+F(5)$. Now
$$F(2)=\frac{h^2+2h+4}{4}, \quad F(4)=\frac{h^2+2h+9}{8}, \quad F(5)=\frac{h^2+h+9}{10}.$$
Hence $$\mathbf{S}_{(2,4,5)}\leq  \frac{19h^2+34h+121}{40}$$
and $$\mathbf{D}_{(2,4,5)}\leq -\frac{h^2-14h-121}{40}.$$
Since   $ -\frac{h^2-14h-121}{40}$ is negative for $h \geq 22$, it follows that $(2,4,5)$ is nonrigid for $G$ provided $h\geq 22$, that is  $\ell\geq11$. Also every hyperbolic triple $(a',b',c')$ of integers with $(a',b',c')\geq (2,4,5)$ is nonrigid for $G$ with $\ell\geq 11$.\\
Suppose $(a,b,c)=(3,3,4)$. We have $\mathbf{S}_{(3,3,4)}\leq F(3)+F(3)+F(4)$. Now
$$ F(3)=\frac{h^2+h+4}{6}, \quad F(4)=\frac{h^2+2h+9}{8}.$$
Hence $$\mathbf{S}_{(3,3,4)}\leq  \frac{11h^2+14h+59}{24}.$$
and $$\mathbf{D}_{(3,3,4)}\leq -\frac{h^2-2h-59}{24}.$$
Since   $-\frac{h^2-2h-59}{24}$ is negative for $h \geq 10$, it follows that $(3,3,4)$ is nonrigid for $G$ provided $h\geq 10$, that is  $\ell\geq 5$. Also every hyperbolic triple $(a',b',c')$ of integers with $(a',b',c')\geq (3,3,4)$ is nonrigid for $G$ with $\ell\geq 5$.\\

Suppose finally that $G$ is of type $D$.   Note that $\dim G=(h+1)(h+2)/2$.
By Theorem \ref{t:blawther} and \cite[Theorem 1]{Lawther}, we have 
\begin{eqnarray*}
d_u(G)&\leq& \left\{\begin{array}{ll} \frac{z^2u+e(2z+1)}2+z+1+4\theta_p\sigma_{u-2}+2\theta_p(1-\sigma_{u-2}) & \textrm{if} \ u \ \textrm{is even}\\ 
\frac{z^2u+e(2z+1)}2+\frac{3(z+1)}2 & \textrm{if} \ u \ \textrm{is odd}\\
 \end{array} \right.  \\
& = &  \left\{\begin{array}{ll}  \frac{(h-e)(h+e+2)}{2u}+\frac{e}2+1+ 4\theta_p\sigma_{u-2}+2\theta_p(1-\sigma_{u-2}) & \textrm{if} \ u \ \textrm{is even}\\
 \frac{(h-e)(h+e+3)}{2u}+\frac{e+3}2& \textrm{if} \ u \ \textrm{is odd}
 \end{array} \right.
\end{eqnarray*} 
Let $$g(e)= \left\{\begin{array}{ll}  \frac{(h-e)(h+e+2)}{2u}+\frac{e}2+1+4\theta_p\sigma_{u-2}+2\theta_p(1-\sigma_{u-2}) & \textrm{if} \ u \ \textrm{is even}\\
 \frac{(h-e)(h+e+3)}{2u}+\frac{e+3}2& \textrm{if} \ u \ \textrm{is odd}
 \end{array} \right.
$$ Then $$g'(e)=\left\{\begin{array}{ll} \frac{u-2e-2}{2u}   & \textrm{if} \ u \ \textrm{is even}\\ 
\frac{u-2e-3}{2u}   & \textrm{if} \ u \ \textrm{is odd}
\end{array}\right.$$ and if $u$ is even then $g'(e)>0$ if and only if $e<(u-2)/2$, else $g'(e)>0$ if and only if $e<(u-3)/2$.  Hence $d_u(G)\leq F(u)$ where 
\begin{eqnarray*}
F(u) &  =&\left\{\begin{array}{ll} g((u-2)/2) & \textrm{if} \ u \ \textrm{is even} \\
g((u-3)/2) & \textrm{if} \ u \ \textrm{is odd} \end{array} \right. \\
& = & \left \{\begin{array}{ll}  \frac{u^2+4u+4(h+1)^2}{8u}+4\theta_p\sigma_{u-2}+2\theta_p(1-\sigma_{u-2}) & \textrm{if} \ u \ \textrm{is even}\\
\frac{u^2+6u+(2h+3)^2}{8u} & \textrm{if} \ u \ \textrm{is odd} \\
  \end{array}\right .
\end{eqnarray*}
In particular, for any hyperbolic triple $(a,b,c)$ of integers, we have $\mathbf{S}_{(a,b,c)}\leq F(a)+F(b)+F(c)$.\\
Suppose $(a,b,c)=(2,3,7)$. We have $\mathbf{S}_{(2,3,7)}\leq F(2)+F(3)+F(7)$. Now
$$F(2)=\frac{h^2+2h+4}{4}+4\theta_p, \quad F(3)=\frac{h^2+3h+9}{6}, \quad F(7)=\frac{h^2+3h+25}{14}.$$
Hence $$\mathbf{S}_{(2,3,7)}\leq  \frac{41h^2+102h+360}{84}+4\theta_p.$$
and $$\mathbf{D}_{(2,3,7)}\leq -\frac{h^2+24h-276}{84}+4\theta_p.$$
Since   $-\frac{h^2+24h-276}{84}+4\theta_p$ is negative for $h \geq 16$, it follows that $(2,3,7)$ is nonrigid for $G$ provided $h\geq 16$, that is  $\ell\geq 9$. Also every hyperbolic triple $(a',b',c')$ of integers with $(a',b',c')\geq (2,3,7)$ is nonrigid for $G$ with $\ell\geq 9$.\\ 
Suppose $(a,b,c)=(2,4,5)$. We have $\mathbf{S}_{(2,4,5)}\leq F(2)+F(4)+F(5)$. Now
$$F(2)=\frac{h^2+2h+4}{4}+4\theta_p, \quad F(4)=\frac{h^2+2h+9}{8}+2\theta_p, \quad F(5)=\frac{h^2+3h+16}{10}.$$
Hence $$\mathbf{S}_{(2,4,5)}\leq  \frac{19h^2+42h+149}{40}+6\theta_p$$
and $$\mathbf{D}_{(2,4,5)}\leq -\frac{h^2+18h-109}{40}+6\theta_p.$$
Since   $ -\frac{h^2+18h-109}{40}+6\theta_p$ is negative for $h \geq 12$, it follows that $(2,4,5)$ is nonrigid for $G$ provided $h\geq 12$, that is  $\ell\geq7$. Also every hyperbolic triple $(a',b',c')$ of integers with $(a',b',c')\geq (2,4,5)$ is nonrigid for $G$ with $\ell\geq 7$.\\
Suppose $(a,b,c)=(3,3,4)$. We have $\mathbf{S}_{(3,3,4)}\leq F(3)+F(3)+F(4)$. Now
$$ F(3)=\frac{h^2+3h+9}{6}, \quad F(4)=\frac{h^2+2h+9}{8}+2\theta_p.$$
Hence $$\mathbf{S}_{(3,3,4)}\leq  \frac{11h^2+30h+99}{24}+2\theta_p.$$
and $$\mathbf{D}_{(3,3,4)}\leq -\frac{h^2+6h-75}{24}+2\theta_p.$$
Since   $-\frac{h^2+6h-75}{24}+2\theta_p$ is negative for $h \geq 10$, it follows that $(3,3,4)$ is nonrigid for $G$ provided $h\geq 10$, that is  $\ell\geq 6$. Also every hyperbolic triple $(a',b',c')$ of integers with $(a',b',c')\geq (3,3,4)$ is nonrigid for $G$ with $\ell\geq 6$.\\

Finally to classify hyperbolic triples $(a,b,c)$ of integers for $G$ with $\ell \leq 10$, $\ell \leq 9$, $\ell \leq 14$, $\ell \leq 8$ accordingly respectively as $G$ is of type $A$, $B$, $C$, $D$  one can use Theorem \ref{t:blawther} and \cite[Theorem 1]{Lawther} to find $d_a(G)$, $d_b(G)$ and $d_c(G)$ and compute $\mathbf{D}_{(a,b,c)}$. Note that if $u>h$ then $d_u(G)=\ell$. 
\end{proof}

\begin{prop}\label{p:classificationa}
Let $G$ be a simple algebraic group  over an algebraic closed field $K$ of prime characteristic $p$. The classification of hyperbolic triples $(a,b,c)$ of integers for $G$ is as given in Theorem \ref{t:classification}. 
\end{prop}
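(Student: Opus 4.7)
\noindent\textit{Proof proposal for Proposition \ref{p:classificationa}.}

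The plan is to reduce this statement entirely to Proposition \ref{p:classificationsc} via Theorem \ref{t:duotsca}. Since $\dim G = \dim G_{s.c.} = \dim G_{a.}$ for any three simple algebraic groups sharing Lie type and Lie rank, the status (reducible/rigid/nonrigid) of a hyperbolic triple $(a,b,c)$ for $G$ depends only on the values $d_a(G), d_b(G), d_c(G)$ compared with $\dim G$, and the classification will follow once we know, for each positive integer $u$, whether $d_u(G)=d_u(G_{s.c.})$ or $d_u(G)=d_u(G_{a.})$.

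First I would dispose of cases (iv) and (v) of Theorem \ref{t:classification}. For $G = {\rm SO}_{2\ell}(K)$ with $p\neq 2$, part (ii)(b) of Theorem \ref{t:duotsca} gives $d_u(G)=d_u(G_{a.})$ whenever $d_u(G_{s.c.}) \neq d_u(G_{a.})$, and trivially when the two agree. Hence $d_u({\rm SO}_{2\ell}(K)) = d_u({\rm PSO}_{2\ell}(K))$ for every $u$, so the triple $(a,b,c)$ has identical status for ${\rm SO}_{2\ell}(K)$ and for ${\rm PSO}_{2\ell}(K) = G_{a.}$. The classification for ${\rm PSO}_{2\ell}(K)$ is already established in Proposition \ref{p:classificationsc} via the adjoint result of Lawther (noting that every triple is nonrigid for $G_{a.}$ of type $D_{\ell}$ by Theorem \ref{t:blawther}(iv) combined with the simply connected classification), which gives (iv). The argument for (v) is symmetric: by Theorem \ref{t:duotsca}(ii)(c), $d_u({\rm HSpin}_{2\ell}(K))=d_u(G_{s.c.})$ for every $u$, so the status of any triple matches that for ${\rm Spin}_{2\ell}(K)$, and we invoke Proposition \ref{p:classificationsc} directly.

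Next I would treat (vi). Let $G={\rm SL}_{\ell+1}(K)/C$ with $C\leqs Z({\rm SL}_{\ell+1}(K))$. If $C$ contains an involution, then by Theorem \ref{t:duotsca}(ii)(a) we have $d_u(G)=d_u(G_{a.})$ in every case where $d_u(G_{s.c.})\neq d_u(G_{a.})$, and trivially otherwise; hence $d_u(G)=d_u({\rm PSL}_{\ell+1}(K))$ for all $u$, and the classification for ${\rm PSL}_{\ell+1}(K)$ has been established in Proposition \ref{p:classificationsc}. If $C$ has no involution then Theorem \ref{t:duotsca}(ii)(a) yields $d_u(G)=d_u(G_{s.c.})$ for all $u$, so the classification matches that of ${\rm SL}_{\ell+1}(K)$, again given by Proposition \ref{p:classificationsc}.

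The only real content of the argument is the bookkeeping that ensures we have indeed exhausted all simple algebraic groups that are neither simply connected nor adjoint. As recalled just after Theorem \ref{t:blawther}, such a group is either a quotient ${\rm SL}_n(K)/C$ with $C\leqs Z({\rm SL}_n(K))$, or, in type $D_{\ell}$ with $p\neq 2$, abstractly isomorphic to ${\rm SO}_{2\ell}(K)$ or to ${\rm HSpin}_{2\ell}(K)$ (with $\ell$ even in the half-spin case). These are exactly the cases treated above, so the classification in each of (iv), (v), (vi) follows, which together with (i)--(iii) from Proposition \ref{p:classificationsc} completes the proof. The main obstacle is purely verificational, namely confirming that the dimension-preserving isogenies allow us to transfer the status of a triple directly; no new dimension computations should be required. $\square$
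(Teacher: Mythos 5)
Your handling of the strictly intermediate isogeny types is correct, and in fact slightly cleaner than the paper's: since Theorem \ref{t:duotsca} shows that for each such $G$ there is a single reference group $H\in\{G_{s.c.},G_{a.}\}$ (depending only on the isogeny type, not on $u$) with $d_u(G)=d_u(H)$ for every $u$, and $\dim G=\dim H$, the status of every triple transfers wholesale, which immediately gives parts (iv), (v) and (vi) of Theorem \ref{t:classification} as the relative statements they are. The paper instead transfers only nonrigidity (via $d_u(G)\leq d_u(G_{s.c.})$ from Proposition \ref{p:ingsimple}) to dispose of large rank, and then recomputes $d_a(G),d_b(G),d_c(G)$ directly for small rank using Theorem \ref{t:duotsca} and Lawther's formulas; your packaging avoids that recomputation for the intermediate groups.

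However, there is a genuine gap: Proposition \ref{p:classificationa} is stated for \emph{all} simple algebraic groups, so it must also establish the classification for the adjoint groups, namely Theorem \ref{t:classification}(iii) (Table \ref{ta:rigida}) and the assertion in (i) that adjoint groups admit no reducible triples. You attribute ``(i)--(iii)'' to Proposition \ref{p:classificationsc}, but that proposition is stated and proved only for simply connected $G$; it yields (ii) and the simply connected half of (i), not (iii). The adjoint case cannot be obtained by transfer from the simply connected one, because the inequality $d_u(G_{a.})\leq d_u(G_{s.c.})$ only pushes \emph{nonrigidity} downward: reducible or rigid triples for $G_{s.c.}$ may change status for $G_{a.}$. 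For instance $(2,b,c)$ is reducible for ${\rm SL}_2(K)$ with $p\neq 2$ but rigid for ${\rm PSL}_2(K)$, the rigid lists for ${\rm SL}_4(K)$ and ${\rm PSL}_4(K)$ differ, and $(2,3,7)$ is rigid for ${\rm Spin}_{12}(K)$ but nonrigid for ${\rm PSO}_{12}(K)$. So Table \ref{ta:rigida} requires its own small-rank computation of $d_u(G_{a.})$ via Lawther's theorem, exactly as the paper does. This omission also undercuts your treatment of (iv) and the involution case of (vi) insofar as you invoke ``the classification for ${\rm PSO}_{2\ell}(K)$'' as already known: the relative statements (iv)--(vi) survive, but the absolute classification for ${\rm SO}_{2\ell}(K)$ and for ${\rm SL}_n(K)/C$ with $C$ containing an involution still rests on the unproven adjoint case.
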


\begin{proof}
 The case where $G$ is of simply connected type is treated in Proposition \ref{p:classificationsc}. We therefore suppose that $G$ is not of simply connected type. 
  We  denote by $G_{s.c.}$ (respectively, $G_{a.}$)  the simple algebraic group of simply connected (respectively, adjoint) type having the same Lie type and Lie rank as  $G$. \\
  For an integer $u$, let $d_u(G)$ denote the minimal dimension of the centralizer of an element of $G$ of order dividing $u$. By Proposition \ref{p:fcu} $d_u(G)$ is the codimension of the subvariety $G_{[u]}$ of $G$ consisting of elements of order dividing $u$. Writing $j_u(G)=\dim G_{[u]}$, we have $d_u(G)=\dim G-j_u(G)$. \\
Let $(a,b,c)$ be a hyperbolic triple of integers. Let $\mathbf{S}_{(a,b,c)}=d_a(G)+d_b(G)+d_c(G)$ and $\mathbf{D}_{(a,b,c)}=\mathbf{S}_{(a,b,c)}-\dim G$. Recall that saying that $(a,b,c)$ is reducible (respectively, rigid, nonrigid) for $G$ amounts to saying that  $\mathbf{D}_{(a,b,c)}$ is greater than (respectively equal to, less than) 0. \\
  By Proposition \ref{p:ingsimple}, given a positive integer $u$, we have $d_u(G)\leq d_u(G_{s.c.})$.
     It follows that every hyperbolic triple of integers which is nonrigid for $G_{s.c.}$ is nonrigid for $G$. In particular, by Proposition \ref{p:classificationsc}, we can now assume that $G$ is of classical type.    By the proof of Proposition \ref{p:classificationsc} if $G$ is such that $\ell \leq 11$, $\ell \leq 10$, $\ell \leq 15$, $\ell \leq 9$ accordingly respectively as $G$ is of type $A$, $B$, $C$, $D$ then every hyperbolic triple of integers is nonrigid for $G$. For $G$ of small rank, one can use  \cite[Theorem 1]{Lawther} and Theorems \ref{t:blawther} and   \ref{t:duotsca} to find $d_a(G)$, $d_b(G)$ and $d_c(G)$, compute $\mathbf{D}_{(a,b,c)}$ and classify a given hyperbolic triple $(a,b,c)$ of integers for $G$. Note that if $u> h$ then $d_u(G)=\ell$. 
    \end{proof}

\section{Proof of Proposition \ref{p:marionred}}\label{s:reducibility}

 We consider Proposition \ref{p:marionred}.  Let $G$ be a simple algebraic group over  an algebraically closed field $K$. The main ingredient in the proof of Proposition \ref{p:marionred} is the following result proved in \cite{Marionconj} combined with the classification given in Theorem \ref{t:classification} of the reducible and the rigid hyperbolic triples of integers for simple algebraic groups
  if $p$ is  a bad prime for $G$ or $G$ is of exceptional type. Recall that $p$ is said to be bad for $G$ if $G$ is of type $B_\ell$, $C_\ell$, $D_\ell$ and $p=2$, or of type $G_2$, $F_4$, $E_6$, $E_7$ and $p\in\{2,3\}$, or of type $E_8$ and $p \in \{2,3,5\}$. A prime $p$ is said to be good for $G$ if it is not bad for $G$.  Also in the statement below and from now on, by an irreducible subgroup of a classical group $G$, we mean a subgroup acting irreducibly on the natural module for $G$.

  \begin{prop}\label{p:reducibility} \cite[Proposition 2.1]{Marionconj}.
  Suppose that $G$ is of classical type and  $p$ is a good prime for $G$. If $g_1$, $g_2$, $g_3$ are elements of $G$ such that $g_1g_2g_3=1$ and $\langle g_1,g_2\rangle$ is an irreducible subgroup of $G$ then 
  $$\dim g_1^G+\dim g_2^G+\dim g_3^G\geq 2 \dim G.$$
  \end{prop}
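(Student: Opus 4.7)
The plan is a standard Weil--Goldman-style dimension count on the representation variety of the triangle relation. Define
\[ V = \{(h_1, h_2, h_3) \in C_1 \times C_2 \times C_3 : h_1 h_2 h_3 = 1\}, \]
where $C_i$ denotes the conjugacy class of $g_i$ in $G$. By hypothesis $V$ contains the point $(g_1, g_2, g_3)$, at which $H := \langle h_1, h_2\rangle$ acts irreducibly on the natural module $V_0$ of the classical group $G$. I will bound $\dim V$ from above and below and compare.

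For the lower bound, $G$ acts on $V$ by simultaneous conjugation, and the stabilizer of a triple equals $C_G(\langle h_1, h_2, h_3\rangle) = C_G(H)$ (using $h_3 = (h_1 h_2)^{-1}$). At the irreducible triple, Schur's Lemma applied to the action of $H$ on $V_0$ gives $C_{\mathrm{GL}(V_0)}(H) = K^{*}$, so the image of $C_G(H)$ in $\mathrm{GL}(V_0)$ is contained in $Z(\mathrm{GL}(V_0))$; as $G$ is simple this forces $C_G(H)$ to be finite. Consequently the $G$-orbit of $(g_1, g_2, g_3)$ has dimension $\dim G$, so $\dim V \geq \dim G$.

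For the upper bound, consider the multiplication map $\mu : C_1 \times C_2 \times C_3 \to G$, $(h_1, h_2, h_3) \mapsto h_1 h_2 h_3$, so that $V = \mu^{-1}(1)$. The key point is that $\mu$ should be smooth (submersive) at the irreducible triple, forcing the fiber to have the expected dimension $\sum \dim C_i - \dim G$. A standard tangent-space computation, after right-translation to the identity, identifies the image of $d\mu$ with
\[ (1-\mathrm{Ad}(g_1))\mathfrak{g} + \mathrm{Ad}(g_1)(1-\mathrm{Ad}(g_2))\mathfrak{g} + \mathrm{Ad}(g_1 g_2)(1-\mathrm{Ad}(g_3))\mathfrak{g}, \]
and its Killing-orthogonal complement is precisely $C_\mathfrak{g}(H)$. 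Surjectivity of $d\mu$ is therefore equivalent to $C_\mathfrak{g}(H) = 0$. Granted this, combining the two bounds yields $\dim G \leq \dim V = \sum \dim C_i - \dim G$, whence $\sum \dim g_i^G \geq 2\dim G$, as required.

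The crux of the argument is the vanishing $C_\mathfrak{g}(H) = 0$, and this is where the hypotheses on $G$ and $p$ enter decisively. For $G$ classical, $\mathfrak{g}$ embeds in $\mathrm{End}(V_0) \cong V_0 \otimes V_0^{*}$ as a Lie subalgebra of type $\mathfrak{sl}$, $\mathfrak{sp}$ or $\mathfrak{so}$. By Schur, $C_{\mathrm{End}(V_0)}(H) = K \cdot \mathrm{id}$, hence $C_\mathfrak{g}(H) \subseteq K \cdot \mathrm{id} \cap \mathfrak{g}$. When $p$ is good, in particular $p \neq 2$ in types $B$, $C$, $D$, the identity does not lie in $\mathfrak{sp}(V_0)$ or $\mathfrak{so}(V_0)$; in type $A$, $\mathrm{id} \in \mathfrak{sl}(V_0)$ precisely when $p \mid \dim V_0$, and the residual cases can be dispatched by passing to the adjoint form or by separate bookkeeping. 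This yields $C_\mathfrak{g}(H) = 0$ and closes the dimension count.
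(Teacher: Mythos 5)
The paper offers no proof of this statement: it is quoted from \cite[Proposition 2.1]{Marionconj}, whose proof rests on the Strambach--V\"{o}lklein linear-rigidity inequality for irreducible tuples in ${\rm GL}_n(K)$ together with a comparison of ${\rm GL}_n$- and $G$-class dimensions. Your deformation-theoretic argument is essentially the same mechanism run directly inside $G$, and most of it is sound: the lower bound (Schur forces $C_G(H)$ to be finite, so the simultaneous-conjugation orbit gives $\dim_{(g_1,g_2,g_3)}V\geq \dim G$), the reduction of the upper bound to surjectivity of $d\mu$, and the verification for types $B$, $C$, $D$ in odd characteristic, where the trace form on $\mathfrak{g}$ is nondegenerate and $K\cdot\mathrm{id}\cap\mathfrak{g}=0$, are all correct.

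The gap is in type $A$, where every prime is good, so the case $p\mid n$ for $G$ of type $A_{n-1}$ is genuinely part of the statement. There two things break at once: $C_{\mathfrak{sl}_n}(H)$ contains $K\cdot\mathrm{id}\neq 0$, and the trace form on $\mathfrak{sl}_n$ is degenerate with radical exactly $K\cdot\mathrm{id}$, so the duality between $\mathrm{im}\,d\mu$ and $C_{\mathfrak{g}}(H)$ that your surjectivity criterion relies on is no longer available; vanishing of the centralizer and surjectivity of $d\mu$ decouple. Your proposed remedy of ``passing to the adjoint form'' does not obviously repair this: $C_{\mathfrak{pgl}_n}(H)$ is the space of classes of $X\in\mathfrak{gl}_n$ with $hXh^{-1}-X\in K\cdot\mathrm{id}$ for all $h\in H$, which is governed by a homomorphism $H\to (K,+)$ that irreducibility alone does not kill when $p\mid n$. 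The clean fix --- and in substance what the cited proof does --- is to run the whole tangent-space computation in ${\rm GL}_n$, where the trace form is nondegenerate and Schur gives $C_{\mathfrak{gl}_n}(H)=K\cdot\mathrm{id}$ exactly; this yields $\sum_i\dim g_i^{{\rm GL}_n}\geq 2(n^2-1)$, and one concludes for type $A$ since $\dim g^{{\rm SL}_n}=\dim g^{{\rm GL}_n}$. For types $B$, $C$, $D$ the same ${\rm GL}_n$ inequality is then transferred to $G$ using $p\neq 2$ to compare $\dim C_{{\rm GL}_n}(g)$ with $\dim C_G(g)$, which is where the good-prime hypothesis actually does its work.
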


  We can now prove Proposition \ref{p:marionred}.\\
  
 \noindent \textit{Proof of Proposition \ref{p:marionred}.}
  If $G$ is of classical type and $p$ is a good prime for $G$ then the result follows from  Proposition \ref{p:reducibility}. The remaining cases follow from Theorem \ref{t:classification} which shows that there are no reducible hyperbolic triples of integers for $G$ if $G$ is of symplectic or orthogonal type and $p= 2$, or if $G$ is of exceptional type.  
  $\hspace{50mm} \square$\\

 \section{Some tables}\label{s:tables}
 
  In this section we collect Tables \ref{t:asc}-\ref{ta:casestoconsider} of the paper. 
  Tables \ref{t:asc}-\ref{t:dsc} appear in \S \ref{s:upc} and Table \ref{ta:casestoconsider} appears in \S \ref{s:pc}.

\begin{table}[h]
\caption{A  semisimple element $y$ of $G=(A_{\ell})_{\rm{s.c}}$ of order $v$ giving an upper bound for $d_u(G)$}\label{t:asc}
\small{{
\begin{tabular}{| l | l | l | l|}
\hline
Case & Element $y$ & $C_{G_{s.c.}}(y)^0$ & $d_q(C_{G_{s.c.}}(y)^0)$\\ 
\hline
$\begin{array}{l} \epsilon_v=1,\\  \textrm{or} \  (\epsilon_v,\epsilon_\alpha)=(0,0)  \end{array}$& $ M_{1}^\alpha\oplus M_{2}\oplus (1)^{\epsilon_\beta}$& $A_\alpha^\beta A_{\alpha-1}^{v-\beta}T_{v-1}$ & $z^2u+e(2z+1)-1$\\
\cline{1-2}
$(\epsilon_v,\epsilon_{\alpha},\epsilon_{\beta})=(0,1,1)$  & $M_{1}^\alpha\oplus M_{2}\oplus (-1)$ &  & \\
\cline{1-2}
$\begin{array}{l}(\epsilon_v,\epsilon_{\alpha},\epsilon_{\beta})=(0,1,0), \\ \beta\geq 2\end{array}$ & $M_1^\alpha\oplus M_5\oplus (1)\oplus (-1)$&  &  \\
\hline 
$(\epsilon_v,\epsilon_{\alpha})=(0,1)$, $\beta=0$  &  $M_1^{\alpha-1}\oplus M_4\oplus (-1)$& $A_\alpha A_{\alpha-1}^{v-2} A_{\alpha-2}T_{v-1}$&  $\left\{\begin{array}{ll} z^2u+e(2z+1)-1&  \textrm{if} \  e>0\\ z^2u+e(2z+1)+1&  \textrm{if} \ e=0  \end{array}\right.$\\
\hline
\end{tabular}}}
\end{table}

\vspace{5mm}

\begin{table}[h]
\caption{A  semisimple element $y$ of $G=(C_{\ell})_{\rm{s.c}}$ of order $v$ giving an upper bound for $d_u(G)$}\label{t:csc}
\small{{
\begin{tabular}{| l | l | l | l|}
\hline
Case & Element $y$ & $C_G(y)^0$ & $d_q(C_G(y)^0)$\\ 
\hline
 $\epsilon_v=1$ & $M_{1}^\alpha\oplus M_{2}\oplus (1)^{\epsilon_\beta}$& $A_\alpha^{\lfloor\frac\beta2\rfloor} A_{\alpha-1}^{\frac{v-1}2-\lfloor\frac\beta2\rfloor}C_{\lceil \frac \alpha2\rceil}T_{\frac{v-1}2}$ & $\frac{(z^2u+e(2z+1))}2+\lceil \frac z2 \rceil \epsilon_u$\\
 \hline
 $(\epsilon_v,\epsilon_\alpha)=(0,0)$ & $ M_{1}^\alpha\oplus M_{2}$& $A_\alpha^{\frac\beta2} A_{\alpha-1}^{\frac{v}2-1-\frac\beta2}C_{\frac \alpha2}^2T_{\frac{v}2-1}$ & $\frac{(z^2u+e(2z+1))}2+2\lceil \frac z2 \rceil$\\
\cline{1-3}
$(\epsilon_v,\epsilon_\alpha)=(0,1)$, $\beta\geq 2$ & $ M_{1}^\alpha\oplus M_{5}\oplus(1)\oplus(-1)$&  $A_\alpha^{\frac\beta2-1} A_{\alpha-1}^{\frac{v}2-\frac\beta2}C_{\frac{\alpha+1}2}^2T_{\frac{v}2-1}$ & \\
\cline{1-3}
$(\epsilon_v,\epsilon_\alpha)=(0,1)$, $\beta=0$ & $ M_{1}^{\alpha-1}\oplus M_{4}\oplus(-1)$&  $A_{\alpha-1}^{\frac{v}2-1}C_{\frac{\alpha+1}2}C_{\frac{\alpha-1}2}T_{\frac{v}2-1}$ & \\
\hline
\end{tabular}}}
\end{table}

\begin{sidewaystable}[h]
\caption{A  semisimple element $y$ of $G=(B_{\ell})_{\rm{s.c}}$ of order $v$ giving an upper bound for $d_u(G)$}\label{t:bsc}
\footnotesize{{
\begin{tabular}{| l | l | l | l|}
\hline
Case & Element $\overline{y}$ & $C_{{\rm SO}_{h+1}(K)}(\overline{y})^0$ & $d_q(C_G(y)^0)$\\ 
\hline
 (a) & $ M_{1}^\alpha\oplus M_{2}\oplus (1)^{1+\epsilon_\beta}$& $A_\alpha^{\lfloor\frac\beta2\rfloor} A_{\alpha-1}^{\frac{v-1}2-\lfloor\frac\beta2\rfloor}B_{\lceil \frac \alpha2\rceil}T_{\frac{v-1}2}$ & $\frac12(z^2u+e(2z+1))+\lceil \frac z2 \rceil \epsilon_u$\\
 \hline
 (b) & $\left\{\begin{array}{ll}(1)^{\ell+1}\oplus(-1)^{\ell} & \textrm{if} \ \ell \equiv 0 \ (4) \\
 (1)^{\ell+2}\oplus(-1)^{\ell-1} & \textrm{if} \ \ell \equiv 1 \ (4)\\
 (1)^{\ell-1}\oplus(-1)^{\ell+2} & \textrm{if} \ \ell \equiv 2 \ (4)\\
 (1)^{\ell}\oplus(-1)^{\ell+1} & \textrm{if} \ \ell \equiv 3 \  (4)
 \end{array}\right.$  &  $\left\{\begin{array}{ll}B_{\frac{\ell}2}D_{\frac{\ell}2} & \textrm{if} \ \ell \equiv 0 \ (4)\\ 
 B_{\frac{\ell+1}2}D_{\frac{\ell-1}2} & \textrm{if} \ \ell \equiv 1 \ (4)\\ 
B_{\frac{\ell-2}2}D_{\frac{\ell+2}2} & \textrm{if} \ \ell \equiv 2 \ (4)\\ 
B_{\frac{\ell-1}2}D_{\frac{\ell+1}2} & \textrm{if} \ \ell \equiv 3 \ (4)
 \end{array}\right.$& $\left \{\begin{array}{ll}\\
 \frac{(z^2u+e(2z+1))}2+2 & \textrm{if} \ \ell \equiv 1,2 \ (4)\\
 & \textrm{and} \ e=0\\
 \frac{(z^2u+e(2z+1))}2 & \textrm{otherwise} 
  \end{array}\right.$ \\
 \hline 
(c) & $M_{1}^\alpha\oplus M_{2}\oplus(-1) $& $A_\alpha^{\frac\beta2} A_{\alpha-1}^{\frac{v}2-1-\frac\beta2}B_{\frac{\alpha-1}2}D_{\frac {\alpha+1}2}T_{\frac{v}2-1}$ & $\frac12(z^2u+e(2z+1))$\\
\cline{1-3}
(d) & $ M_{1}^\alpha\oplus M_{5}\oplus(1)^2\oplus(-1)$&  $A_\alpha^{\frac\beta2-1} A_{\alpha-1}^{\frac{v}2-\frac\beta2}B_{\frac{\alpha+1}2}D_{\frac {\alpha+1}2}T_{\frac{v}2-1}$ & \\
\cline{1-2}
(e) & $ M_{1}^{\alpha}\oplus M_{6}\oplus(1)^2\oplus(-1)$&   & \\
\hline
(f) & $  M_{1}^{\alpha-1}\oplus M_{7}\oplus(1)^3\oplus(-1)^2 $&  $A_{\alpha-1}^{\frac v2-2} A_{\alpha-2}B_{\frac{\alpha+1}2}D_{\frac {\alpha+1}2}T_{\frac{v}2-1}$ & $\left \{\begin{array}{ll}\\
 \frac{(z^2u+e(2z+1))}2+2 & \textrm{if} \ e=0\\
 \frac{(z^2u+e(2z+1))}2 & \textrm{otherwise} 
  \end{array}\right.$ \\
\hline
(g) & $M_1^\alpha\oplus M_2 \oplus (1)$ & $A_\alpha^{\frac\beta2} A_{\alpha-1}^{\frac{v}2-1-\frac\beta2}B_{\frac{\alpha}2}D_{\frac {\alpha}2}T_{\frac{v}2-1}$ & $\frac12(z^2u+e(2z+1))$\\
\cline{1-2}
(h) & $M_1^\alpha\oplus M_6\oplus (\omega^{\frac v2-1})\oplus (\omega^{-\left(\frac v2-1\right)})  \oplus (1)$ & &\\
\cline{1-3}
(i) & $ M_{1}^\alpha\oplus M_{5}\oplus(1)\oplus(-1)^2$&  $A_\alpha^{\frac\beta2-1} A_{\alpha-1}^{\frac{v}2-\frac\beta2}B_{\frac{\alpha}2}D_{\frac {\alpha+2}2}T_{\frac{v}2-1}$ & \\
\cline{1-2}
(j) & $ M_{1}^\alpha\oplus M_{6}\oplus(1)\oplus(-1)^2$& &\\
\hline
(k) & $ M_{1}^{\alpha-1}\oplus M_{9}\oplus(1)^2\oplus(-1)^3 $&  $A_{\alpha-1}^{\frac v2-2} A_{\alpha-2}B_{\frac{\alpha}2}D_{\frac {\alpha+2}2}T_{\frac{v}2-1}$ & $\left \{\begin{array}{ll}\\
 \frac{(z^2u+e(2z+1))}2+2 & \textrm{if} \ e=0\\
 \frac{(z^2u+e(2z+1))}2 & \textrm{otherwise} 
  \end{array}\right.$ \\
  \hline
\end{tabular}}}
\end{sidewaystable}

\begin{sidewaystable}[h]
\caption{A  semisimple element $y$ of $G=(D_{\ell})_{\rm{s.c}}$ of order $v$ giving an upper bound for $d_u(G)$}\label{t:dsc}
\footnotesize{{
\begin{tabular}{| l | l | l | l|}
\hline
Case & Element $\overline{y}$ & $C_{{\rm SO}_{h+1}(K)}(\overline{y})^0$ & $d_q(C_G(y)^0)$\\ 
\hline
 (a) & $ M_{1}^\alpha\oplus M_{3}\oplus (1)^{2-\epsilon_\beta}$& $A_\alpha^{\lceil\frac\beta2\rceil} A_{\alpha-1}^{\frac{v-1}2-\lceil\frac\beta2\rceil}D_{\lceil \frac {\alpha+1}2\rceil}T_{\frac{v-1}2}$ & $\frac12(z^2u+e(2z+1))+\lceil \frac z2 \rceil \epsilon_u+z+1-\epsilon_z$\\
 \hline
 (b) & $\left\{\begin{array}{ll}(1)^{\ell}\oplus(-1)^{\ell} & \textrm{if} \ \ell \equiv 0 \ (4) \\
 (1)^{\ell+1}\oplus(-1)^{\ell-1} & \textrm{if} \ \ell \equiv 1 \ (4)\\
 (1)^{\ell-2}\oplus(-1)^{\ell+2} & \textrm{if} \ \ell \equiv 2 \ (4)\\
 (1)^{\ell-1}\oplus(-1)^{\ell+1} & \textrm{if} \ \ell \equiv 3 \  (4)
 \end{array}\right.$  &  $\left\{\begin{array}{ll}D_{\frac{\ell}2}D_{\frac{\ell}2} & \textrm{if} \ \ell \equiv 0 \ (4)\\ 
 D_{\frac{\ell+1}2}D_{\frac{\ell-1}2} & \textrm{if} \ \epsilon_\ell =1\\ 
D_{\frac{\ell-2}2}D_{\frac{\ell+2}2} & \textrm{if} \ \ell \equiv 2 \ (4)\\ 
 \end{array}\right.$& $\left \{\begin{array}{ll}\\
 \frac{z^2u+e(2z+1)}2+z+1-\epsilon_z+4 & \textrm{if} \ \ell \equiv 2 \ (4)\\ & \textrm{and}\  u =2\\
 \frac{z^2u+e(2z+1)}2+z+1-\epsilon_z+2 & \textrm{if} \ \ell \equiv 2 \ (4)\  \textrm{and}\\ &    e=u-v\neq 0\\
 & \textrm{or}     \ e=0\neq u-v\\
 \frac{z^2u+e(2z+1)}2+z+1-\epsilon_z & \textrm{otherwise} 
  \end{array}\right.$ \\
 \hline 
(c) & $M_{1}^\alpha\oplus M_{3}\oplus(1)\oplus(-1) $& $A_\alpha^{\frac\beta2} A_{\alpha-1}^{\frac{v}2-1-\frac\beta2}D_{\frac{\alpha+1}2}D_{\frac {\alpha+1}2}T_{\frac{v}2-1}$ & $\frac{z^2u+e(2z+1)}2+z+1-\epsilon_z$\\
\cline{1-2}
(d) & $ M_{1}^\alpha\oplus M_{5}\oplus (\omega^{\frac v2-2})\oplus(\omega^{-(\frac v2-2)}) \oplus(1)\oplus(-1)$& &\\
\cline{1-2}
(e) & $ M_{1}^\alpha\oplus M_{5}\oplus (\omega^{\frac v2-1})\oplus(\omega^{-(\frac v2-1)}) \oplus(1)\oplus(-1)$& &\\
\cline{1-2}
(f) & $ M_{1}^\alpha\oplus M_{6}\oplus (\omega^{\frac v2-2})\oplus(\omega^{-(\frac v2-2)}) \oplus(1)\oplus(-1)$& &\\
& if $\beta\neq v-4$& & \\
& $ M_{1}^\alpha\oplus M_{8} \oplus(1)\oplus(-1)$& &\\
& if $\beta=v-4$& & \\
\cline{1-2}
(g) & $ M_{1}^\alpha\oplus M_{6}\oplus (\omega^{\frac v2-1})\oplus(\omega^{-(\frac v2-1)}) \oplus(1)\oplus(-1)$& &\\
\hline
(h) & $M_1^\alpha \oplus M_9 \oplus (1) \oplus (-1)^3 $ & $A_\alpha^{\frac v2-2} A_{\alpha-1} D_{\frac{\alpha+1}2} D_{\frac{\alpha+3}2}T_{\frac v2-1}$ & $\left\{ \begin{array}{ll} \frac{z^2u+e(2z+1)}2 +z+1-\epsilon_z +2 & \textrm{if} \ e=u-2\\
\frac{z^2u+e(2z+1)}2 +z+1-\epsilon_z  & \textrm{otherwise}
  \end{array}\right.$  \\
\hline
(i) & $M_1^{\alpha-1} \oplus M_7 \oplus (\omega)^2 \oplus (\omega^{-1})^2 \oplus (1)^2 $ &  $A_{\alpha}A_{\alpha-1}^{\frac v2-2} D_{\frac{\alpha-1}2} D_{\frac{\alpha+1}2}T_{\frac v2-1}$&  $\left\{ \begin{array}{ll} \frac{z^2u+e(2z+1)}2 +z+1-\epsilon_z +2 & \textrm{if} \ e=0\\
\frac{z^2u+e(2z+1)}2 +z+1-\epsilon_z  & \textrm{otherwise}
  \end{array}\right.$ \\
\hline
(j) & $M_1^{\alpha -1}\oplus M_9 \oplus (\omega)\oplus (\omega^{-1}) \oplus (1)^2 \oplus(-1)^2 $ & $A_\alpha A_{\alpha-1}^{\frac v2-3}A_{\alpha-2} D_{\frac{\alpha+1}2}^2 T_{\frac v2-1}$ &   $\left\{ \begin{array}{ll} \frac{z^2u+e(2z+1)}2 +z+1-\epsilon_z +2 & \textrm{if} \ e=0\\
\frac{z^2u+e(2z+1)}2 +z+1-\epsilon_z  & \textrm{otherwise}
  \end{array}\right.$  \\
\hline
(k) & $M_1^\alpha\oplus M_3 \oplus (1)^2$ & $A_\alpha^{\frac \beta2}A_{\alpha-1}^{\frac v2-1-\frac \beta2}D_{\frac \alpha2}D_{\frac{\alpha+2}2}T_{\frac v2-1}$ & $\frac{z^2u+e(2z+1)}2+z+1-\epsilon_z$ \\
\cline{1-2} 
(l)  & $ M_{1}^\alpha\oplus M_{5}\oplus (\omega^{\frac v2-1})\oplus(\omega^{-(\frac v2-1)}) \oplus(1)^2$ & & \\
\cline{1-2}
(m) & $ M_{1}^\alpha\oplus M_{6} \oplus (\omega^{\frac v2-2})\oplus (\omega^{-(\frac v2-2)})\oplus(1)^2$&  &\\
& if $\beta\neq v-4$& & \\
& $ M_{1}^\alpha\oplus M_{8} \oplus(1)^2$&  &\\
& if $\beta=v-4$& & \\
\cline{1-3}
(n) & $ M_{1}^\alpha\oplus M_{5}\oplus (\omega^{\frac v2-2})\oplus(\omega^{-(\frac v2-2)}) \oplus(1)^2$&  $A_\alpha^{\frac \beta2}A_{\alpha-1}^{\frac v2-1-\frac \beta2}D_{\frac \alpha2}D_{\frac{\alpha+2}2}T_{\frac v2-1}$&\\
& if $\beta\neq v-2$&  if $\beta\neq v-2$ & \\
& $ M_{1}^\alpha\oplus M_{9} \oplus(1)^2\oplus(-1)^2$& $A_\alpha^{\frac v2-2}A_{\alpha-1}D_{\frac {\alpha+2}2}D_{\frac{\alpha+2}2}T_{\frac v2-1}$ &\\
& if $\beta=v-2$& if $\beta=v-2$& \\
\cline{1-3}
(o) & $ M_{1}^\alpha\oplus M_{6}\oplus (\omega^{\frac v2-1})\oplus(\omega^{-(\frac v2-1)}) \oplus(1)^2$&  $A_\alpha^{\frac \beta2}A_{\alpha-1}^{\frac v2-1-\frac \beta2}D_{\frac \alpha2}D_{\frac{\alpha+2}2}T_{\frac v2-1}$&\\
& if $\beta\neq v-2$&  if $\beta\neq v-2$ & \\
& $ M_{1}^\alpha\oplus M_{9} \oplus(1)^2\oplus(-1)^2$& $A_\alpha^{\frac v2-2}A_{\alpha-1}D_{\frac {\alpha+2}2}D_{\frac{\alpha+2}2}T_{\frac v2-1}$ &\\
& if $\beta=v-2$& if $\beta=v-2$& \\
  \cline{1-3}
  (p) & $M_1^{\alpha}\oplus(-1)^2$ & $A_{{\alpha-1}}^{\frac v2-1}D_{\frac \alpha2}D_{\frac{\alpha+2}2}T_{\frac v2-1}$ & \\
  \hline
\end{tabular}}}
\end{sidewaystable}

\begin{table}
\begin{tabular}{|l|l|l|}
\hline
$G$ &   Cases & $d_u(G_{s.c})-d_u(G_{a.})$ \\
\hline
$A_\ell$ & $p\neq 2$, $u$ even, $h=zu$ and $z$ odd & $\leq 2$\\
\hline
$C_\ell$ & $p\neq 2$ and $u$ even & $\leq 2\left\lceil \frac z2\right\rceil$\\
\hline
$B_{\ell}$ & $p\neq 2$,  $u\equiv 2 \mod 4$, $h=zu$, and $z\equiv u/2 \mod 4$ or $z\equiv 2 \mod 4$ & $\leq 2$\\
\cline{2-2}
&  $p\neq 2$, $u \equiv 4 \mod 8$, $h=zu$ and $z$  odd & \\
\hline 
$D_\ell$ & $p\neq 2$, $u=2$, $h=zu$ and $z \equiv u/2 \mod 4$ & $\leq 4$\\
\cline{2-3}
&  $p\neq 2$, $u \equiv 2 \mod 4$, $u>2$, $h=zu$ and $z \equiv u/2 \mod 4$ & $\leq2$ \\
\cline{2-2}
&  $p\neq 2$, $u\equiv 2 \mod 4$, $e=u-2\neq 0$ and 
 $z \equiv 1 \mod 4$& \\
\cline{2-2}
& $p\neq 2$, $u \equiv 4 \mod 8$, $z$ is odd,  and  $h=zu$  & \\
\hline
\end{tabular}
\caption{The possible exceptions to $d_u(G_{s.c.})=d_u(G_{a.})$ for $G$ of classical type}\label{ta:casestoconsider}
\end{table}

\end{document}